%%%%%%%%%%%%%%% Document %%%%%%%%%%%%%%%%%%%%%%%%
\documentclass[11pt]{article}
\usepackage[a4paper,top =1.0in,bottom=1.0in,left=1.0in,right=1.0in]{geometry}

%%%%%%%%%%%%%%% TOC %%%%%%%%%%%%%%%%%%%%%%%%
\usepackage{tocloft}
\setlength\cftbeforesecskip{4pt} %reduce space in TOC
\usepackage[nottoc,numbib]{tocbibind} %add bib to TOC

%%%%%%%%%%%%%%% Packages %%%%%%%%%%%%%%%%%%%%%%%%
\usepackage[dvipsnames]{xcolor}
\usepackage[utf8]{inputenc}
\usepackage{float}
\usepackage[numbers]{natbib}
\usepackage{graphicx}
\graphicspath{{figures/}{pictures/}{figs/}{./}} %where to search for the images
\usepackage{amsfonts}
\usepackage{amsmath}
\usepackage{amssymb}
\usepackage{amsthm} %pour \theoremstyle
\usepackage{mathtools}
\usepackage{amssymb}
\usepackage{enumerate}
\usepackage[hidelinks]{hyperref}
\usepackage{xurl}
\usepackage{bm} %bold math symols
\usepackage{tikz-cd}
\tikzcdset{every label/.append style = {font = \small}} %font of arrow labels
\usepackage{caption}
\usepackage{subcaption}
\usepackage{enumitem} %label items enumerate
\usepackage{faktor} %quotient set \faktor{A}{B}
\usepackage{longtable}
\usepackage{multirow}

\definecolor{deepmagenta}{rgb}{0.8, 0.0, 0.8}
\definecolor{dartmouthgreen}{rgb}{0.05, 0.5, 0.06}

\usepackage{algorithm}
\usepackage{algpseudocode}

\usepackage{titlesec}
\titleformat{\subsubsection}[runin]%no newline after subsubsection
       {\normalfont\bfseries}% formatting commands to apply to the whole heading
       {\thesubsubsection}% the label and number
       {0.5em}% space between label/number and subsection title
       {}% formatting commands applied just to subsection title
       [.]% punctuation or other commands following subsection title

\newcommand{\isep}{\dots} %two dots in integer interval
       
%%%%%%%%%%%%%%%%%%%%%% Environments
\theoremstyle{definition}

\theoremstyle{plain} 
\newtheorem{theorem}{Theorem}[section] 
\newtheorem{proposition}[theorem]{Proposition} %numérotation indexée sur theorem
 
\newtheorem{lemma}[theorem]{Lemma} 
\theoremstyle{definition} 
\newtheorem{example}[theorem]{Example}
\theoremstyle{remark} 
\newtheorem{remark}[theorem]{Remark}

\makeatletter
\newenvironment{breakablealgorithm}
  {
   \vspace*{5pt}
   \begin{center}
     \refstepcounter{algorithm}% New algorithm
     \hrule height.8pt depth0pt \kern2pt% \@fs@pre for \@fs@ruled
     \renewcommand{\caption}[2][\relax]{% Make a new \caption
       {\raggedright\textbf{\fname@algorithm~\thealgorithm} ##2\par}%
       \ifx\relax##1\relax % #1 is \relax
         \addcontentsline{loa}{algorithm}{\protect\numberline{\thealgorithm}##2}%
       \else % #1 is not \relax
         \addcontentsline{loa}{algorithm}{\protect\numberline{\thealgorithm}##1}%
       \fi
       \kern2pt\hrule\kern2pt
     }
  }{
     \kern2pt\hrule\relax% \@fs@post for \@fs@ruled
   \end{center}
   \vspace*{5pt}
  }
\makeatother

%%%%%%%%%%%%%%%%%%%%%% Macros
\newcommand{\spn}[1] { \langle #1\rangle }
\newcommand{\projbracket}[1] { \proj\big[#1\big] }
\newcommand{\projbrackethat}[1] { \widehat{\proj}\big[#1\big] }
\newcommand{\HD}[2] { \d_\mathrm{H}\big(#1,#2\big) }    %Hausdorff dist
\newcommand{\HDmid}[2] { \d_\mathrm{H}\big(#1|#2\big) } %Hausdorff dist non-symmetric

\newcommand{\R}{\mathbb{R}}
\newcommand{\C}{\mathbb{C}}

\newcommand{\V}{\mathcal{V}} %Stiefel manifold
\newcommand{\G}{\mathcal{G}} %Grassmann manifold

\newcommand{\T}{\mathbb T}
\newcommand{\Z}{\mathbb Z}

\newcommand{\M}{\mathrm{M}} %space of matrices
\newcommand{\N}{\mathrm{N}} %Normal space
\renewcommand{\T}{\mathrm{T}} %Tangent space
\newcommand{\W}{\mathrm{W}} %Wasserstein distance
\newcommand{\reach}{\mathrm{reach}} %reach of a submanifold
\newcommand{\op}{\mathrm{op}} %operator norm
 %support of a measure
\newcommand{\Sym}{\mathrm{Sym}} %symmetry group
\newcommand{\sym}{\mathfrak{sym}} %symmetry group
\newcommand{\man}{\mathcal{M}} %manifold

\newcommand{\E}{\mathbb{E}}
\newcommand{\Var}{\mathbb{V}}

\renewcommand{\O}{\mathcal{O}} %Orbit

\newcommand{\GL}{\mathrm{GL}}

\newcommand{\Ort}{\mathrm{O}}
\newcommand{\SO}{\mathrm{SO}}
\newcommand{\U}{\mathrm{U}}
\newcommand{\SU}{\mathrm{SU}}

\newcommand{\Spin}{\mathrm{Spin}}

\newcommand{\h}{\mathfrak{h}}

\newcommand{\so}{\mathfrak{so}}

\newcommand{\su}{\mathfrak{su}}

\newcommand{\g}{\mathfrak{g}}
\renewcommand{\t}{\mathfrak{t}}
\newcommand{\gl}{\mathfrak{gl}}

\newcommand{\proj}{\Pi}

\newcommand{\diag}{\mathrm{diag}}

\DeclareMathOperator{\Tr}{tr}

\newcommand{\iu}{{i\mkern1mu}} %imaginary unit
\renewcommand{\d}{\mathrm{d}} %d in integrals
\DeclareMathOperator{\Rig}{Rig} %rigidity constant

%%%%%%%%%%%%%%% Main %%%%%%%%%%%%%%%%%%%%%%%%
\begin{document}\sloppy
\begin{center}\begin{minipage}{0.9\linewidth}
\begin{center}
\Large{{LieDetect}: Detection of representation orbits \\ of compact Lie groups from point clouds}
\vspace{.5cm}\\
\large{Henrique Ennes$^1$
~~~~~~~~~~
Raphaël Tinarrage$^2$}
\vspace{.5cm}

\begin{minipage}{0.75\linewidth}
$^{1,2}$~EMAp, Fundação Getulio Vargas, Rio de Janeiro, Brazil\\
$^{1}$~~~INRIA Université Côte d’Azur, Valbonne, France\\
$^{2}$~~~IST Austria, Klosterneuburg, Austria    
\end{minipage}
%\vspace{.5cm}

%\large{\today}
\end{center}

\paragraph{Abstract.}
We suggest a new algorithm to estimate representations of compact Lie groups from finite samples of their orbits. 
Different from other reported techniques, our method allows the retrieval of the precise representation type as a direct sum of irreducible representations. Moreover, the knowledge of the representation type permits the reconstruction of its orbit, which is useful for identifying the Lie group that generates the action, from a finite list of candidates. 
Our algorithm is general for any compact Lie group, but only instantiations for SO(2), $T^d$, SU(2), and SO(3) are considered. 
Theoretical guarantees of robustness in terms of Hausdorff and Wasserstein distances are derived.
Our tools are drawn from geometric measure theory, computational geometry, and optimization on matrix manifolds. The algorithm is tested for synthetic data up to dimension 32, as well as real-life applications in image analysis, harmonic analysis, density estimation, equivariant neural networks, chemical conformational spaces, and classical mechanics systems, achieving very accurate results.

\paragraph{Keywords.}  
Linear actions of Lie groups $\cdot$
Geometric inference from point clouds $\cdot$
Sensitivity analysis in Wasserstein distance $\cdot$
Machine learning

\paragraph{MSC2020 codes.} 
68U05 $\cdot$ 
49Q12 $\cdot$ 
15B30 $\cdot$ 
49Q22 $\cdot$ 
49Q15 $\cdot$ 
68T07
% Computational geometry $\cdot$  
% Sensitivity analysis for optimization problems on
% manifolds $\cdot$ 
% Matrix Lie algebras $\cdot$ 
% Optimal transportation $\cdot$ 
% Geometric measure and integration theory, integral and normal currents in optimization $\cdot$ 
% Artificial neural networks and deep learning
\vspace{.5cm}

Communicated by Peter Bubenik.
\vspace{.3cm}

\noindent\makebox[\textwidth]{\rule{0.685\paperwidth}{0.4pt}}
\end{minipage}\end{center}
\vspace{-.2cm}

\setcounter{tocdepth}{3}\tableofcontents

%%%%%%%%%%%%%%%%%%%%%%%%%%%%%%%%%%%%%%%%%%%%%%%%%%%%%%%%%%%%%%%%%%%
%
% Introduction
%
%%%%%%%%%%%%%%%%%%%%%%%%%%%%%%%%%%%%%%%%%%%%%%%%%%%%%%%%%%%%%%%%%%%

\section{Introduction}\label{sec:introduction}

An especially impactful realization in quantitative sciences in the early last century was the combination of the loose notions of symmetry with the precise mathematical definition of groups. From influential results in cryptography \citep{gallian2021contemporary}, passing through the description of the hydrogen atom \cite{woit2017quantum} and the standard model of particles \cite{griffiths2020introduction}, arriving at the formalization of Felix Klein's \textit{Erlangen} program \cite{fecko2006differential}, the group-theoretic point of view on symmetries allowed for unprecedented developments in many theoretical and practical fields. It is, therefore, unsurprising that the prominent role assumed by symmetries translated itself to the description of learning, be it natural---e.g., the detection of invariances in visual psychology \cite{hoffman1968neuron, hoffman1966lie}---or artificial. 
Indeed, the ability to identify that a rotated `7'-digit is still a 7 tells us much about the important presence of symmetries---and, consequently, of groups---in the process of `understanding'.

Notice that symmetries are not restricted to the cognition part of the process---i.e., they are not only devices created by our brain to simplify its training task, although they can be, sometimes, artificially induced---but they are inherent to data. It is because we have seen many images of the same object rotated around its center or shifted in space and have been told, in whatever form, that these correspond to the same entity, that we acquired the invariance of the digit `7' concerning rotations. In a nutshell, this indicates that there is a genuine interest to identify, directly from data, the existence of symmetries on it, a task which we shall review below. Whether and how the information of existing symmetries is used to improve the training process (which it does, as, again, we will indicate in Section \ref{sec:applications}) is not the focus here; rather, we are primarily interested in the ability to tell, from data only, how likely the symmetries are to be generated by a particular group $G$ and the `symmetry' types involved. 

We attempt to tackle the symmetry identification problem algorithmically. 
That is, in a more mathematical language, our goal is to create a technique to determine the actions of groups that generate the symmetries on a dataset. Of course, as stated, this task is rather open, so we assume some constraints. 
The first of these is related to the groups considered. Instead of an algorithm that works for the whole class of groups, we will focus on detecting symmetries generated by \textit{Lie groups}, those admitting smooth structures within their operations. 

We will also restrict the kinds of symmetry, or in group language, actions, that describe how the Lie groups interact with datasets. 
Instead of the general scenario, we consider only \textit{representations}, which are linear actions of Lie groups in vector spaces. This means that only one particular kind of dataset, point clouds living in some $\R^n$, will interest us.
More precisely, we study point clouds $X$ sampled on or close to a submanifold $\O$ that carries a transitive action of its symmetry group.
Equivalently, we assume $\O$ is the \textit{orbit} of a Lie group representation.
Moreover, because of the linearity constraint, it is always possible to embed such Lie group symmetries as $n\times n$ matrices, which significantly simplifies the computations performed.
Lastly, since we only wish to consider, from a data analysis perspective, samples on compact manifolds---and hence, on compact orbits---we will restrict ourselves to \textit{compact} Lie groups.

Although unapologetically restrictive compared to the general problem of determining arbitrary actions of arbitrary groups on arbitrary datasets, the constrained setting of representations of compact Lie groups in point clouds still sets the stage for many interesting problems in data science, some of which we motivate in the examples below, further explored in Section \ref{sec:applications}.

\begin{example}[Pixel permutations, Section \ref{subsec: ppt}]
\label{ex:pixel_permutations}
Consider a set of $m_x\times m_y$-pixeled images generated by permutations pixels of a fixed initial image. Supposing these permutations form a finite group $\Sigma$, then the embedding of the images in $\R^{m_x\times m_y}$ will necessarily lie on the orbit of a representation of $\Sigma$ in $\R^{m_x\times m_y}$.
As it turns out, when $\Sigma$ is Abelian, the data also lies within an orbit of the compact Lie group $T^d$, the $d$-dimensional torus. We propose using this geometric structure to reconstruct the exact representation type from the embedded vectors only.
\end{example}

\begin{example}[Harmonic analysis, Section \ref{subsec:harmonic_analysis}]
\label{ex:harmonic_analysis}
Abstract harmonic analysis denotes the study of signal decomposition in arbitrary groups. For the case of compact Lie groups, the existence of a discrete orthonormal basis for the set of $L^2$-integrable functions is guaranteed by the Peter-Weyl theorem, which implies that such a basis is parametrized by $\widehat{G}$, the complex irreducible representations of $G$. In particular, ordinary harmonic analysis, taken as the decomposition of periodic functions through sines and cosines, reduces to a special case of this result.
Suppose then $\{(x_i, y_i)\}_{i=1}^N$ is a dataset, where each $x_i$ lies in the orbit $\mathcal{O}$ of a representation of a compact Lie group $G$, $y_i$ are real scalars, and there is an unknown function $f:\mathcal{O}\to \R$ such that $f(x_i)\approx y_i$. If $\{f_{[\phi]}\}_{[\phi]\in \widehat{G}}$ is an abstract harmonic analysis basis for $G$, then this regression can be written as 
\begin{equation*}
    f = \sum_{[\phi]\in \widehat{G}}a_{[\phi]}f_{[\phi]},
\end{equation*}
where $ a_{[\phi]}$ are complex constants. Not only does the summation above converge in the whole function's domain, but the functions $f_{[\phi]}$ can be known for every choice of $G$. Therefore, the knowledge that the input data lies close to an orbit of a certain compact Lie group $G$ suggests a linear regression solution to the machine learning problem.
\end{example}

\begin{example}[Equivariant neural networks, Section \ref{sec: equivariant neural networks}]
Steerable CNNs correspond to a class of equivariant implementations of neural networks, that is, networks in which some symmetry of the data is preserved through all layers. Although there are implementations of steerable CNNs to the compact Lie groups of most interest, in applications, researchers often ensure only equivariance up to some of their finite subgroups. It remains an open question whether the full symmetries are well approximated.
\end{example}

\begin{example}[Conformational spaces, Section \ref{subsec:conformational_space}]
The conformational space of a molecule refers to the space of all possible atomic positions within it. As expected, these spaces naturally exhibit geometric structures related to the molecular constitution \cite{brown2008algorithmic,martin2010topology}.
The existence of Lie group symmetries on these spaces has yet to be explored.
\end{example}

\begin{example}[The three-body problem, Section \ref{subsec:3body}]
The three-body problem has received considerable attention from physicists and mathematicians, especially due to its chaotic nature. This constraint implies that analytic solutions are rather rare and, consequently, there has been much interest in finding them. Solutions that lie close to orbits of group representations are, approximately, analytic, and determining them might propel more theoretical research.
\end{example}

\paragraph{Our contributions and related work.}
In Algorithm \ref{alg: 1}, which we call \texttt{LieDetect}, we propose a framework to transform the problems from the last examples in machine learning tasks. The precise model assumed will be formalized in Section \ref{sec:algorithm_description}, but simply put, we assume the existence of an unknown representation $\phi:G\to \GL_n(\R)$ of some known compact Lie group $G$ in some vector space $\R^n$, for which $\mathcal{O}\subseteq \R^n$ is one of its orbits. We then want to precisely determine the representation type, that is, the group homomorphism $\phi$, using only a point cloud $X$ well enough sampled from $\mathcal{O}$. Moreover, because the knowledge of $\phi$ permits the reconstruction $\mathcal{O}$, it is possible to determine which of the compact Lie groups is most likely to have generated the orbit. Unfortunately, this is an ill-posed problem: two distinct compact Lie groups can generate the same orbit $\mathcal{O}$; moreover, fixed a compact Lie group $G$, two of its distinct representations can also generate $\mathcal{O}$.  Ignoring these subtleties for now, we stress that, as far as we know, ours is the first reported technique in the literature to identify both the precise representation type and the most likely acting Lie group. We believe this information may be fruitful in developing new inference techniques and application settings, some of which we already suggest here. Besides, the theoretical guarantees of Section \ref{sec:algorithm_analysis} imply strong robustness results for our algorithm, which works well even when available data $X$ does not sit perfectly within $\mathcal{O}$ because of noise.

On the other hand, although new in what it accomplishes, our approach is not the first attempt to solve this sort of problem, given the long history of interest in action detection problems. For example, as indicated by \cite{shi2016symmetry}, the related problem of symmetry identification has been tackled since the 1980s, with algorithms for symmetries of $\text{SE}(2)$ representations in planar data \cite{wolter1985optimal, highnam1986optimal}. A similar task, now for 3D objects, has often been addressed in the literature, as well-reviewed by \cite{mitra2013symmetry}. We stress, however, that all these works are limited to the detection of symmetries as subgroups of specific rotation groups, a scope slightly different from ours, which is agnostic to the choice of \textit{any} compact Lie group, provided the list of its real representations, and which outputs the generators of the representation. 

The automatic detection of the representation operators of more general Lie groups from different datasets has also received some attention. Sohl-Dickstein, Wand, and Olshausen, for example, use a modified version of the MAP algorithm to estimate the operators $T$ such that $\sum_{t}|x^{(t)}-Tx^{(t-1)}|$ is minimized \cite{sohldickstein2017unsupervised}. Although having the obvious advantage of dealing with naturally sequential data---the article considers applications of this method to videos, for example---the technique is not general enough to allow arbitrary commutation of group elements, being perhaps better suited to Abelian Lie groups. Cohen and Welling, once more, also address the problem of determining the exact representation operators from the orbit points using a Bayesian approach; however, besides being restricted only to Abelian Lie groups, they do not try to project the operators into the closest Lie algebra, meaning that no information about the exact representation type is retrieved \cite{cohen2014learning}. A more recent approach, now useful even for orbits of non-Abelian Lie groups, was introduced by \cite{pfau2020disentangling} in response to the problem raised by \cite{higgins2018definition} which, using methods from Riemannian geometry, estimates a local basis for orbits of direct products of Lie groups.

In \cite{dehmamy2021automatic}, on the other hand, the authors use a generalization of equivariant neural network architectures to Lie algebras convolutions to estimate derived representation operators, denoted $X$, in datasets for supervised tasks that are equivariant under some unknown Lie group $G$. Although similar to our work, this approach only gives the Lie algebra operators as vectors---i.e., there is no attempt to project the estimated $X$ onto the closest derived representation of $G$---and the output may also be an over-determined set of vectors, meaning that it may not form a basis for the Lie algebra of $G$. The over-determination issue is resolved by Cahill \emph{et al.} in \cite{DBLP:journals/corr/abs-2008-04278}, where the exact Lie algebra of the symmetry group of the orbit, denoted by $\sym(\O)$, seen as a vector space, together with its dimension, is estimated. Our algorithm is, in fact, an extension of this last work, where instead of only finding the basis for the space $\sym(\O)$, we project it onto the closest representation of the associated Lie group, i.e., we can determine exactly the representation type. We stress, however, that the application of the projection step to the result of \texttt{LiePCA} goes beyond a (partial) solution to the open problem left by the authors of ``rounding this estimate [result of \texttt{LiePCA}] to the nearest [Lie algebra]", but allows for many new applications not possible with the results of \cite{DBLP:journals/corr/abs-2008-04278} only. First, because the exact representation type is determined, calculating the Hausdorff distance between the estimated orbit and the sample point cloud becomes simpler, as we know exactly what values to exponentiate the derived representation basis. Second, in Section \ref{ex:sampling}, we empirically demonstrate that our full \texttt{LieDetect} algorithm generally outperforms \texttt{LiePCA} in the task that it was originally envisioned, that is, in density estimation on orbits of representations of Lie groups. Lastly, given a finite list of (compact) Lie groups as possible candidates of generators of some orbit, we can determine which of its entries is more likely to do so, as explained in Section \ref{sec: list}. Moreover, while the approach by Cahill \emph{et al.} permits an estimation of the symmetry group as a whole, our new approach to the \texttt{LiePCA} operator allows for estimating lower-dimensional representations, potentially non-transitive, as it will be further detailed in Section \ref{sec: implementation LiePCA} and illustrated in Example \ref{ex: Mobius} and Section \ref{subsec:conformational_space}. These improvements come, nonetheless, at the cost of generality since we are forced to work with compact groups only. However, once the information regarding the exact representation types is determined, it may be used in other inference and machine learning tasks, as we shall further explore through the examples of Section \ref{sec:applications}.

\paragraph{Technical aspects.}
In addition to applications, this article places particular emphasis on theoretical justification.
As far as geometry is concerned, we follow standard ideas of the literature regarding manifold and tangent space estimation, involving notions of geometric measure theory \cite{10.1214/18-AOS1685,boissonnat2019reach}.
In particular, we adopt a measure-theoretic point of view of the problem.
This is part of a movement in recent research to express stability results with the Wasserstein distance, instead of the traditional Hausdorff distance, less suited to the case of noisy datasets \cite{buet2017varifold,lim2021tangent,tinarrage2023recovering}.
On the algebraic side, at the core of our algorithm lies \texttt{LiePCA}, a recent method for estimating the Lie algebra of symmetry groups \cite{DBLP:journals/corr/abs-2008-04278}.
We analyze it by invoking traditional tools of representation theory.
Last, regarding implementations, we raise concrete matrix computation problems, such as simultaneous reduction \cite{Schnemann1968OnTO,umeyama1988eigendecomposition,nolte2006identifying,meinecke2012simultaneous} and local PCA \cite{tyagi2013tangent,kaslovsky2014non,10.1214/18-AOS1685,singer2012vector,DBLP:journals/corr/abs-2008-04278}.
Throughout this article, and although we do not enter into further details, we draw connections with problems of other areas of mathematics, such as varieties of Lie algebras \cite{friedland1983simultaneous,richardson1988conjugacy,lopatin2011orthogonal,baird2007cohomology,adem2007commuting,torres2008fundamental}, Linnik type problems \cite{linnik2ergodic,duke1988hyperbolic,duke2007introduction,schmidt1998distribution,horesh2023equidistribution}, and rigidity of Lie subalgebras \cite{agrachev2001lie,agrachev2010towards,crainic2014survey,barrionuevo2023deformations}.

\paragraph{Code availability.}
We developed a Python library, {\texttt{LieDetect}}, which fully implements the algorithm described in this article. This can be found at \url{https://github.com/HLovisiEnnes/LieDetect}.
It contains several \texttt{Jupyter} notebooks, implementing all the examples presented in the article, as well as our experiments from Section \ref{sec:applications}.

\paragraph{Outline.}
The rest of the article is as follows. 
In Section \ref{sec:preliminaries}, we give a self-contained introduction to the theory of representations of Lie groups.
We define the main algorithm of this article in Section \ref{sec:algorithm_description} and give additional comments regarding its implementation in Section \ref{sec:algorithm_additional}.
We prove consistency and stability results for the algorithm in Section \ref{sec:algorithm_analysis}.
In Section \ref{sec:applications} we apply the algorithm to several concrete data analysis problems.
Notations are listed in Appendix \ref{sec:notations}, and a few supplementary results are gathered in Appendix \ref{sec:supplementary_results}.

\section{Preliminaries}\label{sec:preliminaries}

We now lay the theoretical background needed to read this article.
The first two sections introduce basic notions regarding Lie groups and their representations, following \cite{brocker2013representations}.
The third section is dedicated to defining and analyzing the structure of the Stiefel and Grassmann varieties of Lie algebras, which will play a crucial role in our algorithm.

\subsection{Lie groups and Lie algebras}\label{subsec:lie_groups}

\subsubsection{Lie groups}\label{subsubsec:lie_groups_definition}
A \textit{Lie group} is a group $G$ that is also a smooth manifold, and such that the multiplication map $(g,h)\mapsto gh$ and the inverse map $g\mapsto g^{-1}$ are smooth.
An example is given by the general linear group $\GL_n(\R)$ of real invertible $n\times n$ matrices, and its complex equivalent $\GL_n(\C)$.
Endowed with the subspace topology inherited from the vector space of $n\times n$ matrices $\M_n(\R)$ and $\M_n(\C)$, both $\GL_n(\R)$ and $\GL_n(\C)$ are smooth submanifolds, and also are Lie groups.
They have real dimensions $n^2$ and $2n^2$, respectively, and are not compact.
In what follows, we will focus on compact Lie groups, and more particularly on the orthogonal group $\Ort(n)$, the special orthogonal group $\SO(n)$, the unitary group $\U(n)$, the special unitary group $\SU(n)$, and the torus $T^n$, respectively defined for any integer $n\geq1$ as:
\begin{align*}
    \Ort(n) &= \big\{ A \in \GL_n(\R) \mid A^\top = A^{-1}  \big\}, \\
    \SO(n) &= \big\{ A \in \GL_n(\R) \mid A^\top = A^{-1}, \det(A) = 1  \big\} ,\\
    \U(n) &= \big\{ A \in \GL_n(\C) \mid A^* = A^{-1}  \big\}, \\
    \SU(n) &= \big\{ A \in \GL_n(\C) \mid A^* = A^{-1}, \det(A) = 1  \big\}, \\
    T^n &= \SO(2) \times \cdots \times \SO(2) ~~~~\text{(}n\text{ copies)},
\end{align*}
where $A^\top$ denote the transpose and $A^*$ the conjugate transpose.
These groups have respective dimensions $n(n-1)/2$, $n(n-1)/2$, $n^2$, $n^2 -1$ and $n$, and are all connected, expect for $\Ort(n)$ which consists of two connected components.
Moreover, they are all \textit{matrix groups}, that is, they can be described as subgroups of a general linear group $\GL_n(\R)$ or $\GL_n(\C)$.
In general, it is a consequence of Peter–Weyl theorem that any compact Lie group is a matrix group. 

A \textit{Lie group homomorphism} is a smooth group homomorphism between two Lie groups. For instance, embedding $\M_n(\R)$ into $\M_n(\C)$ and $\M_n(\C)$ into $\M_{2n}(\R)$ yield Lie group homomorphisms $\SO(n) \hookrightarrow \SU(n) \hookrightarrow \SO(2n)$.
If a Lie group homomorphism is bijective with smooth inverse, it is called a \textit{Lie group isomorphism}, and the Lie groups are said isomorphic.
In the list above, all the groups are non-isomorphic, except for $\SO(1) \simeq \SU(1)$, both singletons, and $\SO(2) \simeq \U(1) \simeq T^1$.
Moreover, any compact Abelian Lie group is isomorphic to a torus.

It is worth mentioning that being a Lie group imposes strong regularities on the underlying manifold. 
For instance, they are all orientable.
Moreover, the fundamental group of a connected Lie group is discrete and Abelian.
As another example, the $n$-sphere $S^n$ admits a Lie group structure only when $n=0$, $1$ or $3$, these cases corresponding respectively to the singleton, $\SO(2)$ and $\SU(2)$.
Besides, being a Lie group implies the existence of a Lie algebra structure on the tangent spaces, as we review now.

\subsubsection{Lie algebras}
A \textit{Lie algebra} is a vector space $V$, together with an operation $[\cdot, \cdot]\colon V\times V\to V$, called \textit{Lie bracket}, that is bilinear, anti-symmetric, and obeys Jacobi's identity, that is, $[x,[y,z]]+[y,[z,x]]+[z,[x,y]]=0$ for all $x,y,z\in V$.
If $[A,B]=0$ for all $A,B\in V$, then the Lie algebra is said to be \textit{Abelian}.
By fixing a basis $(A_i)_{i=1}^m$ of the vector space $V$, the \textit{structure constants} of the Lie algebra relative to this basis are defined as the scalars $(c_{i,j}^k)_{i,j,k=1}^m$ satisfying $[A_i,A_j] = \sum_{k=1}^m c_{i,j}^k A_k$ for all $i,j \in [1\isep m]$.
As a particular case, the structure constants of an Abelian Lie algebra are all zero and hence do not depend on the basis.
As another example, one defines a Lie bracket on the space of $n\times n$ matrices $\M_n(\R)$ via the \textit{commutator} $[A,B] = AB - BA$.
This Lie algebra is non-Abelian whenever $n \geq 2$.

Given a Lie group $G$, its tangent space $\T_I G$ at the origin (identity matrix $I$) can be canonically endowed with a Lie bracket, yielding a Lie algebra, denoted $\g$. This construction is commonly based on left-invariant vector fields on $G$, which we do not describe here.
When $G\subset \M_n(\R)$ is a matrix group, a more direct definition exists: the tangent space $\T_I G$, seen as a linear subspace of $\M_n(\R)$, is endowed with Lie bracket given by the commutator of matrices.
By convention, we write in gothic font the Lie algebra.
For instance, the tangent spaces of the general linear group, the special orthogonal group, the special unitary group, and the torus are
\begin{align*}
    \gl(n) &= \M_n(\R),\\
    \so(n) &= \{A \in \M_n(\R)\mid A^\top = -A\} ,\\
    \su(n) &= \{A \in \M_n(\C)\mid A^* = -A, ~\mathrm{tr}(A) = 0 \}, \\
    \mathfrak{t}^n &= \so(2) \times \cdots \times \so(2) ~~~\text{(}n\text{ copies)},
\end{align*}
i.e., $\so(n)$ is the set of skew-symmetric matrices and $\su(n)$ of zero-trace skew-Hermitian matrices.

A \textit{Lie algebra homomorphism} between two Lie algebras $V$ and $W$ is a linear map $\phi\colon V \rightarrow W$ such that $[\phi(A),\phi(B)] = \phi([A,B])$ for all $A,B\in V$. If $\phi$ is bijective, it is called an \textit{isomorphism of Lie algebras}, and the Lie algebras are said isomorphic.
Equivalently, two Lie algebras are isomorphic if they admit the same structure constants in a certain pair of bases.
For instance, as shown in the example below, $\su(2)$ and $\so(3)$ are isomorphic, although $\SU(2)$ and $\SO(3)$ are not.
This shows that the Lie algebra does not totally determine a Lie group.
Despite this fact, it contains valuable information about the group, as we explain in the next section.

\begin{example}
\label{ex:isom_su(2)_so(3)}
A usual choice of basis for $\so(3)$, as presented in \cite{fecko2006differential}, is
\begin{equation*}
     X_1 = \begin{pmatrix}
            0 & 0 & 0\\
            0 & 0 & -1\\
            0 & 1 & 0
        \end{pmatrix},~~
        X_2 = \begin{pmatrix}
            0 & 0 & 1\\
            0 & 0 & 0 \\
            -1 & 0 & 0 
        \end{pmatrix},~~
       X_3= \begin{pmatrix}
            0 & -1 & 0\\
            1 & 0 & 0 \\
            0 & 0 & 0 
        \end{pmatrix}, 
\end{equation*}
for which we have the brackets $[X_1,X_2]=X_3$, $[X_2,X_3]=X_1$ and $[X_1,X_3]=-X_2$.
Besides, a basis of $\su(2)$ is given by the Pauli matrices, which yield the same Lie bracket:
\begin{equation*}
        Y_1=\frac{1}{2}\begin{pmatrix}
            0 & i\\
            i & 0
        \end{pmatrix},~~
        Y_2=\frac{1}{2}\begin{pmatrix}
            0 & -1\\1 & 0
        \end{pmatrix},~~
        Y_3 = \frac{1}{2}\begin{pmatrix}
            i & 0\\ 0 & -i
        \end{pmatrix}.
\end{equation*}
\end{example}

\begin{example}
\label{ex:normal_form_SO(2)}
It is known that any matrix in $\SO(2)$ can be written as $R(\theta)$ for some $\theta \in [0,2\pi)$, and any matrix in $\so(2)$ as $L(a)$ for some $a\in\R$, where 
\begin{equation*}%\label{eq: defs od R and L}
R(\theta) =
\begin{pmatrix}
    \cos \theta  & - \sin \theta\\
     \sin \theta& \cos \theta
\end{pmatrix}
~~~~~\mathrm{and}~~~~~
L(a) =
\begin{pmatrix}
     0 & -a\\
     a & 0
\end{pmatrix}.
\end{equation*}
Consequently, matrices of $T^d$ can be written as $\diag(R(\theta_1),\dots,R(\theta_d))$ for some $(\theta_1,\dots,\theta_d)\in[0,2\pi)^d$, and matrices of $\t^d$ as $\diag(L(a_1),\dots,L(a_d))$ for some $(a_1,\dots,a_d)\in\R^d$.
\end{example}

\subsubsection{The exponential map}\label{subsubsec:exponential_map_geometric}
Given a Lie group $G$ with Lie algebra $\g$, there exists a canonical smooth map $\exp\colon\g\rightarrow G$, called the \textit{exponential map}.
In the general construction, this exponential is defined using one-parameter subgroups generated by left-invariant vector fields.
In the particular case of matrix groups $G\subset \GL_n(\R)$, the exponential map is simply the restriction to $\g$ of the matrix exponential, $\exp\colon \M_n(\R)\rightarrow\GL_n(\R)$, given by $\exp(A) = \sum_{i=0}^{+\infty} A^n/n!$.
For $A,B\in \g$, it holds
\[
\exp(A+B) = \exp(A)\exp(B) \iff [A,B] = 0.
\]
Endowing $\g$ with the group structure of the addition, we deduce that $\exp\colon\g\rightarrow G$ is a group homomorphism only when $\g$ is an Abelian Lie algebra, that is, when $G$ is a torus, in the compact case. 
Moreover, if $G$ is connected and compact, or is the general linear group, then the exponential map is surjective.
For instance, elements of $\SO(2)$ take the form $R(\theta)$ for some $\theta \in \R$, which in turn is equal to the exponential of $L(\theta)$, defined in Example \ref{ex:normal_form_SO(2)}.

The exponential map is a fundamental tool in the study of Lie groups, allowing a dialogue between the group and its algebra.
To illustrate this phenomenon, one can show that any Lie group homomorphism $\phi\colon G\to H$ induces a unique Lie algebra homomorphism $\d\phi\colon\g \to \h$ with commutative diagram
\begin{equation}\label{eq:derived_homomorphism}
\begin{tikzcd}[column sep=2cm,row sep=1cm]
G  \arrow[r,"\phi"] & H \\
\g \arrow[u,"\exp"] \arrow[r,"\mathrm{d}\phi"] & \h \arrow[u,"\exp",swap]
\end{tikzcd}
\end{equation}
The map $\d\phi$ is called the \textit{derived homomorphism} of $\phi$, and $\d\phi(\g) \subset \h$ the \textit{pushforward Lie algebra}.
If $\phi$ is an isomorphism, or surjective with discrete kernel, then $\d\phi$ is an isomorphism.
For instance, the well-known double cover $\SU(2)\rightarrow\SO(3)$ yields an isomorphism $\su(2)\rightarrow\so(3)$.

In Riemannian geometry, there is a concurrent notion of an exponential map.
Without entering into the details, we mention that any compact Lie group admits a bi-invariant Riemannian metric. In this case, the Riemannian exponential coincides with the (Lie) exponential \cite{MILNOR1976293}.
On simple Lie groups, such as $\SO(n)$ with $n\neq 4$, such a metric is unique up to a constant.

The Frobenius inner product $\langle A, B \rangle = \Tr(A B^\top)$ on the Lie algebra $\M_n(\R)$ also defines a metric on $\GL_n(\R)$.
In the rest of this article, if $G$ is $\GL_n(\R)$ or one of its subgroups, it will be endowed with this metric, making it a Riemannian manifold.
This metric is not bi-invariant on $\GL_n(\R)$, however, it is when restricted to $\SO(n)$.
On $\SO(3)$, the corresponding geodesic distance is equal to $\d(P,Q) = \arccos((\langle P,Q\rangle-1)/2)$, which is also the absolute value of the angle of the rotation $PQ^\top$ \cite{moakher2002means}.
More generally, the geodesic distance on $\SO(n)$ is explicitly given by the quadratic mean of its angles.
Note that $\SO(n)$ admits other non-equivalent bi-invariant \textit{distances} (not metrics), such as the one induced by the Frobenius norm, called in this context the chordal distance.
We draw the reader's attention to the fact that, by endowing the Lie algebra and the group itself with metrics, we can treat the exponential $\exp\colon\g\rightarrow G$ as a map between metric spaces. 
This can be used to quantify explicitly some constructions.
Namely, if $r$ denotes the diameter of $G$, we will use in Section \ref{sec:step4_hasdorff} the fact that the exponential map is surjective when restricted to a ball of radius $r$ centered at the origin of $\g$.

A last point on Riemannian geometry is worth mentioning.
On a compact Lie group, there exists a unique left-invariant probability measure $\mu_G$, i.e., a measure such that $\mu_G(g A) = \mu_G(A)$ for any $g\in G$ and any Borel subset $A\subset G$. It is called the \emph{Haar measure}.
If $G$ is endowed with a bi-invariant Riemannian structure, then the Haar measure is equal to the Riemannian volume form.
This notion proves to be an invaluable tool, first for being a canonical probability measure on $G$, and second for enabling the technique of `averaging' on Lie groups.
In particular, we will use the Haar measure to `orthogonalize' representations in Section \ref{subsubsec:def_representation} and to define the uniform measure on the orbits in Section~\ref{subsec:structure_orbits}.

\subsubsection{Intrinsic and extrinsic symmetries}\label{subsec:symmetry_group}
Lie groups naturally appear when studying symmetries of objects. To illustrate this statement, let us consider a Riemannian manifold $\man$ and its \textit{isometry group} $\mathrm{Isom}(\man)$, defined as the set of diffeomorphisms $\man\rightarrow \man$ that preserves the metric.
The Myers–Steenrod theorem states that $\mathrm{Isom}(\man)$, when given the topology of pointwise convergence, forms a Lie group \cite{myers1939group}. 
Moreover, it is compact when $\man$ is.
For instance, the $n$-sphere endowed with the metric inherited from $\R^{n+1}$ satisfies $\mathrm{Isom}(S^n)\simeq\Ort(n+1)$.
For the flat torus $S^1\times S^1$ in $\R^4$, this group is a semi-direct product $\mathrm{Isom}(S^1\times S^1) \simeq (\SO(2)\times\SO(2))\rtimes D_4$, where $D_4$ is the dihedral group.
As a last example, the isometry group of $\SO(3)$, endowed with its bi-invariant Riemannian structure, takes the form $\SO(3)\rtimes\SO(3)\times\{\pm 1\}$ \cite{gaal2018certain}.

If $\man$ is isometrically embedded in $\R^n$, a closely related concept is that of \textit{symmetry group}, defined as 
$$\Sym(\man) = \{P \in \GL_n(\R) \mid P \man = \man\}.$$
It can be understood as the set of `extrinsic isometries' of $\man$.
It is a closed subgroup of $\GL_n(\R)$, hence a Lie group by Cartan's theorem \cite{cartan1952theorie}.
By restricting our action of the matrices $P$ to $\man$, we obtain a group homomorphism 
$$\Sym(\man) \rightarrow \mathrm{Isom}(\man).$$
We note that this map may not be injective, since certain matrices $P$ may act trivially on $\man$.
For instance, if $\man$ is embedded on the first $n$ coordinates of $\R^{n}\times \R^m$, then any transformation $P \in \Ort(n+m)$ that stabilizes $\R^{n}$ will be an element of $\Sym(\man)$, although being trivial when seen in $\mathrm{Isom}(\man)$. 

In this article, it will be convenient to work with the Lie algebra of $\Sym(\man)$, denoted $\sym(\man)$. 
It consists of the matrices of $\M_n(\R)$ that exponentiate to an element of $\Sym(\man)$. The following convenient formulation of this algebra is given in \cite{DBLP:journals/corr/abs-2008-04278}, for which we give a proof in Section \ref{sec:supplementary_preliminaries_symmetrygroup}:
\begin{equation}\label{eq:sym_algebra_formulation}
\sym(\man) = \big\{A \in \M_n(\R)\mid \forall x \in \man, A x \in \T_x \man\big\}.   
\end{equation}

In what follows, we will frequently refer to the following hypothesis: all the points of $\man$ are at an equal distance from the origin, and their linear span $\spn{\man}$ is equal to the whole ambient space $\R^n$.
In this case, a matrix $P\in\Sym(\man)$ must preserve the Euclidean norm, hence it is an orthogonal matrix.
That is, $\Sym(\man)\subset \Ort(n)$, and consequently $\sym(\man)\subset\so(n)$.

\subsection{Representations of Lie groups}

\subsubsection{Definitions}\label{subsubsec:def_representation}
A (real) \textit{representation} of a Lie group $G$ in a (real) vector space $V$ is a Lie group homomorphism $G\rightarrow\GL(V)$. 
When $V$ is a space of dimension $n$, the representation is said to be $n$-dimensional.
Similarly, a \textit{representation} of a Lie algebra $\g$ in $V$ is a Lie algebra homomorphism $\g\rightarrow\gl(V)$.
Let us suppose that $V = \R^n$, and consider a representation of $G$ in $\R^n$, denoted $\phi\colon G \rightarrow \GL_n(\R)$.
As we have seen in Equation \eqref{eq:derived_homomorphism}, the derived homomorphism $\d\phi$ is a representation of $\g$ in $\R^n$, fitting in the commutative diagram
\begin{equation}\label{eq:derived_homomorphism_representation}
\begin{tikzcd}[column sep=2cm,row sep=1cm]
G  \arrow[r,"\phi"] & \GL_n(\R) \\
\g \arrow[u,"\exp"] \arrow[r,"\mathrm{d}\phi"] & \gl_n(\R) \arrow[u,"\exp",swap]
\end{tikzcd}
\end{equation}
This is to say, there is a map
\begin{align*}
    \d\colon \mathrm{Rep}(G,\R^n) &\longrightarrow \mathrm{Rep}(\g,\R^n)\\
    \phi &\longmapsto \d\phi
\end{align*}
between the set of representations of $G$ in $\R^n$ and the set of representations of $\g$ in $\R^n$.
When the exponential map of $G$ is surjective, for instance when $G$ is connected and compact, the diagram above shows that the map $\d$ is injective.
Moreover, when $G$ is simply connected, $\d$ is also surjective, a phenomenon known as the Lie group–Lie algebra correspondence.
We point out, though, that among the groups of interest in this article, only $\SU(n)$ is simply connected.

A representation of a Lie group $G$ (resp.~of a Lie algebra $\g$) is said to be \textit{faithful} if the homomorphism $G\rightarrow\GL_n(\R)$ (resp.~$\g\rightarrow\gl_n(\R)$) is injective, and \textit{almost-faithful} if its kernel is discrete.
Clearly, a representation $G\rightarrow\GL_n(\R)$ is almost-faithful if and only if the derived representation $\g\rightarrow\gl_n(\R)$ is faithful.
In this case, the pushforward algebra $\d\phi(\g)$ is isomorphic to $\g$ via $\d\phi$.

Two representations $\phi$ and $\psi$ of the same group $G$ in $\R^n$ are said to be \textit{equivalent} if there exists $M\in \GL_n(\R)$ such that $\psi(g) = M\phi(g) M^{-1}$ for all $g\in G$, that is if they are a change of basis apart from each other.
This invites us to, given a representation, look for the `simplest' representation equivalent to it. To this end, we say that a representation is \textit{orthogonal} if it takes values in $\Ort(n)$.
As it turns out, when $G$ is compact, every representation is equivalent to an orthogonal one \cite[Th.~I.7]{brocker2013representations}. As we will explain in Section \ref{subsubsec:analysis_orthonormalization}, this result is proved via averaging with the Haar measure.
Moreover, if two orthogonal representations are equivalent, then they are also orthogonally equivalent, meaning that the matrix $M\in \GL_n(\R)$ can be chosen in $\Ort(n)$.
We will refer to the process of finding the orthogonal representation equivalent to some representation $\phi$ as the  `orthogonalization' of $\phi$.

\begin{example}
\label{ex:correspondance_SO(2)}
For $k\in\Z$, the application $\phi_k\colon \SO(2) \rightarrow \GL_2(\R)$ defined below is a representation of $\SO(2)$ in $\R^2$, whose derived representation $\d\phi_k \colon \so(2) \rightarrow \M_2(\R)$ is denoted by $L_k$:
\[
\phi_k(\theta) =
\begin{pmatrix}
    \cos k\theta  & - \sin k\theta\\
     \sin k\theta& \cos k\theta
\end{pmatrix}
~~~~~~~~\mathrm{and}~~~~~~~~
L_k(\theta) =
\begin{pmatrix}
     0 & -k\theta\\
     k\theta & 0
\end{pmatrix}.
\]
Besides, the representation
$\theta \mapsto
\left(\begin{smallmatrix}
     \theta & 0\\
     0 & 0
\end{smallmatrix}\right)$
of $\so(2)$ does not come from a representation of $\SO(2)$.
\end{example}

\begin{example}
For $\alpha > 1$, the following representation $\phi$ of $\SO(2)$ in $\R^2$ is not orthogonal, but $M\phi M^{-1}$ is equal to the orthogonal representation $\phi_1$ defined in Example \ref{ex:correspondance_SO(2)}, where 
\begin{equation*}
\phi(\theta) =
\begin{pmatrix}
    \cos \theta & -\alpha\sin \theta\\
    \alpha^{-1}\sin \theta & \cos \theta
\end{pmatrix}
~~~~~~\mathrm{and}~~~~~~~~
M = \begin{pmatrix}
    1 & 0\\
    0 & \alpha
\end{pmatrix}.~~~~~~~~~
\end{equation*}
\end{example}

\subsubsection{Irreducible representations}\label{subsubsec:irreps}
Let $\phi$ be a representation of a Lie group $G$ in some vector space $V$. 
It is called \textit{reducible} if there exists a proper linear subspace $W\subset V$ stabilized by $\phi$, that is, such that $\phi(g) W \subset W$ for all $g\in G$. Otherwise, $\phi$ is said to be \textit{irreducible}, and is called an \textit{irrep}.
Any orthogonal representation $\phi\colon G \rightarrow \R^n$ is \textit{completely decomposable}, meaning that there exists an integer $p$, a decomposition $\R^n = \bigoplus_{i=1}^p W_i$ into $p$ pairwise orthogonal subspaces, and a collection of irreducible representations $\{\phi_i\colon G\rightarrow W_i\}_{i=1}^p$ such that $\phi$ is isomorphic to the direct sum $\bigoplus_{i=1}^p\phi_i$.
In other words, by denoting $O\in\Ort(n)$ the change of basis into the decomposition given by the $W_i$'s, the representation $\phi$ can be written, in matrix notation, as
\[
\phi \colon g \mapsto O\diag\big(\phi_1(g),\dots,\phi_p(g)\big)O^\top.
\]
This integer $p$ is uniquely defined, as well as the irreps, up to permutation.

This result shows that one can obtain any representation of $G$ by simply summing irreducible representations. Having access to a list of these representations is thus a crucial problem in working with Lie groups.
Although real irreducible representations of simple real Lie algebras have been fully classified for a long time \cite{iwahori1959real}, it does not mean the representative matrices are explicitly known.
We give below this list of irreps for the groups of interest in this article.

\begin{example}
\label{ex:irreps_so(2)}
The irreducible representations of $\SO(2)$ consist of the trivial representation $\SO(2)\rightarrow\GL(1)$, denoted $\phi_0$, as well as the infinite family $\{\phi_k\mid k\in\Z\setminus\{0\}\}$, where $\phi_k\colon\theta\mapsto R(k\theta)$, and $R$ is the matrix defined in Example \ref{ex:normal_form_SO(2)}. 
\end{example}

\begin{example}
\label{ex:irreps_torus}
As a generalization of the $\SO(2)$ case, the irreducible representations of the torus $T^d$, apart from the trivial representation, form an infinite family of representations of dimension two, denoted $\phi_\omega$, and depending on a parameter $\omega \in\Z^d \setminus\{0\}$, called the \textit{weight}, where $\phi_{\omega}(\theta_1,\dots,\theta_d) = R\big(\sum_{i=1}^d\omega_i \theta_i\big)$, and $R$ is defined in Example \ref{ex:normal_form_SO(2)}.
\end{example}

\begin{example}
\label{ex:irreps_su(2)}
Because $\SU(2)$ is the double-cover of $\SO(3)$, all representations of the latter yield representations of the former.
Given $n\in\mathbb{N}$, both groups admit at most one irrep into $\R^n$, up to equivalence. 
Moreover, $\SO(3)$ admits an irrep when $n$ is odd, and $\SU(2)$ admits an irrep when $n$ is odd or $n \equiv 0 \pmod{4}$.
Explicit formulae are given in Appendix \ref{app: irreducible reps su(2)}.
\end{example}

\subsubsection{Structure of the orbits}\label{subsec:structure_orbits}
Consider a representation $\phi\colon G\rightarrow\GL_n(\R)$ and a point $x\in \R^n$.
The main object of study of this article is the \textit{orbit} of $x$ generated by $G$, defined as the set $\O = \{\phi(g)x\mid g \in G\}$.
In what follows, we will suppose that $x$ is not fixed by the action, otherwise the orbit would be trivial.
Most of what can be said about orbits of representation of Lie groups also holds in the more general setting of orbits of actions of Lie groups. 
Among their properties, we cite that orbits are submanifolds of $\R^n$, that they are homogeneous $G$-spaces, and that they satisfy the orbit-stabilizer theorem \cite{lee2013smooth}.
Moreover, the map $\phi$ induces a homomorphism $G \rightarrow \Sym(\O)$. 
At the level of Lie algebras, and as in Equation \eqref{eq:derived_homomorphism_representation}, we deduce:
$$\d\phi(\g) \subset \sym(\O).$$
This inclusion will have great importance since it indicates that $G$ can be `found' in $\sym(\O)$.

To illustrate the above, note that the group $\SO(n)$ acts on $\R^n$ by left-multiplication, and its orbits are $(n-1)$-spheres.
Similarly, $\SU(n)$ acts on $\R^{2n}$ by left-multiplication, generating $(2n-1)$-spheres.
Besides, the torus $T^d = \SO(2)\times\cdots\times\SO(2)$ acts on $\R^{2d} = \R^2\times\cdots\times\R^2$, and the orbit of a point $x$ is a $d$-torus, provided that $x$ is not zero in one of the planes $\R^2$.
It is worth mentioning that other representations yield a variety of orbits.
For instance, $\SO(n)$ acts on $\M_{n,d}(\R)$, the space of $n\times d$ matrices, by simultaneous left-multiplication of the $d$ columns. 
If $x\in\M_{n,d}(\R)$ has rank $k$, then its orbit is homeomorphic to $\V(k,\R^n)$, the Stiefel manifold of $k$-frames in $\R^n$.

As pointed out in Section \ref{subsubsec:exponential_map_geometric}, every compact Lie group admits a canonical probability measure $\mu_G$, called the Haar measure.
In this article, orbits $\O$ will always be endowed with their \emph{uniform measure}, denoted $\mu_\O$, and defined as the pushforward of $\mu_G$ via the map $g \mapsto \phi(g)x$, which does not depend on $x\in\O$.
One shows the following equivalent definition: with $l$ the dimension of $\O$, it holds that $\mu_\O$ admits a constant density over the $l$-dimensional Haudorff measure $\mathcal{H}^l$ of $\R^n$ restricted to $\O$, this constant being $\mathcal{H}^l(\O)^{-1}$, the inverse of its volume.

As formalized in Section \ref{sec:algorithm_description}, the aim of this article is to recover, given an orbit $\O\subset\R^n$, the representation $\phi\colon G\rightarrow\GL_n(\R)$ that generates it.
This problem is ill-posed, since several representations may generate the same $\O$.
With this limitation in mind, our method will aim at recovering \textit{any} one of those.
To make this issue more rigorous, we say that two representations $\phi,\phi'\colon G\rightarrow \GL_n(\R)$ are \textit{orbit-equivalent} if there exists a matrix $M \in\GL_n(\R)$ such that the pushforward Lie algebras $\d\phi(\g)$ and $M\d\phi'(\g)M^{-1}$, seen as linear subspaces of $\so(n)$, are equal.
In particular, their orbits are conjugate.
Note that when the representations are orthogonal, the matrix $M$ can be chosen in $\Ort(n)$.
We denote by $\mathrm{OrbRep}(G,n)$ the collection of orbit-equivalence classes of representations of $G$ in $\R^n$.
Naturally, equivalent representations are orbit-equivalent, but the converse is not true.
For instance, all the non-trivial irreps of $\SO(2)$ are orbit-equivalent, although they are not equivalent to each other.
More generally, for any surjective homomorphism $f\colon G\rightarrow G$, the composition $f\circ \phi$ is orbit-equivalent to $\phi$.
We will study the cases $\SO(2)$, $T^d$, $\SO(3)$ and $\SU(2)$ in Section \ref{sec:algorithm_additional}.

\subsubsection{Abstract harmonic analysis}
We now introduce a generalization of classical Fourier analysis to signals over arbitrary groups, called abstract harmonic analysis, which has also been used in recent years in the context of machine learning, as reviewed by \cite{chirikjian2000engineering}.
In opposition to the rest of this article, which only works with real representations, we will consider, in this paragraph and Section \ref{subsec:harmonic_analysis}, \textit{complex} representations.
In this context, we define a \textit{matrix coefficient} as a function $f:G\to \C$ of form
\begin{equation*}
f_{u,v}^V(g)= \langle g\cdot u, v \rangle,
\end{equation*}
where $V$ is a unitary representation of $G$ (i.e., which takes values in $\U(n)$), and $u,v$ any vectors in $V$. In particular, if $\{e_i\}_{i=1}^n$ is a basis for $V$, then the matrix coefficient $f^V_{e_i, e_j}(g)$ is equal to $\phi_{ij}(g)$, the $ij^\mathrm{th}$ coordinate of the matrix $\phi(g)$, hence the name. 
The core foundation of abstract harmonic analysis is established by the Peter-Weyl theorem, which allows for decomposing signals over compact Lie groups---or their orbits---using only a subset of matrix coefficients \cite{folland2016course, peter1927vollstandigkeit}.

To state the theorem, let us denote by $\widehat{G}$ the set of unitary irreducible representations of a Lie group $G$, up to equivalence.
Note that to every irrep $[\phi]\in \widehat{G}$, there is a corresponding vector space $V_{\phi}$.
The theorem states that $\widehat{G}$ is countable, and that $L^2(G)$, the set of square-integrable functions on $G\rightarrow\C$, has an orthonormal basis given by 
\begin{equation*}
\big\{\sqrt{\dim V_\phi}\cdot\phi_{ij}\mid[\phi]\in \widehat{G}, ~i,j \in [1\isep \dim V_\phi]\big\}.
\end{equation*}
In particular, if $f\in L^2(G)$, then we can decompose it as
\begin{equation}\label{eq: harmonic transform}
f(g) = \sum_{[\phi]\in\widehat{G}}\dim V_\phi \cdot \Tr\big(\widehat{f}(\phi)\pi(g)\big)
~~~~~\mathrm{where}~~~~~    
\widehat{f}(\phi)_{ij}=\int_G f(g) \phi_{ij}^* \d\mu_\G(g),
\end{equation}
in which we compute the integral over the Haar measure.
Although we do not discuss them here, several important results from ordinary harmonic analysis, such as the Parseval formula, the unitarity of the transform $f\mapsto \widehat{f}$, and the Convolution theorem can also be shown in this more general setting.

\begin{example}
\label{ex: fourier}
It is a well-known fact that the complex irreps of $\SO(2)$ have dimension $1$ and are given by $\widehat{\SO}(2)=\big\{e^{ni\theta}\mid\theta\in [0,2\pi), n\in \mathbb{Z}\big\}$. Therefore, applying Equation \eqref{eq: harmonic transform} yields
\begin{equation*}
f(\theta) = \sum_{n\in \mathbb{Z}} e^{in\theta }\widehat{f}(n) 
~~~~~\mathrm{where}~~~~~    
\widehat{f}(n)=\int_0^{2\pi}e^{-in\theta}f(\theta)\frac{\d\theta}{2\pi},
\end{equation*}
which is exactly the usual Fourier decomposition of signals on the circle.
This can be easily generalized to the torus $T^d$, for which $\widehat{T^d} = \big\{e^{i(n_1\theta_1 + \dots + n_d\theta_d)} \mid n\in \mathbb{Z}^d, \theta\in [0,2\pi)^d\big\}$, giving
\begin{equation*}
\begin{split}
f(\theta_1,\dots, \theta_d) &= \sum_{n_1, \dots, n_d\in \mathbb{Z}} e^{i(n_1\theta_1 + \dots + n_d\theta_d)}\widehat{f}(n_1,\dots, n_d) \\ 
\mathrm{where}~~~~~
\widehat{f}(n_1,\dots, n_d)&=\frac{1}{(2\pi)^d}\int_{[0,2\pi)^d}e^{-i(n_1\theta_1 + \dots + n_d\theta_d)}f(\theta_1,\dots,\theta_d) \d\theta_1\cdots\d\theta_d.
\end{split}
\end{equation*}
\end{example}

\subsection{Moduli spaces of Lie algebras}\label{subsec:grassmann}
\subsubsection{Stiefel and Grassmann varieties of Lie algebras}\label{subsec:grassmann_lie_notgroup}
To design an algorithm to work with Lie algebras, it will be convenient to understand the structure of the spaces they form. First, we will embed them into a matrix space. Let $n > 0$ be an integer, $\SO(n)$ the special orthogonal group, and $\mathfrak{so}(n)$ its Lie algebra, i.e., the set of $n\times n$ skew-symmetric matrices. 
Their dimension is $n(n-1)/2$.
We also endow $\mathfrak{so}(n)$ with the Frobenius inner product $\langle A, B\rangle = \sum_{i= 1}^n\sum_{j = 1}^n a_{i,j}b_{i,j}$, with norm denoted by $\|A\|$.
Given an integer $d\geq 1$, we define
\begin{itemize}
\item
$\G^\mathrm{Lie}(d, \mathfrak{so}(n))$, the \textit{Grassmann variety of $d$-dimensional Lie subalgebras of $\mathfrak{so}(n)$}, as the set of $d$-dimensional linear subspaces of $\so(n)$ that form a Lie algebra, i.e., are stable under Lie bracket.
It is embedded in $\M_{n(n-1)/2}(\R)$, the $n(n-1)/2 \times n(n-1)/2$ matrices, by converting a subspace $\mathcal{A} \subset \so(n)$ into the orthogonal projection on it, denoted $\projbracket{\mathcal{A}}\colon\so(n)\to\so(n)$.
%, seen as an operator from $\mathfrak{so}(n)$ to itself.
\item
$\V^\mathrm{Lie}(d, \mathfrak{so}(n))$, the \textit{Stiefel variety of $d$-dimensional Lie subalgebras of $\mathfrak{so}(n)$}, as the set of orthonormal $d$-tuples of skew-symmetric matrices $(A_1,\dots,A_d)$ whose linear span is a Lie algebra.
It is naturally a subset of $d$-fold product $\mathfrak{so}(n)^d$, hence also of $\M_n(\R)^d$.
We will also use another embedding, into $\M_{n^2,d}(\R)$, the $n^2 \times d$ matrices, obtained by converting a $d$-frame $(A_1,\dots,A_d)$ onto the matrix whose $i^\mathrm{th}$ column is the flattening of the matrix $A_i$.
\end{itemize}

\noindent
It is worth mentioning that $\G^\mathrm{Lie}(d, \mathfrak{so}(n))$ and $\V^\mathrm{Lie}(d, \mathfrak{so}(n))$ are subsets of the more general Grassmann and Stiefel manifolds of $d$-dimensional subspaces of $\so(n)$, commonly denoted by $\G(d, \mathfrak{so}(n))$ and $\V(d, \mathfrak{so}(n))$, and where the condition of forming a Lie algebra is dropped.
These latter manifolds have dimension $(n(n-1)/2-d)d$ and $(n(n-1)+d(d+1))d/2$, respectively.

These spaces are linked by a surjective map $\pi\colon \V^\mathrm{Lie}(d, \mathfrak{so}(n)) \rightarrow \G^\mathrm{Lie}(d, \mathfrak{so}(n))$, obtained by sending a $d$-frame $(A_1,\dots,A_d)$ onto its span $\spn{A_1,\dots,A_d}$.
Besides, two useful actions can be defined on them. %$\V^\mathrm{Lie}(d, \mathfrak{so}(n))$.
First, by embedding $\V^\mathrm{Lie}(d, \mathfrak{so}(n))$ as a subset of $\M_{n^2,d}(\R)$, we see that the orthogonal group $\Ort(d)$ acts on it by right-multiplication, and we denote the action by $(A_1 , \dots , A_d) \cdot O$.
It has the effect of rotating the frame into the space it spans.
Orbits of this action are exactly the fibers of $\pi$, allowing us to identify the quotient 
$$
\faktor{\V^\mathrm{Lie}(d, \mathfrak{so}(n))}{\Ort(d)} \simeq\G^\mathrm{Lie}(d, \mathfrak{so}(n)).
$$
The case $d=1$ is worth mentioning.
Since any 1-dimensional subspace of $\so(n)$ is a Lie subalgebra, $\V^\mathrm{Lie}(1, \mathfrak{so}(n))$ can be identified with the unit sphere of $\so(n)$, that is, $S^{n(n-1)/2-1}$.
The orthogonal group $O(1)$ has only two elements, and we see that $\G^\mathrm{Lie}(1, \mathfrak{so}(n))$ is nothing but the projective space $\R P^{n(n-1)/2-1}$.

Next, seeing $\V^\mathrm{Lie}(d, \mathfrak{so}(n))$ in $\M_{n}(\R)^d$, we have the action of $\Ort(n)$ by simultaneous conjugation: 
\begin{equation*}
O \cdot (A_1, \dots, A_d) = (O A_1 O^\top, \dots, O A_d O^\top).
\end{equation*}
Orbits of this action are the simultaneously similar tuples of matrices, whose structure is known to be intricate \cite{friedland1983simultaneous,richardson1988conjugacy,lopatin2011orthogonal}.
We note that the action of $\Ort(n)$ can also be defined on $\G^\mathrm{Lie}(d, \mathfrak{so}(n))$ by simultaneously conjugating all elements of a subspace.

Of particular interest to us is the double quotient $\V^\mathrm{Lie}(d, \mathfrak{so}(n))/\Ort(d)/\Ort(n)$, the $d$-subalgebras up to right-multiplication and conjugation.
As an illustration, let us describe this set when $d=1$.
Since the eigenvalues of a skew-symmetric matrix come as pairs of complex numbers $\pm i\lambda$, with $\lambda\in\R$, they can be ordered as $0\leq\lambda_1\leq\dots\leq \lambda_m$, with $m=\lfloor n/2\rfloor$ the greatest integer less than or equal to $n/2$.
Using that our matrices have unit norm, we see these tuples as elements of the `increasing quadrant' $H=\{(x_1,\dots,x_{m}) \in S^{m-1}\subset \R^{m}\mid 0\leq x_1\leq\dots\leq x_{n/2}\}$ of the $(m-1)$-sphere.
Besides, the action of $\Ort(n)$ by conjugation preserves the eigenvalues of matrices and acts transitively on matrices with the same eigenvalues.
In other words, the double quotient $\V^\mathrm{Lie}(1, \mathfrak{so}(n))/\Ort(1)/\Ort(n)$ can be identified with $H$.
In general, however, we do not know whether a simple description of it exists.
Instead, in the next section, we will modify the definition of $\V^\mathrm{Lie}(d, \mathfrak{so}(n))$, adapting it to a fixed Lie algebra, and show that its structure becomes tractable.
Namely, we will prove that the quotient is in correspondence with the orbit-equivalence classes of representations of the group.

Lastly, it will be convenient to give the Grassmann and Stiefel varieties a distance. In this article, we choose to work with the following ones.
Let $E$ be a vector space.
If $(A_1,\dots,A_d)$ and $(B_1,\dots,B_d)$ are two orthonormal families of $E$, we simply consider their Frobenius norm $(\sum_{i=1}^d\| A_i - B_i \|^2)^{1/2}$.
If $\mathcal{A}$ and $\mathcal{B}$ are the linear subspaces they span, we consider the distance 
\begin{equation}\label{eq:distance_grassmannian}
\big\|\projbracket{\mathcal{A}}-\projbracket{\mathcal{B}}\big\|,  
\end{equation}
where $\projbracket{\cdot}$ denotes the orthogonal projection on a subspace, seen as a $\dim(E) \times \dim(E)$ matrix.
The following equivalent formulations will turn out handy later in the article.
\begin{lemma}\label{lem:distance_grassmannian}
Given $(A_i)_{i=1}^d$ and $(B_i)_{i=1}^d$ in $\V^\mathrm{Lie}(d, \mathfrak{so}(n))$, with $\mathcal{A}$ and $\mathcal{B}$ their span, it holds
$$
\big\|\projbracket{\mathcal{A}}-\projbracket{\mathcal{B}}\big\|^2
= 2\sum_{i,j=1}^d\bigg( 1 - \langle A_i, B_j \rangle^2 \bigg)
= 2\sum_{i=1}^d\bigg( 1 - \big\|\projbracket{\mathcal{B}}(A_i)\big\|^2 \bigg).
$$
Moreover, when $d=1$, we have
$$
\big\|\projbracket{\mathcal{A}}-\projbracket{\mathcal{B}}\big\|^2
= \min\big\{2\big\| A_1 - B_1 \big\|^2,~2\big\| A_1 + B_1 \big\|^2 \big\}.
$$
\end{lemma}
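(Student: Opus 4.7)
The plan is to recognize that $\projbracket{\mathcal{A}}$ and $\projbracket{\mathcal{B}}$ are rank-$d$ self-adjoint idempotent operators on the finite-dimensional Hilbert space $(\so(n),\langle\cdot,\cdot\rangle)$, and to exploit this algebraic structure to expand the squared Frobenius norm as a trace. Writing $P:=\projbracket{\mathcal{A}}$ and $Q:=\projbracket{\mathcal{B}}$, the starting identity
\begin{equation*}
\|P-Q\|^2 = \Tr\big((P-Q)^2\big) = \Tr(P^2)+\Tr(Q^2)-2\Tr(PQ) = 2d - 2\Tr(PQ)
\end{equation*}
uses only $P^2=P$, $Q^2=Q$, cyclicity of the trace, and $\Tr(P)=\Tr(Q)=d$. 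The whole problem is thereby reduced to computing the cross-trace $\Tr(PQ)$.

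For this step I would expand both projectors in their given frames, writing $P(v)=\sum_{i=1}^d\langle A_i,v\rangle A_i$ and $Q(v)=\sum_{j=1}^d\langle B_j,v\rangle B_j$ for $v\in\so(n)$. Picking any orthonormal basis of $\so(n)$ to compute the trace (or equivalently using the rank-one identity $\Tr(uu^{*}\,vv^{*})=\langle u,v\rangle^{2}$ for unit vectors), a routine calculation yields
\begin{equation*}
\Tr(PQ) = \sum_{i,j=1}^d \langle A_i,B_j\rangle^2,
\end{equation*}
which combined with the previous display gives $\|P-Q\|^2 = 2d-2\sum_{i,j}\langle A_i,B_j\rangle^2$. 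This is the first equality of the lemma after rearrangement of the constant term $2d$ into the double sum. For the second formulation, I would apply Parseval's identity to the orthonormal family $(B_j)_{j=1}^d$ of $\mathcal{B}$, which gives $\|\projbracket{\mathcal{B}}(A_i)\|^2 = \big\|\sum_j\langle A_i,B_j\rangle B_j\big\|^2 = \sum_{j=1}^d\langle A_i,B_j\rangle^2$, and then sum over $i$.

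For the final $d=1$ identity, I would specialize the above to $\|P-Q\|^2 = 2\bigl(1-\langle A_1,B_1\rangle^2\bigr)$ and expand $\|A_1\pm B_1\|^2 = 2\pm 2\langle A_1,B_1\rangle$ using $\|A_1\|=\|B_1\|=1$. Since the line $\mathcal{B}=\R B_1$ is invariant under the sign flip $B_1\leftrightarrow -B_1$, one may without loss of generality select the representative so that $\langle A_1,B_1\rangle\geq 0$, and then identify which of the two quantities $2\|A_1\pm B_1\|^2$ realizes the minimum, matching it against the factored form $2(1-\langle A_1,B_1\rangle)(1+\langle A_1,B_1\rangle)$ of the left-hand side.

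None of the steps is technically deep; the principal obstacle is purely bookkeeping—keeping straight the two distinct Frobenius norms (the one on $\so(n)$ and the induced one on operators $\so(n)\to\so(n)$) and managing the sign ambiguity carefully in the rank-one case. It is worth noting that the Lie-algebra hypothesis that $\mathcal{A},\mathcal{B}\in\V^\mathrm{Lie}(d,\so(n))$ is nowhere used: the identities hold verbatim for arbitrary $d$-dimensional linear subspaces of $\so(n)$, so the lemma is really a statement about the Stiefel and Grassmann varieties themselves rather than about Lie algebras.
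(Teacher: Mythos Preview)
The paper states this lemma without proof, treating it as an elementary fact about subspaces of a Euclidean space (and indeed remarks immediately afterward that the Lie-algebra hypothesis plays no role). Your trace-based approach is the natural one, and your derivation of $\|P-Q\|^{2}=2d-2\sum_{i,j}\langle A_i,B_j\rangle^{2}$ together with the Parseval identity $\sum_{j}\langle A_i,B_j\rangle^{2}=\|\projbracket{\mathcal B}(A_i)\|^{2}$ is correct; this cleanly establishes the \emph{second} displayed equality of the lemma.

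There are, however, two genuine gaps where your argument cannot close, and both are traceable to misprints in the statement rather than to a flaw in your method. First, your claim that $2d-2\sum_{i,j}\langle A_i,B_j\rangle^{2}$ coincides with the first displayed expression ``after rearrangement of the constant term $2d$ into the double sum'' fails for $d>1$: the double sum $2\sum_{i,j=1}^{d}(1-\langle A_i,B_j\rangle^{2})$ contributes $2d^{2}$ copies of $1$, not $2d$, so the first equality as printed is simply wrong (the intended formula is the second one, summed over a single index). Second, for $d=1$ the asserted equality is false: writing $c=\langle A_1,B_1\rangle$, the left-hand side equals $2(1-c^{2})$ while $\min\{2\|A_1- B_1\|^{2},\,2\|A_1+ B_1\|^{2}\}=4(1-|c|)$, and these agree only when $|c|=1$. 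Your final ``matching'' step therefore cannot succeed. What \emph{is} true is the inequality $\|P-Q\|^{2}\le\min\{2\|A_1-B_1\|^{2},\,2\|A_1+B_1\|^{2}\}$ (since $1+|c|\le 2$), and both sides are strictly decreasing in $|c|$, hence share the same minimizers; this weaker fact is all that the paper actually exploits when it invokes the $d=1$ case later.
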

Let us clarify that, in the equation above, the first Frobenius norm is that of $\M_{n(n-1)/2}(\R)$, while the second one is that of $\M_{n}(\R)$, even though they are similarly denoted.
We also point out that this lemma is not specific to $\so(n)$ and holds for any Euclidean vector space.

\subsubsection{Stiefel and Grassmann varieties of pushforward Lie algebras}\label{subsec:grassmann_lie}
In this article, we will not only need to work with Lie algebra structures, but more specifically, with those structures coming from representations of a compact Lie group. 
To this end, we fix a compact Lie group $G$ of dimension $d$, and introduce the following variations of $\G^\mathrm{Lie}(d, \mathfrak{so}(n))$ and $\V^\mathrm{Lie}(d, \mathfrak{so}(n))$:
\begin{itemize}
    \item $\G(G, \mathfrak{so}(n))$, the set consisting of those elements $\h \in \G^\mathrm{Lie}(d, \mathfrak{so}(n))$ for which there exists an orthogonal and almost-faithful representation $\phi\colon G \rightarrow \SO(n)$ such that $\h = \d\phi(\g)$;
    \item $\V(G, \mathfrak{so}(n))$, the set consisting of the orthonormal bases of elements in $\G(G, \mathfrak{so}(n))$.
\end{itemize}

To illustrate the above definition, and using the notations of Example \ref{ex:normal_form_SO(2)}, we can describe $\V(\SO(2), \mathfrak{so}(2m))$ as the set of unitary matrices of $\mathfrak{so}(2m)$ which, up to multiplication by a constant, are equivalent to $\diag(L(a_1),\dots,L(a_m))$ for some $(a_1,\dots,a_m)\in\mathbb{N}^m\setminus\{0\}$. 
In comparison, in the variety of Lie algebras $\G^\mathrm{Lie}(1, \mathfrak{so}(n))$, the tuple $(a_1,\dots,a_m)$ can be chosen arbitrarily in $\R^m\setminus\{0\}$.
This discrepancy is due to the fact that $\V^\mathrm{Lie}(1, \mathfrak{so}(n))$ contains representations of $\so(2)$ that may not come from a representation of $\SO(2)$.
This illustrates why $\V(G, \mathfrak{so}(n))$ is more suited to describing representations of the group.

It is worth stressing a few facts.
First, given two distinct Lie groups $G$ and $G'$, it is possible that their varieties intersect.
For instance, $\G(\SO(3), \mathfrak{so}(n))\subset\G(\SU(2), \mathfrak{so}(n))$ for all integer $n$, with equality when $n\leq 3$.
Besides, we note that $\G(G, \mathfrak{so}(n))$ can be empty, which happens precisely when $G$ does not admit almost-faithful representations in $\R^n$. 
This is the case for $T^d$ when $n<2d$.
Moreover, we point out that $\V(T^d, \mathfrak{so}(n))$ is made of $d$-frames of commuting skew-symmetric matrices. 
Taking their exponential yields a frame of commuting matrices of $\SO(n)$.
These objects have received certain attention because of their connection to quantum field theories \cite{baird2007cohomology,adem2007commuting,torres2008fundamental}.
Nevertheless, it is not possible to obtain all the commuting frames of $\SO(n)$ via this procedure.
Indeed, following the example of $\SO(2)$ in the previous paragraph, the exponential of $\V(\SO(2), \mathfrak{so}(2n))$ will only contain `rational' orthogonal matrices.

Just as before, $\Ort(d)$ acts by right multiplication on $\V(G, \mathfrak{so}(n))$, and the quotient set is 
$$
\faktor{\V(G, \mathfrak{so}(n))}{\Ort(d)} \simeq \G(G, \mathfrak{so}(n)).
$$
Besides, $\Ort(n)$ also acts on $\V(G, \mathfrak{so}(n))$ and $\G(G, \mathfrak{so}(n))$ by simultaneous conjugation, but now this action carries another interpretation: two Lie algebras are conjugate if and only if they are pushforwards of \textit{orbit-equivalent} representations of $G$, as defined in Section \ref{subsec:structure_orbits}.
Consequently, the quotient set $\G(G, \mathfrak{so}(n))/\Ort(n)$ is in correspondence with the orbit-equivalence classes of pushforward algebras of (orthogonal and almost-faithful) representations of $G$ in $\R^n$.

Again, the example of $\SO(2)$ is instructive: the matrices of $\V(\SO(2), \mathfrak{so}(2m))$ under the conjugation by $\Ort(n)$ are classified by their eigenvalues, which form increasing tuples $\lambda_1\leq\dots\leq\lambda_m$. 
Moreover, such tuples must be, up to multiplication by a constant, a tuple of integers in $\mathbb{N}^m\setminus\{0\}$.
We can understand this set as the `rational lines' on the $m$-torus $T^m$.
More precisely, $\V(\SO(2), \mathfrak{so}(2m))/\Ort(1)/\Ort(n)$ is in correspondence with the increasing primitive integral vectors of $\mathbb{N}^m$, a connection that will be studied further in Section \ref{subsubsec:orbit_equivalence_so(2)}.
In comparison, we have seen that the quotient $\V^\mathrm{Lie}(1, \mathfrak{so}(2m))/\Ort(1)/\Ort(n)$ is in correspondence with the increasing points $(x_1,\dots,x_m)$ of the $(m-1)$-sphere, a much larger space, which also contains the irrational lines.

More generally, to describe these spaces explicitly, we wish to fix a representative for each equivalence class of representation of $G$. To do so, we first fix a basis $(e_i)_{i=1}^d$ of $\g$.
We also consider $\mathrm{Irr}(G)$, an arbitrary choice of an orthogonal representative for each equivalence class of irreducible representation.
For all $\phi\in\mathrm{Irr}(G)$, the pushforward algebra $\d\phi(\g)$ admits the following basis:
$$
B^\phi=\big(\d\phi(e_1),\dots,\d\phi(e_d)\big).
$$
We apply Gram–Schmidt orthogonalization to obtain an element of $\V(G, \mathfrak{so}(n))$.
Next, we consider the $p$-tuples $(\phi_k)_{k=1}^p$ of elements of $\mathrm{Irr}(G)$, for some integer $p\geq 1$, whose direct sum $\phi = \bigoplus_{k=1}^p\phi_k$ has dimension $n$ and is almost-faithful.
We denote by $\mathrm{OrbRep}(G,n)$ the set consisting of an arbitrary choice of representative for each orbit-equivalence class of such representations $\phi$.
Last, we define $\mathfrak{orb}(G,n)$ as the set of their tuples of pushforward Lie algebras:
\begin{equation}\label{eq:orb}
\mathfrak{orb}(G,n) = \big\{ \big(B^{\phi_1},\dots,B^{\phi_p}\big) \mid (\phi_1,\dots,\phi_p) \in \mathrm{OrbRep}(G,n) \big\}.
\end{equation}
Based on $\mathfrak{orb}(G,n)$, all the operations previously mentioned can be made explicit:
\begin{itemize}
    \item \textbf{Direct sum:} If $\{\phi_k\}_{k=1}^p$ are irreducible representations with $\sum_{k=1}^p \dim(\phi_k)=n$, then their direct sum $\phi = \bigoplus_{k=1}^p\phi_k$ is a representation of $G$ in $\R^n$.
    A basis $B^\phi = (A^\phi_1,\dots,A^\phi_d)$ of $\d\phi(\g)$ can be obtained from the $B^{\phi_k}$'s by putting
    $A^\phi_i = \diag\big(B^{\phi_1}_i,\dots,B^{\phi_p}_i\big).$

    \item \textbf{Right-multiplication:} If $P = (P_{i,j})$ is a matrix in $\Ort(d)$, then the $d$-tuple 
    $
    B^\phi \cdot P
    = \Big(\sum_{j=1}^{d} P_{j,1} A^\phi_j,\dots, \sum_{j=1}^{d} P_{j,d} A^\phi_j\Big)
    $ is a new basis for $\d\phi(\g)$.

    \item \textbf{Conjugation:} If $O = (O_{i,j})$ is a matrix in $\Ort(n)$, then the $d$-tuple 
    $
    O \cdot B^\phi
    = \big(OA^\phi_1 O^\top, \dots,  OA^\phi_d O^\top\big)
    $ is a basis for the conjugate Lie algebra $O\d\phi(\g)O^\top$.
    %, where its $j^\mathrm{th}$ component is $O A^\phi_j O^\top$.
\end{itemize}
As a consequence of the aforementioned considerations, any pushforward Lie algebra of an orthogonal and almost-faithful representation of $G$ in $\R^n$ can be obtained from these operations. 

\begin{lemma}\label{lem:structure_lie_stiefel}
Let $G$ be a compact Lie group of dimension $d$ and $n\geq 1$ an integer.
For any orthogonal and almost-faithful representation $\phi\colon G\rightarrow \GL_n(\R)$, and any orthonormal basis $(A_1,\dots,A_d)$ of its pushforward Lie algebra $\d\phi(\g)$, there exists an integer $p\geq 1$, a $p$-tuple $(B^1,\dots,B^p)\in\mathfrak{orb}(G,n)$ and two matrices $O\in\Ort(n)$ and $P\in\Ort(d)$ such that, for all $i\in[1\isep d]$,
$$
A_i = \sum_{j=1}^{d} P_{j,i}O \diag\big(B^k_j\big)_{k=1}^p O^\top.
$$
In other words, we have $\V(G, \mathfrak{so}(n)) = \Ort(n) \cdot \mathfrak{orb}(G,n) \cdot \Ort(d)$.
\end{lemma}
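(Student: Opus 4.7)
The plan is to combine three ingredients already established earlier: complete decomposability of orthogonal representations of compact groups, the fact that orbit-equivalent orthogonal representations have pushforward algebras conjugate by an element of $\Ort(n)$, and the fact that any two orthonormal bases of a $d$-dimensional Euclidean space differ by an element of $\Ort(d)$. The target formula is precisely the concatenation of these three operations on the canonical representative in $\mathfrak{orb}(G,n)$.

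First I would apply complete decomposability (Section \ref{subsubsec:irreps}) to obtain irreducible representations $\phi_k'$ of $G$ and $O_1\in\Ort(n)$ such that $\phi = O_1\cdot\big(\bigoplus_{k=1}^p \phi_k'\big)\cdot O_1^\top$. Because conjugation is an isomorphism of representations, almost-faithfulness of $\phi$ passes to $\bigoplus_{k=1}^p \phi_k'$. By definition of $\mathrm{OrbRep}(G,n)$, I then pick a representative $(\phi_1,\dots,\phi_p)\in\mathrm{OrbRep}(G,n)$ orbit-equivalent to $(\phi_1',\dots,\phi_p')$, and invoke the orthogonal refinement of orbit-equivalence recorded just before Equation \eqref{eq:orb} to produce $O_2\in\Ort(n)$ with
$$\d\big(\textstyle\bigoplus_k\phi_k'\big)(\g) \;=\; O_2\, \d\big(\textstyle\bigoplus_k\phi_k\big)(\g)\, O_2^\top.$$
Setting $O:=O_1 O_2$ then gives $\d\phi(\g) = O\cdot \d\phi^{\mathrm{std}}(\g)\cdot O^\top$, where $\phi^{\mathrm{std}} := \bigoplus_{k=1}^p \phi_k$ is the fixed representative. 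Its pushforward algebra comes equipped with the canonical orthonormal family $A^{\mathrm{std}}_j := \diag\big(B^k_j\big)_{k=1}^p$ assembled from the tuple $(B^1,\dots,B^p)\in \mathfrak{orb}(G,n)$ via the ``direct sum'' operation of Section \ref{subsec:grassmann_lie}.

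Finally, both $(A^{\mathrm{std}}_1,\dots,A^{\mathrm{std}}_d)$ and $(O^\top A_1 O,\dots, O^\top A_d O)$ are orthonormal bases of the same $d$-dimensional subspace $\d\phi^{\mathrm{std}}(\g)\subset\so(n)$ --- the dimension being exactly $d$ because $\phi^{\mathrm{std}}$ is almost-faithful, which also ensures that the original tuple $(A_i)_{i=1}^d$ is a genuine basis of $\d\phi(\g)$. Two orthonormal bases of a Euclidean space differ by an orthogonal change of coordinates, so there exists $P\in\Ort(d)$ with $O^\top A_i O = \sum_{j=1}^d P_{j,i}\, A^{\mathrm{std}}_j$ for every $i\in[1\isep d]$; conjugating by $O$ produces exactly the displayed formula. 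I expect the only genuinely nontrivial point to be the orthogonal (rather than merely $\GL_n(\R)$) refinement of orbit-equivalence used in the middle step; everything else reduces to bookkeeping of the three operations --- ``direct sum'', ``right-multiplication by $\Ort(d)$'', and ``conjugation by $\Ort(n)$'' --- that were already made explicit in Section \ref{subsec:grassmann_lie}.
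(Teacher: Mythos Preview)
Your proof is correct and follows exactly the argument the paper outlines in the paragraphs immediately preceding the lemma: complete decomposability into irreps (giving the direct-sum structure), the orthogonal form of orbit-equivalence from Section~\ref{subsec:structure_orbits} (giving the conjugation by $O\in\Ort(n)$), and the change of orthonormal basis within $\d\phi(\g)$ (giving $P\in\Ort(d)$). The paper presents the lemma as a direct consequence of those considerations rather than giving a separate proof, and your write-up simply makes the three steps explicit.
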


This lemma gives an algorithmic procedure to generate all orthonormal bases of pushforward Lie algebras of $G$ in $\R^n$.
It will be put into practice in Section \ref{subsec:step3} to solve a minimization problem over $\V(G, \mathfrak{so}(n))$.
As a matter of fact, the core structure of our algorithm relies on the explicit description of the set $\mathfrak{orb}(G,n)$ for the groups under consideration. This will be obtained for $\SO(2)$, $T^d$, $\SU(2)$ and $\SO(3)$ in Equations \eqref{eq:orb_SO2}, \eqref{eq:orb_Td}, \eqref{eq:orb_SU2} and \eqref{eq:orb_SO3} respectively.
As a last comment, we draw the reader's attention to the fact that the formulation $\V(G, \mathfrak{so}(n)) = \Ort(n) \cdot \mathfrak{orb}(G,n) \cdot \Ort(d)$ has some degree of duplication. Indeed, given $(B^1,\cdots,B^p)\in\mathfrak{orb}(G,n)$, there may exist several $O\in\Ort(n)$, or $P\in\Ort(d)$, with the same action on them.

\section{Description of the algorithm}\label{sec:algorithm_description}

We now present our algorithm for detecting Lie group representations from orbits, which we call {\texttt{LieDetect}}. 
It takes as an input a finite point cloud $X\subset \R^n$ and a compact Lie group $G$, and returns a representation of $G$ in $\R^n$ for which $X$ is likely to lie on one of its orbits.
Three additional parameters must be provided: an integer $l$ and a real number $r>0$, to estimate tangent spaces, and a real number $\epsilon>0$, to perform dimension reduction.
However, when the group $G$ is Abelian, an additional integer parameter $\omega_\mathrm{max}$ must be given to limit the infinite set of non-equivalent representations of $G$ that are tested by the algorithm.
While our algorithm can be applied to any point cloud, this section is best understood by imagining that it lies on or near the orbit of a representation of $G$. Besides, this assumption is crucial to the theoretical results we will present in Section \ref{sec:algorithm_analysis}. 
Thus, throughout this section, we keep in mind the following model: 

\begin{center}\fbox{\begin{minipage}{0.95\linewidth}
\textbf{Model.}
Let $G$ be a known compact Lie group of dimension $d$, with an unknown representation $\phi$ on $\mathbb{R}^n$, potentially non-orthogonal.
The Lie algebra of $G$ is denoted by $\mathfrak{g}$ and its pushforward by $\mathfrak{h} = \d\phi(\mathfrak{g})$. Let $\mathcal{O}\subset \mathbb{R}^n$ be an orbit of this representation, $l$ its dimension, and $X = \{x_1,\dots,x_N\}$ a finite sample of points close to or included in $\mathcal{O}$. 
\end{minipage}}\end{center}
\noindent
The representation, pushforward Lie algebra, and orbit estimated by our algorithm will be denoted respectively by $\widehat{\phi}$, $\widehat{\h}$, and $\widehat{\O}_x$ for $x\in X$. A more unsupervised model, in which a list of compact Lie groups is given, rather than a single fixed instance, is explored in Section \ref{sec: list}.

\subsection{Overview}
Let us go over the big-picture justification behind the algorithm, whose details will be further explained in the next sections. 
It works in four steps: 

\begin{enumerate}[label=\textbf{Step \arabic*:},ref=Step \arabic*]
    \item\label{item:step1} \textbf{Orthonormalization} (Section \ref{subsec:step1})
    First, we normalize the orbit $\mathcal{O}$ to make $\phi$ an orthogonal representation.
    Following Section \ref{subsubsec:def_representation}, there exists a positive-definite matrix $M$ such that the translated orbit $M \mathcal{O}$ lies in the unit sphere.
    We find it as the square root of the Moore-Penrose pseudo-inverse of the covariance matrix:
    \begin{equation*}%\label{eq: program of projection2}
    M = \sqrt{\Sigma[X]^+}
    ~~~~~\mathrm{where}~~~~~
    \Sigma[X] = \frac{1}{N}\sum_{i=1}^N x_ix_i^\top.
    \end{equation*}
    However, when $\Sigma[X]$ is singular or close to singular, this procedure can lead to large numerical errors. 
    This is the case when $\O$ does not span $\R^n$.
    We solve this issue with dimension reduction, via Principal Component Analysis (PCA): let $\proj_{\Sigma[X]}^{>\epsilon}$ be the projection matrix on the eigenspaces of eigenvalue greater than $\epsilon$, where $\epsilon$ is a parameter, we compute
    \begin{equation}\label{eq: program of projection2}
    \widetilde{X} = \sqrt{\Sigma[X]^+} \cdot \proj_{\Sigma[X]}^{>\epsilon} \cdot X.
    \end{equation}
    To simplify the notation for the next parts, we will write $X$ instead of $\widetilde{X}$.
        
    \item\label{item:step2} \texttt{LiePCA} (Section \ref{subsec:step2})
    The second step of our algorithm consists of estimating $\sym(\O)$, the Lie algebra of the symmetry group of $\O$ (introduced in Section \ref{subsec:symmetry_group}), using the \texttt{LiePCA} algorithm \cite{DBLP:journals/corr/abs-2008-04278}.
    In this context, we see $\sym(\O)$ as a linear subspace of the matrices $\M_{n}(\R)$. 
    \texttt{LiePCA} is based on the operator $\Lambda\colon \M_{n}(\R)\rightarrow \M_{n}(\R)$ defined as 
    \begin{equation}\label{eq:operator_sigma}
    \Lambda(A) = \sum_{1\leq i \leq N} \projbrackethat{\N_{x_i} X} \cdot A \cdot \projbracket{\spn{x_i}},        
    \end{equation}
    where the $\projbrackethat{\N_{x_i}X}$'s are estimations of the projection matrices on the normal spaces $\N_{x_i} \O$.
    In practice, we find them as $\Pi_{x_i}^{l,r}[X]$, the projection matrix on the top $n-l$ eigenvectors of the local covariance matrices.
    As we will prove in Section \ref{subsec:analysis_liepca}, the kernel of $\Lambda$ is approximately $\sym(\O)$.
    That is, matrices $A \in \sym(\O)$ can be estimated as those for which $\|\Lambda(A)\|/\|A\|$ is small.   
  
    \item\label{item:step3} \textbf{Closest Lie algebra} (Section \ref{subsec:step3})
    For this third step, we seek the pushforward Lie algebra $\h = \d\phi(\g)$, which is included in $\sym(\O)$.
    Consequently, an orthonormal basis $(A_1,\dots,A_d)$ of $\h$ must approximately belong to the kernel of $\Lambda$, thus we are invited to consider the program
    \begin{equation*}
        \arg \min \sum_{i=1}^d  \| \Lambda(A_i) \|^2
        ~~~~\mathrm{s.t.}~~~~
        (A_1,\dots,A_d) \in \mathcal{V}(\g, \mathfrak{so}(n)),
    \end{equation*}
    where $\mathcal{V}(G, \mathfrak{so}(n))$ is the Stiefel variety of Lie subalgebras of $\so(n)$ pushforward of $G$.
    Concretely, we implement this problem using the decomposition of Lemma \ref{lem:structure_lie_stiefel}: any element of $\mathcal{V}(G, \mathfrak{so}(n))$ can be obtained from a $p$-tuple of irreps of $G$, yielding
    \begin{equation}\label{eq:minimization_stiefel_2}
    \arg \min \sum_{i=1}^d \bigg\| \Lambda \bigg(O \diag\big(B^k_j\big)_{k=1}^p O^\top\bigg) \bigg\|^2
    ~~~~\mathrm{s.t.}~~~~ 
    \begin{cases}
      (B^1,\dots,B^p) \in \mathfrak{orb}(G,n),\\
      O \in \Ort(n).
    \end{cases}    
    \end{equation}
    This program consists in an optimization over $\Ort(n)$ for each $p$-tuple $(B^1,\dots,B^p)$.
    An explicit description of $\mathfrak{orb}(G,n)$ is necessary; that we give for $\SO(2)$, $T^d$, $\SU(2)$ and $\SO(3)$ in Section \ref{sec:algorithm_additional}.
    We denote $\widehat{\h}$ the subalgebra spanned by a minimizer of the program. 
    
    \hspace{-4.75em} \textbf{Step 3':}  
    We also propose a variation of \ref{item:step3}, valid when $\h = \sym(\O)$, e.g., when $G \simeq \sym(\O)$.
    Let $(A_1,\dots,A_d)$ be the bottom $d$ eigenvectors of $\Lambda$. They are close to $\sym(\O)$, but may not form a Lie algebra. 
    \ref{item:step3}' consists of projecting them to the closest Lie algebra.
    Using the distance on the Grassmannian defined in Equation \eqref{eq:distance_grassmannian}, we naturally consider
    \begin{equation*}
        \arg \min\big \| \projbracket{\spn{A_i}_{i=1}^d} - \projbracket{\widehat{\h}}  \big\| ~~~~\text{s.t. }~~~~ \widehat{\h}\in \mathcal{G}(G, \mathfrak{so}(n)),
    \end{equation*}
    where $\mathcal{G}(G, \mathfrak{so}(n))$ is the Grassmann variety of Lie subalgebras of $\so(n)$ that are derived from $G$, embedded in $\M_{n(n-1)/2}(\R)$.
    Using Lemma \ref{lem:structure_lie_stiefel}, this can be split into
    \begin{equation}\label{eq:minimization_grassmann}
        \arg \min \big\| \projbracket{\spn{A_i}_{i=1}^d} - \projbracket{\spn{O \diag(B^k_i)_{k=1}^p O^\top}_{i=1}^d}  \big\|
        ~~~~\mathrm{s.t.}~~~~
        \begin{cases}
      (B^k)_{k=1}^p \in \mathfrak{orb}(G,n),\\
          O \in \Ort(n).
        \end{cases}    
    \end{equation}
    This variation of \ref{item:step3} is closer to the original formulation of \texttt{LiePCA} but relies critically on the assumption $\h = \sym(\O)$, not generally satisfied.
    
    \item\label{item:step4} \textbf{Distance to orbit} (Section \ref{subsec:step4})
    Finally, we check whether the previously estimated Lie subalgebra $\widehat{\h}$ yields indeed an orbit close to the input $X$.
    To do so, we pick an arbitrary point $x\in X$, build its orbit $\widehat{\O}_x$, and compute the non-symmetric Hausdorff distance:
    \begin{equation}\label{eq:minimization_hausdorff}
        \HDmid{X}{\widehat{\O}_x}
        ~~~~~\mathrm{where}~~~~~
        \widehat{\O}_x = \big\{\exp(A) x \mid A \in \widehat{\h}\big\},
    \end{equation}
    This quantity can be approximated by sampling $\widehat{\h}$.
    In the case where $X$ contains anomalous points, the Hausdorff distance may be large, and measure-theoretical distances are better suited. 
    We build instead a measure $\mu_{\widehat{\O}}$ and compute the Wasserstein distance 
    \begin{equation}\label{eq:orbit_measure}
        \W_2\big(\mu_X,\mu_{\widehat{\O}}\big)
        ~~~~~\mathrm{where}~~~~~
        \mu_{\widehat{\O}} = \frac{1}{N}\sum_{i=1}^N \mu_{\widehat{\O}_{x_i}}, 
        ~~~~~~~~~~~~
    \end{equation}
    where $\mu_X$ is the empirical measure on $X$, and each $\mu_{\widehat{\O}_{x_i}}$ the uniform measure on $\widehat{\O}_{x_i}$.
    We stress that the orthonormalized data is considered here, since this allows for comparing the distances to reference values given in Section~\ref{subsubsec:statistics}, and to assess the algorithm's success.
    If one requires to return to the original point cloud, simply reverse \ref{item:step1}.
\end{enumerate}

The undetailed version of the algorithm is exposed in Algorithm \ref{alg: 1}.
In this section, we will focus on describing the precise implementation of each step.
The theoretical analysis of the algorithm will only be given later, in Section \ref{sec:algorithm_analysis}.
To ease the navigation in this article, we indicate in the following table the results that justify each step.
Moreover, throughout this section, we will illustrate the algorithm on a concrete dataset, presented in Example \ref{ex:running_ex}.

\begin{center}
\begin{tabular}{ ||c|c|c|c|c|| }\hline 
\ref{item:step1}&\ref{item:step2} & \ref{item:step3}~\&~\ref{item:step3}'&\ref{item:step4}&Algorithm \ref{alg: 1}\\\hline
Prop.~\ref{prop:stability_step1}&
Prop.~\ref{prop:Lie-PCA}&
Lemma~\ref{lem:stability_minimization_liealgebra}&
Prop.~\ref{prop:stability_step4}&
Th.~\ref{th:robustness_algorithm}\\\hline
\end{tabular}
\end{center}

\begin{breakablealgorithm}
\caption{{\texttt{LieDetect}}: Detection of orbits representations of compact Lie groups}\label{alg: 1}
\begin{algorithmic}
\Require A point cloud $X =\{x_i\}_{i=1}^N$ in $\R^n$, a compact Lie group $G$, integers $l$, $\omega_\mathrm{max}$ and $\epsilon,r>0$.
\Ensure A representation $\widehat{\phi}$ of $G$ in $\R^n$, with pushforward Lie algebra $\widehat{\h}$, and an orbit $\widehat{\O}_x$ of it or a measure $\mu_{\widehat{\O}}$ that most likely generates $X$.
\vspace{.1cm}

\State \textbf{\ref{item:step1}:} Compute the matrices of Equation \eqref{eq: program of projection2} and set $x_i \gets \sqrt{\Sigma[X]^+}\cdot\proj_{\Sigma[X]}^{>\epsilon}\cdot x_i$ for all $i$.
\vspace{.1cm}

\State \textbf{\ref{item:step2}:} Compute the operator $\Lambda$ defined in Equation \eqref{eq:operator_sigma} in the canonical basis of $\M_n(\R)$.
\vspace{.1cm}

\State \textbf{\ref{item:step3}:} Find a Lie algebra $\widehat{\h}$ that minimizes Equation \eqref{eq:minimization_stiefel_2}.
\vspace{.1cm}

\State \textbf{\ref{item:step3}':} Find a Lie algebra $\widehat{\h}$ that minimizes Equation \eqref{eq:minimization_grassmann}.
\vspace{.1cm}

\State \textbf{\ref{item:step4}:} Compute the quantity defined in Equations \eqref{eq:minimization_hausdorff} or \eqref{eq:orbit_measure}.
\end{algorithmic}
\end{breakablealgorithm}

\begin{example}
\label{ex:running_ex}
We consider a set $X\subset\R^4$ of $300$ points sampled close to the curve 
\begin{equation*}
\mathcal{O} = \big\{(\cos t, 2 \sin t, \cos 4t, \sin 4t) \mid t \in [0,2\pi)\big\}.
\end{equation*}
This is generated by sampling $t\in[0,2\pi)$ uniformly and then adding some small Gaussian noise (standard deviation $\sigma=0.01$). The data is visualized in Figure \ref{fig: sample so(2)}.
The curve $\mathcal{O}$ is the orbit of the point $(1,0,1,0)^\top$ for the representation $\phi \colon \SO(2) \rightarrow \M_4(\R)$ defined as
\begin{equation*}
t \mapsto
   \diag \bigg(\begin{pmatrix}
    \cos t & -2\sin t\\
    (1/2)\sin t & \cos t
\end{pmatrix}
, \begin{pmatrix}
    \cos 4t & -\sin 4t\\
    \sin 4t & \cos 4t
\end{pmatrix}\bigg).
\end{equation*}
This representation is not orthogonal.
However, by removing the two $2$'s, we see that it is equivalent to
$\phi_1 \oplus \phi_4$, that is, a sum of rotations with weights $1$ and $4$ (see Section \ref{subsubsec:irreps}).
The corresponding pushforward subalgebra $\h$ is the subspace of $\so(4)$ spanned by the matrix 
\begin{equation*}
A = \diag \bigg(\begin{pmatrix}
    0 & -1\\
    1 & 0
\end{pmatrix}
,\begin{pmatrix}
    0 & -4\\
    4 & 0
\end{pmatrix}\bigg)\bigg).
\end{equation*}
In this example, we expect our algorithm to output $\phi_1 \oplus \phi_4$.
\end{example}

\subsection{\ref*{item:step1}: Orthonormalization}\label{subsec:step1}
Let us start with \ref{item:step1}, whose objective is to project the dataset $X=\{x_i\}_{i=1}^N$ close to the unit sphere of $\R^n$, transforming the representation $\phi\colon G \rightarrow \M_n(\R)$ into an orthogonal one.
This pre-processing step significantly simplifies the rest of the algorithm by restricting the analysis to orthogonal representations only.
It is important to note that this `projection' cannot be simply understood as a metric projection on the unit sphere.
Indeed, generally, such an operation does not yield an orthogonal representation, as we show in Example \ref{ex:running_ex_step1}.
Instead, the transformation should be found using the following fact: there exists a positive-definite matrix $M$ such that the conjugate representation $M \phi M^{-1}$ takes values in $\Ort(n)$, that is, is orthogonal. 
Orbits for the latter representation are obtained by left translation by $M$ of orbits for the former.

Our construction is based on the \textit{covariance matrix} of $X$, defined as 
$$
\Sigma[X] = \frac{1}{N}\sum_{i=1}^N x_ix_i^\top,
$$
where $x_ix_i^\top$ represents the outer product of $x_i$ by itself.
It is a symmetric $n \times n$ matrix.
First of all, we will employ $\Sigma[X]$ to reduce the dimension.
More precisely, we use Principal Components Analysis (PCA): $X$ is projected on the subspace of $\R^n$ spanned by the eigenvectors of $\Sigma[X]$ of eigenvalue greater than $\epsilon$, where $\epsilon$ is a parameter of the algorithm.
The corresponding projection matrix is denoted by $\proj_{\Sigma[X]}^{>\epsilon}$.
The parameter $\epsilon$ is supposedly small, so as to project $X$ into $\spn{\O}$, the subspace of $\R^n$ spanned by $\O$.
We stress that this pre-processing step has two objectives: (1) it allows for avoiding numerical errors, when computing the pseudo-inverse of $\Sigma[X]$, as we will do below; and (2) by working in the subspace spanned by $\O$, as detailed in Section \ref{subsec:symmetry_group}, the homomorphism $\Sym(\O) \rightarrow \mathrm{Isom}(\O)$ becomes injective.
This property will be crucial in \ref{item:step3} for ensuring the detection of non-trivial actions of $G$ in $\O$.

Next, we compute the Moore–Penrose pseudo-inverse $\Sigma[X]^+$, and take its square root $\sqrt{\Sigma[X]^+}$.
Note that this operation is well-defined since a positive semi-definite matrix admits a unique square root.
We eventually define the product $M = \sqrt{\Sigma[X]^+}\cdot\proj_{\Sigma[X]}^{>\epsilon}$ and generate the close-to-the-sphere dataset $\{ M x_i\}_{i=1}^N$, which will be used in the following parts. 

\begin{example}
\label{ex:running_ex_step1}
We consider the set $X$ and the orbit $\mathcal{O}$ defined in Example \ref{ex:running_ex}.
Projecting $\mathcal{O}$ onto the sphere $S^3$ would yield the set
$$
 \bigg\{\frac{1}{\sqrt{2+3\sin^2t}}(\cos t, 2\sin t, \cos 4t, \sin 4t) \mid t \in [0,2\pi)\bigg\},
$$
which is not an orbit of an orthogonal representation of $\SO(2)$.
However, the minimization procedure of \ref{item:step1} yields a matrix and projected set approximately equal to
$$
M = \frac{1}{\sqrt{2}}\diag\big(1,1/2,1,1\big)
 ~~~~\text{and}~~~~
 \bigg\{\frac{1}{\sqrt{2}}(\cos t, \sin t, \cos 4t, \sin 4t) \mid t \in [0,2\pi)\bigg\},
$$
which is indeed an orbit of an orthogonal representation.
The non-orthogonalized and the orthogonalized point clouds can be visualized in Figure \ref{fig: sample so(2)}.

\begin{figure}[ht]\centering
\begin{minipage}{0.49\linewidth}\centering
\includegraphics[width=0.8\textwidth]{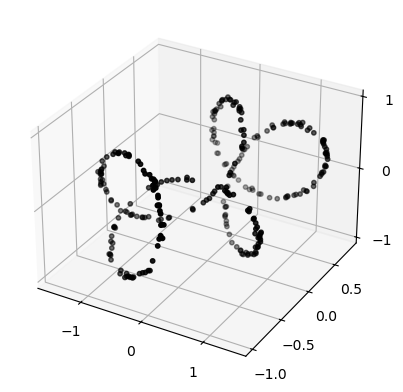}    
\end{minipage}
\begin{minipage}{0.49\linewidth}\centering
\includegraphics[width=0.8\textwidth]{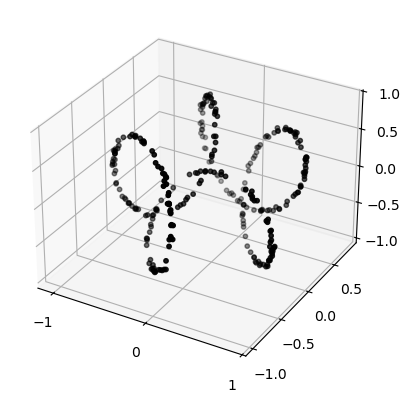}    
\end{minipage}
\caption{Illustrations of Example \ref{ex:running_ex_step1}. The input point cloud $X$, and its transformation after \ref{item:step1} (both projected in dimension $3$ through PCA for visualisation purposes).}
\label{fig: sample so(2)}
\end{figure}
\end{example}

\subsection{\ref*{item:step2}: \texttt{LiePCA}}\label{subsec:step2}
For Step 2, we consider the point cloud $X \subset \R^n$, having applied \ref{item:step1} or not, and want to find $\sym(\O)$. 
We remind the reader that the symmetry group $\Sym(\O)$ and its Lie algebra $\sym(\O)$ have been defined in Section \ref{subsec:symmetry_group}.
This step is based on the work developed in \cite{DBLP:journals/corr/abs-2008-04278}.

\subsubsection{The \texttt{LiePCA} operator}
This step is based on the fact that, for all $x\in \mathcal{O}$, $\sym(\O)$ is contained in the set 
$$
S_x\mathcal{O} = \{A\in \M_n(\R) \mid A x\in \mathrm{T}_x\mathcal{O}\},
$$
where $\mathrm{T}_x\mathcal{O}$ denotes the tangent space of the manifold $\mathcal{O}$ at $x$.
%\cite[Lemma 1]{DBLP:journals/corr/abs-2008-04278}.
Indeed, as we have seen in Equation \eqref{eq:sym_algebra_formulation}, $\sym(\O)$ is equal to the intersection $\cap_{x\in\mathcal{O}}S_x \mathcal{O}$.
Now, having access only to the point cloud $X$, the authors of \cite{DBLP:journals/corr/abs-2008-04278} propose to estimate $\sym(\O)$ via 
$$
\bigcap_{i=1}^N S_{x_i} \mathcal{O} = \ker \bigg( \sum_{i=1}^N \projbracket{{(S_{x_i} \mathcal{O})^\bot}}\bigg),
$$
where the projections $\projbracket{(S_{x_i}\O)^\bot}$ are seen as operators on $\M_n(\R)$.
In practice, the $S_{x_i} \mathcal{O}$ are unknown, and a robust estimation is to be found. 
It is shown that $\proj_{(S_{x_i}\O)^\bot}$ is equal to 
$$A \mapsto \projbracket{\N_{x_i} \mathcal{O}} \cdot A \cdot \projbracket{\spn{x_i}},$$
where $\N_{x_i} \mathcal{O}$ is the normal space of $\O$ at $x_i$, $\spn{x_i}$ is the line spanned by $x_i$, and $\projbracket{\N_{x_i}\O}$ and $\projbracket{\spn{x_i}}$ denotes the projection matrices on these subspaces.
By denoting with $\projbrackethat{\N_{x_i} X}$ an estimation of $\proj_{\N_{x_i} \mathcal{O}}$ computed from the observation $X$, we replace the previous operator with
$$A \mapsto \projbrackethat{\N_{x_i} X} \cdot A\cdot \projbracket{\spn{x_i}}.$$ 
Further details on the estimation of normal spaces are provided below.
Combining all the aforementioned aspects, a reliable estimation of $\sym(\O)$ can be obtained by computing the kernel of the operator $\Lambda\colon \M_{n}(\R)\rightarrow \M_{n}(\R)$ defined as
\begin{equation*}
       \Lambda(A) = \sum_{i = 1}^N \projbrackethat{\N_{x_i} X}
    \cdot A \cdot \projbracket{\spn{x_i}}. 
\end{equation*}
More accurately, $\sym(\O)$ should be sought as the linear subspace of $\M_n(\R)$ spanned by the eigenvectors associated with `almost-zero' eigenvalues of $\Lambda$.
We stress that $\Lambda$ is a symmetric operator and, hence is diagonalizable when seen as an $n^2 \times n^2$ matrix.

\subsubsection{Estimation of normal spaces}\label{subsubsec:step2_estimation_normal_spaces}
We now turn to the estimations $\projbrackethat{\N_{x_i} X}$ of the projections onto normal spaces $\projbracket{\N_{x_i} \mathcal{O}}$.
First, we note that the problem of estimation of normal spaces is equivalent to the problem of estimating tangent spaces, since we have the relation 
$$
\projbracket{\mathrm{T}_{x_i} \mathcal{O}} + \projbracket{\N_{x_i} \mathcal{O}} = I
$$ 
with $I$ the identity matrix.
Indeed, $\mathrm{T}_{x_i} \mathcal{O}$ and $\N_{x_i} \mathcal{O}$ are complementary orthogonal subspaces. % of $\R^n$.
Now, estimating tangent spaces can be done through the common technique of \textit{local PCA}.
Given a real number $r>0$, called the scale parameter, this technique first computes the \emph{local covariance matrix} of $X$ at scale $r$ and centered at $x_i$, defined as
\begin{equation}\label{eq:local_cov_matrix}
\Sigma_{x_i}^r[X] = \frac{1}{|Y|} \sum_{y \in Y} (y-x_i) (y-x_i)^\top,    
\end{equation}
where $Y = \{y \in X \mid \|y-x_i\|\leq r\}$ is the set of input points at distance at most $r$ from $x_i$.
%Based on this matrix, several estimations can be designed.
Now, given the dimension $l$ of $\O$, either known in advance or estimated, we estimate $\T_{x_i}\O$ via the space spanned by the $l$ bottom eigenvectors of $\Sigma_{x_i}^r[X]$. 
In what follows, we denote by $\Pi_{x_i}^{l,r}[X]$ the projection on this space.
We eventually consider the estimator 
$$
\projbrackethat{\N_{x_i} X} = I - \Pi_{x_i}^{l,r}[X].
$$

Due to the popularity of local PCA, probabilistic guarantees for this procedure can be found in several works, such as \cite{tyagi2013tangent,kaslovsky2014non,10.1214/18-AOS1685,lim2021tangent}.
Moreover, closely related variations have been studied, obtained by weighting the matrix $\Sigma_{x_i}^r[X]$, or by using $k$-neighbors \cite{singer2012vector,DBLP:journals/corr/abs-2008-04278}.
All of the works cited so far, however, state their results in probability, and not as deterministic inequalities, the kind we will need when analyzing our algorithm.
This will be remedied in Section \ref{subusbsec:stability_tangent_space_estimation}, which contains deterministic stability results for $\Pi_{x_i}^{r,l}[X]$.
In particular, based on \cite{tinarrage2023recovering}, we obtain in Lemma \ref{lem:consistency_loccovnorm} an explicit inequality as a function of the Wasserstein distance, which, as far as we know, is its first appearance in the literature.
We note, however, that somewhat comparable results have been obtained in recent years, such as in \cite{arias2017spectral,buet2017varifold,lim2021tangent}.

We stress that the accuracy of local PCA, in addition to its dependence on the geometry of $\O$ and the density of $X$, is highly affected by the parameter $r$, since too small values imply large variance, whereas bigger ones give biased estimations. 
We will derive in Lemmas \ref{lem:consistency_loccovnorm} and \ref{lem:wasserstein_stability_tangent_space} a theoretical range in which $r$ should be chosen, although, in practice, there is little \textit{ad hoc} guidance in this choice.

\subsubsection{Implementation}\label{sec: implementation LiePCA}
As presented in the original article, the last step of \texttt{LiePCA} consists of computing the linear subspace of $\M_n(\R)$ spanned by the $l$ bottom eigenvectors of $\Lambda$, offering an estimation of $\sym(\O)$.
This procedure, however, is not suited to our context, since we seek $\h$, which is only a subalgebra of $\sym(\O)$.
Again, this may be illustrated by the unit 3-sphere $S^{3}\subset\R^4$, which is an orbit of a representation of $\SU(2)$, but whose symmetry group is $\SO(4)$, of higher dimension. 
In this case, the inclusion of $\h = \su(2)$ in $\sym(S^3) = \so(4)$ is not an equality.
Consequently, instead of computing the eigenvectors of $\Lambda$, we simply compute the values it takes on the canonical basis of $\M_n(\R)$, and save them in a $n^2\times n^2$ matrix.
This will allow us, in \ref{item:step3}, to propose an estimation for $\h$.
Our implementation of \texttt{LiePCA} is given in Algorithm \ref{alg: 2} below.
In addition, we illustrate some applications of \texttt{LiePCA} in Example \ref{ex:lie_pca}.

\begin{breakablealgorithm}
\caption{\ref{item:step2} of Algorithm \ref{alg: 1} (\texttt{LiePCA})}\label{alg: 2}
\begin{algorithmic}[1]
\Require A point cloud $X = \{x_i\}_{i=1}^N$ in $\R^n$, a real number $r>0$ and an integer $l$.
\Ensure The \texttt{LiePCA} operator $\Lambda$ computed in the canonical basis of $\M_n(\R)$.
\For{$i=1,\dots, N$}
    \State let $\overline{\Sigma}_{x_i}^r[X]$ be the local covariance matrix at scale $r$, as in Equation \eqref{eq:local_cov_matrix}
    \State let $\Pi_{x_i}^{l,r}[X]$ be the projection on its $l$ bottom eigenvectors
    \State $\projbrackethat{\N_{x_i} X} \gets I - \Pi_{x_i}^{l,r}[X]$
\EndFor
\State $\Lambda \gets \bigg[\frac{1}{N}\sum_{i=1}^N \projbrackethat{\N_{x_i} X} \cdot A_{v,w} \cdot  \projbracket{\spn{x_i}}\bigg]_{1\leq i,j \leq n}$ for $A_{v,w}$ the $(v,w)$-th basis vector of $\M_n(\R)$
\end{algorithmic}
\end{breakablealgorithm}

\begin{example}\label{ex:lie_pca}
Figure \ref{fig: LiePCA} illustrates applications of \texttt{LiePCA} to point clouds sampled on a circle in $\mathbb{R}^2$, a flat torus in $\mathbb{R}^4$, and a sphere in $\mathbb{R}^3$.
They consist respectively of $100$, $500$ and $500$ points.
These spaces correspond to orbits of representations of the Lie groups $\mathrm{SO}(2)$, $T^2$, and $\mathrm{SO}(3)$, respectively, which are also their symmetry groups. 
The algorithm shows excellent performance, as evidenced by the fact that the operator $\Lambda$ exhibits a few significantly small eigenvalues, the number of which corresponds to the dimension of the corresponding Lie group.

\begin{figure}[ht]
\centering
\begin{minipage}{0.32\linewidth}\center
\includegraphics[width=0.8\textwidth]{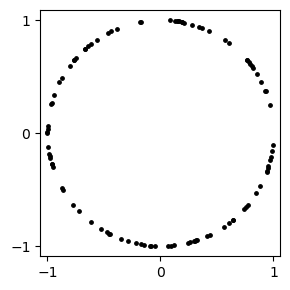}
\end{minipage}
\begin{minipage}{0.32\linewidth}\center
\includegraphics[width=0.95\textwidth]{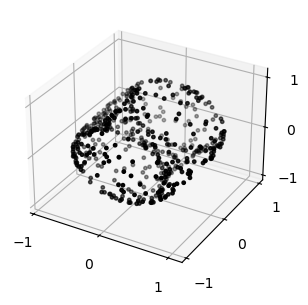}
\end{minipage}
\begin{minipage}{0.32\linewidth}\center
\includegraphics[width=0.95\textwidth]{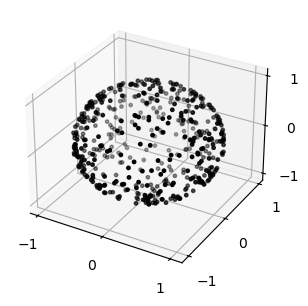}
\end{minipage}
\begin{minipage}{0.32\linewidth}\center
\includegraphics[width=0.99\textwidth]{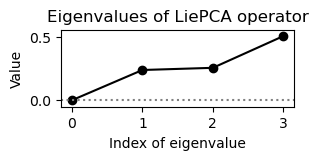}
\end{minipage}
\begin{minipage}{0.32\linewidth}\center
\includegraphics[width=0.99\textwidth]{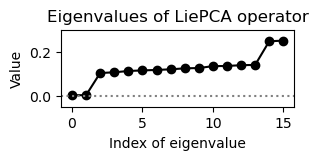}
\end{minipage}
\begin{minipage}{0.32\linewidth}\center
\includegraphics[width=0.99\textwidth]{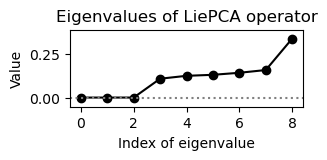}
\end{minipage}
\caption{Illustrations of Example \ref{ex:lie_pca}. Results of \texttt{LiePCA} to points sampled around the unit circle in $\R^2$ (\textbf{left}), the unit torus in $\R^4$ (\textbf{middle}, with dataset reduced to dimension 3 through PCA for visualization purposes), and the unit sphere in $\R^3$ (\textbf{right}). 
One can infer the respective Lie algebra dimensions to be 1, 2, and 3 based on the graphs of eigenvalues of the matrix $\Lambda$, shown at the bottom.}
\label{fig: LiePCA}
\end{figure}
\end{example}

\begin{example}
\label{ex:running_ex_step2}
We apply \ref{item:step2} to the point cloud $X$ presented in Example \ref{ex:running_ex}.
The \texttt{LiePCA} operator has the following eigenvalues, given within 3 decimal places: 
\begin{align*}
0.001, ~0.102, ~0.109, ~0.112, ~0.135, ~0.145, ~0.156, ~0.212, \\
0.212, ~0.233, ~0.236, ~0.247, ~0.249, ~0.259, ~0.296, ~0.296.
\end{align*}
One is significantly small, suggesting that the symmetry group has dimension $1$, as expected.
\end{example}

\subsection{\ref*{item:step3}: Closest Lie algebra}\label{subsec:step3}
For the third step, we seek a Lie subalgebra $\widehat{\h}$ of $\sym(\O)$ isomorphic to $\h$.
We propose two variations of this step: a general algorithm, and another one, valid only when $\h = \sym(\O)$. We stress that this step was left as an open problem by \cite{DBLP:journals/corr/abs-2008-04278}, and both solutions consist of our main algorithmic contribution at this stage.

\subsubsection{Formulation of the problem via \ref{item:step3}}\label{subsubsec:step3_formulation}
As mentioned previously, $\sym(\O)$ can be estimated as the kernel of the \texttt{LiePCA} operator $\Lambda\colon \M_{n}(\R)\rightarrow \M_{n}(\R)$, or more exactly, as the set of matrices $A\in\M_n(\R)$ for which $\|\Lambda(A)\|/\|A\|$ is small.
However, \texttt{LiePCA} does not involve any information regarding the commutators, i.e., it only allows to estimate $\sym(\O)$ as if it were a linear subspace.
To find $\h$, one has to force a Lie algebra structure.
This is done through $\mathcal{V}(G, \mathfrak{so}(n))$, the Stiefel variety of Lie subalgebras of $\so(n)$ that are pushforwards by almost-faithful representations of $G$ (see Section \ref{subsec:grassmann_lie}).
Its elements are the $d$-frames $(A_1,\dots,A_d)$ of $\so(n)$ spanning a subalgebra isomorphic to $\g$.
We are invited to consider the program
\begin{equation}\label{eq:minimization_stiefel}
    \arg \min \sum_{i=1}^d  \| \Lambda(A_i) \|^2
    ~~~~\mathrm{s.t.}~~~~
    (A_1,\dots,A_d) \in \mathcal{V}(G, \mathfrak{so}(n)).
\end{equation}
In implementing this problem, the structure of $\mathcal{V}(G, \mathfrak{so}(n))$ must be known.
Such an analysis has already been given in Section \ref{subsec:grassmann}, and we summarize it now.
The set $\mathfrak{orb}(G,n)$, defined in Equation \eqref{eq:orb}, consists of a choice of tuples $\big(B^{\phi_1},\dots,B^{\phi_p}\big)$, where $(\phi_1,\dots,\phi_p)$ is a collection of irreducible representations of $G$ such that their sum $\bigoplus_{k=1}^p\phi_k$ has dimension $n$ and is orthogonal and almost-faithful, and where $B^\phi$ is defined as $$B^\phi=\big(\d\phi(e_1),\dots,\d\phi(e_d)\big),$$
for a fixed basis $(e_i)_{i=1}^d$ of $\g$.
Moreover, the elements of $\mathfrak{orb}(G,n)$ have been chosen to generate non-orbit-equivalent representations (defined in Section \eqref{subsec:structure_orbits}).
Based on this initial set of tuples of irreducible representations, we have seen a few operations to generate new Lie algebras: their direct sum, the right-multiplication by a matrix $P\in\Ort(d)$, and the conjugation by a matrix $O\in\Ort(n)$.
Moreover, as stated in Lemma \ref{lem:structure_lie_stiefel}, any element $(A_1,\dots,A_d)$ of $\mathcal{V}(G, \mathfrak{so}(n))$ can be obtained via this process: there exists a collection $(B^{1},\dots,B^{p})\in \mathfrak{orb}(G,n)$ and two matrices $P\in\Ort(d)$ and $O \in \Ort(n)$ such that, for all $i\in [1\isep d]$, we have
$$
A_i = \sum_{j=1}^{d} P_{j,i}C_j
~~~~~\mathrm{where}~~~~~
C_j = O\diag\big( B^k_j \big)_{k=1}^p O^\top.
$$
By injecting this expression in Equation \eqref{eq:minimization_stiefel}, we observe a simplification:
\begin{align*}
\sum_{i=1}^d \bigg\| \Lambda \bigg(\sum_{j=1}^{d} P_{j,i} C_j\bigg) \bigg\|^2
&=\sum_{j=1}^{d}\big\| \Lambda  \big( C_j \big) \big\|^2.
\end{align*}
Indeed, the sum of squares of images of an orthonormal frame under any linear operator only depends on the space they span.
That is, the minimization problem is independent of the matrix $P$.
We deduce the equivalent formulation of Equation \eqref{eq:minimization_stiefel}, already stated in Equation~\eqref{eq:minimization_stiefel_2}:
\begin{equation*}%\label{eq:minimization_stiefel_2}
    \arg \min \sum_{j=1}^d \bigg\| \Lambda \bigg( O \diag\big(B^k_j\big)_{k=1}^p O^\top\bigg) \bigg\|^2
    ~~~~\mathrm{s.t.}~~~~ 
    \begin{cases}
      (B^1,\dots,B^p) \in \mathfrak{orb}(G,n),\\
      O \in \Ort(n).
    \end{cases}    
\end{equation*}

By definition of $\mathfrak{orb}(G,n)$, the tuples $(B^1,\dots,B^p)$ come from irreducible representations $(\phi_1,\dots,\phi_p)$, such that the corresponding Lie algebra is a $d$-dimensional subset of $\so(n)$.
Moreover, only one representative for each orbit-equivalence class is chosen.
Let us denote by $|\mathfrak{orb}(G,n)|$ its cardinality.
The program above naturally splits into $|\mathfrak{orb}(G,n)|$ minimization problems on $\Ort(n)$.
In practice, these minimizations can be performed by gradient descent, as described in \cite{absil2009optimization}. 
For the actual implementation, we used the Python package \texttt{Pymanopt} \cite{JMLR:v17:16-177}, which uses as a retraction map the so-called QR retraction \cite{absil2009optimization}.
Since the manifold $\Ort(n)$ is not connected, the minimization must be run twice: on $\SO(n)$ and on its complement. We then save the smallest cost.
Algorithm \ref{alg: 3} summarizes the steps necessary for solving this problem. 

It is worth noting that further complications arrive when the Lie group $G$ is Abelian: the set of non-orbit-equivalent representations of $G$ in $\R^n$ is infinite as soon as $n\geq 2d$.
To circumvent this issue, we simply consider a finite number of them.
In practice, we fix an integer $\omega_\mathrm{max}$, considered as a hyperparameter, and work with the irreducible representations whose weights are lower than $\omega_\mathrm{max}$ in absolute value.
In Section \ref{sec:algorithm_additional}, we will describe explicitly the set $\mathfrak{orb}(G,n)$ in the case of $\SO(2)$, $T^d$, $\SU(2)$ and $\SO(3)$, allowing us to put the algorithm in practice.

\begin{breakablealgorithm}
\caption{Pushforward Lie algebra of $G$ minimizing $\Lambda$ (\ref{item:step3} of Algorithm \ref{alg: 1})}\label{alg: 3}
\begin{algorithmic}[1]
\Require The \texttt{LiePCA} operator $\Lambda$ computed in the canonical basis of $\M_n(\R)$, a compact Lie group $G$ of dimension $d$, and a list $\mathfrak{orb}(G,n)$ of pushforward algebras of tuples of irreps.
\Ensure A basis $A^* = (A_1^*,\dots,A_d^*)$ formed by direct sum, right-multiplication, and conjugation of elements in $\mathfrak{orb}(G,n)$, that minimizes Equation \eqref{eq:minimization_stiefel_2}.
\vspace{.1cm}

\State Let L be the list of tuples $(A_1^\phi,\dots,A_d^\phi)$ obtained from the $(B^{\phi_1},\dots,B^{\phi_p})$'s in $\mathfrak{orb}(G,n)$ by putting $A_j^\phi = \diag\big(B^{\phi_1}_j,\dots,B^{\phi_p}_j\big)$, and where $\phi = \bigoplus_{k=1}^p\phi_k$.

\For{$(A_1^\phi,\dots,A_d^\phi)$ in L}
    \State solve
    $~~~\mathrm{cost(\phi)}\gets\arg \min \sum_{j=1}^d \big\| \Lambda \big( O A_j^\phi O^\top\big) \big\|^2~~~$
    constrained to $O\in \Ort(n)$. Denote the minimizer as $O_\phi$.
\EndFor
\State $\phi^* \gets$ minimizer of cost.
\State $A^* \gets (O_{\phi^*} A^{\phi^*}_1 O_{\phi^*}^\top,\dots, O_{\phi^*}A^{\phi^*}_d O_{\phi^*}^\top)$.
\end{algorithmic}
\end{breakablealgorithm}

\begin{example}
\label{ex:running_ex_step3}
Still considering the point cloud $X$ presented in Example \ref{ex:running_ex}, we apply \ref{item:step3} with $G = \SO(2)$, and with weights at most $\omega_\mathrm{max}=4$.
There are 6 classes of orbit-equivalent representations of $\SO(2)$ in $\R^4$ with such weights.
For each of them, we compute the minimum of Equation \eqref{eq:minimization_stiefel_2}, and write it in Table \ref{table:running_ex_step3}.
We check that the algorithm correctly points the representation $\phi_1\oplus \phi_4$ as the most likely to generate the orbit underlying the points. 

\begin{table}[ht]\center
\begin{tabular}{||r | c c c c c c ||} 
\hline
Weights &  $(0,1)$ & $(1,2)$ &  $(1,3)$ &$(1,4)$ & $(2,3)$&$(3,4)$ \\ 
\hline
Costs & $0.003$&$0.002$&$0.0003$ & \bm{$6.635\times 10^{-6}$}&$0.006$ & $0.009$ \\ 
\hline
\end{tabular}
\caption{Results of \ref{item:step3} in Example \ref{ex:running_ex_step3}. The best score is shown in bold.}
\label{table:running_ex_step3}
\end{table}
\end{example}

\subsubsection{Formulation of the problem via \ref{item:step3}'}\label{subsubsec:step3_variation}
As mentioned earlier, in the original article of \texttt{LiePCA}, the authors propose to estimate $\sym(\O)$ as $\spn{A_1,\dots,A_d}$, the linear subspace of $\M_n(\R)$ spanned by the $d$ bottom eigenvectors of $\Lambda$, where $d$ is a parameter, supposedly equal to the dimension of $\sym(\O)$.
In certain cases, such as those of Example \ref{ex:lie_pca}, $\sym(\O)$ has the dimension of $G$, hence $\spn{A_1,\dots,A_d}$ can serve as an estimator of $\h$.
In this paragraph, we will suppose that this is the case.
As estimated by the authors, this linear subspace may not form a Lie algebra, in the sense that it may not be closed under Lie bracket. 
Hence, a natural step would be projecting $\spn{A_1,\dots,A_d}$ onto the `closest Lie algebra'.
As it turns out, a variation of Algorithm \ref{alg: 3} gives a partial answer to this problem.
Namely, by denoting $\mathcal{G}^\text{Lie}(d, \mathfrak{so}(n))$ the set of $d$-dimensional Lie subalgebras of $\so(n)$ (introduced in Section \ref{subsec:grassmann_lie_notgroup}), we are compelled to solve the program 
\begin{equation*}
    \arg \min \big\| \projbracket{\spn{A_i}_{i=1}^d} - \projbracket{\h}  \big\|~~~~\text{s.t. }~~~~ \h \in \mathcal{G}^\text{Lie}(d, \mathfrak{so}(n)),
\end{equation*}
where we recall that, as is common in the literature, we compute the distance between spaces as the Frobenius distance between their projection matrices.
Here, we consider subspaces of $\so(n)$, hence the corresponding projection matrices have size $n(n-1)/2 \times n(n-1)/2$.

Unfortunately, due to the intricate nature of the space $\mathcal{G}^\text{Lie}(d, \mathfrak{so}(n))$, we were not able to find a way to minimize this problem directly. 
To circumvent this issue, we restrict $\mathcal{G}^\text{Lie}(d, \mathfrak{so}(n))$ to $\mathcal{G}(G, \mathfrak{so}(n))$, the set of Lie subalgebras of $\so(n)$ coming from an almost-faithful representation of some known $G$ in $\R^n$, defined in Section \ref{subsec:grassmann_lie}.
We are invited to consider the second problem 
\begin{equation*}%\label{eq:minimization_grassmann_fixedgroup}
    \arg \min \big\| \projbracket{\spn{A_i}_{i=1}^d} - \projbracket{\h}  \big\| ~~~~\text{s.t. }~~~~ \h\in \mathcal{G}(G, \mathfrak{so}(n)).
\end{equation*}
which, this time, can be solved.
Indeed, as a consequence of Lemma \ref{lem:structure_lie_stiefel}, any element of $\h \in \mathcal{G}(G, \mathfrak{so}(n))$ can be obtained via a process similar to what has been described previously.
Explicitly, there exists a $p$-tuple $(B^1,\dots,B^p) \in \mathfrak{orb}(G,n)$ and a matrix $O\in\Ort(n)$ such that the $d$-tuple $(O \diag(B^{\phi_k}_1)_{k=1}^p O^\top,\dots,O \diag(B^{\phi_k}_d)_{k=1}^p O^\top)$ forms a basis of $\h$.
Consequently, the minimization problem above can be decomposed into the already mentioned Equation \eqref{eq:minimization_grassmann}:
\begin{equation*}%\label{eq:minimization_grassmann}
        \arg \min \big\| \projbracket{\spn{A_i}_{i=1}^d} - \projbracket{\spn{O \diag(B^k_i)_{k=1}^p O^\top}_{i=1}^d}  \big\|
        ~~~~\mathrm{s.t.}~~~~
        \begin{cases}
          (B^1,\dots,B^p) \in \mathfrak{orb}(G,n),\\
          O \in \Ort(n).
        \end{cases}    
\end{equation*}
This program naturally splits into $|\mathfrak{orb}(G,n)|$ minimization problems over $\Ort(n)$, just as it was the case for the minimization of \ref{item:step3}, given in Equation \eqref{eq:minimization_stiefel_2}.

We see that, formulated like this, the computational complexity of \ref{item:step3} and \ref{item:step3}' are comparable.
%: they both consist in optimizations over $\Ort(n)$.
In practice, we observed that performing a minimization over $\Ort(n)$ for the latter problem is slightly faster, this being because the term in Equation \eqref{eq:minimization_grassmann} is less costly to calculate than in \eqref{eq:minimization_stiefel_2}.
Being slightly faster is, however, not the main interest of \ref{item:step3}'.
As we will develop in Sections \ref{sec: algorithm so(2) simp} and \ref{sec: algorithm torus simp}, Equation \eqref{eq:minimization_grassmann} admits a convenient formulation in the case where $G$ is the torus, enabling us to derive a more reliable and significantly faster algorithm.
%In practice, we observed significant time gain by using this reformulation.
We stress, nonetheless, that it comes at the cost of a theoretical limitation: when the Lie algebra $\h$ is a strict subset of $\sym(\O)$, that is, when $d<\dim(\sym(\O))$, then the $d$ bottom eigenvectors of $\Lambda$ may not cover $\h$, hence the minimizer of Equation \eqref{eq:minimization_grassmann} may not be close.

We provide a high-level implementation of this program in Algorithm \ref{alg: 3_variation}. 
As a technical detail, we point out that the bottom eigenvectors of the \texttt{LiePCA} operator may not be skew-symmetric matrices. This is corrected by the first two lines of the algorithm.
In practice, as for all optimization problems in this article, we use gradient descent, implemented in the Python package \texttt{Pymanopt} \cite{JMLR:v17:16-177}.
We emphasize that neither the convergence nor the landscape of this optimization problem are studied in this article. Our theoretical analysis of \ref{item:step3}, in Section \ref{subsec:stability-minimization}, assumes that a global minimum is exactly obtained. Nonetheless, in Section \ref{subsubsec:running_times}, we perform an empirical analysis of the algorithm, for a large collection of input data, and quantify its success rate.

\begin{breakablealgorithm}
\caption{Projection on closest pushforward Lie algebra of $G$ (\ref{item:step3}' of Algorithm \ref{alg: 1})}\label{alg: 3_variation}
\begin{algorithmic}[1]
\Require An orthonormal basis $\{A_i\}_{i=1}^d$ for the estimated pushforward algebra in \ref{item:step2},  a compact Lie group $G$ of dimension $d$, and a list $\mathfrak{orb}(G,n)$ of pushforward algebras of tuples of irreps.
\Ensure A basis $A^* = (A_1^*,\dots,A_d^*)$ formed by direct sum, right-multiplication, and conjugation of elements in $\mathfrak{orb}(G,n)$, that minimizes Equation \eqref{eq:minimization_grassmann}.
\State Make the basis $\{A_i\}_{i=1}^d$ skew-symmetric via $A_i \gets \frac{1}{2}(A_i-A_i^\top)$
\State $\{A_i\}_{i=1}^d\gets \text{Gram-Schmidt process applied to }\{A_i\}_{i=1}^d$;
\State Let L be the list of tuples $(A_1^\phi,\dots,A_d^\phi)$ obtained from the $(B^{\phi_1},\dots,B^{\phi_p})$'s in $\mathfrak{orb}(G,n)$ by putting $A_j^\phi = \diag\big(B^{\phi_1}_j,\dots,B^{\phi_p}_j\big)$, and where $\phi = \bigoplus_{k=1}^p\phi_k$.

\For{$(A_1^\phi,\dots,A_d^\phi)$ in L}
    \State solve
    $~~~\mathrm{cost(\phi)}\gets\arg \min \big\| \projbracket{\spn{A_i}_{i=1}^d} - \projbracket{\spn{O A_i^\phi O^\top}_{i=1}^d}  \big\|~~~$
    constrained to $O\in \Ort(n)$. Denote the minimizer as $O_\phi$.
\EndFor
\State $\phi^* \gets$ minimizer of cost.
\State $A^* \gets (O_{\phi^*} A^{\phi^*}_1 O_{\phi^*}^\top,\dots, O_{\phi^*}A^{\phi^*}_d O_{\phi^*}^\top)$.
\end{algorithmic}
\end{breakablealgorithm}

\begin{example}
\label{ex:running_ex_step3_variation}
We reproduce the same experiment as Example \ref{ex:running_ex_step3}, now with \ref{item:step3}' instead of \ref{item:step3}.
For each of the $6$ non-orbit-equivalent representations of $\SO(2)$ in $\R^4$ with weights at most $4$, we compute the minimum of Equation \eqref{eq:minimization_grassmann}, and write it in the table.
As expected, the representation $\phi_1\oplus \phi_4$ yields the minimal cost.

\begin{table}[ht]\center
\begin{tabular}{||r | c c c c c c ||} 
\hline
Weights &  $(0,1)$ & $(1,2)$ &  $(1,3)$ &$(1,4)$ & $(2,3)$&$(3,4)$ \\ 
\hline
Costs & $0.004$&$0.002$&$0.002$ & \bm{$4.29\times 10^{-5}$}&$0.006$ & $0.008$ \\ 
\hline
\end{tabular}
\caption{Results of \ref{item:step3}' in Example \ref{ex:running_ex_step3_variation}. The best score is shown in bold.}
\label{table:running_ex_step3_variation}
\end{table}
\end{example}

\subsection{\ref*{item:step4}: Distance to orbit}\label{subsec:step4}
In the previous step, we have calculated the representation $\widehat{\phi}\colon G \rightarrow \SO(n)$ whose pushforward Lie algebra $\widehat{\h}$ is closest to creating an orbit underlying the point cloud $X$, with closeness being computed at the level of Lie algebras.
We now close our algorithm with a verification step, by checking whether $\widehat{\phi}$ does indeed generate an orbit containing $X$. 
To this end, we define the estimated orbit as
$$\widehat{\O}_x 
= \widehat{\phi}(G) \cdot x
= \big\{\exp(A) x \mid A \in \widehat{\h}\big\},$$
where $\exp$ denotes the matrix exponential, and where $x$ is taken as any point of $X$.
We aim to assess whether $X$ is close to $\widehat{\O}_x$. We propose two alternatives for computing relevant distances between them: either using the Hausdorff or the Wasserstein distance, both popular in literature.

We stress that this step is more than just a safeguard of the algorithm's performance: if the task consists of determining which compact Lie group $G$ from a finite list of Lie groups $\{G_1,\dots, G_k\}$ is the most likely to generate a particular orbit, we tackle it by comparing the distances of the point cloud $X$ to the orbits, $\widehat{\O}_{x,i}$, estimated from each of the resulting representations of the previous steps, $\widehat{\phi}_i$, when applied to every Lie group on the list, $G_i$. This point is discussed further in Section~\ref{sec: list}.

\subsubsection{Hausdorff distance}\label{sec:step4_hasdorff}
To quantify the proximity between the input and output of our algorithm, we will use the \textit{non-symmetric Hausdorff distance} $\HDmid{X}{\widehat{\O}_x}$, defined as
$$
\HDmid{A}{B} = \adjustlimits \sup_{a \in A} \inf_{b \in B} \| a - b\|.
$$
Note that we recover the (usual) Hausdorff distance as 
$$\HD{A}{B} = \max\left\{\HDmid{A}{B},\HDmid{B}{A}\right\}.$$
We stress that the (usual) Hausdorff distance is not the correct notion of proximity here: the point cloud $X$ is thought as only a subset of the orbit, hence $\HD{X}{\widehat{\O}_x}$ would also reflect `gaps' in the sampling.
It is more appropriate to consider the non-symmetric distance $\HDmid{X}{\widehat{\O}_x}$, which captures the extent to which the first is included in the other. In the case where $X$ lives exactly on the estimated orbit, this distance is zero, even for a finite point cloud.

In practice, the distance between the subsets $\widehat{\mathcal{O}}_x$ and $X=\{x_i\}_{i=1}^N$ cannot be calculated directly, since $\widehat{\mathcal{O}}_x$ is a continuous manifold.
This can be remedied through an approximation by sampling a large but finite set of points on $\widehat{\mathcal{O}}_x$ and computing the distance between the finite sets.  
In this regard, efficient methods for computing the distance are described in \cite{taha2015efficient}, and implemented in the package \texttt{SciPy} in Python, which we use.

To generate a finite sample of $\widehat{\mathcal{O}}_x$, we use the surjectivity of the exponential map $\exp\colon \g \rightarrow G$. As presented in Section \ref{subsubsec:lie_groups_definition}, there is a commutative diagram
\begin{equation*}
\begin{tikzcd}[column sep=2cm,row sep=1cm]
G  \arrow[r,"\widehat{\phi}"] & \SO(n) \\
\g \arrow[u,"\exp"] \arrow[r,"\mathrm{d}\widehat{\phi}"] & \h \arrow[u,"\exp",swap]
\end{tikzcd}
\end{equation*}
Moreover, let $\mathrm{diam}(G)$ denote the diameter of $G$, when endowed with a bi-invariant Riemannian structure, as discussed in Section \ref{subsubsec:exponential_map_geometric}.
By definition of the diameter, the image of a ball of radius $\mathrm{diam}(G)$ around the origin of $\g$ covers the whole group $G$.
If $\lambda$ is a Lispchitz constant for $\widehat{\phi}$, we deduce that the matrix exponential of a ball of radius $\lambda\times\mathrm{diam}(G)$ around the origin of $\h$ will also cover the whole image $\phi(G)$. 
In other words, we have a parametrization
$$
\widehat{\O}_x = \big\{\exp (A) x \mid A \in \h, \|A\|\leq \lambda\times\mathrm{diam}(G)\big\}.
$$
Because the Lie algebra $\h$ is a $d$-dimensional vector subspace of $\so(n)$, we can easily sample points on it and deduce a sample of $\widehat{\O}_x$.
More precisely, we consider a basis $\{A_i\}_{i=1}^d$ of $\h$, choose a large and regularly-spaced finite subset of the unit ball $B^{d}$ of $\R^d$, and build
\begin{equation}\label{eq:orbit_approximation_sample}
\bigg\{ \exp \bigg( \lambda\times\mathrm{diam}(G) \sum_{i=1}^d  t_i A_i \bigg) \cdot x \mid (t_1\dots,t_d)\in B^d \bigg\} \subset \widehat{\O}_x.
\end{equation}
We illustrate this procedure in Example \ref{ex:running_ex_step4}.

Let us now suppose that the Hausdorff distance $\HDmid{X}{ \widehat{\mathcal{O}}_x}$ has been computed.
Because a `close enough' distance grows with dimension, it is not trivial to establish fixed thresholds to separate what are data close enough to an orbit to be considered a sample from it. 
In practice, one may vary the number of samples used in the estimation of the orbit and observe whether or not the distance converges to a small error value.
As another answer to this problem, we could compare $\HDmid{X}{ \widehat{\mathcal{O}}_x}$ to the theoretical upper bound given in Theorem \ref{th:robustness_algorithm}. 
This bound, however, depends on certain unknown quantities that would need to be estimated.
A last solution, for which we will give some hints in Section \ref{subsubsec:statistics}, is to compute empirically what a `small distance' should be, in the manner of a table of quantiles in statistical inference.
In this regard, we stress that the Hausdorff distance is not calculated from the point cloud $X$ given as an input to our algorithm, but from its orthonormalized version, after \ref{item:step1}.
This point is crucial in assessing the quality {\texttt{LieDetect}}'s output. Indeed, by considering only orthonormalized point clouds, we ensure that the results are comparable from one experiment to another.

\begin{remark}
In practice, if the point cloud $X$ is noisy, there is a risk that the initial arbitrary point $x$ will be an outlier, resulting in an orbit $\widehat{\mathcal{O}}_x$ that badly fits the data.
A simple way to circumvent this problem is to test the results for the whole collection of points $x\in X$, and choose the one that yields the smallest distance $\HDmid{X}{ \widehat{\mathcal{O}}_x}$.
This will be put into practice in Section \ref{subsec:conformational_space}, for a dataset representing chemical configurations, and which has a certain degree of noise.
We stress that cleverer ways could be developed, but we have not taken this question further.
Instead, we propose below a measure-theoretic point of view, which helps to process point clouds with a high level of noise.
\end{remark}

\subsubsection{Wasserstein distance}\label{subsubsec:step4_wasserstein}
Comparing the input data $X$ and the output orbit $\widehat{\O}_x$ can be realized, as an alternative, using the popular notion of distance offered by optimal transport.
We recall that, given two probability measures $\mu$ and $\nu$ over $\R^n$, a \textit{transport plan} between $\mu$ and $\nu$ is a probability measure $\pi$ over $\R^n \times \R^n$ whose marginals are $\mu$ and $\nu$. 
For any real number $p\geq 1$, the $p$-\textit{Wasserstein distance} between $\mu$ and $\nu$ is defined as 
\[\W_p(\mu,\nu) = \left(\inf_\pi \int \|x-y\|^p \d\pi(x,y) \right)^\frac{1}{p},\]
where the infimum is taken over all the transport plans $\pi$.
We denote by \textit{optimal} transport plan the $\pi$ that attains this infimum.
Notice that if $p\leq q$, then $\W_p(\mu,\nu) \leq \W_q(\mu,\nu)$ by Jensen's inequality.
In this article, we choose to work with the parameter $p=2$, which turned out to be the most convenient for deriving theoretical results in Section \ref{sec:algorithm_analysis}.

In this framework, we must associate probability measures to the subsets $X$ and $\widehat{\O}_x$. We choose for the former the \textit{empirical measure}, defined as $\mu_X = \frac{1}{N}\sum_{i=1}^N \delta_{x_i}$, where $\delta$ denotes the Dirac mass.
Regarding the latter, which is an orbit of $G$, a natural choice is the \textit{uniform measure} $\mu_{\widehat{\O}_x}$.
As defined in Section \ref{subsec:structure_orbits}, it is the pushforward of the Haar measure on $G$:
$$
\mu_{\widehat{\O}_x} = \big(\widehat{\phi}\cdot x\big)\mu_G.
$$
In this context, the Wasserstein distance $\W_2\big(\mu_X, \mu_{\widehat{\O}_x} \big)$ is a relevant measure of proximity, computing how close the supports $X$ and $\widehat{\O}_x$ are and how uniform the distribution of $X$ is.

However, in the context of data analysis with the Wasserstein distance, it is common to allow $X$ to have a few anomalous points, that is, points far away from the underlying orbit $\O$.
If $x$ is such a point, the estimated orbit $\widehat{\O}_x$ would not be a reliable approximation of $\O$.
We circumvent this issue by considering all the points of $X$ and defining the \textit{average measure}
$$
\mu_{\widehat{\O}} = \frac{1}{N}\sum_{i=1}^N \mu_{\widehat{\O}_{x_i}}.
$$
Eventually, the last step of our algorithm consists of computing $\W_2\big(\mu_X, \mu_{\widehat{\O}} \big)$.

In practice, we face a problem similar to that of the Hausdorff distance: the support of $\mu_{\widehat{\O}}$ is infinite, and we are not aware of algorithms that would compute it exactly.
Instead, we propose to use the approximation of Equation \eqref{eq:orbit_approximation_sample} to compute a finite sample on each orbit $\widehat{\O}_x$.
We then gather all the samples in a set $\widehat{\O}'$ and consider its empirical measure $\mu_{\widehat{\O}'}$.
The Wasserstein distance $\W_2\big(\mu_X, \mu_{\widehat{\O}'} \big)$, only involving discrete measures, can be computed using the package \texttt{POT} in Python \cite{flamary2021pot}, based on efficient implementations of the Sinkhorn algorithm, such as \cite{NIPS2013_af21d0c9}.
This is shown in Examples \ref{ex:running_ex_step4} and \ref{ex:running_ex_step4_noise}.

We stress that, compared to the Hausdorff distance, the Wasserstein distance has the advantage of being robust to the presence of outliers in $X$. 
Indeed, from its definition, we see that adding a few anomalous points to $X$ cannot change too much the value of $\W_2\big(\mu_X, \mu_{\O} \big)$, hence implying robustness against outliers.
In Theorem \ref{th:robustness_algorithm}, we will show that $\W_2\big(\mu_X, \mu_{\widehat{\O}} \big)$ is small as long as the initial distance $\W_2\big(\mu_X, \mu_{\O} \big)$ is, proving the robustness of our algorithm against anomalous points.
This property is illustrated in Example \ref{ex:running_ex_step4_noise}, where we successfully apply the algorithm to a dataset corrupted with noise. 
On the downside, we draw the reader's attention to the fact that, in this framework, the point cloud $X$ is seen as a sample of the \textit{uniform} distribution on $\O$ or some close distribution:
it does not include the more general assumption of non-uniform distributions.
In contrast, the Hausdorff distance is only sensitive to the support and may be small even when non-uniformly distributed.

\begin{example}
\label{ex:running_ex_step4}
We apply \ref{item:step4} on $X$, based on the optimal Lie algebra returned by \ref{item:step3} in Example \ref{ex:running_ex_step3}. 
We select an arbitrary $x\in X$, and approximate the Hausdorff distance of Equation \eqref{eq:minimization_hausdorff} by sampling $500$ points on the estimated orbit $\widehat{\O}_x$, yielding $\HDmid{X}{\widehat{\O}_x} \approx 0.018$.
As observed in Figure \ref{fig:step4_orbit}, $\widehat{\O}_x$ appears to fit $X$ correctly.
Besides, to approximate the Wasserstein distance of Equation \eqref{eq:orbit_measure}, we sample $50$ points on $\widehat{\O}_x$ for each $x\in X$, and obtain $\W_2\big(\mu_X, \mu_{\widehat{\O}} \big) \approx 0.335$.
To visualize the measure $\mu_{\widehat{\O}}$, we consider its kernel density estimator $f\colon \R^4 \rightarrow [0,+\infty)$ with Gaussian kernel of bandwidth $0.1$, and represent its sublevel set $f^{-1}([0.5,+\infty))$.
We observe from the figure its adequacy with $X$.
\begin{figure}[H]
\centering
\begin{minipage}{0.49\linewidth}\center
\includegraphics[width=0.8\textwidth]{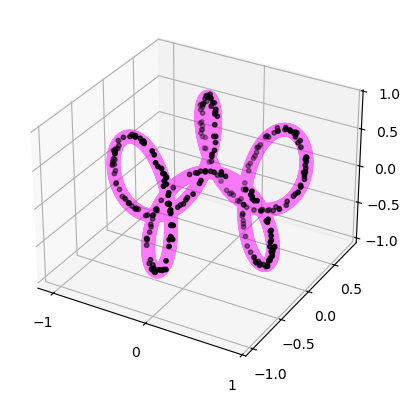}
\end{minipage}
\begin{minipage}{0.49\linewidth}\center
\includegraphics[width=0.8\textwidth]{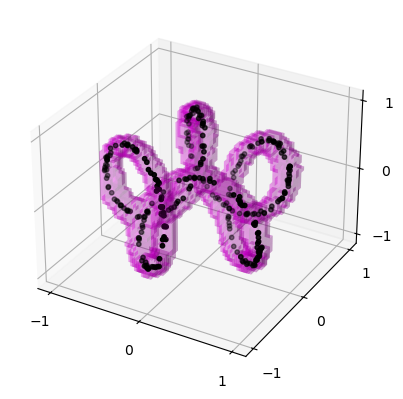}
\end{minipage}
\caption{Illustrations of Example \ref{ex:running_ex_step4}. The point cloud $X$ in black, together with the estimated orbit $\widehat{\O}_x$ in magenta (\textbf{left}) and a sublevel set of a kernel density estimator on $\mu_{\widehat{\O}}$ (\textbf{right}).}\label{fig:step4_orbit}
\end{figure}
\end{example}

\begin{example}
\label{ex:running_ex_step4_noise}
We reproduce the running example of this section, starting from the same point cloud $X$ of cardinality $300$, presented in Example \ref{ex:running_ex}, to which we add some additive Gaussian noise of standard deviation $\sigma = 0.03$, as well as $30$ points drawn uniformly in the cube $[-1,1]^4$.
Algorithm \ref{alg: 1} retrieves successfully the representation $\phi_1 \oplus \phi_4$.
However, by selecting an arbitrary $x\in X$, we obtain the Hausdorff distance $\HDmid{X}{\widehat{\O}_x} \approx 1.128$, which we consider as large.
It can be seen in Figure \ref{fig:step4_orbit_corrupted} that $\widehat{\O}_x$ does not fit $X$ correctly.
On the other hand, the Wasserstein distance is $\W_2\big(\mu_X, \mu_{\widehat{\O}} \big) \approx 0.392$, significantly better.
We represent a sublevel set of a kernel density estimation on $\mu_{\widehat{\O}}$, similar to that of Example \ref{ex:running_ex_step4}, which appears to match $X$.
\begin{figure}[H]
\centering
\begin{minipage}{0.49\linewidth}\center
\includegraphics[width=0.8\textwidth]{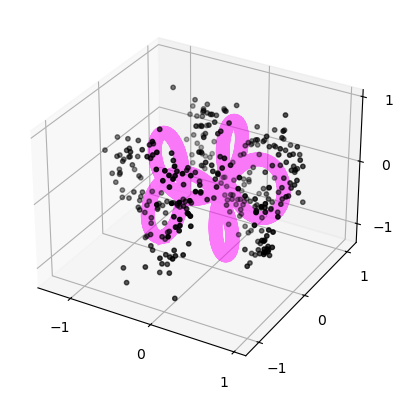}
\end{minipage}
\begin{minipage}{0.49\linewidth}\center
\includegraphics[width=0.8\textwidth]{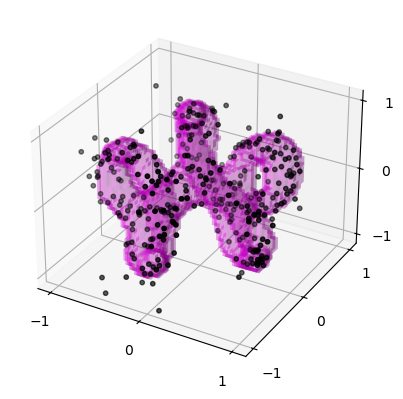}
\end{minipage}
\caption{Illustrations of Example \ref{ex:running_ex_step4_noise}. The corrupted version of $X$ in black, together with the estimated orbit $\widehat{\O}_x$ in magenta (\textbf{left}) and sublevel set of kernel density estimator on $\mu_{\widehat{\O}}$ (\textbf{right}).}\label{fig:step4_orbit_corrupted}
\end{figure}
\end{example}

\section{Concrete implementation}\label{sec:algorithm_additional}

A downside of Algorithm \ref{alg: 1}, presented in the previous section, is that it has to be adapted to the particular Lie group $G$ considered. 
More precisely, in \ref{item:step3}, a list of irreps of $G$ must be provided, as well as an explicit description of its orbit-equivalence classes, $\mathrm{OrbRep}(G,\R^{n})$ (defined in Section~\ref{subsec:structure_orbits}), and a choice of pushforward Lie algebras, $\mathfrak{orb}(G,n)$ (defined in Equation~\eqref{eq:orb}).
In this section, we provide these lists in the case of $\SO(2)$, $T^d$, $\SU(2)$, and $\SO(3)$.
Given any other group of interest, similar computations must be performed.
However, it must be emphasized that, even if irreducible representations of compact Lie groups are well understood, it may be complicated to find explicit descriptions of them in the literature. 
Finally, we collect in Section \ref{subsec:additional} additional comments. This includes a new context in which \texttt{LieDetect} becomes useful: when the precise Lie group $G$ is unknown but may be determined from a list of candidates $\{G_1,\dots, G_k\}$. We also propose a rule of thumb to determine whether the algorithm succeeded, based on the output Hausdorff distance  $\HDmid{X}{\widehat{\O}_x}$.

\subsection{The algorithm for \texorpdfstring{$\SO(2)$}{SO(2)}}\label{sec: algorithm so(2)}

We will observe that the set $\mathfrak{orb}(\SO(2),n)$ is closely linked to the primitive integral vectors, allowing an explicit description.
We then propose a simplification of \ref{item:step3}'.

\subsubsection{Orbit-equivalence for \texorpdfstring{$\SO(2)$}{SO(2)}}\label{subsubsec:orbit_equivalence_so(2)}
As a consequence of the classification of irreps of $\SO(2)$, its representations in $\R^{n}$ are, up to equivalence, classified by a multiset $\{\{k_1,\dots,k_p\}\}$ of weights in $\Z$.
As shown in the next lemma, up to orbit-equivalence, representations are also classified by a certain tuple of integers.
For the sake of notation simplicity, we will suppose that the dimension of the ambient space is even, equal to $n=2m$.
Note that the odd-dimensional case is similar, since representations of $\SO(2)$ in $\R^{2m+1}$ are obtained from representations in $\R^{2m}$, to which we add the trivial representation.

\begin{lemma}\label{lem:orb_eq_so(2)}
The set of orbit-equivalence classes of representations of $\SO(2)$ in $\R^{2m}$ is in correspondence with the non-negative and non-decreasing primitive $m$-tuples of integers (i.e., with trivial greatest common divisor), denoted $\mathbb{N}^{m}_\mathrm{prim}$. 
That is to say, one has a bijection
\begin{equation*}
\mathrm{OrbRep}(\SO(2),\R^{2m}) \simeq \mathbb{N}^{m}_\mathrm{prim}.
\end{equation*}
Explicitly, a choice of representatives for their pushforward Lie algebras is given by
\begin{equation}\label{eq:orb_SO2}
\mathfrak{orb}(\SO(2),2m) = \big\{B(k_1,\dots,k_{m}) \mid (k_1,\dots,k_{m})\in \mathbb{N}^{m}_\mathrm{prim}\big\},
\end{equation}
where
\begin{equation}\label{eq:skewsymmetric_B}
B(k_1,\dots,k_{m}) = \frac{\mathrm{diag}\big(L(k_1), \dots, L(k_{m})\big)}{\sqrt{2}\big\|(k_1,\dots,k_{m})\big\|} 
~~~~~\mathrm{and}~~~~~
L(k) =
\begin{pmatrix}
     0 & -k\\
     k & 0
\end{pmatrix}.
\end{equation}
When the representation is trivial, we define $B(0,\dots,0)$ as the zero matrix.
\end{lemma}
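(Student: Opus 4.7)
The plan is to classify representations of $\SO(2)$ up to orthogonal equivalence first, then refine to orbit-equivalence via an eigenvalue analysis of the pushforward Lie algebras.

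First, by complete decomposability (Section \ref{subsubsec:irreps}) and the list of irreps of $\SO(2)$ recalled in Example \ref{ex:irreps_so(2)}, every orthogonal representation $\phi\colon \SO(2) \to \SO(2m)$ is orthogonally equivalent to a direct sum $\bigoplus_{i=1}^m \phi_{k_i}$ with weights $k_i\in\Z$. The reflection $\diag(1,-1)\in\Ort(2)$ conjugates $R(k\theta)$ into $R(-k\theta)$, so $\phi_k$ and $\phi_{-k}$ are orthogonally equivalent; up to such sign flips and permutations of summands, we may assume $0\leq k_1\leq\cdots\leq k_m$, and almost-faithfulness amounts to the tuple being non-zero. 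The derived representation sends $L(1)\in\so(2)$ to $L(k)$, so the pushforward Lie algebra $\d\phi(\so(2))$ is the line in $\so(2m)$ spanned by $\diag(L(k_1),\dots,L(k_m))$, whose unit-norm generator is precisely $B(k_1,\dots,k_m)$ from Equation \eqref{eq:skewsymmetric_B}.

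The heart of the proof is to determine when two such sorted non-negative integer tuples $(k_i),(k_i')$ yield orbit-equivalent representations, i.e., when their pushforward lines are conjugate under some $O\in\Ort(2m)$: there must exist $c\in\R^*$ with $O\diag(L(k_i))O^\top = c\,\diag(L(k_i'))$. Orthogonal conjugation preserves the spectrum, and the spectrum of $\diag(L(k_1),\dots,L(k_m))$, viewed in $\M_{2m}(\C)$, is the multiset $\{\pm ik_j\}_{j=1}^m$. Matching these multisets forces $(k_1,\dots,k_m) = c\,(k_{\sigma(1)}',\dots,k_{\sigma(m)}')$ for some permutation $\sigma$ and some $c\in\R^*$, with non-negativity then forcing $c>0$. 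The converse is explicit: given such $\sigma$ and $c$, one builds a suitable $O$ from a block permutation matrix combined with block reflections $\diag(1,\pm 1)$ to absorb any sign flips between the $L(k_{\sigma(j)}')$ and $L(k_j)$ blocks.

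Finally, integrality of both tuples forces $c\in\mathbb{Q}_{>0}$; the chosen sorting reduces $\sigma$ to the identity up to equalities of weights; and imposing primitivity ($\gcd=1$) forces $c=1$, so the two primitive tuples must coincide. This yields the bijection $\mathrm{OrbRep}(\SO(2),\R^{2m})\simeq\mathbb{N}^m_\mathrm{prim}$, with representatives for the pushforward Lie algebras given by the matrices $B(k_1,\dots,k_m)$ as in Equation \eqref{eq:orb_SO2}. The main obstacle is the spectral argument: one must carefully invoke the canonical form of skew-symmetric matrices to conclude that two block-diagonal skew-symmetric matrices related by a scalar under orthogonal conjugation must have proportional weight multisets, and then verify that the explicit conjugator $O$ can indeed be realized as an orthogonal matrix. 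The all-zero tuple, corresponding to the trivial and hence non-almost-faithful representation, is naturally excluded from $\mathbb{N}^m_\mathrm{prim}$, consistent with the convention $B(0,\dots,0)=0$ adopted as a boundary case.
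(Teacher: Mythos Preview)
Your proposal is correct and follows essentially the same approach as the paper: decompose into irreps to obtain a weight tuple, normalize via sign flips and sorting, then use the spectral invariance of skew-symmetric matrices under orthogonal conjugation to reduce orbit-equivalence to positive rational proportionality of weight tuples, which primitivity pins down uniquely. Your treatment is in fact slightly more explicit than the paper's in constructing the conjugating matrix $O$ for the converse direction; the only minor looseness is writing the decomposition as $\bigoplus_{i=1}^m \phi_{k_i}$ with $k_i\in\Z$, since $\phi_0$ is one-dimensional, so pairs of trivial summands must be grouped into $2\times 2$ blocks to get exactly $m$ blocks---the paper handles this by explicitly defining $\Omega(\phi)$ with one zero per pair of trivial summands.
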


\begin{proof}
Let us recall the classification of irreps of $\SO(2)$, already given in Example \ref{ex:irreps_so(2)}: except the trivial representation $\phi_0\colon\SO(2)\rightarrow\GL_1(\R)$, they are all of dimension $2$, and form the family $\{\phi_k\mid k \in \Z\setminus\{0\}\}$.
In other words, irreps of $\SO(2)$ are parametrized by $\Z$ (the weight).
As a consequence of the decomposition into irreps, any representation $\phi\colon\SO(2)\rightarrow\GL_n(\R)$ corresponds to a unique multiset $\{\{k_1,\dots,k_p\}\}$ of integers such that $\phi$ is equivalent to the direct sum $\bigoplus_{i=1}^p \phi_{k_i}$.
Since $n$ is assumed even, $\{\{k_1,\dots,k_p\}\}$ must contain an even number of zero values.
In what follows, it will be convenient to denote by $\Omega(\phi)$ the $m$-tuple containing all the nonzero integers in $\{\{k_1,\dots,k_p\}\}$, and one zero for each pair of zeros.
After potentially renaming these integers, let us write $\Omega(\phi) = (k_1,\dots,k_m)$, whose ordering is, for the moment, arbitrary.
Note that there exists $A\in\GL_{2m}(\R)$ such the pushforward algebra $\h=\d\phi(\g)$ is generated by the skew-symmetric matrix $A B(k_1,\dots,k_{m}) A^{-1}$, where $B$ is defined in Equation~\eqref{eq:skewsymmetric_B}.

We remind the reader that, as defined in Section \ref{subsec:structure_orbits}, two representations $\phi$ and $\phi'$ are said orbit-equivalent if there exists a matrix $M \in\GL_n(\R)$ such that $\d\phi(\g) = M\d\phi'(\g)M^{-1}$.
In the particular case of $\SO(2)$, one obtains a convenient equivalent characterization that can be deduced from the fact that conjugate matrices share the same eigenvalues.
First of all, given a representation $\phi$, we can suppose that its weights $\Lambda(\phi)=(k_1,\dots,k_{m})$ are non-negative and in non-decreasing order, since this operation would yield an equivalent representation, hence orbit-equivalent.
Moreover, two representations $\phi$ and $\phi'$ are orbit-equivalent when there exists an $\alpha \in \mathbb{N}\setminus\{0\}$ such that $\Lambda(\phi)$ can be obtained from $\Lambda(\phi')$ by multiplying each element by $\alpha$, or dividing each element by $\alpha$.
That is to say, $\mathrm{OrbRep}(\SO(2),\R^{2m})$ is in correspondence with the set $\mathbb{N}^{m}_\mathrm{prim}$, and the result follows.
\end{proof}

\begin{remark}
We point out that $\mathbb{N}^{m}_\mathrm{prim}$ has been extensively studied in the context of Linnik's problem in analytic number theory \cite{linnik2ergodic,duke1988hyperbolic,duke2007introduction}.
It is known that, as $r \to +\infty$, the sets 
$$
\big\{x/\|x\| \mid x \in \mathbb{N}^{m}_\mathrm{prim}, ~ \|x\|\leq r\big\}
$$ 
are \textit{equidistributed} on the intersection of the sphere $S^{m-1}$ and the `increasing quadrant' $H=\{(x_1,\dots,x_{m}) \in \R^{m}\mid 0\leq x_1\leq\dots\leq x_{m}\}$.
That is to say, the sequence of empirical measures on these sets converges in distribution to the Lebesgue measure on $S^{m-1}\cap H$.
We stress that the restriction to $H$ comes from the fact that, in our context, only non-negative \textit{ordered} primitive vectors are considered.
In other words, the primitive vectors, in addition to being dense on the sphere, tend to be uniformly distributed.
Although we will not use the equidistribution property of $\mathbb{N}^{m}_\mathrm{prim}$ directly in this article, it explains our intuition when defining, for a parameter $\omega_\mathrm{max} \in \mathbb{N}$, the set of orbit-equivalence classes of representations of weights at most $\omega_\mathrm{max}$:
\begin{equation}\label{eq:def_orb_so(2)_maximalweight}
\mathfrak{orb}(\SO(2),2m,\omega_\mathrm{max}) 
= 
\big\{B(k_1,\dots,k_{m}) \mid (k_1,\dots,k_{m})\in \mathbb{N}^{m}_\mathrm{prim},~ \max\{|k_i|\}_{k=1}^{m}\leq \omega_\mathrm{max}\big\}.    
\end{equation}
Indeed, when running our algorithm, and as pointed out in Section \ref{subsubsec:step3_formulation}, the infinite set $\mathfrak{orb}(\SO(2),n)$ must be restricted to a finite subset.
Choosing the restriction above will tend to be `uniformly distributed' on $\mathfrak{orb}(\SO(2),n)$.
This will be studied further in Section \ref{subsubsec:rigidity_lie_subalgebras}, in the context of \textit{rigidity} of Lie subalgebras. 
\end{remark}

Based on this explicit description of $\mathfrak{orb}(\SO(2),n)$, \ref{item:step3} of Algorithm \ref{alg: 1} can be performed.
An entire execution of the algorithm has already been given in Examples \ref{ex:running_ex}, \ref{ex:running_ex_step1}, \ref{ex:running_ex_step2}, \ref{ex:running_ex_step3} and \ref{ex:running_ex_step4}, and we provide below the Examples \ref{ex:algo_so(2)_dim4} and \ref{ex:algo_so(2)_dim6}.  

\subsubsection{Reformulation of \ref{item:step3}'}\label{sec: algorithm so(2) simp}
The fact that $\so(2)$ is one-dimensional allows for a significant simplification of the algorithm.
To see so, we suppose that the first two steps have been performed.
Instead of going through \ref{item:step3}, we consider the variation \ref{item:step3}', presented in Section \ref{subsubsec:step3_variation}. 
Namely, we let $A$ be a unit eigenvector of the \texttt{LiePCA} operator $\Lambda$ associated with the smallest eigenvalue, skew-symmetrize it if it is not, and consider Equation \eqref{eq:minimization_grassmann}, which reads 
\begin{equation*}%\label{eq:minimization_grassmann_so(2)}
    \arg \min \big\| \projbracket{\spn{A}} -  \projbracket{\spn{O B(k_1,\dots,k_{m}) O^\top}} \big\|^2
    ~~~~\mathrm{s.t.}~~~~
    \begin{cases}
      (k_1,\dots,k_{m}) \in \mathbb{N}^{m}_\mathrm{prim},\\
      O \in \Ort(2m).
    \end{cases}    
\end{equation*}
To continue, let us consider the normal form of $A$.
Namely, there exist a matrix $P\in\Ort(2m)$ and a unique $m$-tuple of non-negative real numbers $\alpha_1\leq\dots\leq\alpha_{m}$ such that
$$
A = P B(\alpha_1,\dots,\alpha_{m}) P^\top,
$$
where $B$ is defined in Equation \eqref{eq:skewsymmetric_B}.
The problem can now be solved exactly.

\begin{lemma}\label{lem:step3'_simplification_so(2)}
For $G=\SO(2)$, Equation \eqref{eq:minimization_grassmann} is equivalent to
\begin{equation}\label{eq:lemstep3'_simplification_so(2)}
    \arg \min f\big((\alpha_i)_{i=1}^{m},~(k_i)_{i=1}^{m}\big) ~~~~\mathrm{s.t.}~~~~ 
      (k_i)_{i=1}^{m} \in \mathbb{N}^{m}_\mathrm{prim},
\end{equation}
where $f(x,y) = \big\|x/\|x\|-y/\|y\|\big\|^2$.
\end{lemma}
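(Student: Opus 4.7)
The plan is to apply Lemma \ref{lem:distance_grassmannian} in the $d = 1$ case. Since $A$ is a unit skew-symmetric matrix and $B(k_1,\dots,k_m)$ is normalized to have unit Frobenius norm (from $\|\diag(L(k_1),\dots,L(k_m))\|^2 = 2\sum k_i^2$), the matrix $O B(k_1,\dots,k_m) O^\top$ is also unit-norm. The lemma then gives
\begin{equation*}
\bigl\|\projbracket{\spn{A}} - \projbracket{\spn{O B(k) O^\top}}\bigr\|^2 = 4 - 4\bigl|\langle A, O B(k) O^\top\rangle\bigr|,
\end{equation*}
so the minimization over $O \in \Ort(2m)$ reduces to the maximization of $|\langle A, O B(k) O^\top\rangle|$. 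Substituting $Q = P^\top O$, which still ranges over $\Ort(2m)$, the problem becomes $\max_Q |\langle B(\alpha), Q B(k) Q^\top\rangle|$, with $B(\alpha)$ and $B(k)$ both in the normal form of Equation \eqref{eq:skewsymmetric_B}.

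The key computation is a direct evaluation: from $\Tr(L(a) L(b)^\top) = 2 a b$ and the block-diagonal structure, one gets
\begin{equation*}
\langle B(\alpha), B(k)\rangle = \frac{1}{2\|\alpha\|\|k\|}\sum_{i=1}^m 2 \alpha_i k_i = \bigl\langle \alpha/\|\alpha\|,\, k/\|k\|\bigr\rangle.
\end{equation*}
The next step is to show that this is in fact the maximum of $|\langle B(\alpha), Q B(k) Q^\top\rangle|$ over $Q \in \Ort(2m)$. This is a von Neumann–type trace inequality for skew-symmetric matrices: the $\Ort(2m)$-conjugation orbit of $B(k)$ consists of all skew-symmetric matrices with the same spectrum $\{\pm i k_j/(\sqrt{2}\|k\|)\}_j$, and among all such matrices, the inner product with the fixed normal-form $B(\alpha)$ is maximized when the blocks are matched in the same sorted order. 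Since both $(\alpha_i)$ and $(k_i)$ are non-negative and non-decreasing, the rearrangement inequality selects the identity pairing; the absolute value is immaterial because sign flips of individual blocks (realized by orthogonal conjugation) already achieve the same positive maximum.

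Combining these facts yields
\begin{equation*}
\min_{O \in \Ort(2m)} \bigl\|\projbracket{\spn{A}} - \projbracket{\spn{O B(k) O^\top}}\bigr\|^2 = 4 - 4\bigl\langle \alpha/\|\alpha\|, k/\|k\|\bigr\rangle = 2\,f\bigl((\alpha_i), (k_i)\bigr),
\end{equation*}
since $\|u - v\|^2 = 2 - 2\langle u, v\rangle$ for unit vectors. Therefore the outer minimization over $(k_1,\dots,k_m) \in \mathbb{N}^m_\mathrm{prim}$ in Equation \eqref{eq:minimization_grassmann} is equivalent to Equation \eqref{eq:lemstep3'_simplification_so(2)}, and their argmins coincide.

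The main obstacle is carefully justifying the orthogonal-conjugation maximum in the third step. The subtlety is that $\Ort(2m)$-conjugation permits more than mere block permutations and sign flips: it also allows continuous rotations mixing different blocks. A clean argument proceeds either by invoking the spectral characterization of skew-symmetric matrices (expressing $\langle X, Y\rangle = \Tr(X Y^\top)$ in terms of singular values and applying von Neumann's trace inequality, after pairing the complex-conjugate eigenvalues), or by a direct diagonalization argument using that simultaneous block-diagonalization is always achievable for a single pair of skew-symmetric matrices in canonical form.
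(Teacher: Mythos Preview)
Your proof is correct and follows the same route as the paper: reduce via the $d=1$ case of Lemma~\ref{lem:distance_grassmannian} to optimizing $\|A-OB(k)O^\top\|$ (equivalently, the inner product) over $O\in\Ort(2m)$, then invoke a spectral inequality. The paper dispatches this last step in one line by citing the Hoffman--Wielandt inequality, with the bound attained at $O=P$; this cleanly resolves what you flag as ``the main obstacle,'' and your von Neumann trace inequality is simply the dual formulation of the same fact.
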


\begin{proof}
According to Lemma \ref{lem:distance_grassmannian}, the distance on the Grassmannian can be reformulated as
\begin{equation}\label{eq:minimization_grassmann_so(2)}
2\big\| A \pm O B(k_1,\dots,k_{m}) O^\top \big\|^2,    
\end{equation}
where the sign is chosen to minimize the norm.
By Hoffman-Wielandt inequality \cite{hoffman2003variation}, the value 
$$
\big\| P B(\alpha_1,\dots,\alpha_{m}) P^\top - O B(k_1,\dots,k_{m}) O^\top \big\|^2
$$
is lower bounded by $2 \big\|x/\|x\|-y/\|y\|\big\|^2$, where $x=(\alpha_i)_{i=1}^{n/2}$ and $y=(k_i)_{i=1}^{n/2}$.
Moreover, this bound is attained when $O=P$.
\end{proof}

\begin{remark}
A tuple $(k_1,\dots,k_{m})$ being fixed, we recognize in Equation \eqref{eq:minimization_grassmann_so(2)} what is known as the \textit{two-sided orthogonal Procrustes problem with one transformation}: given two matrices $A$ and $B$, find an orthogonal matrix $O$ that minimizes $\|A-OB O^\top\|$.
When both matrices $A$ and $B$ are symmetric, it has been shown that the problem admits an explicit solution, based on the best pairing between their eigenvalues \citep{Schnemann1968OnTO,umeyama1988eigendecomposition}.
In our context, the matrices are skew-symmetric, but the problem is solved similarly, as shown in the proof of Lemma \ref{lem:step3'_simplification_so(2)}.
\end{remark}

The lemma makes explicit a specific issue, occurring when the vector $x=(\alpha_1,\dots,\alpha_{m})$ does not come from an integral vector, i.e., when it does not span a rational line.
In this case, the map $y \in \mathbb{N}^{m}_\mathrm{prim} \mapsto f(x,y)$ admits $0$ as an infimum, but does not admit a minimizer.
Consequently, \ref{item:step3}' is ill-defined.
Nevertheless, this detail is fixed by restricting $\mathbb{N}^{m}_\mathrm{prim}$ to the tuples of coordinates lower or equal to the parameter $\omega_\mathrm{max}$.
In other words, we restrict the set of representations to $\mathfrak{orb}(\SO(2),2m,\omega_\mathrm{max})$ defined in Equation \eqref{eq:def_orb_so(2)_maximalweight}.
Under this assumption, we will derive in Lemma \ref{lem:lower_bound_gamma_abelian}, in the next section, a lower bound on $f$.

From a computational point of view, we obtain $(\alpha_1,\dots,\alpha_{m})$ by reducing $A$ in normal form via Schur decomposition. 
The classical algorithm for such a reduction is based on Householder reflections, and implemented in the library \texttt{numpy} for Python \cite{harris2020array}, which we use.
As a last remark, we point out that, instead of considering all non-negative ordered tuples of $\mathbb{N}^{m}_\mathrm{prim}$, we can restrict our list to those which do not contain zero nor repeated elements.
Indeed, after \ref{item:step1}, the point cloud $X$ is supposedly included in an orbit $\O$ that spans the whole ambient space, which is possible only when the representation has positive and unique weights.

\begin{example}
\label{ex:algo_so(2)_dim4}
We reproduce the experiment of Example \ref{ex:running_ex_step3_variation}, now using the minimization of Equation \eqref{eq:lemstep3'_simplification_so(2)} instead of Equation \eqref{eq:minimization_grassmann}.
As expected, the weights $(1,4)$ are found to be optimal, with a cost of $0.003$.
Additionally, \ref{item:step4} yields $\HDmid{X}{\widehat{\O}_x} \approx 0.019$, a value considered as small.    
\end{example}

\begin{example}\label{ex:algo_so(2)_dim6}
We consider a representation of $\SO(2)$ in $\R^{10}$ with weights $(2, 4, 5, 7, 8)$, sample uniformly $600$ points on one of its orbits, that we corrupt with a Gaussian additive noise of standard deviation $\sigma = 0.03$, and apply Algorithm \ref{alg: 1} on it.
Among the $251$ positive and strictly increasing primitive integral vectors of $\mathbb{N}^5$ with coordinates at most $\omega_\mathrm{max}=10$, \ref{item:step3}' recovers successfully the weights of the representation. 
We indicate in Table \ref{table:algo_so(2)_dim6} the values of Equation \eqref{eq:lemstep3'_simplification_so(2)} for the top $12$ tuples.
Last, we get from \ref{item:step4} the approximations $\HD{X}{ \widehat{\O}_x} \approx 0.231$ and $\W_2\big(\mu_X, \mu_{\widehat{\O}} \big) \approx 0.389$.
As shown on Figure \ref{fig:alg1_R10}, both the estimated orbit $\widehat{\O}_x$ and the estimated measure $\mu_{\widehat{\O}}$, represented via a sublevel set of a kernel density estimator, appear to fit correctly the input point cloud.

\setlength\tabcolsep{3pt}
\begin{table}[ht]\center    
\begin{tabular}{||r | c c c c c c ||} 
\hline
Weights &  $(2, 4, 5, 7, 8)$ & $(2, 5, 6, 8, 9)$ &  $(3, 5, 7, 9, 10)$ &$(3, 6, 7, 9, 10)$ & $(3, 5, 6, 8, 9)$&$(2, 4, 5, 6, 7)$ \\ 
\hline
Costs & \bm{$0.028$}&$0.032$&$0.037$ & $0.037$&$0.038$ & $0.044$ \\ 
\hline
Weights &  $(3, 5, 6, 9, 10)$ & $(2, 5, 7, 9, 10)$ &  $(2, 3, 4, 5, 6)$ &$(2, 5, 6, 9, 10)$ & $(2, 6, 7, 9, 10)$&$(3, 5, 6, 8, 10)$ \\ 
\hline
Costs & $0.046$&$0.055$&$0.057$ & $0.058$&$0.058$ & $0.058$ \\ 
\hline
\end{tabular}
\caption{Results of \ref{item:step3}' in Example \ref{ex:algo_so(2)_dim6}. The best score is shown in bold.}
\label{table:algo_so(2)_dim6}
\end{table}

\begin{figure}[ht]
\centering
\begin{minipage}{0.49\linewidth}\center
\includegraphics[width=0.8\textwidth]{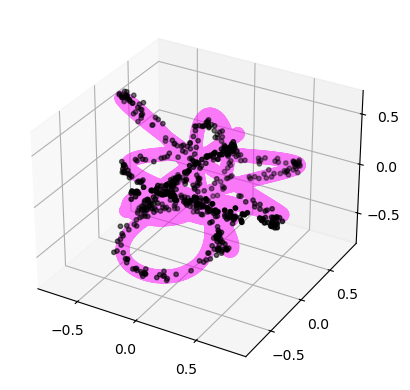}
\end{minipage}
\begin{minipage}{0.49\linewidth}\center
\includegraphics[width=0.8\textwidth]{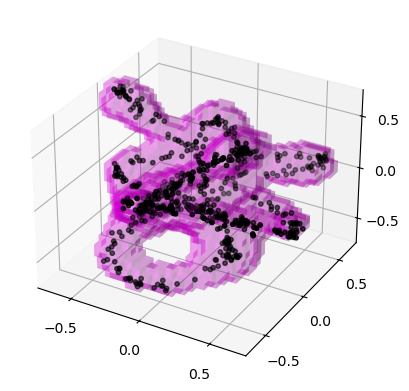}
\end{minipage}
\caption{Illustrations of Example \ref{ex:algo_so(2)_dim6}. The input point cloud in black, the estimated orbit $\widehat{\O}_x$ in magenta (\textbf{left}), and a sublevel set of a kernel density estimator on $\mu_{\widehat{\O}}$ (\textbf{right}).}
\label{fig:alg1_R10}
\end{figure}
\end{example}

\subsection{The algorithm for \texorpdfstring{$T^d$}{Td}}\label{sec: algorithm torus}

We still work in even dimension $n=2m$.
Employing standard ideas, we describe the set $\mathfrak{orb}(T^d,2m)$ as a direct generalization of $\mathfrak{orb}(\SO(2),2m)$, in which the role of primitive integral vectors is played by primitive lattices of $\Z^{m}$.
We also derive a simplification of \ref{item:step3}'.

\subsubsection{Orbit-equivalence for \texorpdfstring{$T^d$}{Td}}
As we have seen in Example \ref{ex:irreps_torus}, irreps of the $d$-dimensional torus are parametrized by a weight $\omega$ in $\Z^d$ (the \textit{weight lattice} of the torus), where the zero weight corresponds to the trivial representation and the other irreps have dimension $2$.
Similar to $\SO(2)$, the decomposition into irreps of a representation $\phi$ of $T^d$ in $\R^{2m}$ forms a unique multiset $\{\{\omega_1,\dots,\omega_m\}\}$ of elements of $\Z^d$.
We will restrict our discussion to almost-faithful representations, implying that the weights are nonzero and that $m\geq d$.
We will denote by $\Omega(\phi)$ the $d\times m$ matrix of integers formed by the weights, where the order is chosen arbitrarily.

Next, we denote by $(\pi_1,\dots,\pi_{d})$ the rows of $\Omega(\phi)$, alternatively defined as $\pi_i = (\omega_1^i,\dots,\omega_{m}^i)$ for $i\in [1\isep d]$.
They span an \textit{integral lattice} $\spn{\Omega(\phi)}$, defined as the discrete subgroup of $\Z^{m}$:
$$
\spn{\Omega(\phi)} = \bigg\{ \sum_{i=1}^d t_i \pi_i \mid \Omega(\phi)=(\pi_1,\dots,\pi_{d})^\top, ~(t_1,\dots,t_d) \in \Z^d \bigg\}.
$$
Since $\phi$ is almost-faithful, the lattice has rank $d$.
The collection of rank-$d$ lattices will be denoted by $\mathcal{R}^{m,d}$.
In addition, we denote by $\mathcal{R}^{m,d}_\mathrm{prim}$ the rank-$d$ \textit{primitive} lattices, i.e., those sublattices of $\Z^{m}$ that are not contained in any larger integral lattice of the same rank.
The group of \textit{signed permutations} $\{\pm1\}^m\rtimes\mathfrak{S}_{m}$ acts on both $\mathcal{R}^{m,d}$ and $\mathcal{R}^{m,d}_\mathrm{prim}$ by permutation of the axes and reflection along them.
Combinatorially, this corresponds to permuting of the columns of $\Omega(\phi) = (\omega_1,\dots,\omega_{m})$, or multiplying some of them by $-1$.
The following lemma states that the quotient set of primitive lattices classifies the orbit-equivalence classes of $T^d$.

\begin{lemma}
Let $d\in\mathbb{N}$ positive. The set of orbit-equivalence classes of almost-faithful representations of $T^d$ in $\R^{2m}$ is in correspondence with the rank-$d$ primitive sublattices of $\Z^{m}$, up to the action of the signed permutations on the axes.
That is to say, one has a bijection
\begin{equation*}
\mathrm{OrbRep}(T^d,\R^{2m}) \simeq \faktor{\mathcal{R}^{m,d}_\mathrm{prim}}{\{\pm1\}^m\rtimes\mathfrak{S}_{m}}.
\end{equation*}
Once a choice of representatives in this quotient has been chosen, one obtains an explicit choice of representatives for their pushforward Lie algebras:
\begin{equation}\label{eq:orb_Td}
\mathfrak{orb}(T^d,2m) = \big\{\big(B(\omega_1^i,\dots,\omega_{m}^i)\big)_{i=1}^d \mid \spn{\omega_1,\dots,\omega_m} \in \mathrm{OrbRep}(T^d,\R^{2m})\big\},
\end{equation}
where $B$ is defined in Equation \eqref{eq:skewsymmetric_B}.
\end{lemma}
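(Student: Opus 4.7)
My plan is to adapt the strategy from Lemma \ref{lem:orb_eq_so(2)}, replacing primitive integral vectors with primitive integral sublattices. The key observation is that once one works inside the Cartan subalgebra $\bigoplus_{k=1}^m \so(2) \subset \so(2m)$, the pushforward Lie algebra becomes an $\R$-subspace of $\R^m$, and orbit-equivalence translates to an action of the Weyl group of $\Ort(2m)$.

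\textbf{Reduction and normal form.} Since $T^d$ is compact, every representation is equivalent (hence orbit-equivalent) to an orthogonal one, so I may assume $\phi\colon T^d\to\SO(2m)$ is orthogonal. By the classification of irreducible representations of $T^d$ (Example \ref{ex:irreps_torus}), an almost-faithful representation decomposes as $\phi\simeq\bigoplus_{k=1}^m\phi_{\omega_k}$ for a multiset of nonzero weights $\omega_k\in\Z^d$, assembled into the matrix $\Omega(\phi)$. After an additional orthogonal change of basis, the pushforward algebra $\h=\d\phi(\t^d)$ lies in the standard Cartan $\bigoplus_{k=1}^m\so(2)$ and admits the basis $A_i=\diag(L(\omega_1^i),\dots,L(\omega_m^i))$ with $L$ as in Equation \eqref{eq:skewsymmetric_B}. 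Identifying this Cartan with $\R^m$ via $\diag(L(x_1),\dots,L(x_m))\leftrightarrow(x_1,\dots,x_m)$, the subspace $\h$ corresponds to $\spn_\R\{\pi_1,\dots,\pi_d\}\subset\R^m$, where the $\pi_i=(\omega^i_1,\dots,\omega^i_m)$ are the rows of $\Omega(\phi)$. Almost-faithfulness of $\phi$ is equivalent to the linear independence of these rows, i.e., to $\dim_\R\h=d$.

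\textbf{From subspace to primitive lattice.} I associate to $\phi$ the integer lattice $\Lambda_\phi=\spn_\Z\{\pi_1,\dots,\pi_d\}\subset\Z^m$ and its primitivization $\Lambda_\phi^{\mathrm{prim}}=(\h\otimes_\R\R)\cap\Z^m=(\Lambda_\phi\otimes\Q)\cap\Z^m$, which is a rank-$d$ primitive sublattice of $\Z^m$. Crucially, $\Lambda_\phi^{\mathrm{prim}}$ is determined by the $\R$-subspace $\h\subset\R^m$ alone, and conversely, $\h=\Lambda_\phi^{\mathrm{prim}}\otimes_\Z\R$, so the map $\h\leftrightarrow\Lambda_\phi^{\mathrm{prim}}$ is a bijection between $d$-dimensional subspaces of $\R^m$ defined over $\Q$ and rank-$d$ primitive sublattices of $\Z^m$. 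This observation handles the mismatch between almost-faithful (where $\Lambda_\phi$ need not be primitive, e.g.\ $\omega=(2,4)$) and primitive lattices: every orbit-equivalence class admits a canonical primitive-lattice representative.

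\textbf{Orbit-equivalence via Weyl group.} I then show that two such representations are orbit-equivalent iff their subspaces in $\R^m$ agree up to the action of $\{\pm1\}^m\rtimes\mathfrak{S}_m$. The backward direction is explicit: permuting the $m$ two-dimensional blocks of $\R^{2m}$ and reflecting within them are realized by conjugation in $\Ort(2m)$, and preserve membership in the Cartan while inducing signed permutations on $\R^m$. For the forward direction, suppose $\h'=M\h M^\top$ for some $M\in\Ort(2m)$. Since $\h$ and $\h'$ consist of pairwise commuting skew-symmetric matrices, each can be simultaneously block-diagonalized, with the joint normal form being the $d$-tuple $(\diag(L(\omega_k^i))_k)_i$, unique up to simultaneous signed permutation of the $m$ blocks (because $L(-a)=SL(a)S^\top$ for the reflection $S$, and the unordered joint spectrum determines the tuple up to such relabeling). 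Composing the two diagonalizations with $M$ yields an element of $\Ort(2m)$ that normalizes the Cartan and implements the required signed permutation on $\R^m$. Consequently $\Lambda_\phi^{\mathrm{prim}}$ and $\Lambda_{\phi'}^{\mathrm{prim}}$ differ by a signed permutation, establishing the bijection. Surjectivity is immediate: from any primitive $\Lambda$ with $\Z$-basis $\pi_1,\dots,\pi_d$, read off the columns as weights $\omega_k\in\Z^d$ to get an almost-faithful representation, and this gives the explicit set $\mathfrak{orb}(T^d,2m)$ of Equation \eqref{eq:orb_Td}.

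The main obstacle will be the uniqueness of the joint normal form for a tuple of commuting skew-symmetric matrices up to signed permutations, i.e., the statement that $\Ort(2m)$-conjugation on Abelian subalgebras of the Cartan factors through the Weyl group $\{\pm1\}^m\rtimes\mathfrak{S}_m$. This is essentially the spectral theorem for commuting normal matrices combined with the fact that each $2\times 2$ block $L(a)$ has eigenvalues $\pm ia$, so the reflection $a\mapsto -a$ generates all ambiguity within a block; everything else is routine.
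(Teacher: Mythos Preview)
Your approach is essentially the same as the paper's: both reduce to the standard Cartan $\bigoplus_{k=1}^m\so(2)\simeq\R^m$ and identify orbit-equivalence with the Weyl-group action of signed permutations. Your treatment is in fact more careful on one point: you explicitly pass to the primitivization $\Lambda_\phi^{\mathrm{prim}}=(\h\otimes\R)\cap\Z^m$, whereas the paper characterizes orbit-equivalence by ``one lattice is included in the other after signed permutation,'' which is loose (e.g., $\Omega=2I_2$ and $\Omega'=3I_2$ in $\R^4$ give the same pushforward algebra, yet neither of $2\Z^2$, $3\Z^2$ contains the other). One small imprecision on your side: the composite $N=P_2MP_1^{-1}$ need not normalize the full Cartan (take $\h=\spn\{\diag(L(1),L(1))\}$, whose centralizer in $\Ort(4)$ is all of $\U(2)$). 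What you actually need, and correctly flag as the main obstacle, is that the \emph{joint spectrum} $\{\pm\omega_k\}_{k=1}^m$ is a complete conjugation invariant of the commuting tuple; this alone forces the two subspaces $V,V'\subset\R^m$ to differ by a signed permutation, without any claim about $N$ itself.
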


In particular, when $d=1$, we recognize the correspondence of Lemma \ref{lem:orb_eq_so(2)}:
$$
\mathrm{OrbRep}(\SO(2),2m)
\simeq \faktor{\mathcal{R}^{m,1}_\mathrm{prim}}{\{\pm1\}^m\rtimes\mathfrak{S}_{m}},
$$ 
that is, the primitive elements of $\Z^m$ up to signed permutations, which is represented by the non-negative non-decreasing primitive tuples $\mathbb{N}^{m}_\mathrm{prim}$.
Besides, we point out that when $m=d$, there is only one primitive lattice: $\Z^d$ itself.

\begin{proof}
Determining whether two representations $\phi$ and $\phi'$ are equivalent, based on matrix encodings $\Omega(\phi)$ and $\Omega(\phi')$, is a classical question of representation theory, involving the notions of root systems and Weyl groups.
In the case of the torus, the problem is easily solved.
Indeed, the representations are equivalent if and only if $\Omega(\phi)$ can be obtained from $\Omega(\phi')$ via the action of the signed permutations on $\Z^m$.
That is to say, one has a bijection
\begin{equation*}
\mathrm{Rep}(T^d,\R^{2m}) \simeq \faktor{\Z^{m\times d}}{\{\pm1\}^m\rtimes\mathfrak{S}_{m}}.
\end{equation*}
In the case of orbit-equivalence, however, one is allowed to permute the rows $(\pi_1,\dots,\pi_{d})$ and $(\pi_1',\dots,\pi_{d}')$, or take linear combinations; in onther words, one must consider the lattice encodings $\spn{\Omega(\phi)}$ and $\spn{\Omega(\phi')}$.
Indeed, we see that the representations $\phi$ and $\phi'$ are orbit-equivalent if and only if one of the lattices $\spn{\Omega(\phi)}$ and $\spn{\Omega(\phi')}$ is \textit{included} in the other, after a series of permutation of the axes and reflection along them.
This proves the result for $\mathrm{OrbRep}(T^d,\R^{2m})$.
The second point of the lemma follows from the fact that, by decomposition into irreps, there exists $M\in\GL_{2m}(\R)$ such that the pushforward algebra $\h=\d\phi(\g)$ admits the following basis:
\begin{equation*}
\big( M B(\pi_1) M^{-1}, \dots, M B(\pi_d) M^{-1} \big).\qedhere
\end{equation*}
\end{proof}

\begin{remark}
We mention that the set of primitive lattices $\mathcal{R}^{m,d}_\mathrm{prim}$ enjoys an equidistribution property similar to the primitive vectors. 
To state this result, let us denote, for any integral lattice $L \subset \R^{m,d}$, the space it spans as $V_L$.
This is a linear subspace of $\R^{m}$, that we see as an element of $\G(d,m)$, the Grassmannian of $d$-dimensional planes of $\R^{m}$. 
We also denote by $d(L)$ the \textit{covolume} of $L$, that is, the $d$-dimensional volume of a fundamental region, seen in the subspace $V_L$.
As it has been shown, for instance in \cite{schmidt1998distribution,horesh2023equidistribution}, the sequence of sets
$$
\big\{ V_L \mid L \in \mathcal{R}^{m,d}_\mathrm{prim}, ~ d(L) \leq r\big\}
$$ 
equidistributes, when $r\to \infty$, to the uniform measure on $\G(d,m)$.
Besides, we remind the reader that, to apply our algorithm, one must input a finite subset of $\mathfrak{orb}(T^d,2m)$, the Lie algebras of a set of representatives of orbit-equivalence classes.
Therefore we choose a positive integer $\omega_\mathrm{max}$ and define $\mathfrak{orb}(T^d,2m,\omega_\mathrm{max})$ as those coming from representations with weights at most $\omega_\mathrm{max}$.
The equidistribution property implies that it is an `accurate' sample of $\mathfrak{orb}(T^d,2m)$.
\end{remark}

In practice, the set $\mathcal{R}^{m,d}_\mathrm{prim}$ can be generated as follows: we first consider the set of all $d\times m$ matrices with integer coefficients of absolute value at most $\omega_\mathrm{max}$, then discard those that do not span a primitive rank lattice of rank $d$, and finally discard matrices when they span the same linear subspace.
These conditions are easily verified via algebraic manipulations of their Smith normal form.
In our case, however, lattices must be considered up to the action of $\{\pm1\}^m\rtimes\mathfrak{S}_{m}$.
To do so, we simply associate to every lattice $L\in\mathcal{R}^{m,d}_\mathrm{prim}$ the collection of the $m!\times 2^m$ spaces $V_L$ it spans (that we encode as $m\times m$ projection matrices), after permutation and reflection of the axes.
Another lattice $L'$ will define an orbit-equivalent representation if these collections agree.
Ultimately, we store the smallest projection matrix (for the lexicographic order on the entries)---this is a complete invariant of orbit-equivalence.
We point out that, although yielding accurately the set $\mathfrak{orb}(T^d,2m,\omega_\mathrm{max})$, this procedure is arguably time and memory-consuming, and we expect that improvements could be defined.

\subsubsection{Reformulation of \ref{item:step3}'}\label{sec: algorithm torus simp}
We now inspect \ref{item:step3}' in the context of $G = T^d$.
Let $A_1,\dots,A_d$ be eigenvectors associated with the bottom $d$ eigenvalues of the \texttt{LiePCA} operator, as given by \ref{item:step2}, that we skew-symmetrize if they are already not, and orthonormalize.
We suppose that, after this operation, they still form a free family, i.e., $\spn{A_1,\dots,A_d}$ has dimension $d$.
We see this family as a point of the Grassmannian $\G(d,\so(2m))$ of $d$-planes of $\so(2m)$.
Notice that  the projection matrix
$$\projbracket{\spn{A_1 ,\dots,A_d }}$$ 
has dimension $m(2m-1)\times m(2m-1)$.
Besides, for any lattice $L \in \mathcal{R}^{m,d}_\mathrm{prim}$, we consider an orthonormal basis $\pi_1,\dots,\pi_d$ of its span and build the skew-symmetric matrices $B(\pi_i) = \diag(B(\pi_i^k))_{k=1}^{m}$ for all $i\in[1\isep m]$, where $B$ as been defined in Equation \eqref{eq:skewsymmetric_B}.
We denote their span by $\mathcal{B} = \spn{B(\pi_1),\dots,B(\pi_d)}$.
Given a matrix $O\in\Ort(2m)$, we consider the conjugate space $O \mathcal{B} O^\top$.
With these notations, Equation \eqref{eq:minimization_grassmann} can be rewritten as
\begin{equation}\label{eq:minimization_grassmann_td}
    \arg \min \big\| \projbracket{\spn{ A_1 ,\dots,A_d }} -  \projbracket{\spn{OB(\pi_1)O^\top,\dots,OB(\pi_d)O^\top}}\big\|^2
    ~~~~\mathrm{s.t.}~~~~ 
    \begin{cases}
      L \in \mathcal{R}^{m,d}_\mathrm{prim},\\
      O \in \Ort(2m).
    \end{cases}    
\end{equation}
Using Lemma \ref{lem:distance_grassmannian}, we can reformulate it as
\begin{align*}
2 \sum_{i=1}^d \bigg( 1 - \big\| \projbracket{\mathcal{B}}(OA_iO^\top)\big\|^2\bigg).
%=2 \sum_{i=1}^d \big\| \projbracket{\mathcal{B}^\top}(OA_iO^\top)\big\|^2.
\end{align*}
To further study this equation, let us denote by $\mathcal{C}$ the subspace of $\so(2m)$ spanned by the $(2\times2)$-block-diagonal skew-symmetric matrices.
In particular, $\mathcal{B}$ is a linear subspace of $\mathcal{C}$.
Consequently, to minimize the previous equation, the matrices $O$ must also minimize 
\begin{equation}\label{eq:simultaneous_reduction}
O \mapsto 2 \sum_{i=1}^d \big\| \projbracket{\mathcal{C}^\bot}(OA_iO^\top)\big\|^2.
\end{equation}
Let $O_*$ be a matrix that minimizes the equation.
For each $i\in[1\isep d]$, the matrices $\projbracket{\mathcal{C}^\bot}(O_*A_iO^\top_*)$ take the form $B(\rho_i)$ for some $\rho_i \in \R^{m}$.
The collection of $\rho_i$'s spans a rank-$d$ lattice in $\R^{m}$.
As a consequence, we see that Equation \eqref{eq:minimization_grassmann_td} is nothing but a distance between the lattices $\spn{\rho_i}_{i=1}^d$ and $\spn{\pi_i}_{i=1}^d$. We obtain a direct generalization of Lemma \ref{lem:step3'_simplification_so(2)}.

\begin{lemma}\label{lem:step3'_simplification_torus}
For $G=T^d$, Equation \eqref{eq:minimization_grassmann} is equivalent to
\begin{equation}\label{eq:lemstep3'_simplification_torus}
     \underset{L \in \mathcal{R}_d^{m}}{\mathrm{arg~min}} \sum_{k=1}^d f\bigg((\rho_i^k)_{i=1}^{m},~(\pi_i^k)_{i=1}^{m}\bigg), 
\end{equation}
where $f(x,y) = \big\|x/\|x\|-y/\|y\|\big\|^2$ and where $\spn{\pi_i}_{i=1}^d$ denotes any basis of the lattice $L$.
\end{lemma}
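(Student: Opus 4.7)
The plan is to parallel the proof of Lemma \ref{lem:step3'_simplification_so(2)}, with the added complication that the relevant Stiefel structure is $d$-dimensional rather than $1$-dimensional. First, since a direct check from \eqref{eq:skewsymmetric_B} gives $\langle B(\pi_i),B(\pi_j)\rangle=\langle\pi_i,\pi_j\rangle/(\|\pi_i\|\|\pi_j\|)$, we may choose $(\pi_1,\dots,\pi_d)$ to be an orthonormal basis of the real span $V_L\subset\R^m$ of $L$, which makes $(B(\pi_1),\dots,B(\pi_d))$ and hence $(OB(\pi_1)O^\top,\dots,OB(\pi_d)O^\top)$ orthonormal frames in $\so(2m)$ (conjugation is isometric). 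Applying Lemma \ref{lem:distance_grassmannian} then reformulates \eqref{eq:minimization_grassmann} as the maximisation of
\[
\sum_{i=1}^d\bigl\|\projbracket{\mathcal{B}}(O^\top A_iO)\bigr\|^2
\qquad\text{with}\qquad
\mathcal{B}=\spn{B(\pi_1),\dots,B(\pi_d)}.
\]

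The second step is to decouple the minimisation over $O$ from that over $L$ using the inclusion $\mathcal{B}\subset\mathcal{C}$, where $\mathcal{C}\subset\so(2m)$ denotes the subspace of $(2\times 2)$-block-diagonal skew-symmetric matrices. The identity $\projbracket{\mathcal{B}}=\projbracket{\mathcal{B}}\,\projbracket{\mathcal{C}}$, together with $\|A_i\|=1$ and Pythagoras, yields
\[
\bigl\|\projbracket{\mathcal{B}}(O^\top A_iO)\bigr\|^2
=1-\bigl\|\projbracket{\mathcal{C}^\bot}(O^\top A_iO)\bigr\|^2-\bigl\|\projbracket{\mathcal{B}^\bot\cap\mathcal{C}}(O^\top A_iO)\bigr\|^2.
\]
Since the $\projbracket{\mathcal{C}^\bot}$ contribution depends only on $O$, any joint minimiser $O_*$ must in particular minimise this term, that is, it must solve the simultaneous reduction problem \eqref{eq:simultaneous_reduction}. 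This accounts for the definition of $O_*$ and of the vectors $\rho_i\in\R^m$ announced in the excerpt.

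The third step identifies $\mathcal{C}$ with $\R^m$ through the isometry $\diag(L(r_1),\dots,L(r_m))\mapsto\sqrt{2}(r_1,\dots,r_m)$, under which $\projbracket{\mathcal{C}}(O_*^\top A_iO_*)$ maps to $\sqrt{2}\rho_i$ (by the very definition of $\rho_i$) and $\mathcal{B}$ maps to $V_L$. The residual cost therefore equals, up to an additive constant and a positive factor, the sum of squared distances $\sum_i\|\projbracket{V_L^\bot}\rho_i\|^2$, so the problem reduces to finding a primitive rank-$d$ sublattice of $\Z^m$ whose real span best fits the reduced vectors $\rho_1,\dots,\rho_d\in\R^m$.

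The hard part is converting this subspace-fitting problem into the pointwise expression $\sum_{k=1}^d f(\rho_k,\pi_k)$ of the lemma. Following the pattern of Lemma \ref{lem:step3'_simplification_so(2)}, I would apply a Hoffman--Wielandt-type inequality \cite{hoffman2003variation} to lower-bound each contribution by the squared distance between the normalised vectors $\rho_k/\|\rho_k\|$ and $\pi_k/\|\pi_k\|$, with equality attained when the lattice basis $(\pi_k)$ is aligned, via a $\GL_d(\Z)$-change of basis together with a signed permutation from $\{\pm1\}^m\rtimes\mathfrak{S}_m$, with the principal directions produced by the simultaneous reduction. Optimising the resulting expression jointly over the primitive lattice $L$ and over the admissible bases then recovers the equivalent formulation \eqref{eq:lemstep3'_simplification_torus}.
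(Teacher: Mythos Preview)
Your proposal mirrors the paper's argument almost exactly: the paper's ``proof'' is the discussion immediately preceding the lemma, which (i) applies Lemma~\ref{lem:distance_grassmannian} to rewrite the cost as $2\sum_i(1-\|\projbracket{\mathcal{B}}(OA_iO^\top)\|^2)$, (ii) uses the inclusion $\mathcal{B}\subset\mathcal{C}$ to split off the simultaneous-reduction term~\eqref{eq:simultaneous_reduction}, fixes a minimiser $O_*$, reads off the vectors $\rho_i$, and (iii) declares the residual to be a distance between the lattices $\spn{\rho_i}$ and $\spn{\pi_i}$. Your four steps follow this outline and supply more detail than the paper does (the explicit Pythagorean decomposition, the isometry $\mathcal{C}\simeq\R^m$).

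Two points deserve flagging, though they are shared with the paper rather than departures from it. First, the inference in your second step --- ``since the $\projbracket{\mathcal{C}^\bot}$ contribution depends only on $O$, any joint minimiser $O_*$ must in particular minimise this term'' --- is not a valid general deduction: in a joint problem $\min_{L,O}\bigl[f(O)+g(L,O)\bigr]$ a minimiser need not minimise $f$ alone. The paper makes the identical leap (``Consequently, to minimize the previous equation, the matrices $O$ must also minimize\dots''), and neither argument supplies the missing justification, namely that for the $O$ minimising~\eqref{eq:simultaneous_reduction} the lattice term can be driven (over the dense family of rational $d$-planes $V_L$) at least as low as for any competing $O$. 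The paper in fact treats the lemma as an algorithmic reformulation rather than a sharp equivalence, remarking right afterwards that the minimiser is ill-defined when $\spn{\rho_i}$ is irrational. Second, your step~4 remains a sketch: the passage from the subspace-fitting cost $\sum_i\|\projbracket{V_L^\bot}\rho_i\|^2$ to the pointwise sum $\sum_k f(\rho_k,\pi_k)$ over lattice bases is not an immediate Hoffman--Wielandt consequence, and the paper does not spell it out either, simply calling the result ``a direct generalization of Lemma~\ref{lem:step3'_simplification_so(2)}''. So your proposal is faithful to the paper's route, with the same informal joints.
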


\begin{remark}
We recognize in Equation \eqref{eq:simultaneous_reduction} the problem of \emph{simultaneously reducing} a $d$-tuple of skew-symmetric matrices.
When the $A_i$'s commute, this problem is solved by simply reducing each matrix to its normal form.
When they do not, the usual closed-form reduction algorithms cannot be used, and optimization-based methods have been proposed \cite{nolte2006identifying,meinecke2012simultaneous}.
We will employ the methods of the first of these articles: Equation \eqref{eq:simultaneous_reduction} is minimized by a direct optimization on $\Ort(n)$ via projected gradient descent.
\end{remark}

As was the case for $\SO(2)$, we see that the minimizer of Equation \eqref{eq:lemstep3'_simplification_torus} is ill-defined when $(\rho_i)_{i=1}^d$ is not a rational point of the Grassmannian.
However, this will make no difference in the application of our algorithm, since we restrict the lattices to those admitting a basis with coordinates upper bounded by the parameter $\omega_\mathrm{max}$.
In practice, we observe that this new formulation of \ref{item:step3}' allows for a significant reduction in computation time.
We provide two examples below, consisting of a representation of $T^2$ in $\R^6$ and of $T^3$ in $\R^{8}$.

\begin{example}\label{ex:algo_T2_R6}
Figure \ref{fig:alg1_t2_r6} shows a uniform $750$-sample from an orbit of the representation $\phi_{(1,1)}\oplus\phi_{(1,2)}\oplus\phi_{(2,1)}$ of $T^2$ in $\R^6$, to which we apply Algorithm \ref{alg: 1}.
Besides, we represent the eigenvalues of the \texttt{LiePCA} operator.
We identify on this graph two significantly small eigenvalues, in accordance with the dimension of the symmetry group.
We apply \ref{item:step3}', with the formulation of Equation \eqref{eq:lemstep3'_simplification_torus}, to the $18$ almost-faithful representations of $T^2$ in $\R^6$ up to orbit-equivalence, which we encode as a $2\times 3$ matrix. We indicate the best twelve values in the Table \ref{table:algo_T2_R6}.
The algorithm's output is $\left(\begin{smallmatrix}0&1&1\\2&-2&1\end{smallmatrix}\right)$, that is, the representation $\phi_{(0,2)}\oplus\phi_{(1,-2)}\oplus\phi_{(1,1)}$, which is indeed orbit-equivalent to the original one.
\ref{item:step4} yields the approximation $\HDmid{X}{\widehat{\O}_x} \approx 0.071$, indicating a strong accordance between the input point cloud and the output orbit.
As pointed out in Section \ref{sec:step4_hasdorff}, this approximation is obtained by sampling $K$ points on $\widehat{\O}_x$, with $K$ large, and calculating the Hausdorff distance between the finite point clouds. 
The bottom right graph of Figure \ref{fig:alg1_t2_r6} indicates the values of this approximation, for $K$ varying between $100$ and $90,000$.

\begin{table}[ht]\center
\begin{tabular}{||r | c c c c c c ||} 
\hline
Type &  $\left(\begin{smallmatrix}0&1&1\\2&-2&1\end{smallmatrix}\right)$ & $\left(\begin{smallmatrix}1&1&2\\-2&2&-1\end{smallmatrix}\right)$ &  $\left(\begin{smallmatrix}0&1&2\\2&-2&-1\end{smallmatrix}\right)$&$\left(\begin{smallmatrix}0&1&1\\1&-2&0\end{smallmatrix}\right)$  & $\left(\begin{smallmatrix}0&1&1\\1&-2&-1\end{smallmatrix}\right)$&$\left(\begin{smallmatrix}0&1&2\\2&-2&1\end{smallmatrix}\right)$ \\ 
\hline
Costs & \bm{$0.036$}&$0.136$&$0.198$ & $0.233$&$0.244$ & $0.312$ \\ 
\hline
Type &$\left(\begin{smallmatrix}0&1&2\\1&-2&-2\end{smallmatrix}\right)$ & $\left(\begin{smallmatrix}0&1&2\\1&-2&-1\end{smallmatrix}\right)$ &  $\left(\begin{smallmatrix}1&2&2\\-2&-2&1\end{smallmatrix}\right)$ &$\left(\begin{smallmatrix}1&1&1\\-2&-1&2\end{smallmatrix}\right)$ & $\left(\begin{smallmatrix}0&1&2\\1&-2&0\end{smallmatrix}\right)$&$\left(\begin{smallmatrix}0&1&1\\1&-2&1\end{smallmatrix}\right)$ \\ 
\hline
Costs & $0.331$&$0.348$&$0.388$ & $0.447$&$0.457$ & $0.472$ \\ 
\hline
\end{tabular}
\caption{Results of \ref{item:step3}' in Example \ref{ex:algo_T2_R6}. The best score is shown in bold.}
\label{table:algo_T2_R6}
\end{table}

\begin{figure}[ht]
\centering
\begin{minipage}{0.54\linewidth}\center
\includegraphics[width=0.99\textwidth]{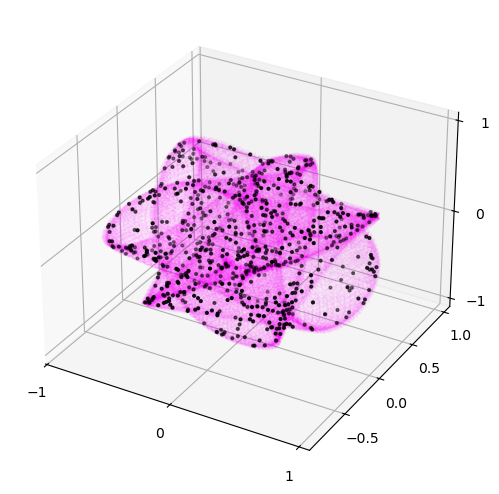}
\end{minipage}
\begin{minipage}{0.44\linewidth}\flushright
\includegraphics[width=0.9\textwidth]{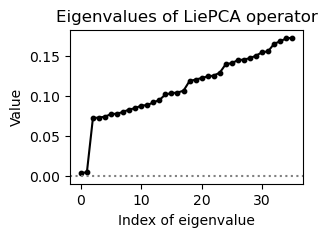}\\
\includegraphics[width=0.9\textwidth]{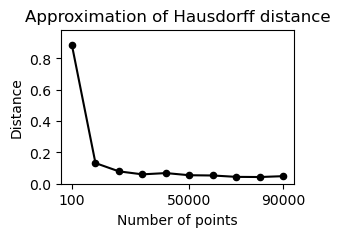}
\end{minipage}
\caption{Illustrations of Example \ref{ex:algo_T2_R6}. \textbf{Left:} the original point cloud (black) and the orbit estimated (magenta) by Algorithm~\ref{alg: 1}. The tight fit suggests that the algorithm converged to the correct representation.
\textbf{Top right:} eigenvalues of the \texttt{LiePCA} operator, where two small values are identified, corresponding to the dimension of the symmetry group, $T^2$. 
\textbf{Bottom right:} approximation of the Hausdorff distance between the point cloud and the orbit, as the approximation parameter $K$ increases (number of points sampled on the estimated orbit).}
\label{fig:alg1_t2_r6}
\end{figure}
\end{example}

\begin{example}\label{ex:algo_T3_R8}
We apply Algorithm \ref{alg: 1} to a $1500$-samples of an orbit of the representation $\phi_{(-1,0,-1)}\oplus\phi_{(1,0,0)}\oplus\phi_{(1,1,0)}\oplus\phi_{(-1,-1,-1)}$ of $T^3$ in $\R^8$.
There are $10$ non-orbit-equivalent and almost-faithful representations of $T^3$ in $\R^8$ with weights in $\{-1,0,1\}$.
We apply \ref{item:step3}' with the formulation of Equation \eqref{eq:lemstep3'_simplification_torus} to these representations and obtain as output $\phi_{(0,0,1)}\oplus\phi_{(1,0,-1)}\oplus\phi_{(1,-1,-1)}\oplus\phi_{(0,-1,1)}$, which is indeed orbit-equivalent to the initial representation.
\ref{item:step4} gives the Hausdorff distance $\HDmid{X}{\widehat{\O}_x} \approx 0.058$, a value which we consider as a good fit.
\end{example}

\subsection{The algorithm for \texorpdfstring{$\SU(2)$}{SU(2)} and \texorpdfstring{$\SO(3)$}{SO(3)}}\label{sec: algorithm so(3)}

We now turn to the two non-Abelian groups of interest in this article: $\SU(2)$ and $\SO(3)$.
For such, we start showing that the notion of orbit-equivalence coincides with that of equivalence for representations of these groups.
In opposition to $T^d$, we did not obtain a simplification of \ref{item:step3}'.
We give several applications of Algorithm \ref{alg: 1} on synthetic datasets.

\subsubsection{Orbit-equivalence for \texorpdfstring{$\SU(2)$}{SU(2)} and \texorpdfstring{$\SO(3)$}{SO(3)}}
As pointed out in Section \ref{subsec:structure_orbits}, if $G$ denotes a compact Lie group, then for any representation $\phi\colon G\rightarrow \GL_n(\R)$ and any surjective homomorphism $f\colon G\rightarrow G$, the representation $f\circ \phi$ is orbit-equivalent to $\phi$.
In the case of the torus $T^d$, these homomorphisms can be identified with $\GL_d(\R)\cap \M_d(\Z)$, the $d\times d$ integer matrices with nonzero determinant. 
This was the idea behind Sections \ref{sec: algorithm so(2)} and \ref{sec: algorithm torus}, where we eventually described $\mathrm{OrbRep}(T^d,n)$ as (a quotient of) the set of lattices.
In the case of $\SU(2)$ or $\SO(3)$, however, this analysis is greatly simplified because their surjective homomorphisms are automorphisms, which are in turn inner automorphisms.

\begin{lemma}
Two orbit-equivalent representations of $G = \SU(2)$ or $\SO(3)$ are equivalent.
\end{lemma}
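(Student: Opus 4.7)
The plan is to reduce orbit-equivalence to ordinary equivalence using three ingredients specific to $G=\SU(2)$ or $\SO(3)$: simplicity of $\g \cong \su(2)\cong\so(3)$, the absence of outer automorphisms of $\g$, and the connectedness of $G$ combined with the surjectivity of the exponential map from Section \ref{subsubsec:exponential_map_geometric}.

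First, I would normalize the data. Given orbit-equivalent representations $\phi,\phi'\colon G\to\GL_n(\R)$, the definition in Section \ref{subsec:structure_orbits} yields $M\in\GL_n(\R)$ with $\d\phi(\g) = M\,\d\phi'(\g)\,M^{-1}$. Replacing $\phi'$ by the equivalent representation $g\mapsto M\phi'(g)M^{-1}$, I may assume $\d\phi(\g)=\d\phi'(\g)$ as linear subspaces of $\gl_n(\R)$. The kernels $\ker\d\phi$ and $\ker\d\phi'$ are ideals of $\g$; since $\g$ is simple, each kernel is either $0$ or all of $\g$. In the latter case both derived representations vanish, hence (as $G$ is connected and $\exp$ is surjective) both $\phi$ and $\phi'$ are trivial, so equivalent. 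Otherwise $\d\phi$ and $\d\phi'$ are injective, and I may define the Lie algebra automorphism
\[
f \;=\; (\d\phi)^{-1}\circ \d\phi' \;\in\; \Aut(\g).
\]

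Next comes the key algebraic step: every automorphism of $\g$ is inner. For $\so(3)$ this is elementary via the identification $(\so(3),[\cdot,\cdot])\cong(\R^3,\times)$. An automorphism must preserve the Killing form, hence lie in $\Ort(3)$; but under $\psi\in\Ort(3)$ one has $\psi(X)\times\psi(Y)=\det(\psi)\,\psi(X\times Y)$, so preservation of the bracket forces $\det(\psi)=+1$. Thus $\Aut(\so(3))=\SO(3)=\Inn(\so(3))$, and the same conclusion transfers to $\su(2)$ via the isomorphism of Example \ref{ex:isom_su(2)_so(3)}. Consequently there exists $g_0\in G$ such that $f = \Ad(g_0)$.

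Finally I would lift to the group level. Differentiating the identity $\phi(g h g^{-1})=\phi(g)\phi(h)\phi(g)^{-1}$ yields the intertwining relation $\d\phi\circ\Ad(g) = \Ad(\phi(g))\circ \d\phi$ on $\g$. Applying this with $g=g_0$ and setting $M_0 = \phi(g_0)$ gives
\[
\d\phi' \;=\; \d\phi\circ\Ad(g_0) \;=\; \Ad(M_0)\circ \d\phi,
\]
that is, $\d\phi'(X)=M_0\,\d\phi(X)\,M_0^{-1}$ for all $X\in\g$. Exponentiating via the commutative diagram \eqref{eq:derived_homomorphism_representation} yields $\phi'(\exp X) = M_0\,\phi(\exp X)\,M_0^{-1}$ for all $X\in\g$, and since $G$ is connected and compact, $\exp\colon\g\to G$ is surjective, whence $\phi' = M_0\,\phi(\cdot)\,M_0^{-1}$. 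Composed with the initial conjugation by $M$, this exhibits $\phi$ and the original $\phi'$ as equivalent. The main obstacle is really the algebraic step $\Aut(\g)=\Inn(\g)$; everything else is a routine lift from $\g$ to $G$ using simplicity and connectedness.
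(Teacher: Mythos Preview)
Your proof is correct and follows the same strategy as the paper: reduce orbit-equivalence to an automorphism of $\g$, use that every automorphism of $\su(2)\cong\so(3)$ is inner, and then pass back to the group level. The only differences are in execution---you argue $\Aut(\so(3))=\SO(3)$ elementarily via the cross-product identification and lift via surjectivity of $\exp$, whereas the paper invokes the Dynkin type (no outer automorphisms for type $B_n$) together with a lifting theorem for Lie algebra automorphisms---but the architecture is identical.
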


\begin{proof}
Consider two representations $\phi,\phi'\colon G \rightarrow \GL_n(\R)$, with $\h=\d\phi(\g)$ and $\h'=\d\phi'(\g)$ their pushforward Lie algebras.
We suppose that $\phi$ and $\phi'$ are orbit-equivalent, meaning that there exists a matrix $M\in\GL_n(\R)$ such that $h' = M \h M^{-1}$.
In particular, the conjugation by $M$ induces an automorphism of $\h$, that we denote by $u$. 
It is a property of the groups $\SU(2)$ and $\SO(3)$, as a consequence of Ado's theorem \cite[Th.~7.4.1]{hilgert2011structure}, that any automorphism of their Lie algebra comes from an automorphism of the group. 
That is, there exists an automorphism $f\colon G \rightarrow G$ such that $\d f = u$.
Moreover, because these groups are of type $B_n$ in Dynkin diagram's classification of Lie algebras, we know that their automorphisms are all inner automorphisms.
In other words, there exists $g\in G$ such that $\forall h\in G, f(h) = ghg^{-1}$.
The matrix $M' = \phi(g) \in \GL_n(\R)$ provides an equivalence between the representations $\phi$ and $\phi'$.
\end{proof}

We are left with describing explicitly the set of equivalence classes of representations.
Let us start with the group $\SU(2)$.
Referring to Example \ref{ex:irreps_su(2)}, it admits one irrep in $\R^k$ for each $k\geq 1$ such that $k\equiv 1 \pmod{2}$ or $k\equiv 0 \pmod{4}$. 
We denote the corresponding representation by $\phi_k$.
We deduce that the equivalence classes of representations of $\SU(2)$ in $\R^n$ are given by the \textit{partitions} of $n$ into integers in $(2\mathbb{N}+1)\cup 4\mathbb{N}$.
Let us denote this set as $\mathrm{Part}\big(n,(2\mathbb{N}+1)\cup 4\mathbb{N}\big)$.
By letting $B(k) = (B(k)_1, B(k)_2, B(k)_3)$ be the basis of the pushforward Lie algebra of $\phi_k$ given in Appendix \ref{app: irreducible reps su(2)}, we obtain the explicit description
\begin{equation}\label{eq:orb_SU2}
    \mathfrak{orb}(\SU(2),n)
= \big\{\big(B(k_1), \dots, B(k_p)\big) \mid (k_1,\dots,k_p) \in \mathrm{Part}\big(n, (2\mathbb{N}+1)\cup 4\mathbb{N}\big)\big\}.
\end{equation}
In the case of $\SO(3)$, only the irreps with $k=1$ or $k\equiv 0 \pmod{4}$ are considered, yielding
\begin{equation}\label{eq:orb_SO3}
\mathfrak{orb}(\SO(3),n)
= \big\{\big(B(k_1), \dots, B(k_p)\big) \mid (k_1,\dots,k_p) \in \mathrm{Part}\big(n, 2\mathbb{N}+1\big)\big\}.~~~~~~~~~~
\end{equation}
We note that the cardinality of both these sets is lower than the number of partitions of the integer $n$, which we know is equivalent to $\exp(\pi\sqrt{2n/3})/(4\sqrt{3}n)$.
Moreover, these sets being finite, we do not need to restrict them to $\omega_\mathrm{max}$, as we did for $\mathfrak{orb}(\SO(2),n,\omega_\mathrm{max})$ and $\mathfrak{orb}(T^d,n,\omega_\mathrm{max})$.

\subsubsection{Applications}
As described above, the classes of orbit-equivalent representations of $\SU(2)$ and $\SO(3)$ are given by the partitions of the integer $n$, respectively with odd terms and multiples of four, and only odd terms.
In practice, these partitions are easily computed, for instance, using the algorithm of \cite{kelleher2009generating}.
\ref{item:step3} of Algorithm \ref{alg: 1} is put into practice by parsing this list of partitions.
This is illustrated in the seven examples below, for representations in the Euclidean space $\R^n$ with $n=5,7,11,6,8,9$ and $16$.

\begin{example}\label{ex:SO3_R5}
Using the explicit formulae for irreps of $\so(3)$ in Appendix \ref{app: irreducible reps su(2)}, we apply the algorithm for the data $X$ of $1500$ points created from an orbit of the irreducible representation of $\SO(3)$ in $\R^5$. Namely, the pushforward Lie algebra is spanned by
\begin{equation*}
    \left(\begin{matrix}
        0 & 0 & 0 & -1 & 0\\
        0 & 0 & 1 & 0 & 0 \\
        0 & -1 & 0 & 0 &0\\
        1 & 0 & 0 & 0 &-\sqrt{3}\\
        0 & 0 & 0 & \sqrt{3} & 0
    \end{matrix}\right),~~~
    \left(\begin{matrix}
        0 & 0 & 1 & 0 & 0\\
        0 & 0 & 0 & 1 & 0 \\
        -1 & 0 & 0 & 0 &-\sqrt{3}\\
        0 & -1 & 0 & 0 &0\\
        0 & 0 & \sqrt{3} & 0 & 0
    \end{matrix}\right),~~~\mathrm{and}~~~
    \left(\begin{matrix}
        0 & -2 & 0 & 0 & 0\\
        2 & 0 & 0 & 0 & 0 \\
        0 & 0 & 0 & -1 &0\\
        0 & 0 & 1 & 0 &0\\
        0 & 0 & 0 & 0 & 0
    \end{matrix}\right).
\end{equation*}
The left side of Figure \ref{fig: type j2} indicates the result of spectral analysis applied to this data, which makes it clear that a three-dimensional Lie algebra generates it, making $\su(2)$ a good candidate. Through \ref{item:step3}, we find that the three non-trivial representations of $\SU(2)$ in $\R^5$, associated with the partitions $(5)$, $(1,1,3)$ and $(3,4)$, have respective cost $0.001$, $0.011$ and $0.015$, confirming that the point cloud has been sampled on the irrep of $\SO(3)$.
Through \ref{item:step4}, we obtain the distance $\HDmid{X}{\widehat{\O}_x}\approx 0.088$, evidence of a successful run of the algorithm.
We give the graph of the estimated Hausdorff distances for different sample sizes on the right side of Figure \ref{fig: type j2}.

\begin{figure}[ht]
\centering
\begin{minipage}{0.49\linewidth}\center
\includegraphics[width=0.75\textwidth]{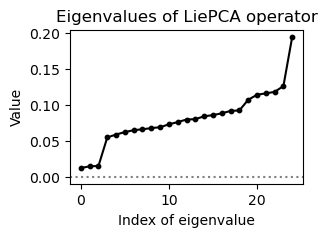}
\end{minipage}
\begin{minipage}{0.49\linewidth}\center
\includegraphics[width=0.75\textwidth]{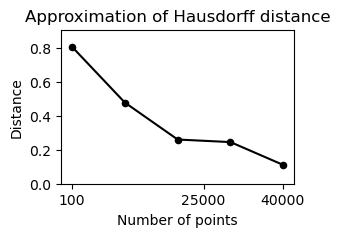}
\end{minipage}
\caption{Illustrations of Example \ref{ex:SO3_R5}. \textbf{Left:} eigenvalues of the \texttt{LiePCA} operator. \textbf{Right:} estimated Hausdorff distance $\HDmid{X}{\widehat{\O}_x}$ for varying numbers of samples.}\label{fig: type j2}
\end{figure}
\end{example}

\begin{example}\label{ex:rep_SU2_R7}
The non-trivial representations of $\SU(2)$ in $\R^{7}$ are given by the partitions 
$$
(1, 1, 1, 1, 3),~
(1, 1, 1, 4),~
(1, 1, 5),~
(1, 3, 3),~ 
(3, 4),
~\mathrm{and}~
(7).
$$
Only the last two representations yield orbits that span the ambient space $\R^{7}$.
We sample $3000$ points on them, denote the point clouds $X_{(3,4)}$, and $X_{(7)}$, and run Algorithm \ref{alg: 1}.
The \texttt{LiePCA} operator, in Figure \ref{fig:repsR7_LiePCA}, exhibits respectively four and three small eigenvalues.
According to the results of \ref{item:step3} in Table \ref{table:rep_SU2_R7}, the algorithm has succeeded in identifying the representations.
\ref{item:step4} yields estimated orbits for $X_{(3,4)}$ and $X_{(7)}$ with Hausdorff distances approximately $0.096$ and $0.168$.
These values can be considered small, as seen in Figure \ref{fig:repsR7_orbits}.

\begin{table}[ht]\center
\begin{tabular}{||r | c | c | c| c| c| c||} 
 \hline
Representation &  $(1, 1, 1, 1, 3)$ &$(1, 1, 1, 4)$ &$(1, 1, 5)$ & $(1, 3, 3)$ & $(3, 4)$ &$(7)$
\\\hline Cost for $X_{(3,4)}$ &  $0.008$ &$0.0.013$ &$0.003$ &$0.003$ &\bm{$9\times10^{-6}$} & $0.005$
\\\hline Cost for $X_{(7)}$ & $0.013$ & $0.014$& $0.010$ & $0.012$& $0.012$& \bm{$0.008$} 
\\\hline
\end{tabular}
\caption{Results of \ref{item:step3} in Example \ref{ex:rep_SU2_R7}. The best score in each raw is shown in bold.}
\label{table:rep_SU2_R7}
\end{table}

\begin{figure}[ht]
\centering
\begin{minipage}{0.49\linewidth}\center
\includegraphics[width=0.75\textwidth]{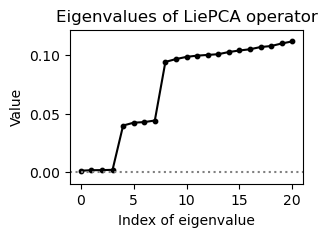}
\end{minipage}
\begin{minipage}{0.49\linewidth}\center
\includegraphics[width=0.8\textwidth]{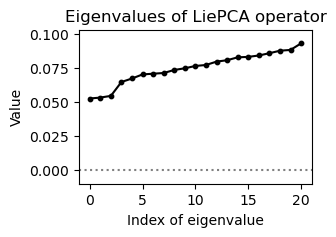}
\end{minipage}
\caption{Illustrations of Example \ref{ex:rep_SU2_R7}. Eigenvalues of \texttt{LiePCA} on orbits of the representations $(3,4)$ and $(7)$ of $\SU(2)$ in $\R^7$ (restricted to skew-symmetric matrices for visualization ease).}\label{fig:repsR7_LiePCA}
\end{figure}

\begin{figure}[ht]
\centering
\begin{minipage}{0.49\linewidth}\center
\includegraphics[width=0.8\textwidth]{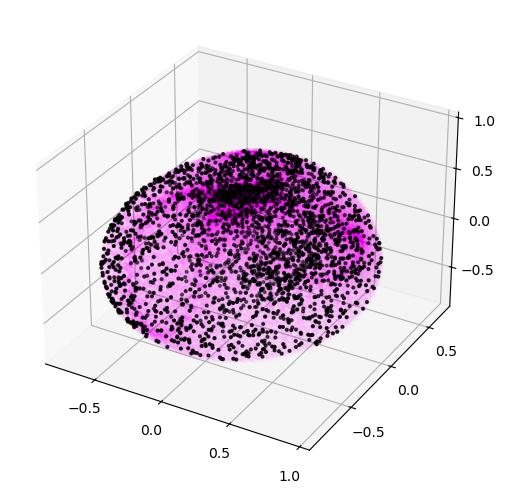}
\end{minipage}
\begin{minipage}{0.49\linewidth}\center
\includegraphics[width=0.8\textwidth]{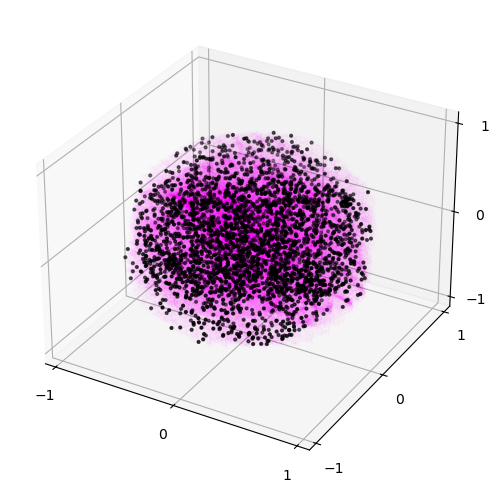}
\end{minipage}
\caption{Illustrations of Example \ref{ex:rep_SU2_R7}. \textbf{Left:} a sample of an orbit of the representation $(3,4)$ of $\SU(2)$ in $\R^7$ (black) and the estimated orbit by Algorithm \ref{alg: 1} (magenta).  \textbf{Right:} same for the representation of partition $(7)$.}\label{fig:repsR7_orbits}
\end{figure}
\end{example}

\begin{example}
We reproduce Example \ref{ex:rep_SU2_R7}, now in $\R^{11}$. The representations of $\SU(2)$ are given by the following admissible partitions of $11$, where we omit the $1$'s:
\begin{align*}
&(3),~
(4),~
(5),~ 
(3, 3),~
(3, 4),~ 
(7),~ 
(3, 5),~
(4, 4),~
(8),~
(3, 3, 3),~
(4, 5),~\\
&(9),~
(3, 3, 4),~
(3, 7),~
(5, 5),~
(3, 3, 5),~
(3, 4, 4),~
(3, 8),~
(4, 7),~
~\mathrm{and}~
(11).
\end{align*}
We sample $5000$ points on orbits of the last three representations.
In each case, Algorithm~\ref{alg: 1} outputs the correct representation, with Hausdorff distances respectively $0.077$, $0.107$, and $0.184$.
Repeating the same experiment with drawing only $3000$ points yields $0.122$, $0.182$ and $0.239$.
\end{example}

\begin{example}\label{ex:rep_SU2_R6}
Let $\O$ be the set of $2 \times 3$ matrices with orthonormal rows, embedded in $\R^6$ by flattening.
We create a set $X$ of $3000$ points uniformly sampled on $\O$, on which we apply Algorithm \ref{alg: 1}.
Through \ref{item:step1}, we observe that the covariance matrix of $X$ has eigenvalues between $0.159$ and $0.174$, hence, we do not perform dimension reduction. 
The \texttt{LiePCA} operator, computed in \ref{item:step2}, presents three significantly small eigenvalues.
Among the four non-trivial representations of $\SU(2)$ in $\R^6$, \ref{item:step3} detects the partition $(3,3)$ as that yielding the minimal cost.
It corresponds to a representation of $\SO(3)$.
Finally, \ref{item:step4} outputs the distance $\HDmid{X}{\widehat{\O}_x} \approx 0.091$, witnessing a good fit between the point cloud and the estimated orbit.
These results are coherent with the fact that the initial set $\O$ is indeed an orbit of a representation of $\SO(3)$ in $\R^6$, namely, that it comes from the action of $\SO(3)$ on $\M_{2,3}(\R)$ by simultaneous multiplication of the rows.
The orbit is the Stiefel manifold $\V(2,\R^3)$, homeomorphic to $\SO(3)$.
\end{example}

\begin{example}\label{ex:rep_SU2_R8}
We reproduce Example \ref{ex:rep_SU2_R6} in higher dimension: $\O$ now is the Stiefel manifold $\V(2,\R^4)$ seen in $\M_{2,4}(\R)$, i.e., it is the $2\times 4$ matrices with orthonormal rows. It carries a transitive action of $\SO(4)$.
We generate a $10,000$-sample $X$.
By applying Algorithm \ref{alg: 1} with $G = \SO(3)$, we expect to identify a sub-action induced by an inclusion $\SO(3)\hookrightarrow\SO(4)$.
Up to orbit-equivalence, this group admits five distinct non-trivial representations in $\R^8$, indexed by
\[
(1, 1, 1, 1, 1, 3), ~(1, 1, 1, 5), ~(1, 7), ~(1, 1, 3, 3), ~~\mathrm{and}~~ (3, 5).
\]
\ref{item:step3} recognizes $(1, 1, 3, 3)$ as the optimal representation.
By choosing an arbitrary point $x\in X$, \ref{item:step4} yields the distance $\HDmid{X}{\widehat{\O}_x}\approx 1.838$, showing that the orbit does not approximate $X$ correctly. 
This is expected, since $\SO(3)$ does not act transitively on $\V(2,\R^4)$ (it has dimension 5).
The opposite Hausdorff distance is smaller: $\HDmid{\widehat{\O}_x}{X}\approx0.571$.
This suggests that we have generated a \textit{subset} of $\O$.
To verify this statement, we generate the set $\widehat{\O} = \bigcup_{x\in X}\widehat{\O}_x$.
The fitting is now correct: $\HDmid{X}{\widehat{\O}} = 0$ and $\HDmid{\widehat{\O}}{X}\approx 0.642$.
Note that this latter value is not expected to be significantly small, since, differently from $\HDmid{X}{\widehat{\O}}$, it also reflects the sampling error in $X$.
We emphasize that this and Example \ref{ex:rep_SU2_R16} have a major distinction from the other examples: the underlying actions are non-transitive. 
In particular, the Hausdorff distances obtained are not comparable.
While we will propose, in Section \ref{subsubsec:statistics}, a systematic analysis of the values of Hausdorff distances in the case of transitive actions, we did not push the study further for non-transitive ones.
\end{example}

\begin{example}\label{ex:rep_SU2_R9}
In the same vein as Examples \ref{ex:rep_SU2_R6} and \ref{ex:rep_SU2_R8}, let $\O$ be the $3\times3$ special orthogonal matrices, embedded in $\R^9$, and $X$ a $3000$-sample set.
Since $\SO(3)$ acts transitively on itself, $\O$ can be seen as an orbit of it.
While running Algorithm \ref{alg: 1}, \ref{item:step2} shows a \texttt{LiePCA} operator with six almost-zero eigenvalues.
This coincides with the dimension of the isometry group of $\SO(3)$ given in Section \ref{subsec:symmetry_group}, equal to $\SO(3)\rtimes\SO(3)\times\{\pm 1\}$.
Among the twelve non-trivial representations of $\SU(2)$ in $\R^9$, \ref{item:step3} yields the costs given in Table \ref{table:rep_SU2_R9}, where we omit the $1$'s in the partitions:
in particular, the optimum is given by the partition $(3, 5)$.
However, this Lie algebra does not seem to generate $X$: the Hausdorff distance, equal to $\HDmid{X}{\widehat{\O}_x} \approx 2.658$, is large. 
In comparison, the distance from the orbit to $X$ is small: $\HDmid{\widehat{\O}_x}{X} \approx 0.543$. 
This indicates that the representation $(3, 5)$ yields an orbit that is only a subset of $\O$.
As seen in the previous table, another good candidate is the representation $(3,3,3)$.
We run \ref{item:step4} again, now using the optimal Lie algebra computed for this latter representation.
The set $X$ is now well approached, since $\HDmid{X}{\widehat{\O}_x} \approx 0.061$.
These results suggest that we have detected two distinct linear actions of $\SO(3)$ on itself: first, the action by conjugation, which is not transitive; and second, the action by left or right multiplication, which is.

\begin{table}[ht]\center
\begin{tabular}{||r | c | c | c| c| c| c||} 
 \hline
Representation &  $(3, 5)$ &$(3,3,3)$ &$(4, 5)$ & $(8)$ & $(5)$ &$(7)$
\\\hline Cost &  \bm{$2\times 10^{-5}$} &\bm{$4\times 10^{-5}$} &$0.001$ &$0.001$ &$0.03$ & $0.004$
\\\hline Representation &  $(9)$ &$(3,3)$ &$(3,4)$ & $(4,4)$ & $(3)$ &$(4)$
\\\hline Cost & $0.004$ & $0.006$& $0.007$ & $0.009$& $0.011$& $0.013$
\\\hline
\end{tabular}
\caption{Results of \ref{item:step3} in Example \ref{ex:rep_SU2_R9}. The two best scores are shown in bold.}
\label{table:rep_SU2_R9}
\end{table}
\end{example}

\begin{example}\label{ex:rep_SU2_R16}
For our last example, we consider the embedding $\G(2,\R^4) \hookrightarrow \R^{4\times 4}$ of the Grassmann manifold into the Euclidean space via projection matrices, and sample $5000$ points on it.
Just as it was the case for the Stiefel manifold $\V(2,\R^4)$ in Example \ref{ex:rep_SU2_R8}, the group $\SO(4)$ acts transitively on $\G(2,\R^4)$.
By applying Algorithm \ref{alg: 1} with $G=\SO(3)$, we identify $(1,3,5)$ as the optimal representation. 
However, this action cannot be transitive, since $\G(2,\R^4)$ has dimension 4.
Instead of computing $\widehat{\O}_x$ for a certain data point $x$, we consider their union $\widehat{\O}$.
We eventually get $\HDmid{\widehat{\O}}{X} \approx 0.453$, a value we consider small, and indicating that we have found, indeed, a non-transitive action.
\end{example}

\subsection{Additional algorithmic considerations}\label{subsec:additional}

\subsubsection{Application of the algorithm for a list of groups}\label{sec: list}
Up to now, we have considered a model in which the compact Lie group $G$ with a representation that generates the data is known beforehand. As we will illustrate in some of the examples of Section \ref{sec:applications}, it is not impossible to conceive applications in which, although the precise representation is unknown, the compact Lie group responsible for it might be correctly guessed through the very statement of the task.

However, there are situations, especially in more exploratory data science problems, in which we might suspect the data to lie within a representation of a compact Lie group, but where the exact group is unknown. In these cases, we might be interested in determining not only the representation $\phi$ but $G$ itself, from a list of potential candidates $\{G_1,\dots,G_k\}$. \texttt{LiePCA}, as established in \cite{DBLP:journals/corr/abs-2008-04278}, is unable to solve this task because it fails to estimate $\mathfrak{h}$ as a Lie algebra. On the other hand, the last steps of Algorithm \ref{alg: 1} can be used to tackle this problem. 

In this context, successive applications of {\texttt{LieDetect}} can be used to determine which group of the list is isomorphic to $G$: for each $G_i$, $1\leq i\leq k$, run Algorithm \ref{alg: 1} and save the Hausdorff and/or Wasserstein distance estimated at the end of \ref{item:step4}. Then, the most likely group of $\{G_1,\dots,G_k\}$ to generate the orbit $\mathcal{O}$ is arguably the one that attains the smallest distance. 

Two remarks are worth mentioning here. Even though the above rationale is valid, we might still worry about cases in which two non-isomorphic compact Lie groups share some representations in $\R^n$. For example, any compact Lie group $G$ of more than one connected component will have the same representations (up to orbit-equivalence) of its principal connected component. Moreover, even if we restrict our attention to connected compact Lie groups, this sort of ill-posedness might still happen. This is the case, for example, of the groups $\SU(2)$ and $\SO(3)$ which, although non-isomorphic, have the same irreducible representation on spaces $\R^n$ for odd $n$. In this case, notice that the two Lie groups involved have the same Lie algebra, so the set of representations of one needs to be contained in the set of representations of the other. Another example is when the two groups, although having different representations in $\R^n$, have representations that are orbit-equivalent. That happens, for instance, with any two representations of $\SO(2)$ and $T^2$ in $\R^2$. Unfortunately, situations like these are of no rescue: no algorithm will ever be able to tell which of the two groups is more likely to generate the dataset from the point cloud only. Therefore, when considering such a list of candidates, or the output distances $d_i$, such subtleties must be kept in mind.

The second remark has to do with the choice of the list $\{G_1,\dots,G_k\}$ itself. Again, we are here assuming that the precise $G$ is unknown, however, as reviewed in Section \ref{subsec:step2}, an upper bound of its \textit{dimension} $d$ can be estimated through the \texttt{LiePCA} operator $\Sigma$ by identifying the number of very small eigenvalues it has. Let us denote this estimation by $\widehat{d}$. Then, because we are restricting the analysis to compact Lie groups only, the list of possible candidates to generate the orbit, that is, the list of Lie groups of dimension at most $\widehat{d}$, becomes finite. In fact, as it is well-known in literature (see Theorem 4 in Section 10.7 of \cite{procesi2007lie}), the full list of all connected compact Lie groups is given by the three classes of classical Lie groups (the fundamental groups of $\SO(n)$, the groups $\SU(n)$ and the compact symplectic groups), the five `exceptional Lie groups', their finite direct products, their finite covers, and their quotients by finite central subgroups. Because of the redundancy of representations of finite covers, quotients, and connected components (up to orbit-equivalence), we have that, for every $\widehat{d}$, there is but a finite list of compact Lie groups of dimension at most $\widehat{d}$. Table \ref{table:groups_per_dimension} shows all compact Lie groups of dimension up to $5$ capable of generating representations with different orbits, that is, this is the list of all possible groups up to dimension 5 that can be distinguished from one another with $\texttt{LieDetect}$.

\begin{table}[ht]\center
\begin{tabular}{||c | c ||} 
\hline
Dimension &  Compact Lie groups
\\\hline 1 & $\SO(2)$
\\\hline 2 & $T^2$
\\\hline 3 & $\SU(2),T^3$
\\\hline 4 & $\SO(2)\times \SU(2),T^4$
\\\hline 5 & $T^2\times \SU(2),T^5$
\\\hline
\end{tabular}
\caption{List of compact Lie groups of small dimension.}
\label{table:groups_per_dimension}
\end{table}

We notice that, except for the products, the set of real irreps of all compact Lie groups in Table \ref{table:groups_per_dimension} were worked out in this section. One might imagine that the classifications of irreps for the direct products could be achieved from the irreps of the terms, as was the case for $T^d$. That is, one might guess that if $G_1$ and $G_2$ are compact Lie groups then any irrep of $G_1\times G_2$ equals $\phi_1\otimes \phi_2$, where $\phi_1$ and $\phi_2$ are irreps of $G_1$ and $G_2$, respectively. 
Unfortunately, although the above result is true, it is only for representations of Lie groups over closed fields, as it depends on Schur's lemma. Consequently, in our scenario of real representations, one may only assume that $\text{irrep}(G_1)\otimes \text{irrep}(G_2)\subset \text{irep}(G_1\times G_2)$, meaning that, if Algorithm \ref{alg: 1} is to be applied to products of Lie groups that have already their representations worked out---such as $\SO(2)$, $\SU(2)$, and $T^d$---we need to first derive their sets of irreps using other techniques, a task that we do not further pursue here.

With the caveat regarding the direct products of compact Lie groups in mind, the model discussed in this section can still be applied to exploratory machine learning, as Examples \ref{ex: distinguish SU(2) and T^3} and \ref{ex: Mobius} illustrate on synthetic data. 
In addition, it will be applied to real data in Section~\ref{subsec:conformational_space}.

\begin{example}\label{ex: distinguish SU(2) and T^3} 
We generate a point cloud $X$ of 1500 points on an orbit of the representation $(1,5)$ of $\SU(2)$ in $\R^6$. 
As shown in Figure \ref{subfig:distinguish_groups_1}, the \texttt{LiePCA} operator exhibits three significantly small eigenvalues.
Supposing we do not know a priori that the acting group is $\SU(2)$, but only that its dimension is $3$, we read on Table \ref{table:groups_per_dimension} two candidates: $\SU(2)$ and $T^3$.
We first apply our algorithm to $T^3$. 
Up to orbit-equivalence, only one almost-faithful representation on $\R^3$ exists.
After application of \ref{item:step4}, we obtain an orbit $\widehat{\O}_x$ of $T^3$, which is not close to $X$: the Hausdorff distance is $\HDmid{X}{\widehat{\O}_x} \approx 0.7513$ (see Figure \ref{subfig:distinguish_groups_2}).
On the other hand, when applied with the group $\SU(2)$, the algorithm detects, among five representations, $(1,5)$ as the optimal one and generates a close orbit: $\HDmid{X}{\widehat{\O}_x} \approx 0.0628$ (see Figure \ref{subfig:distinguish_groups_3}).

This example can be repeated in higher dimensions: let us now take a sample of 5000 points on the orbit $(3,5)$ of $\SU(2)$ in $\R^8$. 
We observe three small eigenvalues of the \texttt{LiePCA} operator and apply the algorithm with $T^3$ and $\SU(2)$. 
For the former group, we obtain a Hausdorff distance of approximately $1.0554$ (we test representations up to frequency $\omega_\mathrm{max}=2$), while for the latter, only $0.1084$ (nine non-orbit-equivalent representations are tested).

\begin{figure}[ht]\center
\begin{subfigure}{.31\textwidth}\center
\includegraphics[width=.99\linewidth]{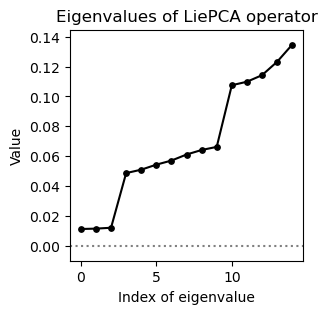}
\caption{Eigenvalues of \texttt{LiePCA} (restricted to skew-sym.~matrices).}
\label{subfig:distinguish_groups_1} 
\end{subfigure}
~
\begin{subfigure}{.31\textwidth}\center
\includegraphics[width=.99\linewidth]{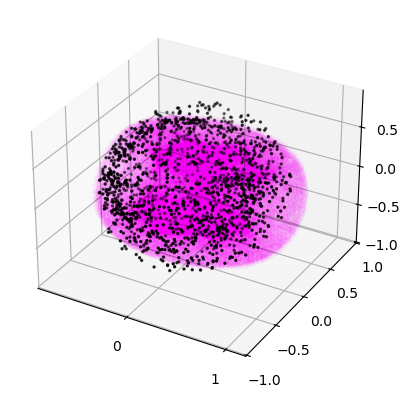}
\caption{Orbit generated by the algorithm for the group $T^3$.}
\label{subfig:distinguish_groups_2} 
\end{subfigure}
~
\begin{subfigure}{.31\textwidth}\center
\includegraphics[width=.99\linewidth]{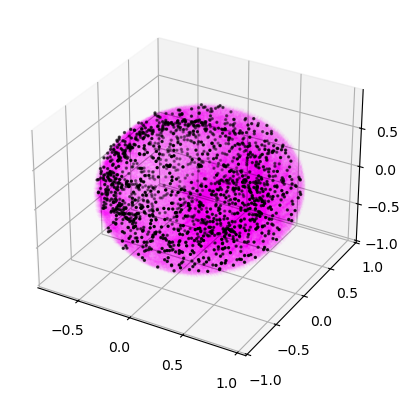}
\caption{Orbit generated by the algorithm for the group $\SU(2)$.}
\label{subfig:distinguish_groups_3} 
\end{subfigure}
\caption{Illustrations of Example \ref{ex: distinguish SU(2) and T^3}. The input point cloud is sampled on an orbit of $\SU(2)$ in $\R^8$. By inspecting the number of small eigenvalues of \texttt{LiePCA}, we deduce two potential acting groups: $\SU(2)$ and $T^3$.}
\end{figure}
\end{example}

\begin{example}\label{ex: Mobius}
Let us sample $500$ points on the Möbius strip in $\R^4$, defined as
$$
\left\{\left(\cos\theta,\sin\theta,r\cos\frac{\theta}{2},r\sin\frac{\theta}{2}\right) \mid \theta\in[0,2\pi),
~r \in [-1,1]
\right\}.
$$
The \texttt{LiePCA} operator, visualized in Figure \ref{subfig:Mobius_1}, clearly shows two eigenvalues close to zero.
However, the Möbius strip does not support any almost-faithful action of $T^2$. 
As a consequence, applying our algorithm with the group $T^2$ generates an orbit that does not fit the point cloud correctly (see Figure \ref{subfig:Mobius_2}).
It admits, however, an action of $\SO(2)$, which we estimate with our algorithm.
The obtained action is not transitive, since the orbits are one-dimensional.
However, their collection accurately covers the Möbius strip.
To quantify this, we compute, for each point $x\in X$ of the point cloud, the orbit $\widehat{\O}_x$ generated by this point, and the Hausdorff distance $\HDmid{\widehat{\O}_x}{X}$ from the orbit to $X$.
The largest value obtained this way is relatively small: $0.1586$.

\begin{figure}[ht]\center
\begin{subfigure}{.31\textwidth}\center
\includegraphics[width=.99\linewidth]{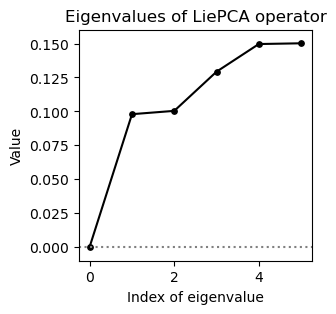}
\caption{Eigenvalues of \texttt{LiePCA} (restricted to skew-sym.~matrices).}
\label{subfig:Mobius_1} 
\end{subfigure}
~
\begin{subfigure}{.31\textwidth}\center
\includegraphics[width=.99\linewidth]{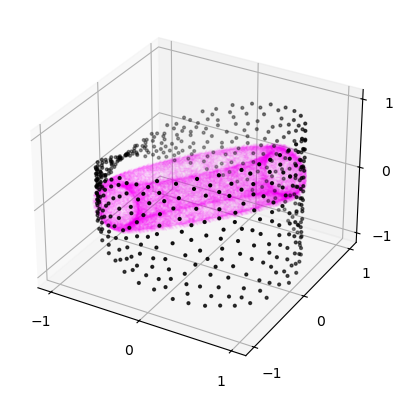}
\caption{Orbit generated by the algorithm for the group $T^2$.}
\label{subfig:Mobius_2} 
\end{subfigure}
~
\begin{subfigure}{.31\textwidth}\center
\includegraphics[width=.99\linewidth]{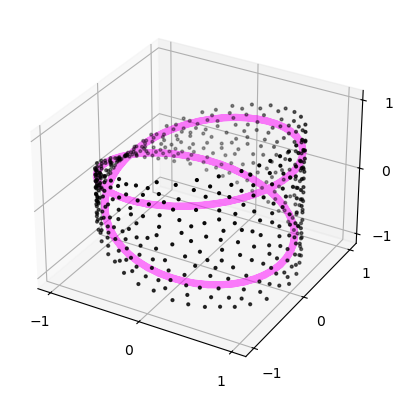}
\caption{Orbit generated by the algorithm for the group $\SO(2)$.}
\label{subfig:Mobius_3} 
\end{subfigure}
\caption{Illustrations of Example \ref{ex: Mobius}. The input is sampled on the Möbius strip in $\R^4$. 
While \texttt{LiePCA} detects an action of $T^2$, only $\SO(2)$ acts almost-faithfully on it.}
\end{figure}
\end{example}

\begin{remark}
At this point in the text, it is important to stress that {\texttt{LieDetect}} can return actions that are not transitive. In addition to the dataset studied above, this has already been encountered in Examples \ref{ex:rep_SU2_R8} and \ref{ex:rep_SU2_R9}, and will also occur in Section \ref{subsec:conformational_space} when we study chemical data.
Even though we have chosen the theoretical framework of transitive actions to study our algorithm, non-transitive ones are equally detectable.
This comes from the fact the \texttt{LiePCA} operator (that we will study theoretically in Section \ref{subsubsec:lie-pca_consistency}) is a consistent estimator of the symmetry algebra $\sym(\O)$ and that this object, as one sees from its definition in Section \ref{subsec:symmetry_group}, is blind to the fact the group acts transitively or not.
However, in all these cases, transitivity is checked through \ref{item:step4} of our algorithm. If this item fails, then one can, as done above, generate the orbits from various points $x\in X$, and check whether they are close to the point cloud, through the Hausdorff distance $\HDmid{\widehat{\O}_x}{X}$. If these distances are small, this indicates that a correct action, although non-transitive, has been estimated.
\end{remark}

\subsubsection{Typical distance between orbits}\label{subsubsec:statistics}

For most experiments so far, the result of our algorithm, measured by the Hausdorff distance $\HDmid{X}{\widehat{\O}_x}$ in \ref{item:step4}, has been judged rather arbitrarily.
In this section, we would like to sketch out a comprehensive analysis of this value.
Our model, which has been briefly given at the beginning of Section~\ref{sec:algorithm_description} and will be formalized further in Section~\ref{sec:algorithm_analysis}, is that of a point cloud $X$, coming as an $N$-sample of the uniform measure $\mu_\O$---or a measure close to it---on an orbit $\O$ of $G$.
Supposing that $\O$ is a $l$-dimensional submanifold, it is a well-known fact of probability theory that the symmetric Hausdorff distance $\HD{X}{\O}$ tends in probability to 0, with speed $(\log(N)/N)^l$ \cite{niyogi2008finding}. 
In particular, $\widehat{\O}_x$, the output orbit of the algorithm, cannot be closer than that, when measured with the distance $\HDmid{\widehat{\O}_x}{X}$.
This result, however, does not help us to understand the other distance, $\HDmid{X}{\widehat{\O}_x}$.

We, therefore, seek to provide some empirical clues for the determination of explicit thresholds on $\HDmid{X}{\widehat{\O}_x}$, at which the algorithm can be regarded as having succeeded or failed.
To simplify the situation, we can focus on the distance between two orbits originating from the same point $x\in\R^n$, for two representations $\phi_1,\phi_2$ of the same group $G$ in $\R^n$.
Since, in \texttt{LieDetect}, the data goes through an orthonormalization process, we can restrict our study to that of orthogonal representations and points of norm 1.
Note that this reduction is quite crude since it omits the influence of noise in the data. However, it would give us an initial insight.

Accordingly, denote $\widehat{\O}_x^1$ and $\widehat{\O}_x^2$ the orbits of $x$ under $\phi_1$ and $\phi_2$.
Their difference, measured by the symmetric Hausdorff distance, $\HD{\widehat{\O}_x^1}{\widehat{\O}_x^2}$, or the non-symmetric one, $\HDmid{\widehat{\O}_x^1}{\widehat{\O}_x^2}$, can be caused by two effects: either the representations are non-equivalent, and then a large distance is expected, or they are, and the Hausdorff distance depends continuously on a change of basis that sends the first to the second.
Since we are primarily interested here in estimating the representation types, we will focus on the first effect.
In particular, if we knew the minimal value reached by $\HDmid{\widehat{\O}_x^1}{\widehat{\O}_x^2}$ when $(\phi_1,\phi_2)$ covers all pairs of non-orbit-equivalent representations of $G$ in $\R^n$, then we would have a relevant reference for evaluating the result of our algorithm. 

As a last simplification, rather than considering all pairs in this way, we will choose a single representation $\phi$ for each class of orbit equivalence.
That is, we chose a set of representatives $\mathrm{OrbRep}(G,n)$, as defined in Section \ref{subsec:grassmann_lie}.
If $k$ denotes the number of classes, then we have $k(k-1)/2$ pairs, and we compute the Hausdorff distance to the orbits they generate.
On the other hand, when the group is Abelian, this number is infinite, and we must restrict ourselves to a maximum frequency. For representations of $\SO(2)$ in $\R^{2m}$, we choose to go up to frequency $2m$, and for the tori $T^2$ and $T^3$, up to frequency 2 and 1, respectively, since they cover all the cases encountered in this article.
The results of this operation are shown in Figure \ref{fig:statistics}.
We stress that, for $n\leq 2d+1$, the torus $T^d$ admits at most $1$ almost-faithful representation in $\R^n$, up to orbit-equivalence, thus making trivial the problem of identifying this representation.
Consequently, these dimensions are omitted. 
Similarly, $\SU(2)$ is considered only in dimensions for which it admits several orbit-equivalence classes of representations.
Lastly, we point out that $\SO(3)$ is not represented, since all its representations are already featured by $\SU(2)$. 

\begin{figure}[ht]\center
\includegraphics[width=.49\linewidth]{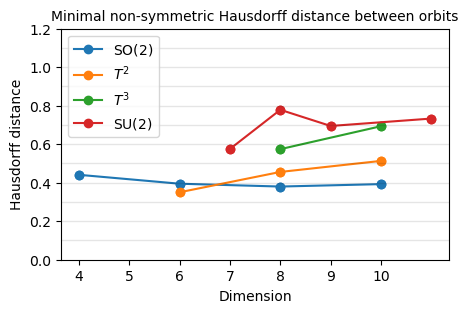}
\includegraphics[width=.49\linewidth]{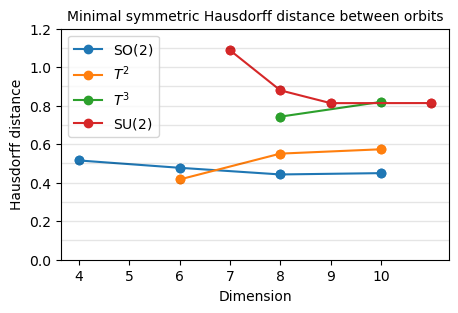}
\caption{\textbf{Left:} empirical estimation of the minimal non-symmetric Hausdorff distance $\HDmid{\widehat{\O}_x^1}{\widehat{\O}_x^2}$ between two orbits of a same initial point $x$ for two non-orbit equivalent representations $\phi_1,\phi_2$ of a compact Lie group $G$ in $\R^n$. The minimal value is approximately $0.35$.
\textbf{Right:} same for the symmetric Hausdorff distance $\HD{\widehat{\O}_x^1}{\widehat{\O}_x^2}$. The minimal value is $0.42$.}
\label{fig:statistics}
\end{figure}

As seen on the figure, all the non-symmetric distances $\HDmid{\widehat{\O}_x^1}{\widehat{\O}_x^2}$ computed during our experiment were greater than $0.35$, and the symmetric distances $\HD{\widehat{\O}_x^1}{\widehat{\O}_x^2}$ greater than $0.42$ regardless of group or dimension.
As a conclusion, we draw the following interpretation: if \texttt{LieDetect}, applied to a point cloud $X$, outputs an orbit that verifies $\HDmid{X}{\widehat{\O}_x}<0.35$, then it is reasonable to assume that the algorithm has succeeded.
Besides, if the symmetric distance satisfies $\HD{X}{\widehat{\O}_x}<0.42$, then we can draw the same conclusion (this can be useful when $X$ has been well sampled).
Of course, in cases where we know that $X$ has been drawn from a specific representation, we must check that the output representation is orbit-equivalent to it.
This rule of thumb comes in handy precisely when no additional information on $X$ is available, such as in the datasets we will explore in Section \ref{sec:applications}.

\subsubsection{Running time and convergence}\label{subsubsec:running_times}
To demonstrate the technical performance of our algorithm, Table \ref{tab:running_times} shows the running times for complete executions, from \ref{item:step1} to \ref{item:step4}.
More precisely, we apply the algorithm for the groups $\SO(2)$, $T^2$, $T^3$, and $\SU(2)$, from point clouds of size 250, 500, 1000, and 1000, respectively.
These point clouds are sampled randomly from a representation orbit of the group in $\R^n$, with $n$ up to 10 for the Abelian group, and up to 12 for $\SU(2)$.
For the Abelian groups, we considered representations with frequencies up to $n$, 2, and 1, respectively, while for $\SU(2)$, all the orbit-equivalence classes of representations in $\R^n$ are tested.
We stress that, since no reductions have been obtained for $\SU(2)$, we need to perform gradient descent optimization on $\Ort(n)$, as in \ref{item:step3}, while the $\SO(2)$ group is treated with the reformulation of Section \ref{sec: algorithm so(2) simp}, and the higher-dimensional tori with that of Section \ref{sec: algorithm torus simp}.
The experiment was repeated several times (100 for Abelian groups and 10 for $\SU(2)$), and the average execution times are reported. In addition, we indicate the proportion of successful runs of the algorithm, i.e., runs where the representation type was exactly identified.
Note that for dimensions 4, 5, and 10, $\SU(2)$ only admits one representation, up to orbit-equivalence, hence the algorithm automatically succeeds.
These dimensions are still given in the table: although the type of representation is fixed, the algorithm has yet to find its orientation.

As expected, the table reveals a significant discrepancy between the running times of the Abelian groups and of $\SU(2)$.
The algorithm for $\SO(2)$ is the fastest since it is implemented as a simple Schur decomposition, in Lemma \ref{lem:step3'_simplification_so(2)}.
Similarly, $T^2$ and $T^3$ are performed through a simultaneous reduction of two or three matrices, as in Lemma \ref{lem:step3'_simplification_torus}.
Although implemented as an optimization over $\Ort(n)$, via projected gradient descent, the problem is still quickly solved.
Similarly, $\SU(2)$ is implemented as a series of optimizations over $\Ort(n)$, but in this case, the function, given in Equation \eqref{eq:minimization_stiefel_2}, is much more complicated to minimize, resulting in longer running times.
We point out that the default parameters of \texttt{Pymanopt} have been used, namely, the initial point of $\Ort(n)$ for the optimization is chosen randomly, and the stopping criterion is to reach a small gradient norm.
The problem of reducing this computation time is reserved for future work, and we will use, in Section \ref{sec:applications}, the algorithm as it is.

\begin{table}[ht]\centering
\subfloat[$\SO(2)$]{
\begin{tabular}{||r|cccc||} 
\hline
Dimension &  4 & 6 & 8 & 10 
\\\hline 
Running time & 0.04s & 0.05s & 0.08s & 0.14s
\\\hline
Success & 100.0\% & 100.0\% & 100.0\% & 100.0\%
\\\hline
\end{tabular}}
% \quad
\vspace{.33cm}\\
\subfloat[$T^2$]{
\begin{tabular}{||r|ccc||} 
\hline
Dimension &  6 & 8 & 10 
\\\hline 
Running time & 0.24s & 0.63s & 4.03s
\\\hline
Success & 82.0\% & 100.0\% & 98.0\%
\\\hline
\end{tabular}}
\\
\vspace{.5cm}
\subfloat[$T^3$]{
\begin{tabular}{||r|cc||} 
\hline
Dimension &  8 & 10 
\\\hline 
Running time & 1.44s & 5.98s
\\\hline
Success & 100.0\% & 100.0\%
\\\hline
\end{tabular}}
% \quad
\vspace{.33cm}\\
\subfloat[$\SU(2)$]{
\begin{tabular}{||r|cccccc||} 
\hline
Dimension & 4 & 5 & 7 & 8 & 9 & 10 
\\\hline 
Running time & 0.6s & 5.04s & 4min 21s & 13min 7s & 16min 9s & 10min 53s
\\\hline
Success &  100.0\% & 100.0\% & 90.0\% & 100.0\% & 100.0\% & 100.0\%
\\\hline
\end{tabular}}
\caption{Running time (in seconds or minutes) and success rate (percentage) of full execution of \texttt{LieDetect}, as a function of the input group, and the dimension of the ambient Euclidean space. 
The input of the algorithm is a point cloud sampled from the uniform measure on an orbit chosen randomly.
For the Abelian groups $\SO(2)$, $T^2$, and $T^3$, the representations are considered up to a maximal frequency (described in the main text), 100 runs of the algorithm are performed, and the results are averaged. For $\SU(2)$, 10 runs have been performed.}
\label{tab:running_times}
\end{table}

\section{Theoretical guarantees}\label{sec:algorithm_analysis}

In this section, we derive theoretical results regarding Algorithm \ref{alg: 1}, guaranteeing that the correct representation will be found (up to orbit equivalence), and that the output $\widehat{\O}_x$ will be close to the underlying orbit $\O$.
The quality of these results depends on the proximity of the input $X$ to the orbit $\O$.
In the literature, two common measures of proximity are the Hausdorff distance $\HD{X}{\O}$ and the Wasserstein distance $\W_2(\mu_X, \mu_\mathcal{O})$ (defined in Sections \ref{sec:step4_hasdorff} and \ref{subsubsec:step4_wasserstein}).
Here, $\mu_X$ and $\mu_\mathcal{O}$ are measures associated with $X$ and $\mathcal{O}$, typically the empirical and uniform measures.
We found that the Wasserstein distance is better suited to analyze our algorithm.
Indeed, the whole algorithm depends on an estimation of normal spaces via local PCA, an operation that is stable for the Wasserstein, but not the Hausdorff distance.

Certain results of this section are stated for arbitrary probability measures $\mu$ and $\nu$ on $\R^n$, and others hold only with additional assumptions.
To ensure clarity, the notation $\mu_X$ will denote exclusively the empirical measure on $X$, and $\mu_\O$ the uniform measure on $\O$.
As recalled in Section \ref{subsubsec:exponential_map_geometric}, $\mu_\O$ can be seen as a pushforward of the Haar measure $\mu_G$, or, equivalently, as the restriction of the $l$-dimensional Hausdorff measure to it, where $l$ is its dimension.

Our investigation is organized as follows.
In Section \ref{subsec:PCA_preprocessing}, we raise the issue of dimensionality reduction with PCA in the context of Lie group orbits.
Next, we analyze \texttt{LiePCA} in Section~\ref{subsec:analysis_liepca}.
We gather, in Section \ref{subsec:stability-minimization}, guarantees regarding the minimization problems involved in the algorithm.
These results are brought together in Section \ref{subsec:robustness_algorithm}, giving Theorem \ref{th:robustness_algorithm}.

As a guideline, we can already formulate the result of the theorem (presented in a simplified form in Remark~\ref{rem:reformulation_theorem}): if the distance $\W_2\big(\mu_{X},\mu_\O\big)$ between the input point cloud and the underlying orbit (unknown) is small enough, then the algorithm returns a representation that is orbit-equivalent to that generating $\O$.
In addition, under a certain choice of parameters, the output orbit $\widehat{\O}_x$ generated from a point $x$ (defined in Section \ref{sec:step4_hasdorff}) and the output measure $\mu_{\widehat{\O}}$ (defined in Section \ref{subsubsec:step4_wasserstein}) satisfy the inequalities
\begin{align*}
\HD{\widehat{\O}_x}{\O}
&\leq \mathrm{constant}\cdot\left(\HDmid{X}{\O}+ \W_2\big(\mu_{X},\mu_\O\big)^{1/4(l+3)}\right),\\
\W_2\big(\mu_{\widehat{\O}}, \mu_{\O} \big)
&\leq \mathrm{constant}\cdot \W_2\big(\mu_{X},\mu_\O\big)^{1/4(l+3)},
\end{align*}
where the leftmost distances refer to the orthonormalized versions of the orbits.

\subsection{PCA and orthonormalization}\label{subsec:PCA_preprocessing}

As a general practice in data analysis, dimensionality reduction helps lower the computational cost of algorithms that are subsequently applied.
In the current context, however, applying dimension reduction not only allows us to speed up our algorithm, but is also a crucial procedure for ensuring its consistency, as we will explain in Remark \ref{rem:projection_mindimspace}.
Nevertheless, we shall not apply any general reduction algorithm: supposing that the dataset $X$ lies on or close to an orbit $\O$ of a representation of a Lie group $G$, we must ensure that the projected dataset still does.
In this section, we will show how to project $X$ into $\spn{\O}$, the linear subspace spanned by $\O$, via the common method of PCA.
More generally, one may want to reduce the dimension further.
The particular case of $G = \SO(2)$ and $T^d$ will be treated in Section \ref{subsec:pixel_permutations}.

Such a pre-processing step intervenes a second time in \ref{item:step1}, where the points are normalized to transform the representation into an orthogonal one.
This step, similarly to PCA, involves the computation of the covariance matrix of $X$.
To prove stability results for the Wasserstein distance, we should use a definition of PCA adapted to measures.
In this regard, we define the covariance matrix of a square-integrable measure $\mu$ on $\R^n$ as 
\begin{equation}\label{eq:covariance_matrix}
\Sigma[\mu] = \int x x^\top \d \mu(x) = \int \|x\|^2 \projbracket{\spn{x}} \d \mu(x),
\end{equation}
where $x x^\top$ represents the tensor product of the vector $x$ by itself, which is equal to the projection $\projbracket{\spn{x}}$ when $x$ has norm $1$.
This integral is a symmetric $n\times n$ matrix, hence diagonalizable.
Given an integer $m$, PCA refers to the projection operator on any $m$ top eigenvectors of $\Sigma[\mu]$, that is, eigenvectors associated with the $m$ largest eigenvalues; it is denoted by $\Pi^m_{\Sigma(\mu)}$.
When $\mu$ is $\mu_X$, the empirical measure on a finite point cloud, we recover the usual definition of PCA.

We divide our analysis into two. 
Section \ref{subsubsec:analysis_pca_orbit} gathers results regarding covariance matrices and applies them in the context of PCA. 
The same tools are employed in Section \ref{subsubsec:analysis_orthonormalization} to the problem of orthonormalization.
We stress that the ideas developed in this section are not new; the originality lies in the effort to obtain explicit bounds in terms of the Wasserstein distance.

\subsubsection{Analysis of PCA}\label{subsubsec:analysis_pca_orbit}
Our strategy to study the covariance matrix of $X$ consists of comparing it to $\Sigma[\mu_\O]$, the covariance of the orbit $\O$, which we consider as the `ideal' case.
Let $\phi\colon G\rightarrow\M_n(\R)$ be a representation that generates the orbit.
Let also $x_0 \in \O$ be an arbitrary point that is fixed throughout this section.
Note that any other point $x\in\O$ can be written $x=\phi(g)x_0$ for some $g\in G$.
Using that $\mu_\O$ is the pushforward of the Haar measure $\mu_G$ on $G$, Equation \eqref{eq:covariance_matrix} reads
\begin{equation*}%\label{eq:covariance_matrix_projections_g}
\Sigma[\mu_\O] = \int \big( \phi(g)  x_0\big)\cdot \big( \phi(g)  x_0\big)^\top \d \mu_G(g).
\end{equation*}
We observe that its trace is equal to the moment of order two of $\|\mu_\O\|$:
\begin{align}\label{eq:ideal_cov_variance_trace}
\Tr\big( \Sigma[\mu_\O] \big)
= \int \Tr\big( \big( \phi(g)  x_0\big) \cdot\big( \phi(g)  x_0\big)^\top \big) \d \mu_G(g)    
= \int \big\|\phi(g)  x_0\big\|^2 \d \mu_G(g)    
= \E[\|\mu_\O\|^2].
\end{align}
Now, let $\R^n = \bigoplus_{i=1}^m V_i$ be the decomposition of $\phi$ into irreps, and denote by $(\projbracket{V_i})_{i=1}^m$ the projection matrices on these subspaces.
We can decompose the previous equality in 
$$
\Sigma[\mu_\O] = 
\sum_{i=1}^m \underbrace{\int \phi_i(g) \bigg(\projbracket{V_i} (x_0) \cdot\projbracket{V_i} (x_0)^\top\bigg) \phi_i(g)^\top \d \mu_G(g)}_{C_i}.
$$
Each $C_i$ is a symmetric matrix and is zero on the $V_j$'s for which $j\neq i$.
Moreover, $C_i$ is zero if and only if $\projbracket{V_i} (x_0)=0$. We deduce that the kernel of $\Sigma[\mu_\O]$ is $\spn{\O}^\bot$.
We now suppose that $\phi$ is orthogonal. 
In particular, $g\mapsto\|\phi(g)  x_0\|$ is constant. 
We deduce that the variance $\Var[\|\mu_\O\|]$ is zero and that $\Tr( \Sigma[\mu_\O] ) = \|x_0\|^2$.
Besides, by Schur's lemma, the $C_i$ are homotheties:
$$
\Sigma[\mu_\O] = \sum_{i=1}^m \sigma_i^2 \projbracket{V_i}
~~~~~~\mathrm{where}~~~~~
\sigma_i^2 
= \frac{1}{\dim(V_i)} \int \big\|\projbracket{V_i}(x)\big\|^2 \d\mu_\O(x)
= \frac{\big\|\projbracket{V_i}(x_0)\big\|^2}{\dim(V_i)}.
$$
Note that, without further assumptions, the eigenvalues $(\sigma_i^2)_{i=1}^m$ can be any vector of positive coordinates whose sum is $\|x_0\|^2$.
In what follows, we will call by \textit{homogeneous} an orbit of an orthogonal representation whose eigenvalues are all equal.
We see it as the most `regular' orbit.

In this section, we do not assume the hypothesis that the representation is orthogonal, but we deduce from this discussion that two quantities are naturally involved in the problem:
\begin{itemize}
\item The variance $\Var[\|\mu_\O\|]$, which we see as a measure of \textit{deviation from orthogonality} of $\O$. When the representation is orthogonal, it is zero.
\item The ratio $\sigma_\mathrm{max}^2/\sigma_\mathrm{min}^2$ between the top and bottom nonzero eigenvalue of $\Sigma[\mu_\O]$, understood as a measure of \textit{homogeneity} of $\O$.
When the orbit is homogeneous, it is one.
\end{itemize}

The ideal case having been treated, we now turn to $X$.
To study how accurately $\Sigma[\mu_\O]$ can be approximated by $\Sigma[\mu_X]$, we give a general stability result for covariance matrices.
Combined with the Davis-Kahan theorem (Lemma \ref{lem:davis_kahan}), we deduce Proposition \ref{prop:stability_PCA_orbit}.
We stress that the following results do not involve the hypothesis that the measure $\mu_X$ comes from a point cloud. Instead, we work in the broader setting of an arbitrary probability measure $\nu$.

\begin{lemma}
\label{lem:wasserstein_stability_covariance_matrix}
For any two square-integrable measures $\mu$ and $\nu$ on $\R^n$, we have
    $$
    \| \Sigma[\mu] - \Sigma[\nu] \|
    \leq \big(2\Var[\|\mu\|]^{1/2} + \W_2(\mu,\nu)\big)\W_2(\mu,\nu).
    $$
\end{lemma}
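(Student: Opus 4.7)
My plan is to use an optimal transport coupling between $\mu$ and $\nu$ together with a convenient algebraic decomposition of the integrand. I will begin by letting $\pi$ be an optimal $2$-Wasserstein plan, so that its marginals are $\mu$ and $\nu$ and $\int \|x - y\|^2 \d\pi(x,y) = \W_2(\mu,\nu)^2$. Writing each covariance matrix as an integral against $\pi$ via the respective marginal, the quantity of interest becomes
\[
\Sigma[\nu] - \Sigma[\mu] = \int \big(y y^\top - x x^\top\big) \d\pi(x,y),
\]
so that the problem reduces to estimating a single integral over $\pi$.

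The key algebraic step is the identity $y y^\top - x x^\top = (y-x)(y-x)^\top + (y-x)x^\top + x(y-x)^\top$, which isolates the displacement $y-x$ everywhere. Substituting and bounding each resulting integral separately, the pure quadratic piece contributes at most $\int \|y-x\|^2 \d\pi = \W_2(\mu,\nu)^2$ in Frobenius norm, which is exactly the $\W_2^2$ term of the stated bound. For each of the two mixed pieces, I will apply the matrix Cauchy--Schwarz inequality $\big\|\int a(z)\, b(z)^\top \d\pi(z)\big\|_F \leq \big(\int \|a\|^2 \d\pi\big)^{1/2}\big(\int \|b\|^2 \d\pi\big)^{1/2}$; for instance,
\[
\Big\|\int x(y-x)^\top \d\pi\Big\|_F \leq \Big(\int \|x\|^2 \d\mu\Big)^{1/2} \W_2(\mu,\nu),
\]
using the marginal property, and similarly for the transpose term. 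Combining the three estimates already gives a bound of the form $\big(2\,\E_\mu[\|x\|^2]^{1/2} + \W_2(\mu,\nu)\big)\,\W_2(\mu,\nu)$.

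The main obstacle is to sharpen this second moment $\E_\mu[\|x\|^2]^{1/2}$ down to the standard deviation $\Var[\|\mu\|]^{1/2}$ appearing in the claim: heuristically, the direction $x/\|x\|$ paired against $y-x$ should not contribute, and only the radial fluctuation $\|x\| - \E_\mu[\|x\|]$ should survive. The natural attempt is to split $x = \bar r\,(x/\|x\|) + (\|x\| - \bar r)\,(x/\|x\|)$ with $\bar r = \E_\mu[\|x\|]$ inside the mixed integral: the second summand yields a $\Var[\|\mu\|]^{1/2}\,\W_2$ contribution by Cauchy--Schwarz, while the first summand, which has bulk size $\bar r\cdot \W_2$, must be absorbed using the identity $\int (y-x)\d\pi = c_\nu - c_\mu$ together with $\|c_\nu - c_\mu\| \leq \W_2(\mu,\nu)$ and a second application of Cauchy--Schwarz. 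Verifying that this cancellation really reassembles into the claimed constants --- rather than into the coarser $\E_\mu[\|x\|^2]^{1/2}$ that the direct Cauchy--Schwarz route produces --- is the delicate part of the argument, and the step where I expect to spend most of the effort.
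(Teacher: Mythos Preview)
Your first three paragraphs are fine and essentially match the paper's route: the paper writes $\|xx^\top-yy^\top\|\le(\|x\|+\|y\|)\|x-y\|$ (a variant of your decomposition) and then applies Cauchy--Schwarz, which yields exactly the intermediate bound $\big(2\,\E_\mu[\|x\|^2]^{1/2}+\W_2(\mu,\nu)\big)\W_2(\mu,\nu)$ that you obtain.

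The sharpening to $\Var[\|\mu\|]^{1/2}$, however, cannot be carried out---the inequality as stated is \emph{false}. Take $\mu=\delta_a$ and $\nu=\delta_b$ on $\R$ with $a,b>0$: then $\Var[\|\mu\|]=0$, $\W_2(\mu,\nu)=|a-b|$, but $|\Sigma[\mu]-\Sigma[\nu]|=|a^2-b^2|=(a+b)\,|a-b|$, so the claimed bound would force $a+b\le|a-b|$. The same phenomenon occurs whenever $\mu$ is supported on a sphere and $\nu$ is its radial dilation: the left side is first order in $\W_2$ while the right side is second order. Your proposed cancellation via $\int(y-x)\,\d\pi=c_\nu-c_\mu$ does not help, since in these examples both centroids vanish while $\int x(y-x)^\top\,\d\pi$ does not. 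The paper's own proof slips at the very same place: its displayed step
\[
\sqrt{\int(\|x\|+\|y\|)^2\,\d\pi}\;\le\;\sqrt{\int(\|x\|-\E[\|\mu\|])^2\,\d\pi}+\sqrt{\int(\|y\|-\E[\|\mu\|])^2\,\d\pi}
\]
is not a valid $L^2$ triangle inequality (the two right-hand integrands sum to $\|x\|+\|y\|-2\E[\|\mu\|]$, not $\|x\|+\|y\|$) and already fails for $\mu=\nu$ a nonzero point mass. The version you should retain is the one with $\E_\mu[\|x\|^2]^{1/2}$ in place of $\Var[\|\mu\|]^{1/2}$.
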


\begin{proof}
Let $\pi$ be an optimal transport plan for the Wasserstein distance $\W_2(\mu,\nu)$.
We write 
\begin{align*}
    \| \Sigma[\mu] - \Sigma[\nu] \|
    &\leq \int \| x  x^\top - y  y^\top \| \d\pi(x,y)\nonumber\\
    &\leq \int (\|x\|+\|y\|)\|x - y\| \d\pi (x,y)\nonumber\\
    &\leq \sqrt{\int (\|x\|+\|y\|)^2 \d\pi (x,y)} \cdot \sqrt{\int \|x - y\|^2 \d\pi (x,y)},
\end{align*}
where the second line follows from Lemma \ref{lem:stability_outer_product}, stated below, and the last line from Hölder's inequality.
The second term of the product is equal to $\W_2(\mu,\nu)$.
Concerning the first term, we introduce the quantity $\E[\|\mu\|]$ and use the subadditivity of the $L^2$-norm:
\begin{align*}\label{eq:stability_pca_lemma_2}
    \sqrt{\int (\|x\|+\|y\|)^2 \d\pi (x,y) }
    &\leq \sqrt{\int (\|x\|-\E[\|\mu\|])^2 \d\pi (x,y) } + \sqrt{\int (\|y\|-\E[\|\mu\|])^2 \d\pi (x,y) }.
\end{align*}
The first term of the sum is equal to $\Var[\|\mu\|]^{1/2}$.
We bound the second one using the triangle inequality $\|y\|-\E[\|\mu\|] \leq (\|x\|-\E[\|\mu\|])+\|y-x\|$ and the subadditivity:
\begin{align*}
\sqrt{\int (\|y\|-\E[\|\mu\|])^2 \d\pi (x,y)}
\leq 
\sqrt{\int (\|x\|-\E[\|\mu\|])+\|y-x\|])^2 \d\pi (x,y)}
\leq \Var[\|\mu\|]^\frac{1}{2} + \W_2(\mu,\nu).
\end{align*}
Adding these terms yields the inequality of the lemma.
\end{proof}

\begin{lemma}\label{lem:stability_outer_product}
For every $x,y \in \R^n$, we have $\|x x^\top - y y^\top\| \leq (\|x\|+\|y\|)\|x-y\|$. 
\end{lemma}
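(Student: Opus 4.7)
The plan is to use the standard telescoping trick to turn the difference of outer products into a sum of two rank-one matrices, and then use the fact that the Frobenius norm of a rank-one matrix $ab^\top$ equals $\|a\|\|b\|$.

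Concretely, I would first write
\[
xx^\top - yy^\top = xx^\top - xy^\top + xy^\top - yy^\top = x(x-y)^\top + (x-y)y^\top.
\]
This is the only nontrivial step, and it is just adding and subtracting $xy^\top$. From here, the triangle inequality for the Frobenius norm gives
\[
\|xx^\top - yy^\top\| \leq \|x(x-y)^\top\| + \|(x-y)y^\top\|.
\]
Next, I would invoke the elementary identity $\|ab^\top\| = \|a\|\|b\|$ for the Frobenius norm of a rank-one matrix (which follows at once from $\|ab^\top\|^2 = \sum_{i,j} a_i^2 b_j^2 = \|a\|^2\|b\|^2$). Applying this to both terms yields
\[
\|xx^\top - yy^\top\| \leq \|x\|\|x-y\| + \|x-y\|\|y\| = (\|x\|+\|y\|)\|x-y\|,
\]
which is the desired inequality.

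There is no real obstacle here; the only thing to double-check is that the norm being used is the Frobenius norm (consistent with the notation $\|\cdot\|$ on $\M_n(\R)$ fixed earlier in the paper via the Frobenius inner product $\langle A,B\rangle = \Tr(AB^\top)$), since the rank-one identity $\|ab^\top\| = \|a\|\|b\|$ is what makes the bound tight. The same proof would also work for the operator norm, so the lemma is robust to this choice.
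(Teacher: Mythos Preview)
Your proof is correct and essentially identical to the paper's: both telescope the difference of outer products into two rank-one terms and apply the triangle inequality together with $\|ab^\top\| = \|a\|\|b\|$. The only cosmetic difference is that the paper adds and subtracts $yx^\top$ (obtaining $(x-y)x^\top + y(x-y)^\top$) whereas you add and subtract $xy^\top$, which leads to the same bound.
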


\begin{proof}
The triangle inequality on $x x^\top - y  y^\top = (x-y)  x^\top + y  (x-y)^\top$ yields 
\begin{align*}
\|x  x^\top - y  y^\top\|
&\leq \|x-y\|\|x\| + \|y\|\|x-y\|=(\|x\|+\|y\|)\|x-y\|.\qedhere
\end{align*} 
\end{proof}

Lemma \ref{lem:wasserstein_stability_covariance_matrix} shows that the distance between the covariance matrices of two measures is small, provided that their Wasserstein distance is.
We now wish to obtain a similar result for the PCA, that is, the projection matrices on their top eigenspaces.
If $U$ and $V$ are linear subspaces of $\R^n$, we will quantify their proximity via $\|\projbracket{U}-\projbracket{V}\|$, the Frobenius norm between their projections, just as we defined in Section \ref{subsec:grassmann_lie_notgroup} for subspaces of $\so(n)$.
We mention the relation
\begin{equation}\label{eq:norm_proj_principalangles}
\|\projbracket{U}-\projbracket{V}\| = \sqrt{2} |\sin\Theta(U,V)|,
\end{equation}
where $\Theta(U,V) \in \R^n$ denotes the \emph{principal angles} between $U$ and $V$. 
These are defined recursively as the minimal angles between two unit vectors of the subspaces (see \cite[Sec.~2]{davis1970rotation}).

Now, let $A,B\in\M_n(\R)$ be two symmetric matrices, $m\in[1\isep n]$ an integer, and $\Pi_A^m,\Pi_B^m$ the projection matrices on the subspaces of $\R^n$ spanned by any top $m$ eigenvectors of $A$ and $B$.
The problem of bounding $\|\Pi_A^m-\Pi_B^m\|$ in terms of $\|A-B\|$ is a classical question of matrix perturbation theory.
The \emph{stability of eigenvalues} is given by Weyl's theorem: for any eigenvalue $\lambda$ of $A$, there exists an eigenvalue $\mu$ of $B$ such that $|\lambda-\mu|\leq \|A-B\|$ \cite[Cor.~VI.4.10]{stewart1990matrix}.
On the other hand, the \emph{stability of eigenspaces} is given by Davis-Kahan theorem \cite{davis1970rotation}, which we reformulate in the lemma below, and that will also be used on several occasions throughout the rest of this article.
Given a symmetric matrix in $\M_n(\R)$ with sorted eigenvalues $\lambda_1\geq\dots\geq\lambda_n$ and an integer $m\in[1\isep n-1]$, we call \textit{$m^\mathrm{th}$ eigengap} the difference $\lambda_m-\lambda_{m+1}$.
As a consequence of Weyl's theorem, if $\delta$ is the $m^\mathrm{th}$ eigengap of $A$, then that of $B$ is at least $\delta-2\|A-B\|$.

\begin{lemma}(Davis-Kahan theorem)\label{lem:davis_kahan}
Let $A,B\in\M_n(\R)$ be symmetric matrices, $m\in [1\isep n-1]$ an integer, and $\delta$ the $m^\mathrm{th}$ eigengap of $A$.
If $\|A-B\|< \delta/2$, then
\[
\|\Pi_A^m-\Pi^m_B\|\leq \sqrt{2}/\delta\|A-B\|.
\] 
The result also holds if we consider the bottom eigenvectors instead of the top or if we consider the operator norm instead of the Frobenius norm.
\end{lemma}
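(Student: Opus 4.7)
The plan is to reduce the statement to the classical $\sin\Theta$ theorem of Davis and Kahan \cite{davis1970rotation}, and then translate its conclusion via the identity of Equation~\eqref{eq:norm_proj_principalangles}, namely $\|\projbracket{U}-\projbracket{V}\| = \sqrt{2}|\sin\Theta(U,V)|$, which converts any bound on sines of principal angles into the desired bound on differences of projection matrices. The first preparatory step will be to set up the spectral data. Let $\lambda_1 \geq \dots \geq \lambda_n$ and $\mu_1 \geq \dots \geq \mu_n$ be the sorted eigenvalues of $A$ and $B$; by hypothesis, $\lambda_m - \lambda_{m+1} = \delta$. Writing $U$ and $V$ for the images of $\Pi_A^m$ and $\Pi_B^m$, Weyl's theorem gives $|\lambda_i - \mu_i| \leq \|A - B\|$ for each $i \in [1 \isep n]$. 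Combined with the hypothesis $\|A-B\| < \delta/2$, this forces $\mu_m - \mu_{m+1} > 0$, so the top-$m$ eigenspace of $B$ is unambiguously defined.

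The core step is then the Davis-Kahan $\sin\Theta$ theorem applied to the spectral splittings $\R^n = U \oplus U^\bot$ and $\R^n = V \oplus V^\bot$. The spectrum of the restriction of $A$ to $U$ lies in $[\lambda_m, +\infty)$, while that of the restriction of $B$ to $V^\bot$ lies in $(-\infty, \mu_{m+1}] \subseteq (-\infty, \lambda_{m+1} + \|A-B\|]$, so these two spectra are separated by a distance of at least $\delta - \|A-B\| > \delta/2$. The sine theorem then bounds $\|\sin\Theta(U, V)\|_F$ linearly in $\|A-B\|$, and Equation~\eqref{eq:norm_proj_principalangles} immediately translates this into a Frobenius bound on $\|\Pi_A^m - \Pi_B^m\|$. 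The operator-norm version of the lemma would follow from the same argument, replacing the Frobenius sine theorem with its operator-norm analogue.

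The main obstacle I anticipate lies in recovering the sharp constant $\sqrt{2}/\delta$ stated in the conclusion rather than a weaker constant such as $2\sqrt{2}/\delta$. Indeed, the naive application above produces the denominator $\delta - \|A-B\|$ (the \emph{perturbed} gap), whereas the lemma has the full $\delta$ (the gap of $A$ alone). To close this slack, I would either appeal to the refined form of Davis-Kahan in which the relevant denominator is the gap of $A$ directly, without any perturbation deduction (see for instance \cite[Ch.~VII]{stewart1990matrix}), or use the resolvent integral representation
\begin{equation*}
\Pi_A^m - \Pi_B^m = \frac{1}{2\pi\iu}\oint_\Gamma (zI-A)^{-1}(A-B)(zI-B)^{-1}\,\d z
\end{equation*}
over a contour $\Gamma$ encircling the top $m$ eigenvalues of $A$ at distance $\delta/2$ from the spectrum, and optimize the resulting estimates on the two resolvents using the hypothesis $\|A-B\|<\delta/2$. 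Either route reduces the remaining work to a routine estimation, but tracking the numerical constant is the one delicate point; the rest of the argument is standard matrix perturbation theory.
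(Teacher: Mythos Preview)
The paper does not supply its own proof of this lemma: it is stated as a reformulation of the classical Davis--Kahan theorem, with a direct citation to \cite{davis1970rotation} and to \cite{stewart1990matrix} for Weyl's theorem, and is then used as a black box throughout Section~\ref{sec:algorithm_analysis}. There is therefore no paper-side argument to compare your proposal against.

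That said, your sketch is a sound route to a self-contained proof. You correctly set up the spectral splittings, invoke Weyl to ensure $\Pi_B^m$ is well defined under the hypothesis $\|A-B\|<\delta/2$, and translate the $\sin\Theta$ bound via Equation~\eqref{eq:norm_proj_principalangles}. You have also put your finger on the only genuine subtlety: the naive separation argument yields the denominator $\delta-\|A-B\|$ (the gap between $\mathrm{spec}(A|_U)$ and $\mathrm{spec}(B|_{V^\bot})$), not $\delta$ itself. Recovering the unperturbed $\delta$ does require the refined form of the theorem in which the separation is measured between $\mathrm{spec}(A|_U)$ and $\mathrm{spec}(A|_{U^\bot})$ directly; this is available in \cite[Ch.~VII]{stewart1990matrix} and in later variants, so your plan to appeal to that version is the right move. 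The resolvent-integral alternative you mention also works but is heavier than necessary here.
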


Equipped with these results, we can derive our main proposition concerning PCA.

\begin{proposition}%[Projection onto $\spn{\O}$]
\label{prop:stability_PCA_orbit}
Let $\mathcal{O}\subset\R^n$ be the orbit of a representation, potentially non-orthogonal, $\mu_\O$ its uniform measure, $\projbracket{\spn{\mathcal{O}}}$ the projection on its span, and $\sigma_\mathrm{max}^2,\sigma_\mathrm{min}^2$ the top and bottom nonzero eigenvalues of $\Sigma[\mu_\O]$.
Also let $\nu$ be a measure, $\Sigma[\nu]$ its covariance matrix, $\epsilon>0$ and $\Pi_{\Sigma[\nu]}^{>\epsilon}$ the projection on the subspace spanned by eigenvectors of eigenvalue at least $\epsilon$. 
We suppose
\begin{align*}
&\frac{\W_2(\mu_\O,\nu)}{\sigma_\mathrm{min}} < \bigg(\frac{\Var\big[\|\mu_\O\|\big]}{\sigma_\mathrm{min}^2}+\frac{1}{2}
\bigg)^{1/2}-\bigg(\frac{\Var\big[\|\mu_\O\|\big]}{\sigma_\mathrm{min}^2}\bigg)^{1/2}
\\\mathrm{and}~~~~~
&\bigg(2\bigg(\frac{\Var\big[\|\mu_\O\|\big]}{\sigma_\mathrm{min}^2}\bigg)^{1/2}  + \frac{\W_2(\mu_\O,\nu)}{\sigma_\mathrm{min}}\bigg)
\bigg(\frac{\W_2(\mu_\O,\nu)}{\sigma_\mathrm{min}}\bigg) < \frac{\epsilon}{\sigma_\mathrm{min}^2} \leq \frac{1}{2}.
\end{align*}
Then we have the following bound between the pushforward measures after PCA:
\[
\W_2\bigg(\projbracket{\spn{\mathcal{O}}}\mu_\O,~\Pi_{\Sigma[\nu]}^{>\epsilon}\nu\bigg)\bigg/\sigma_\mathrm{min}
\leq
3(n+1) \bigg( \frac{\sigma_\mathrm{max}^2}{\sigma_\mathrm{min}^2} \bigg)\bigg(\frac{\W_2(\mu_\O,\nu)}{\sigma_\mathrm{min}}\bigg).
\]
\end{proposition}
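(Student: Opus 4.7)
The plan is to decompose $\W_2\big(P\mu_\O, Q\nu\big)$ by inserting $P\nu$ as an intermediate point, and to bound each piece using the covariance-stability Lemma \ref{lem:wasserstein_stability_covariance_matrix} together with Davis--Kahan (Lemma \ref{lem:davis_kahan}). Write $P = \projbracket{\spn{\O}}$, $Q = \Pi^{>\epsilon}_{\Sigma[\nu]}$, $W = \W_2(\mu_\O,\nu)$, and $V = \Var[\|\mu_\O\|]^{1/2}$. Since $\mu_\O$ is supported in $\spn{\O}$, $P\mu_\O = \mu_\O$, so the triangle inequality gives
\begin{equation*}
\W_2(P\mu_\O, Q\nu) \leq \W_2(P\mu_\O, P\nu) + \W_2(P\nu, Q\nu).
\end{equation*}
Orthogonal projection being $1$-Lipschitz, the first term is at most $W$. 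For the second, both measures are pushforwards of $\nu$, so the diagonal coupling yields
\begin{equation*}
\W_2(P\nu, Q\nu)^2 \leq \int \|(P-Q)x\|^2 \,\d\nu(x) \leq \|P-Q\|^2 \,\Tr(\Sigma[\nu]).
\end{equation*}

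The heart of the proof is bounding $\|P-Q\|$ via Davis--Kahan. The matrix $\Sigma[\mu_\O]$ has rank $k = \dim\spn{\O}$, with nonzero eigenvalues confined to $[\sigma_\mathrm{min}^2, \sigma_\mathrm{max}^2]$, so $P$ is the projection on its top $k$ eigenvectors and the $k$-th eigengap equals $\sigma_\mathrm{min}^2$. Lemma \ref{lem:wasserstein_stability_covariance_matrix} gives $\|\Sigma[\mu_\O] - \Sigma[\nu]\| \leq (2V+W)W$, and the second hypothesis rewrites cleanly as $(2V+W)W < \epsilon \leq \sigma_\mathrm{min}^2/2$. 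Weyl's theorem then places the top $k$ eigenvalues of $\Sigma[\nu]$ strictly above $\sigma_\mathrm{min}^2-\epsilon\geq \epsilon$ while the remaining $n-k$ fall strictly below $\epsilon$, so $Q$ coincides with the projection on the top $k$ eigenvectors of $\Sigma[\nu]$; Lemma \ref{lem:davis_kahan} then yields
\begin{equation*}
\|P-Q\| \leq \frac{\sqrt{2}}{\sigma_\mathrm{min}^2}(2V+W)W.
\end{equation*}

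To convert this into the stated estimate, I will bound both $(2V+W)$ and $\Tr(\Sigma[\nu])$ in terms of $\sigma_\mathrm{max}$. From $V^2 \leq \E[\|\mu_\O\|^2] = \Tr(\Sigma[\mu_\O]) \leq n\sigma_\mathrm{max}^2$ and the inequality $W < \sigma_\mathrm{min}/\sqrt{2}$ extracted from $(2V+W)W < \sigma_\mathrm{min}^2/2$, one gets $2V+W \leq C\sqrt{n}\,\sigma_\mathrm{max}$. Similarly, the trace-perturbation inequality $|\Tr(\Sigma[\nu]) - \Tr(\Sigma[\mu_\O])| \leq \sqrt{n}\,\|\Sigma[\nu]-\Sigma[\mu_\O]\|$ combined with the same bound on $(2V+W)W$ gives $\Tr(\Sigma[\nu]) \leq \tfrac{3}{2}n\sigma_\mathrm{max}^2$. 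Substituting into the two inequalities above yields
\begin{equation*}
\W_2(P\mu_\O, Q\nu) \leq W + \frac{\sqrt{2}}{\sigma_\mathrm{min}^2}(2V+W)W \cdot \sqrt{\Tr(\Sigma[\nu])} \leq W\left(1 + C' n \cdot \frac{\sigma_\mathrm{max}^2}{\sigma_\mathrm{min}^2}\right),
\end{equation*}
and the announced form $3(n+1)(\sigma_\mathrm{max}^2/\sigma_\mathrm{min}^2) \cdot W$ follows after collecting constants and using $\sigma_\mathrm{max}^2/\sigma_\mathrm{min}^2 \geq 1$.

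The main technical obstacle is the bookkeeping required to reach the clean prefactor $3(n+1)$: several $\sqrt{n}$ factors enter the argument (from the bound on $V$, from the trace estimate on $\Sigma[\nu]$, and from Frobenius-versus-operator-norm comparisons), and they must be combined using the hypothesis $(2V+W)W < \sigma_\mathrm{min}^2/2$ to absorb the mixed $VW$ terms and leave only a linear dependence in $W$. A second delicate point is the identification $\Pi^{>\epsilon}_{\Sigma[\nu]} = \Pi^k_{\Sigma[\nu]}$, which depends critically on \emph{both} ends of the hypothesis $(2V+W)W < \epsilon \leq \sigma_\mathrm{min}^2/2$ being used: the strict bound $(2V+W)W < \epsilon$ pushes the irrelevant eigenvalues below the threshold, and $\epsilon \leq \sigma_\mathrm{min}^2/2$ keeps the relevant ones above it.
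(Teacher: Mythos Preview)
Your approach is essentially the same as the paper's: both combine Lemma~\ref{lem:wasserstein_stability_covariance_matrix} with Davis--Kahan (Lemma~\ref{lem:davis_kahan}) to control $\|P-Q\|$, use Weyl's theorem on both sides of the hypothesis $(2V+W)W<\epsilon\leq\sigma_{\min}^2/2$ to identify $\Pi^{>\epsilon}_{\Sigma[\nu]}$ with the top-$k$ eigenprojector, and finish with a Wasserstein triangle inequality. The one structural difference is the choice of intermediate point: you split through $P\nu$, whereas the paper splits through $Q\mu_\O$, i.e.\ it writes
\[
\W_2(P\mu_\O,Q\nu)\;\leq\;\W_2(P\mu_\O,Q\mu_\O)+\W_2(Q\mu_\O,Q\nu)
\;\leq\;\|P-Q\|_\op\,\E[\|\mu_\O\|^2]^{1/2}+\W_2(\mu_\O,\nu).
\]
Both are legitimate, but your route forces you to control $\Tr(\Sigma[\nu])$ rather than $\Tr(\Sigma[\mu_\O])=\E[\|\mu_\O\|^2]\leq n\sigma_{\max}^2$; the extra perturbation step introduces a factor that, after your bookkeeping, yields a leading coefficient of order $2\sqrt{3}\,n$ rather than the paper's $2\sqrt{2}\,n$. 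Concretely, for $n=2$ and $\sigma_{\max}=\sigma_{\min}$ your bound gives roughly $9.25\,W/\sigma_{\min}$ versus the target $9\,W/\sigma_{\min}$, so the sentence ``the announced form $3(n+1)$ follows after collecting constants'' does not quite go through as written. Switching the intermediate point to $Q\mu_\O$ (so that the second-moment factor is that of $\mu_\O$, not of $\nu$) recovers the stated constant exactly via the elementary inequality $\sqrt{2n}(2\sqrt{n}+1/\sqrt{2})\leq 3n+2$. Everything else in your argument is correct.
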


\begin{proof}
We will first derive the following bound:
\begin{align}\label{eq:majoration_frobenius_pca}
\big\|\projbracket{\spn{\mathcal{O}}} - \Pi_{\Sigma[\nu]}^{>\epsilon} \big\|
&\leq 
\frac{\sqrt{2}}{\sigma_\mathrm{min}^2}\bigg(2\Var\big[\|\mu_\O\|\big]^{1/2}  + \W_2(\mu_\O,\nu)\bigg)\W_2(\mu_\O,\nu).
\end{align}
Observe that the hypothesis can be written in the form of a quadratic polynomial in $\W_2(\mu_\O,\nu)$:
\begin{align}
&\frac{\W_2(\mu_\O,\nu)}{\sigma_\mathrm{min}} < \bigg(\frac{\Var\big[\|\mu_\O\|\big]}{\sigma_\mathrm{min}^2}+\frac{1}{2}
\bigg)^{1/2}-\bigg(\frac{\Var\big[\|\mu_\O\|\big]}{\sigma_\mathrm{min}^2}\bigg)^{1/2}
\nonumber\\&
~~~~~~~~~~~~~~~~~~~~~~~~~~~~~\iff 
\bigg(2\bigg(\frac{\Var\big[\|\mu_\O\|\big]}{\sigma_\mathrm{min}^2}\bigg)^{1/2}  + \frac{\W_2(\mu_\O,\nu)}{\sigma_\mathrm{min}}\bigg)
\bigg(\frac{\W_2(\mu_\O,\nu)}{\sigma_\mathrm{min}}\bigg)<\frac{1}{2}.
\label{eq:bound_W2_with_sigma}
\end{align}
By combining Lemma \ref{lem:wasserstein_stability_covariance_matrix} with this second inequality, we obtain
\begin{align}
\|\Sigma[\mu_\O] - \Sigma[\nu] \| 
&\leq \bigg(2\Var\big[\|\mu_\O\|\big]^{1/2}  + \W_2(\mu_\O,\nu)\bigg)\W_2(\mu_\O,\nu)
<\frac{\sigma_\mathrm{min}^2}{2}.
\label{eq:proof_consistencyPCA_covariance}
\end{align}
Denote by $m$ the dimension of $\spn{\O}$, and consider the projections $\Pi_{\Sigma[\mu_\O]}^m$ and $\Pi^m_{\Sigma[\nu]}$ on the top $m$ eigenvectors of $\Sigma[\mu_\O]$ and $\Sigma[\nu]$.
Note that the kernel of $\Sigma[\mu_\O]$ is $\spn{\O}^\bot$ and its $m^\mathrm{th}$ eigengap is $\sigma_\mathrm{min}^2$. 
With Equation \eqref{eq:proof_consistencyPCA_covariance}, we deduce that the hypothesis of Lemma \ref{lem:davis_kahan} is satisfied.
It yields
\begin{align*}
\big\|\Pi_{\Sigma[\mu_\O]}^m-\Pi^m_{\Sigma[\nu]}\big\|
&\leq 
\frac{\sqrt{2}}{\sigma_\mathrm{min}^2}\bigg(2\Var\big[\|\mu_\O\|\big]^{1/2}  + \W_2(\mu_\O,\nu)\bigg)\W_2(\mu_\O,\nu).
\end{align*}
We will now identify the matrices $\Pi_{\Sigma[\mu_\O]}^m$ and $\Pi^m_{\Sigma[\nu]}$.
Since the kernel of $\Sigma[\mu_\O]$ is $\spn{\O}^\bot$, we have  $\Pi_{\Sigma[\mu_\O]}^m=\projbracket{\spn{\mathcal{O}}}$.
Besides, using Equation \eqref{eq:proof_consistencyPCA_covariance} and the assumption $\epsilon/\sigma_\mathrm{min}^2\leq1/2$, we have that
\begin{align*}
\sigma_\mathrm{min}^2 -\big\|\Sigma[\mu_\O] - \Sigma[\nu] \big\| 
> \sigma_\mathrm{min}^2-\sigma_\mathrm{min}^2/2 \geq \epsilon.
\end{align*}
Similarly, Equation \eqref{eq:proof_consistencyPCA_covariance} and the assumption $\big(2\Var\big[\|\mu_\O\|\big]^{1/2}+\W_2(\mu_\O,\nu)\big) \cdot\W_2(\mu_\O,\nu) < \epsilon$ yields
\begin{align*}
0 + \big\|\Sigma[\mu_\O] - \Sigma[\nu] \big\| 
&\leq \big(2\Var\big[\|\mu_\O\|\big]^{1/2}+\W_2(\mu_\O,\nu)\big) \W_2(\mu_\O,\nu) < \epsilon.
\end{align*}
Consequently, by Weyl's theorem for the stability of eigenvalues, $\Sigma[\nu]$ admits exactly $m$ eigenvalues greater than or equal to $\epsilon$.
Hence, we have $\Pi^m_{\Sigma[\nu]} = \Pi_{\Sigma[\nu]}^{>\epsilon}$.
As a consequence of these equalities, we obtain Equation \eqref{eq:majoration_frobenius_pca}.
Now, to prove the statement of the proposition, we apply the triangle inequality of the Wasserstein distance on the pushforward measures $\projbracket{\spn{\mathcal{O}}}\mu_\O$ and $\Pi_{\Sigma[\nu]}^{>\epsilon}\nu$:
\begin{align*}
\W_2\left(\projbracket{\spn{\mathcal{O}}}\mu_\O,\Pi_{\Sigma[\nu]}^{>\epsilon}\nu\right)
&\leq \W_2\left(\projbracket{\spn{\mathcal{O}}}\mu_\O,\Pi_{\Sigma[\nu]}^{>\epsilon}\mu_\O\right)
+
\W_2\left(\Pi_{\Sigma[\nu]}^{>\epsilon}\mu_\O,\Pi_{\Sigma[\nu]}^{>\epsilon}\nu\right)\\
&\leq \big\|\projbracket{\spn{\mathcal{O}}}-\Pi_{\Sigma[\nu]}^{> \epsilon}\big\|_\mathrm{op} \cdot {\E[\|\mu_\O\|^2]}^{1/2}
+\big\|\Pi_{\Sigma[\nu]}^{>\epsilon}\big\|_\mathrm{op}\cdot \W_2(\mu_\O,\nu).
\end{align*}
The second term is equal to $\W_2(\mu_\O,\nu)$.
To bound the first one, we use the loose bound $\E[\|\mu_\O\|^2] = \Tr\big( \Sigma[\mu_\O]\big) \leq n\sigma_\mathrm{max}^2$, seen in Equation \eqref{eq:ideal_cov_variance_trace}.
Combined with Equation \eqref{eq:majoration_frobenius_pca}, we get
\[
\big\|\projbracket{\spn{\mathcal{O}}}-\Pi_{\Sigma[\nu]}^{> \epsilon}\big\|_\mathrm{op} \cdot {\E[\|\mu_\O\|^2]}^{1/2}
\leq
\frac{\sqrt{2n}}{\sigma_\mathrm{min}} \bigg( \frac{\sigma_\mathrm{max}}{\sigma_\mathrm{min}} \bigg) \bigg(2\Var\big[\|\mu_\O\|\big]^{1/2}  + \W_2(\mu_\O,\nu)\bigg)\W_2(\mu_\O,\nu).
\]
We simplify it with $\Var\big[\|\mu_\O\|\big]\leq \E[\|\mu_\O\|^2]\leq n\sigma_\mathrm{max}^2$ and $\W_2(\mu_\O,\nu)\leq\sigma_\mathrm{min}/\sqrt{2}\leq\sigma_\mathrm{max}/\sqrt{2}$:
\[
\big\|\projbracket{\spn{\mathcal{O}}}-\Pi_{\Sigma[\nu]}^{> \epsilon}\big\|_\mathrm{op} \cdot {\E[\|\mu_\O\|^2]}^{1/2}
\leq
\frac{\sqrt{2n}}{\sigma_\mathrm{min}} \bigg( \frac{\sigma_\mathrm{max}}{\sigma_\mathrm{min}} \bigg) \bigg( 2\sqrt{n}+\frac{1}{\sqrt{2}} \bigg) \sigma_\mathrm{max}  \W_2(\mu_\O,\nu).
\]
To conclude, we gather the terms and use the inequality $\sqrt{2n} \big( 2\sqrt{n}+1/\sqrt{2} \big)\leq 3n+2$:
\begin{align*}
\W_2\left(\projbracket{\spn{\mathcal{O}}}\mu_\O,\Pi_{\Sigma[\nu]}^{>\epsilon}\nu\right)
&\leq
\sqrt{2n} \bigg( 2\sqrt{n}+\frac{1}{\sqrt{2}} \bigg) \bigg( \frac{\sigma_\mathrm{max}^2}{\sigma_\mathrm{min}^2} \bigg) \W_2(\mu_\O,\nu)
+ \W_2(\mu_\O,\nu)\\
&\leq 3(n+1) \bigg( \frac{\sigma_\mathrm{max}^2}{\sigma_\mathrm{min}^2} \bigg) \W_2(\mu_\O,\nu).\qedhere
\end{align*}
\end{proof}

We point out that Proposition \ref{prop:stability_PCA_orbit} has an important consequence in practice: performing PCA on $X$ will give a new point cloud that is again close to the orbit of a representation.
Indeed, if $\phi$ denotes the representation that generates $\O$, then the span of $\O$ is an invariant subspace of $\phi$, hence the projection $\projbracket{\spn{\mathcal{O}}}(\O)$ is still an orbit of a representation.

Besides, we note that the assumptions of the proposition are valid, provided that $\W_2(\mu_\O,\nu)$ is small enough and that the parameter $\epsilon$ is chosen in accordance with $\W_2(\mu_\O,\nu)$.
In practice, however, these conditions are hardly verifiable, since the quantities $\sigma_\mathrm{max}^2$, $\sigma_\mathrm{min}^2$ and $\Var\big[\|\mu_\O\|\big]$ are unknown.
To address this issue, we could try to estimate these quantities using the observation $X$, in a statistical fashion, although we did not go further in this direction.

\begin{remark}\label{rem:invariant_under_rescaling}
The stability of PCA, expressed in Proposition \ref{prop:stability_PCA_orbit} above, involves the terms
$$
\frac{\sigma_\mathrm{max}^2}{\sigma_\mathrm{min}^2},~~~~
\frac{\Var\big[\|\mu_\O\|\big]}{\sigma_\mathrm{min}^2},
~~~~\mathrm{and}~~~~~
\frac{\W_2(\mu_\O,\nu)}{\sigma_\mathrm{min}}.
$$
They correspond respectively to the homogeneity of $\O$, its deviation from orthogonality, and the quality of the sample $\nu$.
Moreover, they are all \emph{invariant under rescaling}.
That is, multiplying the input $\mu_\O$ and $\nu$ by a fixed positive constant will not modify their value.
This is an important property, showing that the algorithm is not affected by the scale and, hence, works with the intrinsic geometric features of the input.
We will ensure this property in all the results to come.
\end{remark}

\subsubsection{Orthonormalization}\label{subsubsec:analysis_orthonormalization}
The second part of \ref{item:step1} consists of computing $M = \sqrt{\Sigma[X]^+}$, the square root of the Moore–Penrose pseudo-inverse of the covariance matrix, and in translating $X$ by $M$.
This step actually implements the orthogonalization of representations, a standard practice when working with Lie groups.
In fact, along the way, we will implement a stronger result: $M X$ will be close to a homogeneous orbit, as defined in Section \ref{subsubsec:analysis_pca_orbit}.

We recall that a representation $\phi\colon G\rightarrow\M_n(\R)$ is orthogonal if it takes values in $\Ort(n)$, i.e., if the relation $\big\langle \phi(g)x,\phi(g)y \big\rangle = \langle x,y \rangle$ holds for all $g\in G $ and $x,y\in \R^n$, where we consider the usual Euclidean inner product.
As recalled in Section \ref{subsubsec:def_representation}, for any representation $\phi$ of a compact Lie group, there exists a positive-definite matrix $M\in\GL_n(\R)$ such that the conjugate representation $M\phi M^{-1}$ is orthogonal.
Let us write down the classic construction of such a matrix.
We define an inner product on $\R^n$ via
\[
\langle x, y \rangle_G = \int \big\langle \phi(g)x, \phi(g)y \big\rangle\d\mu_G(g),
\]
with $\mu_G$ the Haar measure.
By construction, the representation is orthogonal with respect to this inner product. 
Let $K$ be the matrix of scalar products of the canonical basis vectors $(e_i)_{i=1}^n$ of $\R^n$.
It is a symmetric positive-definite matrix and satisfies the relation $\langle x, y \rangle_G = \langle K x, y \rangle$.
By denoting $M = K^{1/2}$ a square root, the relation becomes $\langle x, y \rangle_G=\langle M x, M y \rangle$ and we see that the conjugate representation is orthogonal with respect to the usual inner product:
\begin{align*}
\big\langle M\phi(g)M^{-1}x, M\phi(g)M^{-1} y \big\rangle 
= \big\langle \phi(g)M^{-1}x, \phi(g)M^{-1} y \big\rangle_G
= \big\langle M^{-1}x, M^{-1} y \big\rangle_G
= \big\langle x, y \big\rangle.
\end{align*}

In the context of this work, this construction of the matrix $M$ cannot be used, since we do not have access to the representation $\phi$, but only to a sample $X$ of an orbit $\O$ of it.
However, it can be easily adapted.
Indeed, with $\Sigma[\mu_\O]$ the covariance matrix of $\O$ and any matrix $M \in \M_n(\R)$, we observe that Equation \eqref{eq:covariance_matrix} yields
\begin{align*}
\Sigma[M \mu_\O] = \int \big(M x) \big(M x)^\top \d \mu_\O(x)
= M \Sigma[\mu_\O] M^\top.
\end{align*}
We remark that the choice of $M = \Sigma[ \mu_\O]^{-1/2}$, the square root of the inverse of $\Sigma[ \mu_\O]$, gives the particularly simple covariance matrix $\Sigma[M \mu_\O] = I$.
Moreover, in this case, the conjugate representation $M\phi M^{-1}$ is orthogonal. 
Indeed, by definition of $\Sigma[ \mu_\O]$, we have, for all $g\in G$,
$$\phi(g)^{-1} \Sigma[ \mu_\O] = \Sigma[ \mu_\O] \phi(g)^\top,$$ 
We deduce that $M\phi(g) M^{-1}$ is an orthogonal matrix.
This justifies our choice of $M$.
In addition, since $\Sigma[M \mu_\O]=I$, the ratio of its top and bottom eigenvalues is $1$, hence $M\O$ is a homogeneous orbit, as defined in Section \ref{subsubsec:analysis_pca_orbit}.
We could say that the orbit has been `orthonormalized'.

We now discuss the concrete implementation of this procedure when the input is a point cloud $X$ or, more generally, a measure $\nu$ on $\R^n$.
We suppose that PCA pre-processing has been applied successfully to $\O$ and $X$, hence that $\spn{\O} = \R^n$.
In particular, $\Sigma[\mu_\O]$ is non-singular.
We aim to show that the square root $M = \Sigma[\nu]^{-1/2}$ allows us to make the underlying representation orthogonal.
To do so, we will follow the same strategy as we did when studying PCA: deriving an upper bound on the distance between $\Sigma[\nu]$ and $\Sigma[\mu_\O]$, expressed with the Wasserstein distance, and deducing that the output of our algorithm is consistent.
In opposition to PCA, the following result does not involve projection on eigenspaces and is more straightforward to prove.

\begin{proposition}\label{prop:stability_orthonormalization}
Let $\mathcal{O}\subset\R^n$ be the orbit of a representation, potentially non-orthogonal, that spans $\R^n$.
Let $\mu_\O$ be its uniform measure and $\sigma_\mathrm{max}^2,\sigma_\mathrm{min}^2$ the top and bottom eigenvalues of $\Sigma[\mu_\O]$.
Let $\nu$ be a measure and $\Sigma[\nu]$ its covariance matrix.
We suppose that $\W_2(\mu_\O,\nu)$ satisfies the first hypothesis of Proposition \ref{prop:stability_PCA_orbit}.
Then $\Sigma[\nu]$ is positive-definite, and we have a bound
\begin{align*}
&\W_2\bigg(\Sigma[\mu_\O]^{-1/2}\mu_\O, ~\Sigma[\nu]^{-1/2}\nu\bigg)\\
&~~~~~~~~~~~~~~~~~~~~~
\leq  \big(\sqrt{2n}+2\big)
\bigg(\frac{\sigma_\mathrm{max}}{\sigma_\mathrm{min}}\bigg)
%\cdot
\bigg(\frac{\W_2(\mu_\O,\nu)}{\sigma_\mathrm{min}}\bigg)^{1/2}
%\cdot
\bigg(2\bigg(\frac{\Var\big[\|\mu_\O\|\big]}{\sigma_\mathrm{min}^2}\bigg)^{1/2}  + \frac{\W_2(\mu_\O,\nu)}{\sigma_\mathrm{min}}\bigg)^{1/2}.
\end{align*}
\end{proposition}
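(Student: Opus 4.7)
The plan is to mirror the strategy used in the proof of Proposition~\ref{prop:stability_PCA_orbit}: bound $\|\Sigma[\mu_\O] - \Sigma[\nu]\|$ via Lemma~\ref{lem:wasserstein_stability_covariance_matrix} and then transport this into a bound on the Wasserstein distance between the two orthonormalized measures. I first check that $\Sigma[\nu]$ is positive-definite. Applying Lemma~\ref{lem:wasserstein_stability_covariance_matrix} together with the rearranged form of the first hypothesis of Proposition~\ref{prop:stability_PCA_orbit}, as in Equation~\eqref{eq:bound_W2_with_sigma}, yields
$$\|\Sigma[\mu_\O] - \Sigma[\nu]\| \leq \big(2\Var[\|\mu_\O\|]^{1/2} + \W_2(\mu_\O,\nu)\big)\W_2(\mu_\O,\nu) < \sigma_\mathrm{min}^2/2.$$
By Weyl's eigenvalue inequality, every eigenvalue of $\Sigma[\nu]$ is then at least $\sigma_\mathrm{min}^2/2$, so $\Sigma[\nu]$ is invertible and $\|\Sigma[\nu]^{-1/2}\|_\mathrm{op}\leq\sqrt{2}/\sigma_\mathrm{min}$.

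Next I split the target distance via the triangle inequality as
$$\W_2\big(\Sigma[\mu_\O]^{-1/2}\mu_\O,\Sigma[\nu]^{-1/2}\nu\big)\leq\W_2\big(\Sigma[\mu_\O]^{-1/2}\mu_\O,\Sigma[\nu]^{-1/2}\mu_\O\big) + \W_2\big(\Sigma[\nu]^{-1/2}\mu_\O,\Sigma[\nu]^{-1/2}\nu\big).$$
The second summand is immediate: pushforward by a linear map $T$ is $\|T\|_\mathrm{op}$-Lipschitz in $\W_2$, so it is bounded by $(\sqrt{2}/\sigma_\mathrm{min})\,\W_2(\mu_\O,\nu)$. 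The first summand is bounded by $\|\Sigma[\mu_\O]^{-1/2} - \Sigma[\nu]^{-1/2}\|_\mathrm{op}\cdot\E[\|\mu_\O\|^2]^{1/2}$, where $\E[\|\mu_\O\|^2]^{1/2}\leq\sqrt{n}\,\sigma_\mathrm{max}$ by Equation~\eqref{eq:ideal_cov_variance_trace}.

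The key step is bounding the operator norm of the difference of inverse square roots. Writing $A=\Sigma[\mu_\O]$ and $B=\Sigma[\nu]$, the identity $A^{-1/2}-B^{-1/2}=A^{-1/2}(B^{1/2}-A^{1/2})B^{-1/2}$ yields, by submultiplicativity of the operator norm, $\|A^{-1/2}-B^{-1/2}\|_\mathrm{op}\leq(\sqrt{2}/\sigma_\mathrm{min}^2)\|A^{1/2}-B^{1/2}\|_\mathrm{op}$. I invoke the Hölder-type operator-monotone inequality $\|A^{1/2}-B^{1/2}\|_\mathrm{op}\leq\|A-B\|_\mathrm{op}^{1/2}\leq\|A-B\|^{1/2}$, valid for positive semi-definite operators, so that Lemma~\ref{lem:wasserstein_stability_covariance_matrix} bounds the first summand of the triangle inequality by
$$\frac{\sqrt{2n}\,\sigma_\mathrm{max}}{\sigma_\mathrm{min}^2}\sqrt{\big(2\Var[\|\mu_\O\|]^{1/2}+\W_2(\mu_\O,\nu)\big)\W_2(\mu_\O,\nu)}.$$

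To finish, I set $\alpha=\W_2(\mu_\O,\nu)/\sigma_\mathrm{min}$ and $\beta=2(\Var[\|\mu_\O\|]/\sigma_\mathrm{min}^2)^{1/2}+\alpha$, so that the two summands of the triangle inequality read $\sqrt{2n}\,(\sigma_\mathrm{max}/\sigma_\mathrm{min})\sqrt{\alpha\beta}$ and $\sqrt{2}\,\alpha$ respectively. Since $\beta\geq\alpha$ and $\sigma_\mathrm{max}\geq\sigma_\mathrm{min}$, we have $\sqrt{2}\,\alpha\leq 2(\sigma_\mathrm{max}/\sigma_\mathrm{min})\sqrt{\alpha\beta}$, and adding yields the stated bound with prefactor $\sqrt{2n}+2$. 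The main obstacle is the choice of perturbation inequality: a direct Lipschitz bound on $A\mapsto A^{1/2}$ through the Sylvester equation gives an estimate linear in $\|A-B\|$, incompatible with the $\W_2^{1/2}$-type factor appearing in the proposition; the operator-monotone Hölder inequality for the matrix square root is what produces the correct exponent.
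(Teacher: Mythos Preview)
Your proof is correct and follows essentially the same route as the paper: bound $\|\Sigma[\mu_\O]-\Sigma[\nu]\|$ via Lemma~\ref{lem:wasserstein_stability_covariance_matrix}, control $\|\Sigma[\mu_\O]^{-1/2}-\Sigma[\nu]^{-1/2}\|_\mathrm{op}$ through the factorization $A^{-1/2}-B^{-1/2}=A^{-1/2}(B^{1/2}-A^{1/2})B^{-1/2}$ together with the operator-monotone inequality $\|A^{1/2}-B^{1/2}\|_\mathrm{op}\leq\|A-B\|_\mathrm{op}^{1/2}$, and then split the Wasserstein distance by the triangle inequality. The only cosmetic difference is that you bound $\|\Sigma[\nu]^{-1/2}\|_\mathrm{op}\leq\sqrt{2}/\sigma_\mathrm{min}$ directly from Weyl, whereas the paper routes this through $\|\Sigma[\mu_\O]^{-1/2}\|_\mathrm{op}+\|\Sigma[\mu_\O]^{-1/2}-\Sigma[\nu]^{-1/2}\|_\mathrm{op}$; both collections yield the same prefactor $\sqrt{2n}+2$.
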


\begin{proof}
The first step of the proof consists of showing the inequality
\begin{align}\label{eq:proof_orthonormalization_3}    
\big\|\Sigma[\mu_\O]^{-1/2} - \Sigma[\nu]^{-1/2}\big\|_\mathrm{op}
\leq
\frac{\sqrt{2}}{\sigma_\mathrm{min}^2} \bigg(2\Var\big[\|\mu_\O\|\big]^{1/2}  + \W_2(\mu_\O,\nu)\bigg)^{1/2}\W_2(\mu_\O,\nu)^{1/2}.
\end{align}
Let us recall a few relations regarding the operator norm of matrices.
First, the inequality 
$$
\|A^{-1}-B^{-1}\|_\mathrm{op} \leq \|A-B\|_\mathrm{op}\cdot \|B^{-1}\|_\mathrm{op}\cdot\|A^{-1}\|_\mathrm{op}
$$ 
is valid for any pair of invertible matrices.
This relation comes from applying the submultiplicativity of the operator norm to the equality $A^{-1}-B^{-1} = A^{-1}(B-A)B^{-1}$.
Now, if $A$ is a positive definite matrix, then $\|A\|_\mathrm{op}$ is equal to its top eigenvalue.
Consequently, $\|A^{1/2}\|_\mathrm{op} = \|A\|_\mathrm{op}^{1/2}$ and $\|A^{-1}\|_\mathrm{op} = \lambda^{-1}$, with $\lambda$ its bottom eigenvalue.
Moreover, it is known that
$$\|A^{1/2}-B^{1/2}\|_\mathrm{op}\leq \|A-B\|_\mathrm{op}^{1/2}.$$
Putting these relations together, we deduce
\begin{align}
\|A^{-1/2}-B^{-1/2}\|_\mathrm{op} 
&\leq 
\|A^{1/2}-B^{1/2}\|_\mathrm{op}\cdot\|A^{-{1/2}}\|_\mathrm{op}\cdot \|B^{-{1/2}}\|_\mathrm{op}\nonumber\\
&\leq \|A-B\|_\mathrm{op}^{1/2}\cdot \lambda^{-1/2} (\lambda - \|A-B\|_\mathrm{op} )^{1/2},\label{eq:inequality_op_norm_inversesquareroot}
\end{align}
where we used Weyl's eigenvalues stability for the last term. 
We now apply these results to our context.
In Equation \eqref{eq:proof_consistencyPCA_covariance} of the proof of Proposition \ref{prop:stability_PCA_orbit}, we have observed that
\begin{align*}
\| \Sigma[\mu_\O] - \Sigma[\nu] \| \leq \bigg(2\Var\big[\|\mu_\O\|\big]^{1/2}  + \W_2(\mu_\O,\nu)\bigg)\W_2(\mu_\O,\nu)
< \frac{\sigma_\mathrm{min}^2}{2}.
\end{align*}
This second inequality, combined with the fact that the eigenvalues of $\Sigma[\mu_\O]$ are at least $\sigma_\mathrm{min}^2$, shows that $\Sigma[\nu]$ is non-singular.
Moreover, the first and second inequalities respectively yield
\begin{align*}
\big\|\Sigma[\mu_\O]-\Sigma[\nu]\big\|^{1/2}
&\leq 
\bigg(2\Var\big[\|\mu_\O\|\big]^{1/2}  + \W_2(\mu_\O,\nu)\bigg)^{1/2}\cdot\W_2(\mu_\O,\nu)^{1/2}
,\\
\sigma_\mathrm{min}^2 - \big\|\Sigma[\mu_\O]-\Sigma[\nu]\big\|&\geq \sigma_\mathrm{min}^2-\sigma_\mathrm{min}^2/2=\sigma_\mathrm{min}^2/2.
\end{align*}
We eventually bound $\big\|\Sigma[\mu_\O]^{-1/2} - \Sigma[\nu]^{-1/2}\big\|_\mathrm{op}$ through the inequality of Equation \eqref{eq:inequality_op_norm_inversesquareroot}:
\begin{align*}
\big\|\Sigma[\mu_\O]^{-1/2} - \Sigma[\nu]^{-1/2}\big\|_\mathrm{op}
&\leq  
\big\|\Sigma[\mu_\O]-\Sigma[\nu]\big\|^{1/2}\sigma_\mathrm{min}^{-1} 
\big(\sigma_\mathrm{min}^2 - \big\|\Sigma[\mu_\O]-\Sigma[\nu]\big\| \big)^{-1/2}\\
&\leq
\bigg(2\Var\big[\|\mu_\O\|\big]^{1/2}  + \W_2(\mu_\O,\nu)\bigg)^{1/2}\W_2(\mu_\O,\nu)^{1/2}
 \frac{\sqrt{2}}{\sigma_\mathrm{min}^2}.
\end{align*}
This is Equation \eqref{eq:proof_orthonormalization_3} previously announced.
Next, to prove the inequality of the proposition, we consider the pushforward measures and apply the triangle inequality:
\begin{align*}
&\W_2\left(\Sigma[\mu_\O]^{-1/2}\mu_\O,\Sigma[\nu]^{-1/2}\nu\right)\\
&\leq \W_2\left(\Sigma[\mu_\O]^{-1/2}\mu_\O,\Sigma[\nu]^{-1/2}\mu_\O\right)
+
\W_2\left(\Sigma[\nu]^{-1/2}\mu_\O,\Sigma[\nu]^{-1/2}\nu\right)\\
&\leq \big\|\Sigma[\mu_\O]^{-1/2}-\Sigma[\nu]^{-1/2}\big\|_\mathrm{op} \cdot {\E[\|\mu_\O\|^2]}^{1/2}
+\big\|\Sigma[\nu]^{-1/2}\|_\mathrm{op}\cdot \W_2(\mu_\O,\nu)\\
&\leq \big\|\Sigma[\mu_\O]^{-1/2}-\Sigma[\nu]^{-1/2}\big\|_\mathrm{op} \left(  {\E[\|\mu_\O\|^2]}^{1/2}+\W_2(\mu_\O,\nu)\right)
+\big\|\Sigma[\mu_\O]^{-1/2}\|_\mathrm{op}\cdot \W_2(\mu_\O,\nu),
\end{align*}
where we used $\big\|\Sigma[\nu]^{-1/2}\|_\mathrm{op}\leq\big\|\Sigma[\mu_\O]^{-1/2}\|_\mathrm{op}+\W_2(\mu_\O,\nu)$ on the last line.
The second term is equal to $\W_2(\mu_\O,\nu)/\sigma_\mathrm{min}$.
For the first one, we use the inequality $\W_2(\mu_\O,\nu)\leq  \sigma_\mathrm{min}/\sqrt{2}$, coming from the hypothesis, and $\E\big[\|\mu_\O\|^2\big]\leq \Tr{\Sigma[\mu_\O]} \leq n\sigma_\mathrm{max}^2$, coming from Equation \eqref{eq:ideal_cov_variance_trace}, to obtain
\begin{align*}
\E\big[\|\mu_\O\|^2\big]^{1/2}+\W_2(\mu_\O,\nu)
\leq \sqrt{n}\sigma_\mathrm{max} + \frac{\sigma_\mathrm{min}}{\sqrt{2}} 
\leq \frac{1}{\sqrt{2}}\big(\sqrt{2n}+1\big)\sigma_\mathrm{max}.
\end{align*}
Combined with Equation \eqref{eq:proof_orthonormalization_3}, we deduce that
\begin{align*}
&\big\|\Sigma[\mu_\O]^{-\frac{1}{2}} - \Sigma[\nu]^{-\frac{1}{2}}\big\|_\mathrm{op} \big(\E\big[\|\mu_\O\|^2\big]^{1/2}+\W_2(\mu_\O,\nu)\big)\\
&\leq
\bigg(2\frac{\Var\big[\|\mu_\O\|\big]^{1/2}}{\sigma_\mathrm{min}}  + \frac{\W_2(\mu_\O,\nu)}{\sigma_\mathrm{min}}\bigg)^{1/2}\bigg(\frac{\W_2(\mu_\O,\nu)}{\sigma_\mathrm{min}}\bigg)^{1/2} \bigg(\sqrt{2n}+1\bigg)\frac{\sigma_\mathrm{max}}{\sigma_\mathrm{min}}.
\end{align*}
We collect the terms to conclude:
\begin{align*}
&\W_2\bigg(\Sigma[\mu_\O]^{-1/2}\mu_\O, ~\Sigma[\nu]^{-1/2}\nu\bigg)\leq
\\ &
\bigg(\frac{\W_2(\mu_\O,\nu)}{\sigma_\mathrm{min}}\bigg)
+ \big(\sqrt{2n}+1\big)\bigg(\frac{\sigma_\mathrm{max}}{\sigma_\mathrm{min}}\bigg)
\bigg(\frac{\W_2(\mu_\O,\nu)}{\sigma_\mathrm{min}}\bigg)^{1/2}
\bigg(2\bigg(\frac{\Var\big[\|\mu_\O\|\big]}{\sigma_\mathrm{min}^2}\bigg)^{1/2}  + \frac{\W_2(\mu_\O,\nu)}{\sigma_\mathrm{min}}\bigg)^{1/2}\bigg).
\end{align*}
Simplifying this equation gives the result of the proposition.
\end{proof}

We observe that in Proposition \ref{prop:stability_orthonormalization}, the distance $\W_2\big(\Sigma[\mu_\O]^{-1/2}\mu_\O, ~\Sigma[\nu]^{-1/2}\nu\big)$ does not appear normalized by a constant, as it is the case for the other terms.
This, however, does not contradict the invariance under rescaling, raised in Remark \ref{rem:invariant_under_rescaling}. 
Indeed, this distance is `dimensionless', i.e., already invariant, since $\Sigma[\mu_\O]^{-1/2}\mu_\O$ and $\Sigma[\nu]^{-1/2}\nu$ are `normalized' measures.

We now gather Propositions \ref{prop:stability_PCA_orbit} and \ref{prop:stability_orthonormalization} to obtain our main result for \ref{item:step1}.
It shows that the orthonormalized measure $\sqrt{\Sigma[\nu]^+}\Pi_{\Sigma[\nu]}^{>\epsilon}\nu$, output of \ref{item:step1} defined in Equation \eqref{eq: program of projection2}, is close to $\sqrt{\Sigma[\mu_\O]^+}\projbracket{\spn{\mathcal{O}}}\mu_\O$, which is the uniform measure on an orbit of an orthogonal representation that spans the ambient space, as it is wished in this pre-processing step.

\begin{proposition}
\label{prop:stability_step1}
Let $\mathcal{O}\subset\R^n$ be the orbit of a representation, potentially non-orthogonal, $\mu_\O$ its uniform measure, $\projbracket{\spn{\mathcal{O}}}$ the projection on its span, and $\sigma_\mathrm{max}^2,\sigma_\mathrm{min}^2$ the top and bottom nonzero eigenvalues of $\Sigma[\mu_\O]$.
Besides, let $\nu$ be a measure, $\Sigma[\nu]$ its covariance matrix, $\epsilon>0$ and $\Pi_{\Sigma[\nu]}^{>\epsilon}$ the projection on the subspace spanned by eigenvectors with eigenvalue at least $\epsilon$. 
Suppose
\begin{align*}
&
\frac{\W_2(\mu_\O,\nu)}{\sigma_\mathrm{min}}
< \bigg(\bigg(\frac{\Var\big[\|\mu_\O\|\big]}{\sigma_\mathrm{min}^2}+\frac{1}{2}\bigg)^2
\bigg)^{1/2}-\bigg(\frac{\Var\big[\|\mu_\O\|\big]}{\sigma_\mathrm{min}^2}\bigg)^{1/2}\bigg)
\bigg/ \bigg( 3(n+1) \bigg( \frac{\sigma_\mathrm{max}^2}{\sigma_\mathrm{min}^2} \bigg)\bigg)
\\\mathrm{and}~~~~~
&\bigg(2\bigg(\frac{\Var\big[\|\mu_\O\|\big]}{\sigma_\mathrm{min}^2}\bigg)^{1/2}  + \frac{\W_2(\mu_\O,\nu)}{\sigma_\mathrm{min}}\bigg)
\bigg(\frac{\W_2(\mu_\O,\nu)}{\sigma_\mathrm{min}}\bigg) < \frac{\epsilon}{\sigma_\mathrm{min}^2} \leq \frac{1}{2}.
\end{align*}
Then we have the following bound between the pushforward measures after \ref{item:step1}:
\begin{align*}
&\W_2\bigg(\sqrt{\Sigma[\mu_\O]^+}\projbracket{\spn{\mathcal{O}}}\mu_\O,~\sqrt{\Sigma[\nu]^+}\Pi_{\Sigma[\nu]}^{>\epsilon}\nu\bigg)\\
&\leq 
8(n+1)^{3/2}\bigg(\frac{\sigma_\mathrm{max}^3}{\sigma_\mathrm{min}^3}\bigg)
\bigg(\frac{\W_2(\mu_\O,\nu)}{\sigma_\mathrm{min}}\bigg)^{1/2}
\bigg(\bigg(\frac{\Var\big[\|\mu_\O\|\big]}{\sigma_\mathrm{min}^2}\bigg)^{1/2}  + \frac{\W_2(\mu_\O,\nu)}{\sigma_\mathrm{min}}\bigg)^{1/2}.
\end{align*}
\end{proposition}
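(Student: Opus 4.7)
The strategy parallels the proofs of Propositions~\ref{prop:stability_PCA_orbit} and~\ref{prop:stability_orthonormalization}: the PCA and orthonormalization steps are handled separately via a single triangle inequality. Since $\mu_\O$ is already supported on $\spn{\mathcal{O}}$, the projection $\projbracket{\spn{\mathcal{O}}}\mu_\O$ equals $\mu_\O$. Setting $M_1=\sqrt{\Sigma[\mu_\O]^+}$ and $M_2=\sqrt{\Sigma[\nu]^+}\,\Pi_{\Sigma[\nu]}^{>\epsilon}$, the target quantity is $\W_2(M_1\mu_\O,M_2\nu)$, which we split as
\begin{equation*}
\W_2(M_1\mu_\O,M_2\nu) \leq \|M_1-M_2\|_\mathrm{op}\,\E\big[\|\mu_\O\|^2\big]^{1/2} + \|M_2\|_\mathrm{op}\,\W_2(\mu_\O,\nu).
\end{equation*}

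The second term is handled elementarily: under both hypotheses, Weyl's stability of eigenvalues combined with Lemma~\ref{lem:wasserstein_stability_covariance_matrix} shows that the $m$ retained eigenvalues of $\Sigma[\nu]$ stay above $\sigma_\mathrm{min}^2/2$, whence $\|M_2\|_\mathrm{op}\leq\sqrt{2}/\sigma_\mathrm{min}$, while $\E[\|\mu_\O\|^2]^{1/2}\leq\sqrt{n}\,\sigma_\mathrm{max}$ via Equation~\eqref{eq:ideal_cov_variance_trace}. For the first term, we extend Equation~\eqref{eq:proof_orthonormalization_3} to pseudo-inverses. Under the hypotheses, both $\Sigma[\mu_\O]$ and $\Sigma[\nu]$ have exactly $m=\dim\spn{\mathcal{O}}$ eigenvalues above $\epsilon$, so $M_1$ and $M_2$ are well-defined maps of rank $m$. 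The Davis--Kahan theorem (Lemma~\ref{lem:davis_kahan}) controls the misalignment of the corresponding eigenspaces, and an argument analogous to Equation~\eqref{eq:inequality_op_norm_inversesquareroot} applied on these matched subspaces yields the pseudo-inverse counterpart
\begin{equation*}
\|M_1-M_2\|_\mathrm{op} \leq \frac{\sqrt{2}}{\sigma_\mathrm{min}^2}\Big(2\Var\big[\|\mu_\O\|\big]^{1/2}+\W_2(\mu_\O,\nu)\Big)^{1/2}\W_2(\mu_\O,\nu)^{1/2}.
\end{equation*}

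Assembling the two contributions yields an intermediate estimate proportional to $(\sqrt{n}\,\sigma_\mathrm{max}/\sigma_\mathrm{min}^2)$ times the above, plus a lower-order $\W_2(\mu_\O,\nu)/\sigma_\mathrm{min}$. Normalizing by $\sigma_\mathrm{min}$ to reinstate the scaling-invariant ratios and absorbing the numerical constants via $(\sqrt{2n}+2)\sqrt{3(n+1)}\cdot\sqrt{2}\leq 8(n+1)^{3/2}$ recovers the stated bound. The first hypothesis of the proposition differs from that of Proposition~\ref{prop:stability_PCA_orbit} precisely by the factor $3(n+1)(\sigma_\mathrm{max}^2/\sigma_\mathrm{min}^2)$; this is calibrated so that the PCA-amplified Wasserstein distance still satisfies the quadratic smallness condition~\eqref{eq:bound_W2_with_sigma} needed to invoke the orthonormalization bound, and it also explains the extra factor $\sigma_\mathrm{max}/\sigma_\mathrm{min}$ appearing in the final estimate compared to Proposition~\ref{prop:stability_orthonormalization}.

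The main obstacle is the pseudo-inverse extension of Equation~\eqref{eq:inequality_op_norm_inversesquareroot}. In the proof of Proposition~\ref{prop:stability_orthonormalization}, $\Sigma[\mu_\O]$ is assumed invertible, allowing the direct chain of matrix-inverse identities $A^{-1}-B^{-1}=A^{-1}(B-A)B^{-1}$ together with $\|A^{1/2}-B^{1/2}\|_\mathrm{op}\leq\|A-B\|_\mathrm{op}^{1/2}$. Here $\Sigma[\mu_\O]$ is only invertible on $\spn{\mathcal{O}}$, while $\Sigma[\nu]$ is invertible only on a nearby but a priori distinct $m$-dimensional subspace $\Pi_{\Sigma[\nu]}^{>\epsilon}(\R^n)$, so these manipulations no longer apply naively. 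The resolution uses Davis--Kahan to rotate one eigenspace onto the other at a cost controlled by $\|\Sigma[\mu_\O]-\Sigma[\nu]\|_\mathrm{op}/\sigma_\mathrm{min}^2$, and then applies the standard argument on the common subspace, incurring two additional contributions of the same order which are absorbed in the constant.
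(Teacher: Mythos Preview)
Your approach is genuinely different from the paper's, and while the overall strategy is plausible, there is a gap and an internal inconsistency worth flagging.

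\textbf{What the paper does.} The paper's proof is a sequential composition: it first applies Proposition~\ref{prop:stability_PCA_orbit} to bound $\W_2\big(\projbracket{\spn{\mathcal{O}}}\mu_\O,\Pi_{\Sigma[\nu]}^{>\epsilon}\nu\big)$ by $\alpha\cdot\W_2(\mu_\O,\nu)/\sigma_\mathrm{min}$ with $\alpha=3(n+1)\sigma_\mathrm{max}^2/\sigma_\mathrm{min}^2$, checks that this amplified distance still satisfies the hypothesis of Proposition~\ref{prop:stability_orthonormalization}, and then applies that proposition to the \emph{projected} measures. The point is that after projection, both covariance matrices are genuinely invertible on the relevant subspaces, so $\Sigma[\projbracket{\spn{\mathcal{O}}}\mu_\O]^{-1/2}=\sqrt{\Sigma[\mu_\O]^+}\projbracket{\spn{\mathcal{O}}}$ and similarly for $\nu$, and Equation~\eqref{eq:inequality_op_norm_inversesquareroot} applies verbatim. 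No pseudo-inverse perturbation theory is needed.

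\textbf{Where your argument is incomplete.} You attempt instead to bound $\|M_1-M_2\|_\mathrm{op}$ directly, claiming the same constant $\sqrt{2}/\sigma_\mathrm{min}^2$ as in Equation~\eqref{eq:proof_orthonormalization_3}. But $M_1$ and $M_2$ have ranges in \emph{different} $m$-dimensional subspaces, and the Davis--Kahan rotation you invoke contributes a term of order $\|M_1\|_\mathrm{op}\cdot\|\projbracket{\spn{\mathcal{O}}}-\Pi_{\Sigma[\nu]}^{>\epsilon}\|_\mathrm{op}\lesssim\sigma_\mathrm{min}^{-3}\|\Sigma[\mu_\O]-\Sigma[\nu]\|$, which carries an extra $1/\sigma_\mathrm{min}$ compared to your stated bound. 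This term is linear in $\W_2$ rather than $\W_2^{1/2}$, so it is lower order and can indeed be absorbed, but not into the constant $\sqrt{2}/\sigma_\mathrm{min}^2$; you would need to track it separately and show it fits under $8(n+1)^{3/2}\sigma_\mathrm{max}^3/\sigma_\mathrm{min}^3$. Your write-up asserts this absorption without doing it.

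\textbf{The inconsistency.} Your last paragraph explains the strengthened first hypothesis as ``calibrated so that the PCA-amplified Wasserstein distance still satisfies the quadratic smallness condition''---but in your direct approach there is no PCA-amplified distance, since you never pass through Proposition~\ref{prop:stability_PCA_orbit}. That explanation belongs to the paper's sequential strategy, not yours. In a direct operator comparison, the weaker hypothesis of Proposition~\ref{prop:stability_PCA_orbit} would already suffice to control $\|\Sigma[\mu_\O]-\Sigma[\nu]\|$, so either your route gives a stronger result than stated (under a weaker hypothesis), or you are implicitly using the sequential route after all. The paper's modular composition avoids this ambiguity entirely.
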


\begin{proof}
This assumption on $\W_2(\mu_\O,\nu)$ is stronger than in Proposition \ref{prop:stability_PCA_orbit}. Thus, its result holds:
\[
\W_2\bigg(\projbracket{\spn{\mathcal{O}}}\mu_\O,~\Pi_{\Sigma[\nu]}^{>\epsilon}\nu\bigg)\bigg/\sigma_\mathrm{min}
\leq 
\underbrace{3(n+1)
\bigg(\frac{\sigma_\mathrm{max}^2}{\sigma_\mathrm{min}^2}\bigg)}_{\alpha}
 \bigg(\frac{\W_2(\mu_\O,\nu)}{\sigma_\mathrm{min}}\bigg).
\]
Still using the assumption on $\W_2(\mu_\O,\nu)$, we deduce that
\[
\W_2\bigg(\projbracket{\spn{\mathcal{O}}}\mu_\O,~\Pi_{\Sigma[\nu]}^{>\epsilon}\nu\bigg)\bigg/\sigma_\mathrm{min}
\leq \bigg(\frac{\Var\big[\|\mu_\O\|\big]}{\sigma_\mathrm{min}^2}+\frac{1}{2}\bigg)^2
\bigg)^{1/2}-\bigg(\frac{\Var\big[\|\mu_\O\|\big]}{\sigma_\mathrm{min}^2}\bigg)^{1/2}.
\]
Hence we can apply Proposition \ref{prop:stability_orthonormalization} on $\projbracket{\spn{\mathcal{O}}}\mu_\O$ and $\Pi_{\Sigma[\nu]}^{>\epsilon}\nu$.
They have been projected into the subspaces $\O$ and $X$ span, thus we can identify the inverse of the covariance matrices of the projected measures with the Moore-Penrose pseudo-inverse of the initial measures, therefore
$$
\Sigma[\projbracket{\spn{\mathcal{O}}}\mu_\O]^{-1/2} = \sqrt{\Sigma[\mu_\O]^+}\projbracket{\spn{\mathcal{O}}}
~~~~~\mathrm{and}~~~~~
\Sigma[\projbracket{\spn{\mathcal{O}}}\nu]^{-1/2} = \sqrt{\Sigma[\nu]^+}\Pi_{\Sigma[\nu]}^{>\epsilon}.
$$
Following these notations, the result of the proposition reads
\begin{align*}
&\W_2\bigg(\sqrt{\Sigma[\mu_\O]^+}\projbracket{\spn{\mathcal{O}}}\mu_\O,~\sqrt{\Sigma[\nu]^+}\Pi_{\Sigma[\nu]}^{>\epsilon}\nu\bigg)
\\
&\leq
\big(\sqrt{2n}+2\big)
\bigg(\frac{\sigma_\mathrm{max}}{\sigma_\mathrm{min}}\bigg)
\bigg(\alpha\frac{\W_2(\mu_\O,\nu)}{\sigma_\mathrm{min}}\bigg)^{1/2}
\bigg(2\bigg(\frac{\Var\big[\|\mu_\O\|\big]}{\sigma_\mathrm{min}^2}\bigg)^{1/2}  + \alpha\frac{\W_2(\mu_\O,\nu)}{\sigma_\mathrm{min}}\bigg)^{1/2}\\
&\leq
\alpha\big(\sqrt{2n}+2\big)
\bigg(\frac{\sigma_\mathrm{max}}{\sigma_\mathrm{min}}\bigg)
\bigg(\frac{\W_2(\mu_\O,\nu)}{\sigma_\mathrm{min}}\bigg)^{1/2}
\bigg(\bigg(\frac{\Var\big[\|\mu_\O\|\big]}{\sigma_\mathrm{min}^2}\bigg)^{1/2}  + \frac{\W_2(\mu_\O,\nu)}{\sigma_\mathrm{min}}\bigg)^{1/2},
\end{align*}
since $2\leq\alpha$.
To simplify this expression, we use the following inequality, valid for $n\geq0$:
$$
\alpha\big(\sqrt{2n}+2\big)
\bigg(\frac{\sigma_\mathrm{max}}{\sigma_\mathrm{min}}\bigg)
=
3(n+1)\big(\sqrt{2n}+2\big)
\bigg(\frac{\sigma_\mathrm{max}^3}{\sigma_\mathrm{min}^3}\bigg)
\leq 8(n+1)^{3/2}\bigg(\frac{\sigma_\mathrm{max}^3}{\sigma_\mathrm{min}^3}\bigg).
$$
Injected into the equation above, we obtain the result.
\end{proof}

\begin{remark}\label{rem:prop_orthonormalization_asymptotics}
The stability of \ref{item:step1}, expressed in Proposition \ref{prop:stability_step1}, shows that the output distance is of order $\W_2(\mu_\O,\nu)^{1/2}$.
The linear dependence in $\W_2(\mu_\O,\nu)$ has been lost in Proposition \ref{prop:stability_orthonormalization} because of the non-orthogonality of $\O$.
However, when $\O$ comes from an orthogonal representation, we have $\Var\big[\|\mu_\O\|\big]=0$, and the result of the proposition is a linear bound:
$$
\W_2\bigg(\sqrt{\Sigma[\mu_\O]^+}\projbracket{\spn{\mathcal{O}}}\mu_\O,~\sqrt{\Sigma[\nu]^+}\Pi_{\Sigma[\nu]}^{>\epsilon}\nu\bigg)
\leq
8(n+1)^{3/2}\bigg(\frac{\sigma_\mathrm{max}^3}{\sigma_\mathrm{min}^3}\bigg)
\bigg(\frac{\W_2(\mu_\O,\nu)}{\sigma_\mathrm{min}}\bigg).
$$
\end{remark}

\subsection{Analysis of \texttt{LiePCA}}\label{subsec:analysis_liepca}
This section is devoted to the study of \texttt{LiePCA}, \ref{item:step2} of our algorithm.
As presented in the original article \cite{DBLP:journals/corr/abs-2008-04278}, it has been designed to estimate the symmetry group of a manifold $\O \subset \R^n$, based on the observation of a point cloud $X$ sampled on it. 
As written in Equation \eqref{eq:operator_sigma}, \texttt{LiePCA} is based on the operator $\Lambda\colon \M_n(\R)\rightarrow \M_n(\R)$ defined as
\begin{equation*}
    \Lambda(A) = \sum_{1\leq i \leq N} \projbrackethat{\N_{x_i} X} \cdot A \cdot \projbracket{\spn{x_i}},        
\end{equation*}
where $X = \{x_i\}_{i=1}^N$ is the input point cloud, and the $\projbrackethat{\N_{x_i}X}$'s are $n\times n$ matrices, understood as estimators of the projection matrices on the normal spaces of the underlying manifold $\O$.
To study the operator $\Lambda$ through the lenses of the Wasserstein distance, we must define a version of it adapted to measures.
Given a measure $\mu$ on $\R^n$ with support satisfying $0 \notin \mathrm{supp}(\mu)$, and any map $\N[\mu] \colon \mathrm{supp}(\mu) \rightarrow \M_n(\R)$, we define the operator $\Lambda[\mu, \N[\mu]]\colon \M_n(\R)\rightarrow \M_n(\R)$ as
\begin{equation*}
    \Lambda[\mu, \N[\mu]](A) = \int \N[\mu](x) \cdot A \cdot \projbracket{\spn{x}} \d\mu(x).
\end{equation*}
In particular, if $\mu=\mu_\O$ is the uniform measure on the orbit $\O$, and $\N[\O] \colon \O \rightarrow \M_n(\R)$ is the map such that $\N[\O](x) = \projbracket{\N_x \O}$, with $\N_x \O$ the normal space of $\O$ at $x$, then $\Lambda[\mu_\O, \N[\O]]$ is to be understood as the \textit{ideal} \texttt{LiePCA} operator that we wish to estimate.
We denote it by $\Lambda_\O$:
\begin{equation}\label{eq:ideal_Lie_PCA_operator}
\Lambda_\O(A) = \int \projbracket{\N_x \O} \cdot A \cdot \projbracket{\spn{x}} \d\mu_\O(x).
\end{equation}
As we show below, its kernel is $\sym(\O)$, hence it allows us to estimate the symmetry group.

We start by studying the ideal \texttt{LiePCA} operator $\Lambda_\O$ in Section \ref{subsubsec:lie-pca_consistency}.
Next, we quantify in Section \ref{subusbsec:stability_lie_pca_operator} the stability of $\Lambda$ under small variations of $X$. 
Conjugated with stability results for the estimation of normal spaces, which we gather in Section \ref{subusbsec:stability_tangent_space_estimation}, we obtain the main result of this section, Proposition \ref{prop:Lie-PCA}, that guarantees the accuracy of \texttt{LiePCA} in practice.

We stress that such a study has already been initiated in the original article \cite[Sec.~3]{DBLP:journals/corr/abs-2008-04278}, but with a different focus: the authors computed, for several examples of submanifolds $\man\subset\R^n$, the minimal number of input points required for \texttt{LiePCA} to return an accurate estimation of $\sym(\man)$. 
Namely, their results include linear and affine subspaces, spheres, hyperboloids, and quadrics in $\R^n$.
However, they suppose that the estimation of normal spaces $\N_x\man$ is exact.
Our approach is rather the opposite: we are interested in \texttt{LiePCA}'s behavior when it is computed from the data, with the ultimate goal of quantifying the accuracy of our algorithm.

\subsubsection{Consistency of \texttt{LiePCA}}\label{subsubsec:lie-pca_consistency}
We first consider the idealized setting where, instead of a point cloud $X$, we have access to the whole orbit $\O$, as well as $\mu_\O$, the uniform measure on it, the normal spaces $\N_x \O$, and the ideal \texttt{LiePCA} operator $\Lambda_\O$, defined in Equation \eqref{eq:ideal_Lie_PCA_operator}.
We show in the following proposition that its kernel is $\sym(\O)$ and study its eigenvalues, subsequently enabling us to obtain precise estimation results.
From a statistical point of view, this result is a quantification of the confidence one has using \texttt{LiePCA}.

\begin{proposition}\label{prop:consistency_LiePCA_idealcase}
Let $\O$ be the orbit of a representation of a compact Lie group $G$ in $\R^n$ and $\Lambda_\O$ the ideal \texttt{LiePCA} operator.
Then its kernel is equal to $\sym(\O)$.
Moreover, $\Lambda_\O$ is equivariant with respect to the action of $G$ on $\M_n(\R)$ by conjugation.
Last, when $\O$ is the sphere $S^{n-1}$, its nonzero eigenvalues are exactly $\delta_n$ and $\delta'_n$, where 
$$
\delta_n = \frac{2(n-1)}{n(n(n+1)-2)}
~~~~~~\mathrm{and}~~~~~~
\delta'_n = \frac{1}{n},
$$
attained respectively by the symmetric matrices of trace zero and the identity matrix.
\end{proposition}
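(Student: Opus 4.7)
My plan is to attack the three statements in order, each by a short conceptual argument plus one direct computation.

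For the kernel identification, I would first observe that for each $x\in\O$ the map $P_x\colon A\mapsto \projbracket{\N_x\O}\cdot A\cdot \projbracket{\spn{x}}$ is an orthogonal projection on $\M_n(\R)$ endowed with the Frobenius inner product. Indeed, $P_x^2=P_x$ follows from $\projbracket{\N_x\O}^2=\projbracket{\N_x\O}$ and $\projbracket{\spn{x}}^2=\projbracket{\spn{x}}$, and self-adjointness follows by cycling the trace, using that both projections are symmetric. Consequently $\Lambda_\O=\int P_x\,\d\mu_\O(x)$ is positive semidefinite, so $A\in\ker\Lambda_\O$ iff $\langle\Lambda_\O A,A\rangle=\int\|P_x A\|^2\,\d\mu_\O(x)=0$, which (by continuity of $x\mapsto P_x$ and the fact that $\mathrm{supp}(\mu_\O)=\O$) is equivalent to $P_x A=0$ for every $x\in\O$. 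Unfolding $P_x A=\projbracket{\N_x\O}(Ax)\cdot x^\top/\|x\|^2$, this says $Ax\in \T_x\O$ for every $x\in\O$, i.e., $A\in\sym(\O)$ by the formulation \eqref{eq:sym_algebra_formulation}.

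For the equivariance, assume (as ensured after \ref{item:step1}) that the representation $\phi$ is orthogonal. Writing $g\cdot A:=\phi(g)A\phi(g)^\top$, I would perform the change of variable $x\mapsto \phi(g)y$ in the defining integral and use three facts: $\mu_\O$ is $\phi(g)$-invariant (it is the pushforward of the Haar measure), $\N_{\phi(g)y}\O=\phi(g)\N_y\O$ and $\spn{\phi(g)y}=\phi(g)\spn{y}$ (since $\phi(g)$ acts isometrically on $\O$), and therefore the projection matrices transform by conjugation by $\phi(g)$. Inserting $\phi(g)^\top\phi(g)=I$ between the factors isolates $\phi(g)$ on the left and $\phi(g)^\top$ on the right of the remaining integral, yielding $\Lambda_\O(g\cdot A)=g\cdot \Lambda_\O(A)$.

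For the sphere case, the equivariance above makes $\Lambda_{S^{n-1}}$ an $\Ort(n)$-equivariant self-adjoint operator on $\M_n(\R)$, which decomposes as $\M_n(\R)=\so(n)\oplus \R I\oplus \Sym_0(n)$, with $\Sym_0(n)$ the traceless symmetric matrices. These three summands are pairwise non-isomorphic irreducible real $\Ort(n)$-representations (distinguishable by dimension), so by Schur's lemma $\Lambda_{S^{n-1}}$ acts as a scalar on each; part 1 already gives the scalar $0$ on $\so(n)=\sym(S^{n-1})$. On the sphere, $\projbracket{\N_x S^{n-1}}=\projbracket{\spn{x}}=xx^\top$ for $\|x\|=1$, so $\Lambda_{S^{n-1}}(A)=\int (x^\top A x)\,xx^\top\,\d\mu_{S^{n-1}}(x)$. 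Plugging in $A=I$ reproduces the covariance $\Sigma[\mu_{S^{n-1}}]=I/n$, so the scalar on $\R I$ is $\delta'_n=1/n$. For $\Sym_0(n)$, I would test $A=e_1e_1^\top-I/n$ and compute the four-point moments $\E[x_1^4]=3/(n(n+2))$, $\E[x_1^2x_i^2]=1/(n(n+2))$ for $i\neq 1$, and $\E[x_1^2]=1/n$. A diagonal entry-wise computation gives $\Lambda_{S^{n-1}}(A)=\frac{2}{n(n+2)}e_1e_1^\top-\frac{2}{n^2(n+2)}I=\frac{2}{n(n+2)}A$, so the scalar is $\frac{2}{n(n+2)}=\frac{2(n-1)}{n(n(n+1)-2)}=\delta_n$. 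The only obstacle worth flagging is keeping the uniform measure's fourth moments straight; everything else is forced by Schur's lemma and the kernel description.
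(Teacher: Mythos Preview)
Your proof is correct. Parts 1 and 2 coincide with the paper's argument: both identify $\Lambda_\O$ as an integral of orthogonal projections, extract the kernel from $\langle\Lambda_\O A,A\rangle=\int\|P_xA\|^2\,\d\mu_\O$, and prove equivariance by the change of variable $x\mapsto\phi(g)y$ (the paper phrases this via the subspaces $(S_x\O)^\bot$ and an integral over $G$, but the content is the same).

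Part 3 is where you genuinely diverge. The paper first derives, via Schur orthogonality relations, a general eigenvalue formula valid for \emph{any} orbit: if $(B_i)_i$ is an orthonormal basis of $(S_{x_0}\O)^\bot$ and $\M_n(\R)=\bigoplus_j V_j$ is the decomposition into irreducibles, then the eigenvalue on $V_j$ is $\big(\sum_i\|\projbracket{V_j}(B_i)\|^2\big)/\dim V_j$. For the sphere, $(S_{x_0}\O)^\bot$ is the line spanned by $x_0x_0^\top$, and projecting this single matrix onto $\R I$ and $\Sym_0(n)$ gives $\delta'_n$ and $\delta_n$ with essentially no computation. Your route instead evaluates $\Lambda_{S^{n-1}}$ on the test matrices $I$ and $e_1e_1^\top-I/n$ using the second and fourth moments of the uniform measure on $S^{n-1}$; this is more elementary and self-contained, but is specific to the sphere, whereas the paper's formula is reused later (e.g., for the $\SO(2)$-orbit computations in Remark~\ref{rem:universal_constant_Lie-PCA}). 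Two small points worth tightening: the equivariance you proved in Part~2 is only with respect to the given $G$, so you should note explicitly that the same change-of-variable argument applies to any element of $\Sym(\O)$, hence to all of $\Ort(n)$ when $\O=S^{n-1}$; and ``distinguishable by dimension'' fails for $n=2$ (where $\dim\so(2)=\dim\R I=1$), though the two are still non-isomorphic as $\Ort(2)$-representations, so your Schur argument survives.
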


\begin{proof}
In this proof, we let $\Lambda$ denote the operator $\Lambda_\O$ and write $\phi\colon G \rightarrow \GL_n(\R)$ for the representation.
We divide the proof into three steps, following the three statements of the proposition.

\vspace{.25cm}\noindent
\underline{First step} (Kernel of $\Lambda$).
As shown in \cite[Lemma 2]{DBLP:journals/corr/abs-2008-04278}, the product $\projbracket{\N_x \O} \cdot A \cdot \projbracket{\spn{x}}$ is equal to the projection $\projbracket{(S_x\mathcal{O})^\bot}(A)$, where $S_x\mathcal{O} = \{A\in \M_n(\R) \mid Ax \in \mathrm{T}_x\mathcal{O}\}$. 
Combined with the definition of $\Lambda$ in Equation \eqref{eq:ideal_Lie_PCA_operator}, it follows that
\begin{equation}\label{eq:formulation_lambda_1}
\Lambda(A) = \int \projbracket{(S_x\O)^\bot}(A)\d\mu_\O(x).
\end{equation}
Besides, we observe that, by the linearity of the integral,
\begin{align}\label{eq:formulation_lambda_scalar}
\langle\Lambda(A),A\rangle
= \int \big\langle \projbracket{(S_x\O)^\bot}(A),A \big\rangle \d\mu_\O(x)
=\int \big\| \projbracket{(S_x\O)^\bot}(A) \big\|^2\d\mu_\O(x).
\end{align}
We now prove that the kernel of $\Lambda$ is $\sym(\O)$.
Let $A$ be an eigenvector of $\Lambda$ with eigenvalue $\lambda$. 
In particular, $\langle\Lambda(A),A\rangle = \lambda \|A\|^2$.
Inputting this to Equation \eqref{eq:formulation_lambda_scalar}, we get that $\lambda = 0$ if and only if $A$ is orthogonal to $(S_x\mathcal{O})^\bot$ for each $x\in \O$, that is, if and only if $A\in \bigcap_{x\in \O}S_x\mathcal{O}$.
Moreover, we have seen in Equation \eqref{eq:sym_algebra_formulation} that $\bigcap_{x\in \O}S_x\mathcal{O} = \sym(\O)$.
In conclusion, the kernel of $\Lambda$ is $\sym(\O)$.

\vspace{.25cm}\noindent
\underline{Second step} (Equivariance property).
To study $\Lambda$ further, we will derive another formulation.
Let $x_0\in\O$ be a point, fixed until the end of the proof.
Any other $x\in\O$ can be written $x = \phi(g)x_0$ for some $g\in G$.
By a direct inspection of the definition of $S_x \O$, we see that
\begin{align}\label{eq:formulation_lambda_2}
(S_{x}\O)^\bot &=\phi(g) \cdot (S_{x_0}\O)^\bot \cdot \phi(g)^{-1} \nonumber\\
\mathrm{and}~~~~~
\projbracket{(S_{x}\O)^\bot}(A) &= \phi(g) \cdot \projbracket{(S_{x_0}\O)^\bot}\bigg(\phi(g)^{-1} A \phi(g)\bigg) \cdot \phi(g)^{-1}.
\end{align}
Inserting Equation \eqref{eq:formulation_lambda_2} into Equation \eqref{eq:formulation_lambda_1} yields the expression:
\begin{align}\label{eq:formulation_lambda_3}
\Lambda(A) 
&= \int_G \phi(g) \cdot \projbracket{(S_{x_0}\O)^\bot}\bigg(\phi(g)^{-1} A \phi(g)\bigg) \cdot \phi(g)^{-1}\d(g),
\end{align}
with the integral taken over the Haar measure on $G$.
From this formula, we get that the relation
\begin{align*}
\phi(g)\Lambda(A)\phi(g)^{-1}=\Lambda\bigg(\phi(g)A\phi(g)^{-1}\bigg)
\end{align*}
holds for all $A\in\M_n(\R)$ and $g\in G$.
In other words, $\Lambda$ is an operator from $\M_n(\R)$ to itself that is equivariant with respect to the action by conjugation of $G$.
Consequently, by Schur's lemma, there exists a decomposition $\M_n(\R)=\bigoplus_{j=1}^p V_j$ of this representation into irreps such that, for all $j\in[1\isep p]$, $\Lambda$ restricted to $V_j$ is a homothety.
Their ratios are the eigenvalues of $\Lambda$.

Next, let $(B_i)_{i=1}^{n-d}$ denote an orthonormal basis of $(S_{x_0}\O)^\bot$.
It has dimension $n-d$ since, as we see from its definition, its complement has dimension $d + n(n-1)$.
For any $A\in\M_n(\R)$, the orthogonal projection on $(S_{x_0}\O)^\bot$ can be written as
\[
\projbracket{(S_{x_0}\O)^\bot}(A) = \sum_{i=1}^{n-d} \langle A,  B_i \rangle B_i.
\]
Injecting this into Equation \eqref{eq:formulation_lambda_3} and manipulating the matrix inner product, we see that
\begin{align*}
\langle \Lambda(A), A \rangle
= \sum_{i=1}^{n-d} \int \langle A, \phi(g) B_i \phi(g)^{-1} \rangle^2 \d(g).
\end{align*}
These integrals are matrix coefficients, thus, Schur orthogonality relations allow to decompose
\begin{align*}
\int \langle A, \phi(g) B_i \phi(g)^{-1} \rangle^2 \d(g)
&= \sum_{j=1}^p \int \langle \projbracket{V_j}(A), \phi(g) \projbracket{V_j}(B_i) \phi(g)^{-1} \rangle^2 \d(g)\\
&= \sum_{j=1}^p \|\projbracket{V_j}(A)\|^2 \|\projbracket{V_j}(B_i)\|^2 \dim(V_j)^{-1}.
\end{align*}
Taking the sum over the basis of $(S_{x_0}\O)^\bot$ yields
\begin{align*}
\langle \Lambda(A), A \rangle 
&= \sum_{i=1}^{n-d} \sum_{j=1}^p \|\projbracket{V_j}(A)\|^2 \|\projbracket{V_j}(B_i)\|^2 \dim(V_j)^{-1}
= \sum_{j=1}^p \|\projbracket{V_j}(A)\|^2 \beta_j \dim(V_j)^{-1},
\end{align*}
where $\beta_j = \sum_{i=1}^{n-d} \|\projbracket{V_j}(B_i)\|^2$.
We conclude that the eigenvalues of $\Lambda$ are exactly the
\begin{equation}\label{eq:eigenvalues_formula_beta}
\beta_j\dim(V_j)^{-1},
~j\in[1\isep p]. 
\end{equation}

\vspace{.25cm}\noindent
\underline{Third step} (Case $\O = S^{n-1}$).
We now study the particular case where $\O$ is the unit sphere of $\R^n$.
As we see from its definition in Equation \eqref{eq:formulation_lambda_1}, the \texttt{LiePCA} operator $\Lambda$ depends only on $\O$ and not on $G$.
Thus, without loss of generality, we can suppose that $G$ is $\SO(n)$, acting canonically on $\R^n$.
In this case, the action by conjugation on $\M_n(\R)$ is well understood: it decomposes into three invariant subspaces---the space of the skew-symmetric matrices, the space generated by the identity matrix $I$, and the space of the symmetric matrices of trace zero.
We will denote this decomposition as $\M_n(\R) = \so(n)\oplus V\oplus W$.
As shown in the previous step, $\Lambda$ is an equivariant operator, hence, its eigenspaces are precisely given by these three spaces.

The skew-symmetric matrices correspond to the eigenvalue $0$.
Indeed, the symmetry group of $S^{n-1}$ is $\SO(n)$.
Hence, its Lie algebra, $\so(n)$, forms the kernel of $\Lambda$, as proved in the first step.

To continue, let us observe that, for any $x_0\in S^{n-1}$, the normal space $\N_{x_0} \O$ is generated by $x_0$.
Consequently, the subspace $(S_{x_0}\O)^\bot$, of dimension $1 = n-(n-1)$, is generated by the projection matrix $\projbracket{\spn{x_0}}$.
By a direct computation, one sees that the projection of $\projbracket{\spn{x_0}}$ on $V$ is equal to $I/n$.
Applying Equation \eqref{eq:eigenvalues_formula_beta}, we obtain that $V$ is associated with the eigenvalue 
$$
\frac{\|I/n\|^2}{\dim(V)} = \frac{1}{n}.
$$

Last, we turn to $W$, the symmetric matrices of trace zero.
The projection of $\projbracket{\spn{x_0}}$ on it is equal to $\projbracket{\spn{x_0}}-I/n$.
Just as before, we deduce that $W$ is associated with the eigenvalue
$$
\frac{\|\projbracket{\spn{x_0}}-I/n\|^2}{\dim(W)} = \frac{1-1/n}{n(n+1)/2-1},
$$
which is equal to the quantity $\delta_n$ in the statement of the proposition.
\end{proof}

\begin{remark}\label{rem:universal_constant_Lie-PCA}    
In Proposition \ref{prop:consistency_LiePCA_idealcase}, it is not possible to obtain a `universal' lower bound on the nonzero eigenvalues of $\Lambda_\O$, i.e., a bound that would be independent of $\O$, even with $n$ or $G$ being fixed.
To illustrate this, let us consider the representation $\phi\colon \SO(2) \rightarrow \GL_4(\R)$ with distinct positive weights $(\omega_1,\omega_2)$.
Let $\epsilon>0$, $x_0=(1,0,\epsilon,0)$ a point, and $\O$ its orbit.
Explicitly, 
$$
\O = \big\{ \big(\cos(\omega_1 \theta),\sin(\omega_1 \theta),\epsilon\cos(\omega_2 \theta),\epsilon\sin(\omega_2 \theta)\big) \mid \theta \in \SO(2) \big\}.
$$
As we compute in Appendix \ref{subsubsec:computations_universal_constant}, the \texttt{LiePCA} operator $\Lambda_\O$ admits three nonzero eigenvalues when restricted to the skew-symmetric matrices:
\begin{align*}
\frac{\epsilon^2}{2(1+\epsilon^2)}\frac{w_1^2+w_2^2}{w_1^2+(\epsilon w_2)^2},~~~~~
\frac{1}{8}\bigg(1+\frac{1}{1+\epsilon^2}\frac{(w_1-\epsilon^2w_2)^2}{w_1^2+(\epsilon w_2)^2}\bigg),~~~~~
\frac{1}{8}\bigg(1+\frac{1}{1+\epsilon^2}\frac{(w_1+\epsilon^2w_2)^2}{w_1^2+(\epsilon w_2)^2}\bigg).
\end{align*}
When $\epsilon$ tends to zero, this first eigenvalue goes to zero.
This shows that there is no lower bound on the nonzero eigenvalues.
Nonetheless, an interesting observation can still be made in the case where $\epsilon=1$.
This corresponds to a homogeneous orbit, that is, an orbit whose covariance matrix is a homothety, as evoked in Section \ref{subsubsec:analysis_pca_orbit}.
In this case, the eigenvalues above become 
$$
\frac{1}{4},~~~~~
\frac{1}{8}\bigg(1+\frac{(w_1-w_2)^2}{2(w_1^2+w_2^2)}\bigg),
~~~~~
\frac{1}{8}\bigg(1+\frac{(w_1+w_2)^2}{2(w_1^2+w_2^2)}\bigg),
$$
all belonging to the interval $[1/8, 1/4]$.
More generally, we have observed experimentally that, for a homogeneous orbit $\O$ in $\R^n$, the nonzero eigenvalues of $\Lambda_\O$ belong to $[1/n^2,1/n]$, leaving open the possibility of a `universal' lower bound in this case.
On the one hand, such a bound is rather bad news: a spectral gap of the order of $1/n^2$ makes the identification of the kernel of \texttt{LiePCA} rather impractical in high dimensions. 
However, this will not be a problem for us in the applications of Section \ref{sec:applications}: although the datasets will be given in high dimension, we will manage to project them in lower dimension, while preserving the representation structure. 
Specifically, we will use the algorithm for $\SO(2)$ up to dimension 32, for $T^2$ up to dimension 14, and for $\SO(3)$ up to dimension 11.
\end{remark}

\subsubsection{Stability of \texttt{LiePCA}}\label{subusbsec:stability_lie_pca_operator}
We now study the stability of the \texttt{LiePCA} operator with respect to small variations in Wasserstein distance.
Let $\mu$ and $\nu$ be two measures on $\R^n$, $\N[\mu] \colon \mathrm{supp}(\mu) \rightarrow \M_n(\R)$ and $\N[\nu] \colon \mathrm{supp}(\nu) \rightarrow \M_n(\R)$ two maps, and consider the corresponding operators $\Lambda[\mu, \N[\mu]]$ and $\Lambda[\nu, \N[\nu]]$.
We wish to give an upper bound on the operator norm $\|\Lambda[\mu, \N[\mu]]-\Lambda[\nu, \N[\nu]]\|_\mathrm{op}$.
As it might already be expected, this bound will depend on the distance between $\mu$ and $\nu$, but also between $\N[\mu]$ and $\N[\nu]$.
To obtain explicit bounds, we choose an optimal transport plan $\pi$ for the Wasserstein distance $\W_2(\mu,\nu)$, and define
\begin{equation}\label{eq:omega}
\Omega = \int \| \N[\mu](x) - \N[\nu](y) \|\d\pi(x,y).    
\end{equation}
This quantity is a measure of the distance between the estimations of normal spaces.
We prove in this section a lemma that formulates the stability of the \texttt{LiePCA} operator in terms of $\W_2(\mu,\nu)$ and $\Omega$.
In the next section, we will show how $\Omega$ can be bounded by $\W_2(\mu,\nu)$, once a specific normal space estimator has been chosen, and provided regularity conditions on $\mu$.

\begin{lemma}
\label{lem:wasserstein_stability_lie_pca}
If the essential suprema $\sup(\|\N[\mu]\|)$ and $\sup(\|\mu\|^{-1})$ are finite then it holds that
$$
\| \Lambda[\mu, \N[\mu]]-\Lambda[\nu, \N[\nu]] \|_\mathrm{op}
\leq 2\sup(\|\N[\mu]\|)\sup(\|\mu\|^{-1}) \W_2(\mu,\nu) + \Omega.
$$
\end{lemma}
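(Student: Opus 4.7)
The strategy is to introduce an optimal transport plan $\pi$ for $\W_2(\mu,\nu)$ and rewrite both operators as integrals over $\R^n\times\R^n$ against $\pi$, exploiting the marginal property. For any matrix $A\in\M_n(\R)$, this gives
$$
\Lambda[\mu,\N[\mu]](A)-\Lambda[\nu,\N[\nu]](A)
= \int\Big(\N[\mu](x)\,A\,\projbracket{\spn{x}} - \N[\nu](y)\,A\,\projbracket{\spn{y}}\Big)\d\pi(x,y).
$$
Adding and subtracting the bridging integrand $\N[\mu](x)\,A\,\projbracket{\spn{y}}$ splits the right-hand side into two contributions, one reflecting the error in $x\mapsto\projbracket{\spn{x}}$ and one reflecting the error in the normal-space estimator. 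Using submultiplicativity of the Frobenius norm together with $\|\projbracket{\spn{y}}\|_\mathrm{op}=1$, the latter piece is bounded pointwise by $\|\N[\mu](x)-\N[\nu](y)\|\,\|A\|$, which integrates to $\|A\|\,\Omega$ by the very definition of $\Omega$ in Equation~\eqref{eq:omega}.

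The main task is to bound the first piece by a constant multiple of $\sup(\|\N[\mu]\|)\,\sup(\|\mu\|^{-1})\,\W_2(\mu,\nu)\,\|A\|$, which requires a Lipschitz estimate for the rank-one projector $x\mapsto\projbracket{\spn{x}}$ away from the origin. Writing $\hat{x}=x/\|x\|$, I would first derive the elementary bound $\|\hat{x}-\hat{y}\|\leq 2\|x-y\|/\|x\|$: adding and subtracting $y/\|x\|$ gives $\hat{x}-\hat{y}=(x-y)/\|x\|+y(\|y\|-\|x\|)/(\|x\|\|y\|)$, and the reverse triangle inequality $|\|x\|-\|y\||\leq\|x-y\|$ finishes the estimate. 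Combined with the identity
$$\projbracket{\spn{x}}-\projbracket{\spn{y}}=\hat{x}\hat{x}^\top-\hat{y}\hat{y}^\top=(\hat{x}-\hat{y})\hat{x}^\top+\hat{y}(\hat{x}-\hat{y})^\top,$$
this yields an estimate of the form $\|\projbracket{\spn{x}}-\projbracket{\spn{y}}\|\lesssim\|x-y\|/\|x\|$, with the operator-norm version being the sharpest.

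The remaining step is an application of the Cauchy–Schwarz inequality on the transport plan: using that the $x$-marginal of $\pi$ is $\mu$,
$$
\int\frac{\|x-y\|}{\|x\|}\d\pi(x,y)
\leq\bigg(\int\|x-y\|^2\d\pi\bigg)^{\!1/2}\bigg(\int\|x\|^{-2}\d\mu(x)\bigg)^{\!1/2}
\leq\W_2(\mu,\nu)\,\sup(\|\mu\|^{-1}),
$$
where optimality of $\pi$ identifies the first factor with the Wasserstein distance, and the essential supremum bounds the second. Multiplying by the factor $\sup(\|\N[\mu]\|)$ pulled out of the first piece, combining with the $\Omega$ bound for the second, and dividing by $\|A\|$ before taking the supremum over $A$ gives the stated inequality.

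The only genuinely delicate point is the Lipschitz control of $x\mapsto\projbracket{\spn{x}}$ and its coupling, via Cauchy–Schwarz, with the essential supremum $\sup(\|\mu\|^{-1})$; everything else is mechanical use of the marginal property of $\pi$ and of the triangle and sub-multiplicative inequalities. The hypothesis that both $\sup(\|\N[\mu]\|)$ and $\sup(\|\mu\|^{-1})$ are finite is used precisely to extract these two factors outside the integrals.
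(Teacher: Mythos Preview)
Your proposal is correct and follows essentially the same route as the paper: the same bridging term $\N[\mu](x)\,A\,\projbracket{\spn{y}}$, the same Lipschitz estimate $\|\hat x-\hat y\|\leq 2\|x-y\|/\|x\|$, and the same identification of the second piece with $\Omega$. The only cosmetic difference is that the paper pulls $\sup(\|\mu\|^{-1})$ out of the integral directly and then applies Jensen to pass from $\W_1$ to $\W_2$, whereas you reach the same bound via Cauchy--Schwarz on $\pi$; both yield $\sup(\|\mu\|^{-1})\,\W_2(\mu,\nu)$.
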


\begin{proof}
By definition of the operator norm, we have
\begin{align*}
    \| \Lambda[\mu, \N[\mu]]-\Lambda[\nu, \N[\nu]] \|_\mathrm{op} = \sup \| \Lambda[\mu, \N[\mu]](A)-\Lambda[\nu, \N[\nu]](A) \| 
\end{align*}
where the supremum is taken over all matrices $A$ such that $\|A\|=1$. Let $A$ be such a matrix.
We first use the inequality
\begin{align*}
\| \Lambda[\mu, \N[\mu]](A) -\Lambda[\nu, \N[\nu]] \|
&\leq \int \big\|  N[\mu](x) \cdot A \cdot \projbracket{\spn{x}} -  N[\nu](y) \cdot A \cdot \projbracket{\spn{y}} \big\| \d\pi(x,y).
\end{align*}
The term under the integral is upper bounded by
\begin{align}\label{terms:bounds_LiePCA}
\underbrace{\|\N[\mu](x) \cdot A \cdot (\projbracket{\spn{x}}-\projbracket{\spn{y}}) \|}_{(\mathrm{a})}
+
\underbrace{\|(\N[\mu](x) - \N[\nu](y))  \cdot A \cdot \projbracket{\spn{y}} \|}_{\mathrm{(b)}}.
\end{align}
To bound Term $\hyperref[terms:bounds_LiePCA]{(\mathrm{a})}$, we use the submultiplicativity of the Frobenius norm:
\begin{align}\label{eq:proof_LiePCA_terma}
\|\N[\mu](x) \cdot A \cdot (\projbracket{\spn{x}}-\projbracket{\spn{y}}) \|
\leq \| \N[\mu](x) \| \cdot \| A \| \cdot \| \projbracket{\spn{x}}-\projbracket{\spn{y}} \|.
\end{align}
By writing $\projbracket{\spn{x}} = (x/\|x\|) \cdot (x/\|x\|)^\top$ and $\projbracket{\spn{y}} = (y/\|y\|) \cdot (y/\|y\|)^\top$, we see that Lemma \ref{lem:stability_outer_product}, stated in Section \ref{subsubsec:analysis_pca_orbit}, yields $\|\projbracket{\spn{x}}-\projbracket{\spn{y}}\| 
\leq \|x/\|x\| - y/\|y\|\|$. Moreover, 
\begin{align*}
\bigg \|\frac{x}{\|x\|} - \frac{y}{\|y\|}\bigg\|
&= \frac{1}{\|x\|\|y\|} \bigg\| \|y\|x - \|x\|y \bigg\| \\
&\leq \frac{1}{\|x\|\|y\|} \bigg(\bigg\| \|y\|x - \|y\|y \bigg\| + \bigg\| \|y\|y - \|x\|y \bigg\|\bigg) 
= \frac{2}{\|x\|}\|x-y\|.
\end{align*}
We eventually deduce from Equation \eqref{eq:proof_LiePCA_terma} that Term $\hyperref[terms:bounds_LiePCA]{(\mathrm{a})}$ is upper bounded by 
$\sup(\|\N[\mu]\|) \cdot 1 \cdot 2 \sup(\|\mu\|^{-1}) \|x-y\|$.
Consequently, its integral over the transport plan $\pi$ is bounded by
\begin{equation}\label{eq:proof_LiePCA_terma_integral}
    2 \sup(\|\N[\mu]\|) \sup(\|\mu\|^{-1}) \int  \|x-y\| \d\pi(x,y)
    \leq 2\sup(\|\N[\mu]\|)\sup(\|\mu\|^{-1}) \W_2(\mu,\nu),
\end{equation}
where we applied Jensen's inequality.
To finish the proof, we turn to Term $\hyperref[terms:bounds_LiePCA]{(\mathrm{b})}$. The projection matrix $\projbracket{\spn{y}}$ has Frobenius norm equal to $1$.
Thus, the integral of Term $\hyperref[terms:bounds_LiePCA]{(\mathrm{b})}$ is upper bounded by $\int \|(\N[\mu](x) - \N[\nu](y))\| \d \pi(x,y)$, that is, $\Omega$.
Using this inequality and Equation \eqref{eq:proof_LiePCA_terma_integral} in Equation \eqref{terms:bounds_LiePCA} yields the result.
\end{proof}

\subsubsection{Stability of normal space estimation}\label{subusbsec:stability_tangent_space_estimation}
To turn Lemma \ref{lem:wasserstein_stability_lie_pca} into a result applicable in practice, we need to choose a method for estimating normal spaces and bound the term $\Omega$ defined in Equation \eqref{eq:omega}.
For any $x\in\O$, we denote by $\N_{x} \mathcal{O}$ the normal space of $\O$ at $x$.
Besides, let $X = \{x_i\}_{i=1}^N$ denote the observed point cloud.
As discussed in Section \ref{subsubsec:step2_estimation_normal_spaces}, for any integer $i\in[0\isep N]$, we chose as an estimator of $\projbracket{\N_{x_i} \mathcal{O}}$ the matrix
\begin{equation}\label{eq:definition_estimator_normalspaces}
\projbrackethat{\N_{x_i} X} = I - \Pi_{x_i}^{l,r}[X],    
\end{equation}
where $\Pi_{x_i}^{l,r}[X]$ is the projection matrix on any $l$ top eigenvectors of the \textit{local covariance matrix} $\Sigma_{x_i}^r[X]$ centered at $x_i$ and at scale $r$, itself defined as
$$
\Sigma_{x_i}^r[X] = \frac{1}{|Y|} \sum_{y \in Y} (y-x_i) (y-x_i)^\top,    
$$
where $Y = \{y \in X \mid \|y-x_i\|\leq r\}$, the set of input points at distance at most $r$ from $x_i$.
These notions directly translate to the measure-theoretical framework: if $\mu$ is a measure on $\R^n$, we define its \textit{local covariance matrix} centered at $x$ at scale $r$ as 
$$
\Sigma_x^r[\mu] = \int_{\mathcal{B}(x,r)}  (y-x) (y-x)^\top \frac{\d\mu(x)}{\mu(\mathcal{B}(x,r))},
$$
where $\mathcal{B}(x,r)$ is the closed ball of $\R^n$ centered at $x$ and of radius $r$.
Just as in the discrete case, we denote by $\Pi_{x}^{l,r}[\mu]$ the projection matrix on any $l$ top eigenvectors of $\Sigma_x^r[\mu]$.

It is direct to see that, if $\mu$ is the empirical measure on the finite point cloud $X$, then $\Sigma^r_x[X]$ and $\Sigma^r_x[\mu]$ coincide for every $x\in X$.
That is to say, one can study discrete covariance matrices via their measure-theoretic formulations.
Besides, if $\mu = \mu_\man$ is a `regular enough' measure on a submanifold $\man\subset\R^n$ of dimension $l$, and $x \in \man$, then it is known that $(l+2)/r^2\cdot\Sigma_x^r[\mu_\man]$ tends to $\projbracket{\mathrm{T}_x \man}$ when $r$ goes to zero, with $\projbracket{\mathrm{T}_x \man}$ the projection matrix on the tangent space.
To study how accurate this estimation is, it is natural to consider the following decomposition, which can be seen as a bias-variance trade-off.
For any $x,y\in\R^n$,
\begin{align*}
&\bigg\|\frac{1}{l+2}\projbracket{\mathrm{T}_x \man} - \frac{1}{r^2}\Sigma_y^r[\nu]\bigg\| \\
&\leq
\underbrace{\bigg\|
\frac{1}{l+2}\projbracket{\mathrm{T}_x \man}-\frac{1}{r^2}\Sigma_x^r[\mu_\man]\bigg\|}_{\mathrm{consistency}}
+
\underbrace{\bigg\|
\frac{1}{r^2}\Sigma_x^r[\mu_\man] - \frac{1}{r^2}\Sigma_y^r[\mu_\man]
\bigg\|}_{\mathrm{spatial~stability}}
+
\underbrace{\bigg\|
\frac{1}{r^2}\Sigma_y^r[\mu_\man] - \frac{1}{r^2}\Sigma_y^r[\nu]
\bigg\|}_{\mathrm{measure~stability}}.
\end{align*}
To bound these quantities, one needs additional assumptions, in particular regarding the geometry of $\man$.
We state below a result that involves the \textit{reach} of $\man$, denoted $\reach(\man)$. 
It is a quantity associated with submanifolds of $\R^n$, and is closely connected to the notion of curvature. 
We refer the reader to \cite{federer1959curvature} for an exposition of this notion.

\begin{lemma}[{\cite{tinarrage2023recovering}}]
\label{lem:consistency_loccovnorm}
Let $\man\subset \R^n$ be a compact $\mathcal{C}^2$-submanifold of dimension $l$ and $\mu_\man$ a measure, absolutely continuous with respect to the $l$-dimensional Hausdorff measure restricted to $\man$, and such that its density $f$ is $L$-Lipschitz (with respect to the geodesic distance) and bounded by $f_\mathrm{min}, f_\mathrm{max} > 0$. Moreover, let $\nu$ be any measure on $\R^n$, $p\geq1$ a real number, and choose $x\in \man$, $y\in \R^n$ and $r>0$ such that
\[
\|x-y\| \leq \bigg(\frac{\W_p(\mu,\nu)}{r^{l-1}}\bigg)^\frac{1}{2}
~~~~~~\mathrm{and}~~~~~~~
4 \bigg( \frac{\W_p(\mu,\nu)}{V_l f_\mathrm{min} (23/24)^l} \bigg)^\frac{1}{l+1} \leq r < \frac{1}{2}\mathrm{reach}(\man).
\]
where $V_l$ is the volume of the unit ball of $\R^l$.
Then it holds that
\begin{align*}
&\bigg\|\projbracket{\mathrm{T}_x \man} - \frac{l+2}{r^2}\Sigma_y^r[\nu]\bigg\|
\leq 
cr + c'\|x-y\| + c'' \bigg(\frac{\W_p(\mu,\nu)}{r^{l+1}}\bigg)^\frac{1}{2},
\end{align*}
where the constants are given in Equations \eqref{eq:loccovmatrix_consistency}, \eqref{eq:loccovmatrix_spatialstability}, and \eqref{eq:loccovmatrix_measurestability} below.
Moreover, if the left-hand term of the inequality is lower than $1/2$, then
\begin{align*}
&\big\|\projbracket{\mathrm{T}_x \man} - \Pi_{y}^{l,r}[\nu]\big\|
\leq \sqrt{2}\bigg(
cr + c'\|x-y\| + c'' \bigg(\frac{\W_p(\mu,\nu)}{r^{l+1}}\bigg)^\frac{1}{2}\bigg).
\end{align*}
\end{lemma}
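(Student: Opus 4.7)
The plan is to follow the bias–variance decomposition stated just above the lemma, bounding each of the three terms (consistency, spatial stability, measure stability) separately, and then invoke the Davis–Kahan theorem (Lemma \ref{lem:davis_kahan}) for the statement about the projection matrix $\Pi_y^{l,r}[\nu]$.

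For the \emph{consistency} term $\bigl\|\tfrac{1}{l+2}\projbracket{\T_x\man} - \tfrac{1}{r^2}\Sigma_x^r[\mu_\man]\bigr\|$, I would work in a normal chart centred at $x$. Writing the exponential map $\exp_x\colon \T_x\man\to\man$, the $C^2$-regularity of $\man$ gives $\exp_x(v) = x + v + O(\|v\|^2/\reach(\man))$. Pulling the integral back to $\T_x\man$ via $\exp_x$, and using that the density $f$ is $L$-Lipschitz and bounded between $f_\mathrm{min}$ and $f_\mathrm{max}$, the leading term of $\Sigma_x^r[\mu_\man]$ is the flat-disk integral $\int_{\B(0,r)\cap\T_x\man} vv^\top \d v/\mathrm{vol}(\B(0,r)\cap\T_x\man)$, which equals $\tfrac{r^2}{l+2}\projbracket{\T_x\man}$. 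The curvature correction contributes a term proportional to $r/\reach(\man)$, and the Lipschitz density contributes a term of order $Lr/f_\mathrm{min}$; both collapse into a bound of the form $cr$ with constant
\begin{equation}\label{eq:loccovmatrix_consistency}
c = c(l,\reach(\man),f_\mathrm{max}/f_\mathrm{min},L/f_\mathrm{min}).
\end{equation}

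The \emph{spatial stability} term is handled by a direct computation, writing
$$
\Sigma_y^r[\mu_\man] - \Sigma_x^r[\mu_\man] = \int(z-y)(z-y)^\top\frac{\d\mu_\man(z)}{\mu_\man(\B(y,r))} - \int(z-x)(z-x)^\top\frac{\d\mu_\man(z)}{\mu_\man(\B(x,r))},
$$
and controlling: (i) the shift $(z-y)(z-y)^\top - (z-x)(z-x)^\top$, whose Frobenius norm is $\le 2r\|x-y\|$ by Lemma \ref{lem:stability_outer_product}; (ii) the symmetric difference $\B(x,r)\triangle\B(y,r)$, whose $\mu_\man$-mass is $O(\|x-y\|\cdot r^{l-1}f_\mathrm{max})$ by slicing $\man$ with a flat approximation; (iii) the ratio of normalizing constants $\mu_\man(\B(x,r))/\mu_\man(\B(y,r))$, which is $1 + O(\|x-y\|/r)$ by the same symmetric-difference bound, combined with the volume lower bound $\mu_\man(\B(x,r)) \geq V_l f_\mathrm{min}(23/24)^l r^l$ used in the hypothesis. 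After dividing by $r^2$ and using $\|x-y\| \le (W_p(\mu,\nu)/r^{l-1})^{1/2}$, all these errors combine into a linear term $c'\|x-y\|$ with
\begin{equation}\label{eq:loccovmatrix_spatialstability}
c' = c'(l,\reach(\man),f_\mathrm{max}/f_\mathrm{min},L/f_\mathrm{min}).
\end{equation}

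The \emph{measure stability} term is the main obstacle: it requires bounding $\tfrac{1}{r^2}\|\Sigma_y^r[\mu_\man] - \Sigma_y^r[\nu]\|$ in terms of $\W_p(\mu_\man,\nu)$, despite the nonlinear dependence on $\mu,\nu$ through the normalizing constants $\mu_\man(\B(y,r))$ and $\nu(\B(y,r))$. I would fix an optimal transport plan $\pi$ for $\W_p(\mu_\man,\nu)$ and write
$$
\int_{\B(y,r)}(z-y)(z-y)^\top \d\mu_\man(z) - \int_{\B(y,r)}(z-y)(z-y)^\top \d\nu(z) \;=\; \int \bigl(F(z_1)-F(z_2)\bigr)\d\pi(z_1,z_2),
$$
where $F(z)=(z-y)(z-y)^\top \mathbf{1}_{\B(y,r)}(z)$. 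The integrand splits into a Lipschitz part, giving $\le 2r\|z_1-z_2\|$, and an indicator-function discrepancy supported near the sphere $\partial\B(y,r)$; Cauchy–Schwarz against $\pi$ bounds the first part by $2r\W_1(\mu_\man,\nu) \le 2r\W_p(\mu_\man,\nu)$ and the second by $r^2$ times the $\pi$-mass of transported pairs crossing the sphere, which is $O(\W_p^{1/2}\cdot r^{(l-1)/2}f_\mathrm{max}^{1/2})$ via Markov's inequality. Dividing by $r^2$ and by $\mu_\man(\B(y,r))\gtrsim r^l$ produces, after a similar normalization-ratio control, a bound of the form $c''(\W_p/r^{l+1})^{1/2}$ with
\begin{equation}\label{eq:loccovmatrix_measurestability}
c'' = c''(l,\reach(\man),f_\mathrm{max}/f_\mathrm{min}).
\end{equation}
The square root is unavoidable here, originating from the $L^1$-to-$L^2$ Hölder step on the indicator discrepancy. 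The hypothesis $r \geq 4(\W_p/V_lf_\mathrm{min}(23/24)^l)^{1/(l+1)}$ is precisely what guarantees $\nu(\B(y,r))\gtrsim r^l$ and hence keeps both normalizing constants comparable.

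Summing the three contributions gives the first inequality. For the second inequality, $\Pi_y^{l,r}[\nu]$ is the projector onto the $l$ top eigenvectors of $\Sigma_y^r[\nu]$; since $\projbracket{\T_x\man}$ is the top eigenprojector of $\tfrac{1}{l+2}\projbracket{\T_x\man}$ with eigengap $1/(l+2)$, Lemma \ref{lem:davis_kahan} applies as soon as the first bound is at most $1/2$, and yields the advertised $\sqrt{2}$ factor.
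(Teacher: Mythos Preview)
Your approach is essentially the same as the paper's: the three-term decomposition (consistency, spatial stability, measure stability) followed by Davis--Kahan is exactly the route taken. The paper, however, does not re-derive the three bounds; it simply cites Proposition~4.1, Lemma~4.5, and Lemma~4.7 of \cite{tinarrage2023recovering} for each term, quoting the explicit constants that appear in Equations~\eqref{eq:loccovmatrix_consistency}--\eqref{eq:loccovmatrix_measurestability}. Your sketch of how those bounds are obtained is reasonable and broadly faithful to the arguments in that reference, but note that the lemma as stated promises \emph{explicit} constants, which your labeled equations do not supply---you only record the functional dependence.

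One small slip in the Davis--Kahan step: you write that the eigengap of $\tfrac{1}{l+2}\projbracket{\T_x\man}$ is $1/(l+2)$, but the first inequality of the lemma is stated for $\projbracket{\T_x\man} - \tfrac{l+2}{r^2}\Sigma_y^r[\nu]$, not for the $(l+2)^{-1}$-rescaled version. The relevant eigengap of $\projbracket{\T_x\man}$ is therefore $1$, which is precisely what produces the bare $\sqrt{2}$ factor (rather than $\sqrt{2}(l+2)$). Your conclusion is correct, but the stated reason does not match it.
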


\begin{proof}
The lemma is a consequence of the three following bounds, whose proofs are given in \cite{tinarrage2023recovering} under the names Proposition 4.1, Lemma 4.5, and Lemma 4.7.
The first states that, for any $r<\reach(\man)/2$ and $x\in \man$, one has
\begin{align}
&\bigg\| \frac{l+2}{r^2} \Sigma_x^r[\mu_\man] - \projbracket{\mathrm{T}_x \O}\bigg\|\leq c r\nonumber\\
\mathrm{with}~~~~~
&c = \frac{6(l+2)}{\reach(M)}+ \frac{l+2}{f_\mathrm{min}}\bigg(\frac{24}{23}\bigg)^l
\bigg(20L \bigg(\frac{5}{4}\bigg)^l + \frac{f_\mathrm{max}l}{2\reach(\man)}\bigg(5+2^l+2\bigg(\frac{5}{2}\bigg)^l\bigg)\bigg).
\label{eq:loccovmatrix_consistency}
\end{align}
Secondly, for $r<\reach(\man)/2$, $x\in \man$ and $y\in\R^n$ such that $\|x-y\|<r/4$, it holds that
\begin{equation}\label{eq:loccovmatrix_spatialstability}
\bigg\|\frac{l+2}{r^2}\Sigma_x^r[\mu_\man] - \frac{l+2}{r^2}\Sigma_y^r[\mu_\man]\bigg\|\leq c' \|x-y\|
~~~~\mathrm{with}~~~~
c' = l 2^{l+1}\bigg(1 + 4\frac{5^{l-1}}{3^l}\bigg) \bigg(\frac{30}{23}\bigg)^l \frac{f_\mathrm{max} }{f_\mathrm{min} }.
\end{equation}
Last, for any $y\in\R^n$ at distance at most  $(\W_p(\mu,\nu)/r^{l-1})^\frac{1}{2}$ from $\mathrm{supp}(\mu)$, and for $r$ such that $\W_p(\mu,\nu) \leq \min(1, V_l f_\mathrm{min} (23/24)^l) (r/4)^{l+1}$ and $\leq r < \frac{1}{2}\mathrm{reach}(\man)$, one has
\begin{align}
\bigg\|\frac{l+2}{r^2}\Sigma_y[\mu]-\frac{l+2}{r^2}\Sigma_y[\nu]\bigg\| \leq c'' \bigg(\frac{\W_p(\mu,\nu)}{r^{l+1}}\bigg)^\frac{1}{2} \nonumber\\  
\mathrm{with}~~~~~
c'' = l2^{l+1}(l+2)\bigg(2^{l-1}+2\frac{12\cdot5^{l-1}+1}{3^l} +2^{l+3}\bigg(\bigg(\frac{3}{2}\bigg)^{l-1}+1\bigg)\bigg),
\label{eq:loccovmatrix_measurestability}
\end{align}
and where $V_l$ denotes the volume of the unit ball of $\R^l$.
Summing these inequalities gives the bound on $\|\projbracket{\mathrm{T}_x \man} - \frac{l+2}{r^2}\Sigma_y^r[\nu]\|$.
The second statement is obtained by application of the Davis-Kahan theorem (Lemma \ref{lem:davis_kahan}) and by observing that the $l^\mathrm{th}$ eigengap of $\projbracket{\mathrm{T}_x \man}$ is~$1$.
\end{proof}

Based on this lemma, we deduce a bound on the quantity $\Omega$ introduced in Equation \eqref{eq:omega}.
In this context, the manifold $\man$ is the orbit $\O$ and the measure $\mu_\man$ is the uniform measure on $\O$, permitting a simplification of the constants $c$, $c'$, and $c''$.

\begin{lemma}
\label{lem:wasserstein_stability_tangent_space}
Let $\O$ be an orbit of representation of a compact Lie group in $\R^n$, $l$ its dimension, and $\mu_\O$ the uniform measure on it.
Let also $\nu$ be any measure on $\R^n$ and $\pi$ an optimal transport plan for $\W_2(\mu_\O,\nu)$.
Let $r>0$ be such that
$$\big(6\rho\big)^2\W_2(\mu_\O,\nu)^{\frac{1}{l+1}} \leq r \leq \big(6\rho\big)^{-1} ~~~~\mathrm{where}~~~~~ 
\rho = \bigg(16l(l+2)6^l\bigg)\frac{\max(\mathrm{vol}(\O),\mathrm{vol}(\O)^{-1})}{\min(1,\reach(\O))}.$$
Then it holds that
\begin{equation*}
\int \big\| \projbracket{\N_x \O} - \big(I - \Pi_{y}^{r,l}[\nu]\big) \big\|\d\pi(x,y)
\leq 
\sqrt{2}\rho \bigg(r + 3\bigg(\frac{\W_2(\mu_\O,\nu)}{r^{l+1}}\bigg)^{1/2}\bigg). 
\end{equation*}
\end{lemma}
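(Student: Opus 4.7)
}
The plan is to reduce to Lemma \ref{lem:consistency_loccovnorm} and then integrate against $\pi$, with a careful splitting of the domain. First, I would exploit the identity $\projbracket{\N_x\O}=I-\projbracket{\T_x\O}$ to rewrite the integrand as $\|\projbracket{\T_x\O}-\Pi_y^{l,r}[\nu]\|$, so that Lemma \ref{lem:consistency_loccovnorm} becomes directly applicable. Since $\mu_\O$ is the uniform measure on a $G$-orbit, its density with respect to $\mathcal{H}^l|_\O$ is the constant $f=1/\mathrm{vol}(\O)$; hence $f_\mathrm{min}=f_\mathrm{max}=1/\mathrm{vol}(\O)$ and $L=0$. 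Plugging this into Equations \eqref{eq:loccovmatrix_consistency}, \eqref{eq:loccovmatrix_spatialstability}, and \eqref{eq:loccovmatrix_measurestability} collapses $c,c',c''$ to explicit expressions in $l$, $\mathrm{vol}(\O)$, and $\reach(\O)$, each bounded (for instance) by $l(l+2)6^l\max(\mathrm{vol}(\O),\mathrm{vol}(\O)^{-1})/\min(1,\reach(\O))$, i.e.\ by $\rho/16$ up to universal factors. The hypothesis $4(\W_2(\mu_\O,\nu)/(V_l f_\mathrm{min}(23/24)^l))^{1/(l+1)}\leq r<\reach(\O)/2$ of Lemma \ref{lem:consistency_loccovnorm} then follows from the assumption $(6\rho)^2\W_2(\mu_\O,\nu)^{1/(l+1)}\leq r\leq (6\rho)^{-1}$, since $(6\rho)^2$ absorbs the numerical factor $4\mathrm{vol}(\O)^{1/(l+1)}/(V_l(23/24)^l)^{1/(l+1)}$, and $1/(6\rho)\leq\reach(\O)/2$ by the definition of $\rho$.

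The main obstacle is that Lemma \ref{lem:consistency_loccovnorm} requires the pointwise condition $\|x-y\|\leq(\W_2(\mu_\O,\nu)/r^{l-1})^{1/2}$, which is not enforced by optimal transport plans. I would handle this by splitting
\[
\supp(\pi)=G\sqcup B,\qquad G=\bigl\{(x,y):\|x-y\|\leq (\W_2(\mu_\O,\nu)/r^{l-1})^{1/2}\bigr\}.
\]
On $G$, Lemma \ref{lem:consistency_loccovnorm} applies, and one checks from the assumed range of $r$ that the intermediate bound $cr+c'\|x-y\|+c''(\W_2/r^{l+1})^{1/2}$ is below $1/2$ so that the Davis--Kahan version (the second inequality in the lemma) is legitimate; integrating against $\pi|_G$ and using $\int\|x-y\|\,\d\pi\leq \W_1(\mu_\O,\nu)\leq \W_2(\mu_\O,\nu)$ gives a contribution
\[
\sqrt{2}\bigl(cr+c'\W_2(\mu_\O,\nu)+c''(\W_2(\mu_\O,\nu)/r^{l+1})^{1/2}\bigr).
\]
On $B$, a Chebyshev-type inequality for optimal plans yields $\pi(B)\leq \W_2(\mu_\O,\nu)^2/(\W_2(\mu_\O,\nu)/r^{l-1})=\W_2(\mu_\O,\nu)\,r^{l-1}$, and the trivial bound $\|\projbracket{\T_x\O}-\Pi_y^{l,r}[\nu]\|\leq\sqrt{2l}$ on differences of rank-$l$ projections bounds that contribution by $\sqrt{2l}\,\W_2(\mu_\O,\nu)\,r^{l-1}$.

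The remaining task is a book-keeping step: absorb every constant into $\rho$ and check that the middle term $c'\W_2$ and the $B$-contribution fit inside the claimed summand $3\sqrt{2}\rho(\W_2/r^{l+1})^{1/2}$. For the middle term, I would write $\W_2=\W_2^{1/2}\cdot\W_2^{1/2}$ and use $\W_2^{1/2}\leq r^{(l+1)/2}/(6\rho)^{l+1}$ (from the lower bound on $r$), so that $c'\W_2\leq c'(6\rho)^{-(l+1)}r^{(l+1)/2}\W_2^{1/2}$; combined with $r\leq(6\rho)^{-1}$ this fits comfortably inside $\rho\,(\W_2/r^{l+1})^{1/2}$ because $c'\leq\rho$. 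The same manipulation handles the $\sqrt{2l}\,\W_2\,r^{l-1}$ term from $B$. Finally, $cr\leq\rho r$ and $c''(\W_2/r^{l+1})^{1/2}\leq\rho(\W_2/r^{l+1})^{1/2}$ are immediate. Summing yields the announced bound $\sqrt{2}\rho\bigl(r+3(\W_2(\mu_\O,\nu)/r^{l+1})^{1/2}\bigr)$; the hard part is really only tracking the exponents of $\rho$ and $r$ in these absorptions, which is routine but tedious.
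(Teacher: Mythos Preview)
Your proposal is correct and follows essentially the same argument as the paper: rewrite via $\projbracket{\N_x\O}=I-\projbracket{\T_x\O}$, split the integral over $\{\|x-y\|\leq(\W_2/r^{l-1})^{1/2}\}$ and its complement, apply Lemma~\ref{lem:consistency_loccovnorm} on the good set (with $L=0$, $f_\mathrm{min}=f_\mathrm{max}=\mathrm{vol}(\O)^{-1}$, and $c,c',c''\leq\rho$), use Markov on the bad set, and absorb the residual terms into $(\W_2/r^{l+1})^{1/2}$ via the assumed range of $r$. The only cosmetic differences are that the paper uses $2\sqrt{l}$ rather than $\sqrt{2l}$ as the trivial projection bound, and checks the $\leq 1/2$ condition by bounding each of the three summands by $1/6$ separately.
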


\begin{proof}
We first observe that, as a consequence of the relation $I = \projbracket{\T_x \O} + \projbracket{\N_x \O}$, the integral is equal to $\int \| \projbracket{\T_x \O} - \Pi_{y}^{r,l}[\nu] \|\d\pi(x,y)$.
We split it into the set $\Delta$ and its complement $\Delta^\complement$, where we define $\Delta = \{(x,y)\in \R^n\times\R^n \mid \|x-y\|\leq \alpha\}$, and $\alpha = (\W_2(\mu_\O,\nu)/r^{l-1})^\frac{1}{2}$.
As it will be clearer later in the proof, the constant $\alpha$ has been chosen to calibrate the contributions of $\Delta$ and $\Delta^\complement$.
On the set $\Delta^\complement$, Markov's inequality yields
\begin{align*}
\pi\big(\Delta^\complement\big) 
&= \pi\big(\big\{(x,y)\in\R^n\times\R^n \mid \|x-y\|^2\geq \alpha^2\big\}\big)
\leq \bigg(\frac{\W_2(\mu_\O,\nu)}{\alpha}\bigg)^2.
\end{align*}
Since $(\W_2(\mu_\O,\nu)/\alpha)^2 = \W_2(\mu_\O,\nu) r^{l-1}$, we deduce that
\begin{align}\label{eq:proof_boundomega_deltacomplement}    
\int_{\Delta^\complement} \big\| \projbracket{\T_x \O} - \Pi_{y}^{r,l}[\nu] \|\d\pi(x,y)
&\leq \sup \big\| \projbracket{\T_x \O} - \Pi_{y}^{r,l}[\nu] \| \cdot \pi(\Delta^\complement)\nonumber\\ 
&\leq 2\sqrt{l} \W_2(\mu_\O,\nu) r^{l-1}.
\end{align}
We now turn to the set $\Delta$.
For $(x,y)\in\Delta$, the hypotheses of Lemma \ref{lem:consistency_loccovnorm} are satisfied, hence
\begin{align}\label{eq:proof_boundomega_applicationlemma}
\bigg\|\projbracket{\mathrm{T}_x \O} - \frac{l+2}{r^2}\Sigma_y^r[\nu]\bigg\|
\leq 
cr + c'\|x-y\| + c'' \bigg(\frac{\W_2(\mu_\O,\nu)}{r^{l+1}}\bigg)^\frac{1}{2}.
\end{align}
Moreover, in our context, we have $L=0$ and $f_\mathrm{min}=f_\mathrm{min}=\mathrm{vol}(\O)^{-1}$, where the volume of $\O$ refers to its $l$-dimensional Hausdorff measure. Simple manipulations show that we have 
$$
c\leq \max(\mathrm{vol}(\O),\mathrm{vol}(\O)^{-1}) \frac{16l(l+2)}{\reach(\O)}\bigg(\frac{60}{23}\bigg)^l, ~~~~~
c'\leq4l \bigg(\frac{100}{23}\bigg)^l 
~~~~~\mathrm{and}~~~~~
c''\leq16l(l+2)6^l.
$$
In particular, they are all lower than 
$$
\rho = \bigg(16l(l+2)6^l\bigg) \frac{\max(\mathrm{vol}(\O),\mathrm{vol}(\O)^{-1})}{\min(1,\reach(\O))}.
$$
Thus, we deduce from Equation \eqref{eq:proof_boundomega_applicationlemma} that
\begin{align*}
\bigg\|\projbracket{\mathrm{T}_x \O} - \frac{l+2}{r^2}\Sigma_y^r[\nu]\bigg\|
&\leq 
\rho r + \rho\|x-y\| + \rho\bigg(\frac{\W_2(\mu_\O,\nu)}{r^{l+1}}\bigg)^\frac{1}{2}
\leq \frac{1}{6} + \frac{1}{6} + \frac{1}{6} = \frac{1}{2}.
\end{align*}
The fact that each term is bounded by $1/6$ comes from the hypothesis of the lemma regarding $r$.
As a consequence, we can apply the second point of Lemma \ref{lem:consistency_loccovnorm} and get
\begin{align*}
\bigg\|\projbracket{\mathrm{T}_x \O} -  \Pi_{y}^{r,l}[\nu]\bigg\|
&\leq 
\sqrt{2}\rho\bigg(r + \|x-y\| + \bigg(\frac{\W_2(\mu_\O,\nu)}{r^{l+1}}\bigg)^\frac{1}{2}\bigg).
\end{align*}
Integrating this expression over $\Delta$ yields
\begin{align*}
\int_{\Delta} \big\| \projbracket{\T_x \O} - \Pi_{y}^{r,l}[\nu] \|\d\pi(x,y)
&\leq  
\sqrt{2}\rho \bigg(
r + \int_{\Delta}\|x-y\|\d\pi(x,y) + \bigg(\frac{\W_2(\mu_\O,\nu)}{r^{l+1}}\bigg)^\frac{1}{2}\bigg)\\
&\leq
\sqrt{2}\rho\bigg(
r + \W_2(\mu_\O,\nu) + \bigg(\frac{\W_2(\mu_\O,\nu)}{r^{l+1}}\bigg)^\frac{1}{2}\bigg),
\end{align*}
where we used Jensen's inequality on the second line.
Combined with Equation \eqref{eq:proof_boundomega_deltacomplement}, we get a bound on the full integral:
\begin{align*}
\int \big\| \projbracket{\T_x \O} - \Pi_{y}^{r,l}[\nu] \|\d\pi
\leq  2\sqrt{l} \W_2(\mu_\O,\nu) r^{l-1} + \sqrt{2}\rho\bigg(
r + \W_2(\mu_\O,\nu) + \bigg(\frac{\W_2(\mu_\O,\nu)}{r^{l+1}}\bigg)^\frac{1}{2}\bigg).
\end{align*}
Besides, using the hypothesis on $r$, we see that
\[
2\sqrt{l} \W_2(\mu_\O,\nu) r^{l-1} \leq \sqrt{2}\rho\bigg(\frac{\W_2(\mu_\O,\nu)}{r^{l+1}}\bigg)^\frac{1}{2}
~~~~~\mathrm{and}~~~~~
\W_2(\mu_\O,\nu) \leq \bigg(\frac{\W_2(\mu_\O,\nu)}{r^{l+1}}\bigg)^\frac{1}{2},
\]
and the result of the lemma follows.
\end{proof}

\begin{remark}\label{rem:estimation_tangentspace_bias_variance}
The inequality of Lemma \ref{lem:wasserstein_stability_tangent_space} can be seen as a bias-variance trade-off, the former being $r$, and the latter $(\W_2(\mu_\O,\nu)/r^{l+1})^{1/2}$.
In other words, the estimation of tangent and normal spaces is more accurate when $r$ is small, but becomes less stable with respect to variations.
Besides, a word should be said about the exponent $1/2$ on the Wasserstein distance.
In the general context of localization of measures, it can be shown that this exponent is optimal \cite[Remark 4.9]{tinarrage2023recovering}.
However, when it comes to the estimation of tangent spaces via local PCA, we do not know whether it can be improved.
We stress that any improvement of the bound in Lemma \ref{lem:consistency_loccovnorm} would result in a direct improvement of Lemma \ref{lem:wasserstein_stability_tangent_space}, and other results to come.
\end{remark}

A direct combination of Lemmas \ref{lem:wasserstein_stability_lie_pca} and \ref{lem:wasserstein_stability_tangent_space} yields our main result regarding \texttt{LiePCA}. 

\begin{proposition}\label{prop:Lie-PCA}
Let $\O$ be an orbit of representation of a compact Lie group in $\R^n$, $l$ its dimension, and its $\mu_\O$ the uniform measure.
Also, let $X\subset\R^n$ be a finite point cloud and $\mu_X$ be its empirical measure.
We consider the ideal \texttt{LiePCA} operator $\Lambda_\O$, and the operator $\Lambda$ computed from $X$ using the estimators of normal spaces defined in Equation \eqref{eq:definition_estimator_normalspaces}.
Let $r>0$ and $\rho$ be such as in Lemma \ref{lem:wasserstein_stability_tangent_space}.
Then it holds that
\begin{equation*}
\| \Lambda_\O-\Lambda \|_\mathrm{op}
\leq 
\sqrt{2}\rho \bigg(r + 4\bigg(\frac{\W_2(\mu_\O,\mu_X)}{r^{l+1}}\bigg)^{1/2}\bigg).
\end{equation*}
\end{proposition}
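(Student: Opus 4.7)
The proof is essentially a direct assembly of the two stability lemmas established earlier in this subsection, requiring no new ideas. I would proceed in three short steps.

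First, I would instantiate Lemma \ref{lem:wasserstein_stability_lie_pca} with $\mu = \mu_\O$, $\nu = \mu_X$, and the two normal-space maps $\N[\mu_\O](x) = \projbracket{\N_x \O}$ and $\N[\mu_X](y) = I - \Pi_y^{l,r}[\mu_X]$. By the very definitions of $\Lambda_\O$ (Equation \eqref{eq:ideal_Lie_PCA_operator}) and of the computed operator $\Lambda$, one has $\Lambda_\O = \Lambda[\mu_\O, \N[\mu_\O]]$ and $\Lambda = \Lambda[\mu_X, \N[\mu_X]]$, so the lemma yields
\[
\|\Lambda_\O - \Lambda\|_\mathrm{op} \leq 2\sup(\|\N[\mu_\O]\|)\sup(\|\mu_\O\|^{-1})\W_2(\mu_\O,\mu_X) + \Omega,
\]
where $\Omega = \int \|\projbracket{\N_x \O} - (I - \Pi_y^{l,r}[\mu_X])\|\d\pi(x,y)$ for an optimal transport plan $\pi$ between $\mu_\O$ and $\mu_X$.

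Next, I would invoke Lemma \ref{lem:wasserstein_stability_tangent_space}. Its hypotheses on $r$ and $\rho$ have been set up to coincide verbatim with those of the present proposition, so it applies directly to the integrand defining $\Omega$ and produces
\[
\Omega \leq \sqrt{2}\rho\bigg(r + 3\bigg(\frac{\W_2(\mu_\O,\mu_X)}{r^{l+1}}\bigg)^{1/2}\bigg).
\]

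The only remaining task --- and the only piece of genuine bookkeeping --- is to absorb the residual term $2\sup(\|\N[\mu_\O]\|)\sup(\|\mu_\O\|^{-1})\W_2(\mu_\O,\mu_X)$ into one extra unit of $\sqrt{2}\rho(\W_2/r^{l+1})^{1/2}$, so as to upgrade the constant "$3$" into the stated "$4$". Rewriting the hypothesis $r \geq (6\rho)^2\W_2(\mu_\O,\mu_X)^{1/(l+1)}$ as $\W_2(\mu_\O,\mu_X) \leq r^{l+1}(6\rho)^{-2(l+1)}$ and combining with the upper bound $r \leq (6\rho)^{-1}$, one obtains $\W_2^{1/2} \cdot r^{(l+1)/2} \leq (6\rho)^{-2(l+1)}$, which dominates the residual term since $\sup(\|\N[\mu_\O]\|) = \sqrt{n-l}$ is bounded and $\sup(\|\mu_\O\|^{-1})$ is controlled after the orthonormalization of Step 1 (or implicitly by the assumption that $\O$ stays away from the origin). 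The astronomical slack provided by the factor $\rho^{2(l+1)}$ renders the absorption entirely routine. This is the main, although minor, obstacle of the proof: a constants chase rather than a conceptual hurdle.
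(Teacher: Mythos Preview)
Your proposal is correct and follows exactly the route the paper takes: the paper's proof is a single sentence, ``A direct combination of Lemmas \ref{lem:wasserstein_stability_lie_pca} and \ref{lem:wasserstein_stability_tangent_space} yields our main result regarding \texttt{LiePCA}.'' Your write-up is in fact more detailed than the paper's, and your identification of the only nontrivial step --- absorbing the $2\sup(\|\N[\mu_\O]\|)\sup(\|\mu_\O\|^{-1})\W_2(\mu_\O,\mu_X)$ term into the extra unit of $\sqrt{2}\rho(\W_2/r^{l+1})^{1/2}$ via the slack in $\rho$ --- is exactly the bookkeeping the paper leaves implicit.
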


\subsection{Stability of minimization problems}\label{subsec:stability-minimization}

The execution of the algorithm involves a minimization program, either during \ref{item:step3}, formulated in Equation \eqref{eq:minimization_stiefel_2}, or during \ref{item:step3}', formulated in Equation \eqref{eq:minimization_grassmann}.
In both cases, it consists of several minimizations over $\Ort(n)$, solved in practice with gradient descent.
Studying the accuracy of such algorithms is, in general, a difficult task.
Indeed, their outputs depend crucially on the stopping criterion chosen by the user and are subject to numerical errors.
But most of all, minimization via gradient descent seeks only a \textit{local} minimum.
Instead of diving into these issues, which are out of the scope of this article, we will consider that the algorithm always returns a global minimum. 
Our objective with this section is to prove that the global minima of Equations \eqref{eq:minimization_stiefel_2} and \eqref{eq:minimization_grassmann} are close to the `ideal' object: a Lie algebra that underlies the data. 

At the core of this analysis lies the question of the rigidity of Lie subalgebras, which we introduce in Section \ref{subsubsec:rigidity_lie_subalgebras}. 
We then derive stability results for \ref{item:step3} and \ref{item:step3}' in Section \ref{susubsec:stability_step3}, enabling us to deduce in Section \ref{subsubsec:stability_minimization_Hausdorff} the consistency of \ref{item:step4}.

\subsubsection{Rigidity of Lie subalgebras}\label{subsubsec:rigidity_lie_subalgebras}
Let us recall the context of this article: $\O$ is an orbit of a representation of a compact Lie group in $\R^n$ and $X$ is a finite point cloud.
We write $\Lambda$ for the \texttt{LiePCA} operator obtained from $X$ and the estimation of its normal spaces via local PCA. 
Moreover, the ideal \texttt{LiePCA} operator, obtained by replacing $X$ with $\O$, is denoted by $\Lambda_\O$.
Let us consider \ref{item:step3}, which we perform with an input compact Lie group denoted by $G$, and whose Lie algebra is $\g$.
We consider the formulation given in Equation \eqref{eq:minimization_stiefel}:
\begin{equation*}
    \arg \min \sum_{i=1}^d  \| \Lambda(A_i) \|^2
    ~~~~\mathrm{s.t.}~~~~
    (A_1,\dots,A_d) \in \mathcal{V}(\g, \mathfrak{so}(n)).
\end{equation*}
As a first approximation to study this equation, we consider the ideal case $\Lambda_\O$.
This operator can be diagonalized as $\Lambda_\O = \sum_{j=1}^p \lambda_j \projbracket{V_j}$, where the eigenspaces are orthogonal and satisfy $\M_n(\R) = \bigoplus_{j=1}^p V_j$. 
As shown in Proposition \ref{prop:consistency_LiePCA_idealcase}, its kernel is equal to $\sym(\O)$, i.e., the eigenvalue $0$ is associated with the eigenspace $\sym(\O)$.
Denoting by $\lambda>0$ its smallest nonzero eigenvalue, it holds that, for every matrix $A\in\M_n(\R)$,
\begin{equation*}
\big\| \Lambda_\O(A)\big\|
~\geq~ 
\lambda \cdot\big\|\projbracket{\sym(\O)^\bot}(A)\big\|. 
\end{equation*}
The objective function $\sum_{i=1}^d  \| \Lambda_\O(A_i) \|^2$ is equal to zero precisely when the orthonormal frame $(A_1,\dots,A_d)$ is included in $\sym(\mathcal{O})$.
Moreover, by denoting by $\projbracket{\spn{A_i}_{i=1}^d}$ the projection on the space they span, we deduce
\begin{align}\label{eq:bounds_minimization_step3}
\sum_{i=1}^d  \| \Lambda_\O(A_i) \|^2
~&\geq~ 
\lambda^2\cdot
\sum_{i=1}^d\big\| \projbracket{\sym(\O)^\bot} (A_i) \big\|^2\nonumber\\
&=~ 
\lambda^2\cdot
\big\| \projbracket{\sym(\O)^\bot} \projbracket{\spn{A_i}_{i=1}^d} \big\|^2.
\end{align}
This equation will prove useful later, as it provides a nontrivial lower bound on the objective function.
Moreover, this Frobenius norm can be understood as a kind of distance between the spaces $\sym(\O)$ and $\spn{A_i}_{i=1}^d$, adapted to the case where they potentially are of different dimensions.
Indeed, when their dimensions coincide, basic manipulations show that
\begin{align}\label{eq:bounds_minimization_step3_equaldimension}
\big\| \projbracket{\sym(\O)^\bot}
\projbracket{\spn{A_i}_{i=1}^d} \big\|^2
~=~
\frac{1}{2}\big\|\projbracket{\sym(\O)}-\projbracket{\spn{A_i}_{i=1}^d}\big\|^2,
\end{align}
where we recognize the distance between subspaces defined in Section \ref{subsec:grassmann_lie_notgroup} (see Equation \eqref{eq:distance_grassmannian}).

Back to the original problem, and as a consequence of this discussion, we are compelled to study the minimum of the function 
$$
\h \mapsto \big\| \projbracket{\mathfrak{s}^\bot}
\projbracket{\h} \big\|^2
~~~~\mathrm{where}~~~~
\h \in \mathcal{G}(G, \mathfrak{so}(n)),
$$
and where $\mathfrak{s}$ is a Lie subalgebra of $\so(n)$ and $\mathcal{G}(G, \mathfrak{so}(n))$ the Grassmann variety of pushforward Lie algebras of $G$.
We have seen in Section \ref{subsec:grassmann_lie} that the connected components of $\mathcal{G}(G, \mathfrak{so}(n))$ are generated by the action of $\Ort(n)$ on it by conjugation, yielding the quotient space $\mathfrak{orb}(G,n)$.
Let us consider one such component, denoting by $(B^1,\dots,B^p) \in \mathfrak{orb}(G,n)$ an element, and $\h = \spn{\diag\big(B^k_j\big)_{k=1}^p }_{i=1}^d$ the corresponding Lie algebra.
On this component, the function reads
$$
O \mapsto \big\| \projbracket{\mathfrak{s}^\bot}
\projbracket{O \h O^\top} \big\|^2
~~~~~\mathrm{where}~~~~~
O\in\Ort(n).$$
If a certain conjugate Lie algebra $O \h O^\top$ belongs to $\mathfrak{s}$, then the minimum of this function is zero.
Otherwise, it is positive, and finding a lower bound has already been studied in the context of \textit{rigidity of Lie subalgebras}.
Namely, the $d$-dimensional Lie subalgebra $\h\subset \so(n)$ is called \textit{rigid} if its orbit $\{O \h O^\top \mid O \in \Ort(n)\}$, seen in the space of $d$-dimensional Lie subalgebra of $\so(n)$, admits a neighborhood consisting of only isomorphic Lie subalgebras.
It is, in general, a hard problem to determine which Lie subalgebras are rigid \cite{barrionuevo2023deformations}.
As a positive result, a sufficient condition for rigidity can be defined from the cohomology of $\h$ \cite{crainic2014survey}.
Another instance of the literature where rigidity is involved is that of \textit{switched linear systems}.
In this context, it has been studied the closely related problem of recovering the Lie subalgebra $\mathfrak{h}$, based on the observation of a sufficiently close vector space $\mathcal{A}\subset \so(n)$ \cite{agrachev2001lie,agrachev2010towards}.
Our problem is similar.

As it will be clearer in the next section, quantifying the rigidity of the Lie algebra $\h$ proves to be crucial to state explicit consistency results for our algorithm.
To this end, we define
\begin{equation}\label{eq:gamma_def}
\Rig(G,n) = \inf_{\h,\mathfrak{s}} 
\big\| \projbracket{\mathfrak{s}^\bot}
\projbracket{\h} \big\|^2,
\end{equation}
where the infimum is taken over all pairs $(\h,\mathfrak{s})$ such that $\h \in \mathcal{G}(G, \mathfrak{so}(n))$ and $\mathfrak{s}$ is a Lie subalgebra of $\so(n)$, pushforward of any compact Lie group (potentially $G$ itself), and that does not contain no conjugation $O \h O^\top$ for $O\in\Ort(n)$.
When there are no such pairs, we set $\Rig(G,n)=+\infty$.

When $G$ is $\SU(2)$ or $\SO(3)$, this infimum is greater than zero.
This comes from the fact that, being fixed an ambient dimension $n$, only a finite number of non-equivalent representations exist.
If the group is Abelian, however, the infimum may be zero. 
In this case, we restrict the definition of $\Rig(G,n)$ to the finite set of representations used in the algorithm (described in Section \ref{subsubsec:orbit_equivalence_so(2)}).
More precisely, given a positive integer $\omega_\mathrm{max}$, we define $\Rig(G,n,\omega_\mathrm{max})$ as in Equation \eqref{eq:gamma_def}, but imposing the additional assumption that the Lie subalgebras $\h$ and $\mathfrak{s}$ are spanned by matrices whose spectra come from primitive integral vectors of coordinates at most $\omega_\mathrm{max}$.
The following lemma, though not used in this article, provides an indicative lower bound.

\begin{lemma}\label{lem:lower_bound_gamma_abelian}
For any compact Lie group $G$, it holds that 
$$
\Rig(G,n,\omega_\mathrm{max}) 
\geq \Rig(\SO(2),n,\omega_\mathrm{max}) 
\geq 4/(n\omega_\mathrm{max}^2).
$$
\end{lemma}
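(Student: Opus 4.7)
The plan is to prove the two inequalities separately. The first is a structural reduction, downgrading a pair for general $G$ to one for $\SO(2)$; the second is an arithmetic estimate using Lagrange's identity on primitive integer vectors.

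\textbf{First inequality} $\Rig(G,n,\omega_\mathrm{max}) \geq \Rig(\SO(2),n,\omega_\mathrm{max})$. Given an admissible pair $(\h_G,\mathfrak{s})$ entering the infimum for $G$, I would produce an admissible pair $(\spn{A},\mathfrak{s})$ for $\SO(2)$ of cost no larger. Using Lemma~\ref{lem:structure_lie_stiefel}, I choose an orthonormal basis $(A_1,\dots,A_d)$ of $\h_G$ whose elements are block-diagonal with blocks from irreducible representations of weights at most $\omega_\mathrm{max}$; consequently each $\spn{A_i}$ is an element of $\mathcal{G}(\SO(2),\so(n))$ admissible for the $\SO(2)$-infimum. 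The core claim is that at least one index $i$ admits no conjugate of $\spn{A_i}$ inside $\mathfrak{s}$: otherwise, a coordination of the conjugating matrices across the $A_i$ (exploiting the commutation of basis elements within each irreducible block and the Lie-algebra closure of $\mathfrak{s}$) would produce a single $O\in\Ort(n)$ with $O\h_G O^\top\subset \mathfrak{s}$, contradicting the hypothesis. Once such an $A_i$ is isolated, the Pythagorean bound
\[
\bigl\|\projbracket{\mathfrak{s}^\bot}(A_i)\bigr\|^2 \;\leq\; \sum_{j=1}^d \bigl\|\projbracket{\mathfrak{s}^\bot}(A_j)\bigr\|^2 \;=\; \bigl\|\projbracket{\mathfrak{s}^\bot}\projbracket{\h_G}\bigr\|^2
\]
certifies that $(\spn{A_i},\mathfrak{s})$ is an admissible $\SO(2)$-pair of cost no larger than that of $(\h_G,\mathfrak{s})$, giving the desired inequality.

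\textbf{Second inequality} $\Rig(\SO(2),n,\omega_\mathrm{max}) \geq 4/(n\omega_\mathrm{max}^2)$. Fix an admissible pair $(\spn{A},\mathfrak{s})$ with $A=B(k)$, $k\in\mathbb{N}^m_\mathrm{prim}$ and $\max_i k_i\leq\omega_\mathrm{max}$; the cost is $1-\|\projbracket{\mathfrak{s}}(A)\|^2$. I first reduce to $\dim\mathfrak{s}=1$: an admissible $\mathfrak{s}$ of higher dimension decomposes through a maximal torus into pushforwards of $\SO(2)$ one-parameter subgroups whose generators have bounded integer spectra, and the projection $\projbracket{\mathfrak{s}}(A)$ is dominated by its projection onto one such line $\spn{B}\subset\mathfrak{s}$ (here the $\omega_\mathrm{max}$-bound on generators is used to prevent arbitrarily fine arithmetic approximation). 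For the one-dimensional case $\mathfrak{s}=\spn{B}$ with $B=B(k')$, placing $A$ in normal form reduces the inner product to
\[
\langle A,B\rangle \;=\; \frac{\langle k,\sigma k'\rangle}{\|k\|\cdot\|k'\|}
\]
for some signed permutation $\sigma$ of $k'$. Lagrange's identity then gives
\[
\|k\|^2\|k'\|^2 - \langle k,\sigma k'\rangle^2 \;=\; \sum_{i<j}\bigl(k_i(\sigma k')_j - k_j(\sigma k')_i\bigr)^2,
\]
and since $k$ and $\sigma k'$ are non-proportional primitive integer vectors (otherwise $\spn{A}$ and $\spn{B}$ would be conjugate, violating admissibility), at least one cross-term on the right is a nonzero integer, forcing the sum to be at least $1$. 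Combined with $\|k\|^2,\|k'\|^2\leq m\omega_\mathrm{max}^2\leq(n/2)\omega_\mathrm{max}^2$, this yields the announced lower bound on $1-\langle A,B\rangle^2$.

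\textbf{Main obstacle.} The delicate step is the coordination argument in the first inequality: inferring from pointwise conjugacies $O_i\spn{A_i}O_i^\top\subset\mathfrak{s}$ the existence of a single $O$ with $O\h_G O^\top\subset\mathfrak{s}$ requires care with the decomposition of $\h_G$ into irreducible blocks and with the invariance structure of $\mathfrak{s}$, and likely needs a case-by-case treatment according to $G$. The higher-dimensional-to-one-dimensional reduction in the second inequality is also subtle, since the cost decreases with $\dim\mathfrak{s}$, so the $\omega_\mathrm{max}$-bounded-weight restriction on the generators of $\mathfrak{s}$ must be invoked in an essential way to rule out dense approximation of $A$ from within $\mathfrak{s}$.
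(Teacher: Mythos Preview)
Your high-level strategy for the first inequality matches the paper's: both reduce to a single unit $A\in\h$ via the Pythagorean identity $\|\projbracket{\mathfrak{s}^\bot}\projbracket{\h}\|^2=\sum_i\|\projbracket{\mathfrak{s}^\bot}(A_i)\|^2\geq\|\projbracket{\mathfrak{s}^\bot}(A)\|^2$. The paper, however, does not attempt the coordination argument you describe; it simply asserts in one sentence that the $\SO(2)$ bound applies to $\|\projbracket{\mathfrak{s}^\bot}(A)\|^2$ ``for any unit $A\in\h$'', without verifying that $(\spn{A},\mathfrak{s})$ is admissible. So the obstacle you isolate is real, but the paper treats it as self-evident rather than resolving it.

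For the second inequality, there is a genuine gap in your argument. Your claim that ``placing $A$ in normal form reduces the inner product to $\langle A,B\rangle=\langle k,\sigma k'\rangle/(\|k\|\,\|k'\|)$'' is false as an equality: it holds only when $A$ and $B$ are \emph{simultaneously} block-diagonal, i.e.\ when they commute, which is not assumed. With $A=B(k)/\|B(k)\|$ in normal form, one has $B=OB(k')O^\top/\|B(k')\|$ for some uncontrolled $O\in\Ort(n)$, and $\langle A,B\rangle$ genuinely depends on $O$. The paper's route avoids this by invoking the Hoffman--Wielandt inequality, which lower-bounds $\|A-tB\|^2$ by the matched eigenvalue differences $2\sum_i(\lambda_i/\delta-t\lambda_i'/\delta')^2$ \emph{regardless} of the relative orientation of $A$ and $B$. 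After this spectral reduction, the paper minimizes over the scaling $t$ and extracts the integer gap directly; your Lagrange-identity computation could be grafted on at that point, but only after Hoffman--Wielandt (or equivalently von Neumann's trace inequality) has supplied the missing inequality. Note also that the paper does not first reduce to $\dim\mathfrak{s}=1$: it takes an arbitrary unit $B\in\mathfrak{s}$ and works with $\inf_{t}\|A-tB\|^2$ directly, so the dimensional reduction you flag as subtle is simply bypassed. Finally, even with the correct spectral inequality in place, your Lagrange estimate yields a denominator $\|k\|^2\|k'\|^2\leq m^2\omega_{\max}^4$, hence a bound of order $4/(n^2\omega_{\max}^4)$, weaker than the announced $4/(n\omega_{\max}^2)$; the paper's arithmetic exploits a single coordinate difference $|\lambda_k-\lambda_k'|\geq1$ rather than the full Lagrange sum to obtain the sharper constant.
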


\begin{proof}
We start with $G=\SO(2)$.
Without loss of generality, we suppose that $n$ is even and define $m = n/2$.
Let $\h$ and $\mathfrak{s}$ be as in Equation \eqref{eq:gamma_def}, and choose a unit skew-symmetric matrix $A$ that spans $\h$.
We have
\begin{equation}\label{eq:gamma_freqmin_proof}
\big\| \projbracket{\mathfrak{s}^\bot}
\projbracket{\h} \big\|^2
~=~
\big\| \projbracket{\mathfrak{s}^\bot}
(A) \big\|^2
~=~
\inf_{B \in \mathfrak{s}} \big\| B
- A \big\|^2.
\end{equation}
Let $B\in\mathfrak{s}$ be of unit norm.
The matrices $A$ and $B$ are skew-symmetric, thus there exist two $m$-tuples of non-negative real numbers $(\lambda_{i})_{i=1}^m$ and $(\lambda'_{i})_{i=1}^m$ such that their spectra takes the form
$$
\pm\iu\lambda_1/\delta , ~\dots, ~\pm\iu\lambda_m/\delta
~~~~\mathrm{and}~~~~
\pm\iu\lambda_1'/\delta', ~\dots, ~\pm\iu\lambda_m'/\delta'
$$
with $\delta = \sqrt{2\sum_{i=1}^m\lambda_i^2}$, $\delta' = \sqrt{2\sum_{i=1}^m(\lambda_i')^2}$, with $\iu$ the imaginary unit.
By assumption, the $\lambda_i$'s and $\lambda_i'$'s can be chosen as integers lower or equal to $\omega_\mathrm{max}$.
Hoffman-Wielandt inequality reads
$$
\|B-A\|^2 \geq 2\sum_{i=1}^m\bigg(\frac{\lambda_i}{\delta}- \frac{\lambda_i'}{\delta'}\bigg)^2.
$$
Note that, in Equation \eqref{eq:gamma_freqmin_proof}, $B$ does not have to be of unit norm.
The polynomial
$$
t \mapsto 2\sum_{i=1}^m\bigg(\frac{\lambda_i}{\delta}-t \frac{\lambda_i'}{\delta'}\bigg)^2,
$$
where $t\geq 0$, admits as a minimum the value 
$$
2\bigg(1-\bigg(\frac{1}{\delta\delta'}\sum_{i=1}^m\lambda_i\lambda_i'\bigg)^2\bigg).
$$
Let us suppose that the $m$-tuples are distinct.
That is, there exists $k\in[1\isep m]$ such that $\lambda_k \neq \lambda_k'$.
We compute:
\begin{equation*}
2\bigg(1-\bigg(\frac{1}{\delta\delta'}\sum_{i=1}^m\lambda_i\lambda_i'\bigg)^2\bigg)
\geq 
4\sum_{i=1}^m\frac{\lambda_i}{\delta}\bigg(\frac{\lambda_i}{\delta}-\frac{\lambda_i'}{\delta'}\bigg)
\geq
4\frac{\lambda_k}{\delta}\bigg(\frac{\lambda_k}{\delta}-\frac{\lambda_k'}{\delta'}\bigg).
\end{equation*}
Using that $|\lambda_k-\lambda_k'|\geq 1$ and $\delta,\delta' \leq \sqrt{2m\omega_\mathrm{max}^2}$, we finally obtain
$$
2\bigg(1-\bigg(\frac{1}{\delta\delta'}\sum_{i=1}^m\lambda_i\lambda_i'\bigg)^2\bigg)
\geq 
4\cdot\frac{1}{\sqrt{2m\omega_\mathrm{max}^2}}\cdot\frac{1}{\sqrt{2m\omega_\mathrm{max}^2}}
\geq 
\frac{2}{m\omega_\mathrm{max}^2}
=
\frac{4}{n\omega_\mathrm{max}^2}.
$$
We deduce the claim of the lemma.
More generally, when $G$ is a compact Lie group, the bound still holds, since $\big\| \projbracket{\mathfrak{s}^\bot}
\projbracket{\h} \big\|^2$ is lower bounded by $\big\| \projbracket{\mathfrak{s}^\bot}
(A) \big\|^2$ for any unit $A\in\h$.
\end{proof}

\subsubsection{Minimization of Equation \eqref{eq:minimization_stiefel_2}}\label{susubsec:stability_step3}
We now deduce a stability result for \ref{item:step3}.
Namely, and as a direct consequence of the rigidity of Lie subalgebras, we obtain that Equation \eqref{eq:minimization_stiefel_2} is minimized, with respect to the first variable in $\mathfrak{orb}(G,n)$, by a correct orbit equivalence class.
However, the minimizer $\widehat{\h}$, after optimization on the second variable in $\Ort(n)$, may not be close to the underlying algebra.
This latter problem, known as the \textit{stability} of Lie subalgebras, is considered more subtle than its rigidity equivalent \cite[Remark 5.13]{crainic2014survey}.
We circumvent this issue by working with the quantity $\Rig(\SO(2),n,\omega_\mathrm{max})$.

\begin{lemma}\label{lem:stability_minimization_liealgebra}
Let $\O$ be an orbit of a compact Lie group in $\R^n$, $X$ a finite point cloud, and $G$ a compact Lie group of dimension $d$.
We suppose that, for a certain parameter $\omega_\mathrm{max}>0$, the Lie algebra $\sym(\O)$ is spanned by matrices whose spectra come from a primitive integral vectors of coordinates at most $\omega_\mathrm{max}$.
Let $\Rig(G,n,\omega_\mathrm{max})$ be the corresponding rigidity constant.
In addition, we denote by $\lambda$ the smallest nonzero eigenvalue of the ideal \texttt{LiePCA} operator $\Lambda_\O$.
We suppose that the distance between the empirical and ideal operators is bounded above by
$$
\|\Lambda-\Lambda_\O\|_\op \leq \frac{\lambda^2}{16d} \Rig(G,n,\omega_\mathrm{max}).
$$
Consider an orbit-equivalence class $(B^1,\dots,B^p) \in \mathfrak{orb}(G,n)$.
This class being fixed, we suppose that the minimum of Equation~\eqref{eq:minimization_stiefel_2} with respect to $O\in\Ort(n)$ is upper bounded as follows:
$$
\min_{O\in\Ort(n)} \sum_{i=1}^d \bigg\| \Lambda \bigg(O \diag\big(B^k_j\big)_{k=1}^p O^\top\bigg) \bigg\|^2
\leq \frac{\lambda^2}{8}\Rig(G,n,\omega_\mathrm{max}).
$$
Let $\widehat{\h}$ be a minimizer.
Then $\widehat{\h}$ is conjugate to a subset of $\sym(\O)$, and satifies the bound
$$
\big\|\projbracket{\sym(\O)^\bot}  \projbracket{\widehat{\h}}\big\|
\leq \beta 
~~~~~~\text{where}~~~~~~
\beta = \frac{2\sqrt{d\|\Lambda-\Lambda_\O\|_\op}}{\lambda}.
$$
In addition,
\begin{itemize}
\item \textbf{General case:}
Suppose that the hypotheses hold for $\Rig(\SO(2),n,\omega_\mathrm{max})$ instead of $\Rig(G,n,\omega_\mathrm{max})$. 
Then for every $A \in \widehat{\h}$, there exists $O\in\Ort(n)$ such that $B=OAO^\top$ belongs to  $\sym(\O)$ and
$$
\|A-B\| \leq 2 \beta.
$$
\item \textbf{Transitive case:}
Suppose that $\sym(\O)$ and $G$ have equal dimensions. 
Then there exists $O\in\Ort(n)$ such for every $A \in \widehat{\h}$, $B=OAO^\top$ belongs to $\sym(\O)$ and
$$
\|A-B\| \leq 2\sqrt{2} \beta.
$$
Moreover, this inequality still holds if one replaces the cost function in Equation~\eqref{eq:minimization_stiefel_2} (\ref{item:step3}) by that of Equation~\eqref{eq:minimization_grassmann} (\ref{item:step3}').
\end{itemize}
\end{lemma}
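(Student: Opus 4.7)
The plan is to couple two ingredients: first, the spectral lower bound of Equation~\eqref{eq:bounds_minimization_step3}, which translates a small ideal \texttt{LiePCA} cost on $\widehat{\h}$ into Frobenius proximity to $\sym(\O)$ via the gap $\lambda$; and second, the rigidity gap $\Rig(G,n,\omega_\mathrm{max})$, which separates pushforward Lie algebras conjugate to a subset of $\sym(\O)$ from those which are not. The workhorse estimate is the triangle inequality in the $\ell^2$-norm: for any orthonormal $d$-frame $(A_i)$ in $\so(n)$, the quantities $\sqrt{\sum_i\|\Lambda(A_i)\|^2}$ and $\sqrt{\sum_i\|\Lambda_\O(A_i)\|^2}$ differ by at most $\sqrt{d}\,\|\Lambda-\Lambda_\O\|_{\op}$, which lets us pass between empirical and ideal costs at the price of a small additive error.

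\textbf{Correct class and Frobenius bound.} First I would show that $\widehat{\h}$ is conjugate to a subset of $\sym(\O)$, by contradiction: if it were not, then $(\widehat{\h},\sym(\O))$ would qualify for the infimum defining $\Rig$ in Equation~\eqref{eq:gamma_def}, yielding $\|\projbracket{\sym(\O)^\bot}\projbracket{\widehat{\h}}\|^2 \geq \Rig(G,n,\omega_\mathrm{max})$. Combining this with Equation~\eqref{eq:bounds_minimization_step3} and the preliminary triangle estimate, the hypothesis on the minimum cost plus the hypothesis $\|\Lambda-\Lambda_\O\|_{\op}\leq \lambda^2\Rig/(16d)$ forces $\lambda\sqrt{\Rig}$ to be bounded above by a quantity strictly smaller than itself (the constants $1/8$ and $1/(16d)$ are calibrated for this), a contradiction. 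Having established the correct class, I would then pick a conjugate $\h^*$ of $\widehat{\h}$ contained in $\sym(\O)$: being in another $\Ort(n)$-orbit of the same orbit-equivalence class, $\h^*$ is a competitor in the minimization parametrized by the fixed tuple $(B^1,\dots,B^p)$, so the optimality of $\widehat{\h}$ gives $\sum\|\Lambda(A_i)\|^2 \leq \sum\|\Lambda(C_i^*)\|^2$ for the respective orthonormal bases. Since $\h^*\subset\sym(\O)=\ker\Lambda_\O$, each $\|\Lambda(C_i^*)\| = \|(\Lambda-\Lambda_\O)(C_i^*)\|\leq\|\Lambda-\Lambda_\O\|_{\op}$, so the right-hand side is at most $d\|\Lambda-\Lambda_\O\|_{\op}^2$. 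Plugging this back through the triangle estimate and Equation~\eqref{eq:bounds_minimization_step3} yields $\lambda\,\|\projbracket{\sym(\O)^\bot}\projbracket{\widehat{\h}}\| \leq 2\sqrt{d}\,\|\Lambda-\Lambda_\O\|_{\op}$, which implies $\|\projbracket{\sym(\O)^\bot}\projbracket{\widehat{\h}}\|\leq \beta$ provided $\|\Lambda-\Lambda_\O\|_{\op}\leq 1$, a condition guaranteed by the standing hypotheses.

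\textbf{Pointwise conjugation bounds and main obstacle.} For the general case the argument is applied fiberwise: each unit $A\in\widehat{\h}$ spans a one-dimensional (hence abelian, hence $\SO(2)$-pushforward) subalgebra $\spn{A}$, and the restriction $\|\projbracket{\sym(\O)^\bot}(A)\|\leq \beta$ is strictly smaller than $\sqrt{\Rig(\SO(2),n,\omega_\mathrm{max})}$ under the modified hypothesis, so by the same rigidity argument $\spn{A}$ is conjugate to a subset of $\sym(\O)$ and there exists $O\in\Ort(n)$ with $B = OAO^\top \in\sym(\O)$. To upgrade this to the Frobenius bound $\|A-B\|\leq 2\beta$, I would invoke the $d=1$ case of Lemma~\ref{lem:distance_grassmannian}, which reads $\|\projbracket{\spn{A}}-\projbracket{\spn{B}}\|^2 = 2\min(\|A-B\|^2,\|A+B\|^2)$, and select $B$ as the conjugate of $A$ within $\sym(\O)$ closest to the orthogonal projection $\projbracket{\sym(\O)}(A)$, whose distance to $A$ is already at most $\beta$. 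In the transitive case, the subset of $\sym(\O)$ to which $\widehat{\h}$ is conjugate must equal $\sym(\O)$ itself by dimension, so a single $O\in\Ort(n)$ works uniformly; the bound $\|A-OAO^\top\|\leq 2\sqrt{2}\beta$ then follows from Equation~\eqref{eq:bounds_minimization_step3_equaldimension} which gives $\|\projbracket{\sym(\O)}-\projbracket{\widehat{\h}}\|\leq \sqrt{2}\beta$, and the same conclusion applies verbatim to \ref{item:step3}' because the cost in Equation~\eqref{eq:minimization_grassmann} controls the Grassmannian distance directly. The hardest step is the general case: turning the subspace-level Frobenius proximity $\|\projbracket{\sym(\O)^\bot}\projbracket{\widehat{\h}}\|\leq \beta$ into the element-wise conjugation bound $\|A-B\|\leq 2\beta$, since one must pick the correct conjugate of $A$ within $\sym(\O)$ using the spectral structure of skew-symmetric matrices, where Hoffman--Wielandt type arguments enter to match eigenvalues.
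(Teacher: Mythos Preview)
Your outline is correct and tracks the paper's argument closely, with one technical variation and one imprecision worth flagging. On the variation: you compare the empirical and ideal costs via the $\ell^2$ triangle inequality on their square roots, whereas the paper bounds the difference of the squares directly as $|f-f_\O|\leq 2d\,\|\Lambda-\Lambda_\O\|_\op$ (using $\|\Lambda_\O(C_j)\|,\|\Lambda(C_j)\|\leq 1$); both routes work, and yours in fact yields a bound on $\|\projbracket{\sym(\O)^\bot}\projbracket{\widehat{\h}}\|$ that is linear in $\|\Lambda-\Lambda_\O\|_\op$, slightly sharper than the paper's $\beta$, which you then relax via $\|\Lambda-\Lambda_\O\|_\op\leq 1$.

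On the imprecision: in the general case, your assertion ``one-dimensional, hence abelian, hence $\SO(2)$-pushforward'' is not correct as written. A one-dimensional subalgebra $\spn{A}\subset\so(n)$ lies in $\mathcal{G}(\SO(2),\so(n))$ only when the spectrum of $A$ is a rational vector (cf.\ Section~\ref{subsec:grassmann_lie}), which need not hold for a generic unit element of $\widehat{\h}$; so you cannot feed the pair $(\spn{A},\sym(\O))$ directly into the definition of $\Rig(\SO(2),n,\omega_\mathrm{max})$. The paper sidesteps this by not invoking rigidity fiberwise on $\spn{A}$: it takes $B=\projbracket{\sym(\O)}(A)$, normalizes to $B/\|B\|\in\sym(\O)$, shows $\|A-B/\|B\|\|^2\leq 4\|A-B\|^2\leq \Rig(\SO(2),n,\omega_\mathrm{max})$, and concludes from this bound that $A$ and $B/\|B\|$ must share the same spectrum and are therefore orthogonally conjugate. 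The final estimate $\|A-B/\|B\|\|\leq 2\beta$ then follows from $\|A-B\|=\|\projbracket{\sym(\O)^\bot}(A)\|\leq\|\projbracket{\sym(\O)^\bot}\projbracket{\widehat{\h}}\|\leq\beta$. Your transitive-case sketch matches the paper's: the missing detail is that one passes from $\|\projbracket{\sym(\O)}-\projbracket{\widehat{\h}}\|$ to a conjugating $O$ close to the identity via principal angles (Equation~\eqref{eq:norm_proj_principalangles}), and then bounds $\|A-OAO^\top\|\leq 2\|I-O\|$.
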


\begin{proof}
We split the proof into four parts, following the statements of the lemma.

\vspace{.25cm}\noindent
\underline{A conjugation of $\widehat{\h}$ lies in $\sym(\O)$.}
Let $\phi = \bigoplus_{k=1}^p\phi_k$ be the almost-faithful representation of $G$ in $\R^n$, with $B^1,\dots,B^p$ the corresponding Lie algebras of the irreps, and denote $C_j=\diag\big(B^k_j\big)_{k=1}^p$ for all $j\in [1\isep d]$. 
Let $\widehat{\h}_0\subset\so(n)$ be the subalgebra it spans.
In particular, $\widehat{\h}_0$ is conjugate to $\widehat{\h}$.
We are dealing with Equation \eqref{eq:minimization_stiefel_2}, which consists in minimizing $f\colon\Ort(n)\rightarrow\R$ defined below, together with its ideal counterpart, denoted $f_\O$:
\[
f(O) = \sum_{j=1}^d \big\|\Lambda\big(O C_j O^\top\big)\big\|^2
~~~~~\mathrm{and}~~~~~
f_\O(O) = \sum_{j=1}^d \big\|\Lambda_\O\big(O C_j O^\top\big)\big\|^2.
\]
Using the notation $\alpha = \|\Lambda-\Lambda_\O\|_\op$, we see that, for all $O\in\Ort(n)$,
\begin{align}
|f_\O(O) - f(O)|
&\leq \sum_{j=1}^d \big(\underbrace{\big\|\Lambda_\O\big(O C_j O^\top\big)\big\|}_{\leq 1}+\underbrace{\big\|\Lambda\big(O C_j O^\top\big)\big\|}_{\leq 1}\big)
\underbrace{\big(\big\|\Lambda_\O\big(O C_j O^\top\big)\big\|-\big\|\Lambda\big(O C_j O^\top\big)\big\|\big)}_{\leq\alpha}\nonumber\\
&\leq 2d\alpha\label{eq:proof_upperbound_differenceobjectives}.
\end{align}
That is to say, $\|f_\O - f\|_\infty \leq 2d\alpha$.
Now, let $O_*\in\Ort(n)$ be such that $\widehat{\h} = O_*\widehat{\h}_0 O_*^\top$ minimizes $f$.
Together with the assumption $\min f \leq \lambda^2\Rig(G,n,\omega_\mathrm{max})/8$, we deduce that
$$
f_\O(O_*) \leq f(O_*) + \|f_\O - f\|_\infty
\leq \frac{\lambda^2}{8}\Rig(G,n,\omega_\mathrm{max}) + 2d\alpha.
$$
By injecting the other assumption $\alpha=\|\Lambda-\Lambda_\O\|_\op \leq \lambda^2\Rig(G,n,\omega_\mathrm{max})/16d$, we obtain
$$
f_\O(O_*) \leq \frac{\lambda^2}{8}\Rig(G,n,\omega_\mathrm{max}) + 2d\frac{\lambda^2}{16d}\Rig(G,n,\omega_\mathrm{max}) 
= \frac{\lambda^2}{4}\Rig(G,n,\omega_\mathrm{max}).
$$
Besides, we have derived in Equation \eqref{eq:bounds_minimization_step3} the lower bound
\begin{equation}\label{eq:proof_lowerbound_objective}
f_\O(O) \geq \lambda^2 \cdot \big\| \projbracket{\sym(\O)^\bot}  \projbracket{O \widehat{\h}_0 O^\top}  \big\|^2.    
\end{equation}
Combining these two inequalities yields
\begin{equation}\label{eq:proof_upperbound_objective}
\big\| \projbracket{\sym(\O)^\bot}  \projbracket{O_* \widehat{\h}_0 O_*^\top}  \big\|^2
\leq \frac{1}{4} \Rig(G,n,\omega_\mathrm{max}).
\end{equation}
Now, by the definition of $\Rig(G,n,\omega_\mathrm{max})$ (in Equation \eqref{eq:gamma_def}), we conclude that $\widehat{\h}=O_* \widehat{\h}_0 O_*^\top$ is conjugate to a subalgebra of $\sym(\O)$.

\vspace{.25cm}\noindent
\underline{Inequality on $\big\|\projbracket{\sym(\O)^\bot}  \projbracket{\widehat{\h}}\big\|$.}
We deduce from the preceding fact that $f_\O$ admits zero as a minimum.
Consequently, Equation~\eqref{eq:proof_upperbound_differenceobjectives} yields
$$
\min f \leq \underbrace{\min f_\O}_{0} + \|f_\O - f\|_\infty \leq 2d\alpha.
$$
We still consider $O_*\in\Ort(n)$ such that $\widehat{\h} = O_*\widehat{\h}_0 O_*^\top$ minimizes $f$.
Let us observe here that, if $\widehat{\h}$ were a minimizer of $f_\O$, then the conclusions of the lemma would follow directly.
However, $\widehat{\h}$ is only a minimizer of $f$, which slightly complicates the proof.
The value of $f_\O$ is bounded by
$$
f_\O(O_*) \leq \underbrace{f(O_*)}_{\min f} + \|f_\O - f\|_\infty
\leq 2d\alpha+2d\alpha,
$$
Combined with Equation \eqref{eq:proof_lowerbound_objective}, we obtain
\begin{equation}\label{eq:proof_upperbound_norm_subalgebra}
\big\|\projbracket{\sym(\O)^\bot}  \projbracket{\widehat{\h}}\big\|^2
\leq \frac{1}{\lambda^2}\cdot f_\O(O_*)
\leq \frac{4d\alpha}{\lambda^2},
\end{equation}
which is the constant $\beta^2$ in the lemma.

\vspace{.25cm}\noindent
\underline{General case.}
Consider the rigidity constant of $\SO(2)$.
In this context, Equation \eqref{eq:proof_upperbound_objective} reads
$$
\big\|\projbracket{\sym(\O)^\bot}  \projbracket{\widehat{\h}}\big\|^2
\leq
\frac{1}{4}\Rig(\SO(2),n,\omega_\mathrm{max}).
$$
In particular, for any unit norm matrix $A \in \widehat{\h}$, there exists a matrix $B \in \sym(\O)$ such that 
$$
\|A-B\|^2 \leq 
\frac{1}{4}\Rig(\SO(2),n,\omega_\mathrm{max}).
$$
After normalizing the matrix $B$, we obtain
$$
\bigg\|A-\frac{B}{\|B\|}\bigg\|^2 \leq 4 \|A-B\|^2
\leq \Rig(\SO(2),n,\omega_\mathrm{max}).
$$
We deduce, by the definition of $\Rig(\SO(2),n,\omega_\mathrm{max})$, that $A$ and $B/\|B\|$ admit the same spectrum.
Hence, there exists $O\in\Ort(n)$ such that $B/\|B\| = OAO^\top$, as announced in the lemma.
Moreover, since $B$ is the projection of $A$ on $\sym(\O)$, one has
$$
\bigg\|A-\frac{B}{\|B\|}\bigg\|^2 
\leq 4 \|A-B\|^2
\leq 4 \big\|\projbracket{\sym(\O)^\bot}  \projbracket{\widehat{\h}}\big\|^2.
$$

\vspace{.25cm}\noindent
\underline{Transitive case.}
The result when $\dim (G) = \dim (\sym(\O))$ is based on Equation~\eqref{eq:bounds_minimization_step3_equaldimension}, which reads
$$
\big\|\projbracket{\sym(\O)}-\projbracket{\widehat{\h}}\big\|^2
=2\big\| \projbracket{\sym(\O)^\bot}\projbracket{\widehat{\h}}\big\|^2.
$$
Using the characterization of the norm via principal angles, given in Equation~\eqref{eq:norm_proj_principalangles}, one shows that a conjugation matrix $O$ such that $\widehat{\h} = O \sym(\O) O^\top$ can be chosen to satisfy
$$
\|I-O\|\leq \sqrt{2} \big\|\projbracket{\sym(\O)}-\projbracket{\widehat{\h}}\big\|.
$$
Now, consider $A \in \widehat{\h}$ of unit norm, and define $B=OAO^\top$.
By submultiplicativity of the norm, we see that
\begin{align*}
\|A-B\| =
\|(I-O)A+OA(I-O^\top)\|
\leq 2 \|I-O\|.
\end{align*}
Consequently, $\|A-B\|\leq 2\sqrt{2}\beta$, as claimed.

Last, in the case where one considers Equation~\eqref{eq:minimization_grassmann} instead of \eqref{eq:minimization_stiefel_2}, the output subspace of \texttt{LiePCA} is close to the full algebra $\sym(\O)$ and the proof carries over verbatim.
However, in the general case, we point out that the output need not lie close to a pushforward algebra of $G$.
\end{proof}

\begin{remark}
Naturally, one may wish to promote the general case of Lemma \ref{lem:stability_minimization_liealgebra} to a single (i.e., uniform) conjugating matrix $O$, as in the transitive case.
However, we have not been able to prove such a result.
\end{remark}

\subsubsection{Stability of \ref{item:step4}}\label{subsubsec:stability_minimization_Hausdorff}
We now study the last step of our algorithm.
Let $\mu_\O$ and $\mu_X$ denote, respectively, the uniform measure on $\O$ and the empirical measure on $X$.
Moreover, let $\widehat{\h}$ be the Lie algebra output by \ref{item:step3} or \ref{item:step3}'. 
The execution of \ref{item:step4} consists in either choosing a point $x\in X$, forming the estimated orbit $\widehat{\O}_x$ and computing the Hausdorff distance $\HDmid{X}{\widehat{\O}_x}$, or forming the estimated measure $\mu_{\widehat{\O}}$ and computing the Wasserstein distance $\W_2\big(\mu_X, \mu_{\widehat{\O}} \big)$.
We remind the reader that these objects are defined as
\begin{align*}
\widehat{\O}_x &= \big\{\exp(A)x \mid A \in \widehat{\h}\big\}
~~~~~~~\mathrm{and}~~~~~~~
\mu_{\widehat{\O}} = \frac{1}{N}\sum_{i=1}^N \mu_{\widehat{\O}_{x_i}}.
\end{align*}
As one sees in this definition, and because of the matrix exponential, little inaccuracies in $\widehat{\h}$ may result in huge discrepancies between $\widehat{\O}_x$ and $\O$.
Indeed, arbitrarily close Lie algebras may have distant exponentials.
This problem is avoided, however, when they are close \textit{and} conjugate. Indeed, we recall that, in the direction of conjugate skew-symmetric matrices, the exponential map has bounded variation.
The proof is elementary and deferred to Section \ref{sec:supplementary_preliminaries_symmetrygroup}.

\begin{lemma}\label{lem:bound_exp_so(n)}
For any of pair skew-symmetric matrices $A,B\in\so(n)$ with integer spectra, conjugate through an orthogonal matrix, and for all $t\in\R$, it holds that
$$\|\exp(tA) - \exp(tB)\| \leq \|A - B\|.$$
\end{lemma}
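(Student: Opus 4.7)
My starting move is to exploit the orthogonal conjugation $B = QAQ^\top$ to rewrite both sides as commutators with $Q$. Equivariance of the exponential gives $\exp(tB) = Q\exp(tA)Q^\top$, and orthogonal invariance of the Frobenius norm yields
\[
\|\exp(tA) - \exp(tB)\| = \|[\exp(tA), Q]\|, \qquad \|A - B\| = \|[A, Q]\|,
\]
so the claim is equivalent to the single-operator commutator inequality $\|[\exp(tA), Q]\| \le \|[A, Q]\|$, required uniformly in $t\in\R$.

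Next, I would perform a simultaneous spectral reduction. Since $A\in\so(n)$, there exists $W\in\U(n)$ such that $WAW^* = iD$ with $D = \diag(d_1,\dots,d_n)$ real (the nonzero entries coming in $\pm\lambda_k$ pairs); then $W\exp(tA)W^* = e^{itD}$ is unitary diagonal and $\tilde Q := WQW^*\in\U(n)$. Entry-by-entry,
\[
\|[A, Q]\|^2 = \sum_{j,k}(d_j - d_k)^2\,|\tilde Q_{jk}|^2, \qquad \|[\exp(tA), Q]\|^2 = \sum_{j,k}\bigl|e^{itd_j} - e^{itd_k}\bigr|^2\,|\tilde Q_{jk}|^2,
\]
reducing the claim to the weighted pointwise comparison $|e^{itd_j} - e^{itd_k}|^2 \le (d_j - d_k)^2$ on the support of $\tilde Q$.

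Writing $|e^{itd_j} - e^{itd_k}|^2 = 4\sin^2\bigl(t(d_j-d_k)/2\bigr)$ and applying $|\sin x|\le|x|$ only delivers $\|[\exp(tA), Q]\|\le|t|\,\|[A,Q]\|$, i.e.\ the Duhamel/Lipschitz estimate valid for $|t|\le 1$. The principal obstacle, and where I expect the subtle argument to sit, is upgrading this to a bound uniform in $t$: since $\sup_{t\in\R} 4\sin^2(t(d_j-d_k)/2) = 4$, the pointwise inequality fails on any index pair with $|d_j - d_k| < 2$ and $\tilde Q_{jk}\neq 0$. My plan for this step is to exploit the \emph{real} structure forced on $\tilde Q$ by $Q\in\Ort(n)$: conjugation by $W$ intertwines $\tilde Q$ with the antilinear involution that pairs $\lambda_k\leftrightarrow -\lambda_k$, so the coefficients $|\tilde Q_{jk}|$ are constrained to respect this pairing. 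I would try to show that this symmetry concentrates the mass of $\tilde Q$ on index pairs whose spectral gap is at least $2$, so that the crude bound $4\sin^2(\cdot)\le 4\le (d_j-d_k)^2$ closes the proof. Whether this structural restriction genuinely suffices is the crux of the argument; if it does not, the lemma must be read under an implicit restriction (e.g.\ $|t|\le 1$ or a unit-norm normalization of $A$), in which case the Duhamel estimate from step three already suffices for the downstream uses.
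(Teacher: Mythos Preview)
Your reduction to the commutator inequality $\|[\exp(tA),Q]\|\le\|[A,Q]\|$ and the subsequent spectral expansion are both correct, and so is your diagnosis of the obstacle: the termwise bound $4\sin^2\bigl(t(d_j-d_k)/2\bigr)\le(d_j-d_k)^2$ fails uniformly in $t$ whenever $|d_j-d_k|<2$. The real-structure fix you propose does not close this gap, because the lemma as stated is \emph{false}. Take $n=2$, $A=L(a)$ with $0<a<1$, and $Q=\diag(1,-1)\in\Ort(2)$; then $B=QAQ^\top=-A$, so $\|A-B\|=2\sqrt2\,a$, while $\|\exp(tA)-\exp(tB)\|=\|R(ta)-R(-ta)\|=2\sqrt2\,|\sin(ta)|$, which exceeds $2\sqrt2\,a$ as soon as $|\sin(ta)|>a$. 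In your notation this is exactly the pair $(d_1,d_2)=(a,-a)$ with gap $2a<2$ and $\tilde Q_{12}\neq0$: the $\lambda\leftrightarrow-\lambda$ pairing is present but forces nothing useful.

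The paper takes a different route, putting $A$ in $2\times2$ block normal form and invoking the identity $R(t\omega_j)=\cos(t\omega_j)I+\sin(t\omega_j)L(\omega_j)$, from which a per-basis-vector bound is asserted. That identity is mistaken (the skew part should be $\sin(t\omega_j)L(1)$, not $\sin(t\omega_j)L(\omega_j)$), and with the correct version the claimed uniform-in-$t$ inequality does not follow; the same $n=2$ counterexample refutes it. So neither your argument nor the paper's proves the lemma as written. Your closing suspicion is the right resolution: the statement must be read under an implicit restriction. In the paper's applications $A$ is always taken with unit norm, and in the transitive case (the Wasserstein transport plan) the lemma is effectively invoked at bounded $t$, where your Duhamel estimate $\|\exp(tA)-\exp(tB)\|\le|t|\,\|A-B\|$ already suffices.
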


We now finally deduce a stability result for \ref{item:step4}.
We draw the reader's attention to the fact that, in the general case stated below, no bound on the Wasserstein distance is obtained.
This is because the distance $\W_2\big(\mu_\O, \mu_{\widehat{\O}} \big)$ compares $\mu_\O$ and $\mu_{\widehat{\O}}$ symmetrically.
However, when the action is not transitive, the generated orbit $\widehat{\O}_x$ is only a subset of $\O$.

\begin{proposition}\label{prop:stability_step4}
Let $\O\subset \R^n$ be an orbit, $X$ a point cloud, and $\widehat{\h}\subset \so(n)$ a Lie subalgebra.
Let $\delta$ be the maximal distance from $\O$ to the origin.
\begin{itemize}
\item \textbf{General case:}
Suppose that there exists $\beta\geq0$ such that all $A\in \widehat{\h}$ is conjugate to a $B \in \sym(\O)$ for which $\|A-B\| \leq \beta$.
Then for all $x\in X$, we have 
$$
\HDmid{\widehat{\O}_x}{\O} \leq 
(\beta+1)\d(x,\O) + \beta \delta.
$$
\item \textbf{Transitive case:}
Suppose that there exists $\beta\geq0$ and an orthogonal matrix that conjugates $\widehat{\h}$ to $\sym(\O)$, and for which $\|A-B\| \leq \beta$ for all conjugate pairs.
Then it holds
\begin{align*}
\HD{\widehat{\O}_x}{\O} &\leq 
(\beta+1)\d(x,\O) + \beta \delta, \\
\W_2\big(\mu_{\widehat{\O}}, \mu_\O \big)&\leq
\W_2(\mu_\O,\mu_X)+\beta\delta.    
\end{align*}
\end{itemize}
\end{proposition}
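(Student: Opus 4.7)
The key tool is Lemma \ref{lem:bound_exp_so(n)}, which gives $\|\exp(A)-\exp(B)\|\le \|A-B\|$ for skew-symmetric matrices conjugate through an orthogonal matrix; the rest is triangle inequality bookkeeping.

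For the general case, I would fix $y\in\widehat{\O}_x$, writing $y=\exp(A)x$ for some $A\in\widehat{\h}$, and use the hypothesis to produce $B\in\sym(\O)$ orthogonally conjugate to $A$ with $\|A-B\|\le \beta$. Letting $x'$ be a metric projection of $x$ onto $\O$ (so $\|x-x'\|=\d(x,\O)$ and $\|x'\|\le\delta$), the point $z:=\exp(B)x'$ lies in $\O$ because $\exp(B)\in\Sym(\O)$. The split
\[
\|y-z\|\le \|\exp(A)(x-x')\|+\|(\exp(A)-\exp(B))x'\|
\]
combined with orthogonality of $\exp(A)$ and Lemma \ref{lem:bound_exp_so(n)} gives $\|y-z\|\le \d(x,\O)+\beta\delta$, which in particular implies the stated (slightly looser) bound.

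For the transitive case, the other direction of the symmetric Hausdorff distance is handled symmetrically: given $z\in\O$, invoke the fact that the identity component of $\exp(\sym(\O))$ acts transitively on $\O$ to write $z=\exp(B)x'$ with $B\in\sym(\O)$, then pull $B$ back through the fixed conjugating orthogonal matrix $O$ to obtain $A=O^\top B O\in\widehat{\h}$ with $\|A-B\|\le\beta$, and set $y=\exp(A)x\in\widehat{\O}_x$. The same two-step triangle inequality yields $\|y-z\|\le \beta\|x'\|+\|x'-x\|\le \beta\delta+\d(x,\O)$, using that $\|\exp(A)-\exp(B)\|\le\beta$ thanks to Lemma \ref{lem:bound_exp_so(n)} applied after rescaling $A$ and $B$ to unit norm.

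For the Wasserstein bound, I would exploit the convexity of $\W_2$ in both arguments:
\[
\W_2(\mu_{\widehat{\O}},\mu_\O)\le \frac{1}{N}\sum_{i=1}^N \W_2(\mu_{\widehat{\O}_{x_i}},\mu_\O),
\]
since $\mu_\O=\tfrac1N\sum_i \mu_\O$. For each $i$, conjugation by $O$ provides a Haar-preserving isomorphism between $\exp(\widehat{\h})$ and the identity component of $\exp(\sym(\O))$, yielding the explicit coupling $B\mapsto\bigl(\exp(O^\top B O)x_i,\;\exp(B)x_i'\bigr)$ of $\mu_{\widehat{\O}_{x_i}}$ with $\mu_\O$. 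The pointwise estimate from the Hausdorff part gives $\W_2(\mu_{\widehat{\O}_{x_i}},\mu_\O)\le \d(x_i,\O)+\beta\delta$. Averaging and applying Cauchy--Schwarz together with the standard inequality $\frac{1}{N}\sum_i \d(x_i,\O)^2\le \W_2^2(\mu_X,\mu_\O)$ (which follows because any transport plan from $\mu_X$ to $\mu_\O$ moves $x_i$ at least by its distance to $\O$) produces the stated bound.

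The main technical subtlety is justifying the Haar-preserving coupling: one must verify that the identity component of $\exp(\sym(\O))$ acts transitively on $\O$, so that the pushforward of its Haar measure via $P\mapsto Px_i'$ really is $\mu_\O$ independently of $x_i'$. This follows from the fact that $\O$ is homogeneous under a compact connected group whose Lie algebra is contained in $\sym(\O)$ together with surjectivity of the exponential in the compact connected setting, but it is where the transitivity assumption is genuinely used; everything else is essentially Lemma~\ref{lem:bound_exp_so(n)} plus triangle inequalities.
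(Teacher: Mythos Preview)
Your Hausdorff arguments are essentially the paper's, just organized differently: the paper first bounds $\HDmid{\widehat{\O}_x}{\O_x}$ by $\beta\|x\|$ (exponentiating from $x$) and then $\HDmid{\O_x}{\O}$ by $\d(x,\O)$, whereas you first move $x$ to its projection $x'\in\O$ and then exponentiate, bounding by $\beta\|x'\|\le\beta\delta$. This actually gives you the sharper estimate $\d(x,\O)+\beta\delta$; the paper picks up the extra $\beta\,\d(x,\O)$ only because it bounds $\|x\|\le\delta+\d(x,\O)$ at the end.

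For the Wasserstein bound your route differs from the paper's. The paper constructs a single global transport plan: starting from an optimal plan $\pi$ for $\W_2(\mu_\O,\mu_X)$, it integrates, over each pair $(x,y)$ in the support of $\pi$, the deterministic coupling $\exp(A)x\mapsto O\exp(A)O^\top y$ between $\mu_{\widehat{\O}_x}$ and $\mu_\O$, and then bounds $\int(\|x-y\|+\beta\delta)^2\,\d\pi$ directly by subadditivity of the $L^2$ norm. Your approach---split by convexity, bound each $\W_2(\mu_{\widehat{\O}_{x_i}},\mu_\O)$ via the projection $x_i'$, then recombine using $\tfrac1N\sum_i\d(x_i,\O)^2\le\W_2^2(\mu_X,\mu_\O)$---is more modular and equally valid in spirit, but contains one genuine slip: the displayed inequality
\[
\W_2(\mu_{\widehat{\O}},\mu_\O)\le \tfrac{1}{N}\sum_{i=1}^N \W_2(\mu_{\widehat{\O}_{x_i}},\mu_\O)
\]
is false in general. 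The distance $\W_2$ is \emph{not} convex in its arguments; for instance with $\mu_1=\tfrac12(\delta_0+\delta_2)$, $\mu_2=\nu=\delta_1$ one has $\W_2(\tfrac12\mu_1+\tfrac12\mu_2,\nu)=1/\sqrt2>1/2=\tfrac12\W_2(\mu_1,\nu)+\tfrac12\W_2(\mu_2,\nu)$. What \emph{is} convex is $\W_2^2$: using
\[
\W_2^2(\mu_{\widehat{\O}},\mu_\O)\le\tfrac1N\sum_i\W_2^2(\mu_{\widehat{\O}_{x_i}},\mu_\O)\le\tfrac1N\sum_i\bigl(\d(x_i,\O)+\beta\delta\bigr)^2
\]
and then Minkowski in $\ell^2$ (rather than Cauchy--Schwarz) recovers the stated bound, so the repair is immediate.
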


\begin{proof}
We start with the general case.
Given $x\in X$, let $y \in \O$ be such that $\|x-y\|= \d(x,\O)$.
We consider the intermediate sets
\begin{align*}
\widehat{\O}_x &= \{\exp(A)x \mid A \in \widehat{\h}\},~~~
\O_x = \{\exp(B)x \mid B \in \sym(\O)\},~~~
\O_y = \{\exp(B)y \mid B \in \sym(\O)\}.
\end{align*}
Note that we have $\O_y = \O$, since $y\in \O$, and the exponential of $\sym(\O)$ yields the group $\Sym(\O)$, which acts transitively on $\O$.
The triangle inequality for the Hausdorff distance reads
$$
\HDmid{\widehat{\O}_x}{\O_y}
\leq \HDmid{\widehat{\O}_x}{\O_x} + \HDmid{\O_x}{\O_y}.
$$
The second term is upper bounded by $\|x-y\|$, hence by $\d(x,\O)$.
Regarding the first term, let us first consider an element $A \in \widehat{\h}$ of norm $1$, and, by hypothesis, $B\in\sym(\O)$ and $O\in\Ort(n)$ such that $B = O A O^\top$ and $\|B-A\| \leq \beta$.
As stated in Lemma \ref{lem:bound_exp_so(n)}, for any $t \in \R$, we have 
\begin{equation*}%\label{eq:inequality_exp_hattilde}
\|\exp(tA) - \exp(tB)\| \leq \beta. 
\end{equation*}
In particular, the Hausdorff distance between $\{\exp(tA)x\mid t\in\R\}$ and $\{\exp(tB)x\mid t\in\R\}$ is upper bounded by $\beta \|x\|$.
Taking the union over all $A \in \widehat{\h}$, we deduce that $\HDmid{\widehat{\O}_x}{\O_x}\leq \beta \|x\|$.
Gathering the two inequalities yields 
$$
\HDmid{\widehat{\O}_x}{\O_y} \leq \beta \|x\| + \d(x,\O).
$$
To conclude, we inject $\|x\| \leq \|y\| + \|x-y\| \leq \delta + \d(x,\O)$.

Next, we consider the second scenario of the proposition: there exists $O\in\Ort(n)$ such that $\widehat{\h} = O\sym(\O)O^\top$.
In this case, the inequality on the Hausdorff distance follows from the same argument as above.
Regarding the Wasserstein distance, we build a transport plan between $\mu_\O$ and $\mu_{\widehat{\O}}$ as follows.
Let $\pi$ be a transport plan for $\W_2(\mu_\O,\mu_X)$.
For any point $x\in X$ and $y\in\O$, let $\pi_{(x,y)}$ denote the deterministic transport plan between $\mu_{\widehat{\O}_{x}}$ and $\mu_\O$ obtained from
\begin{align*}
\{\exp(A)x \mid A \in \widehat{\h}\} = \widehat{\O}_{x} &\rightarrow \O\\
\exp(A)x &\mapsto O\exp(A)O^\top y.
\end{align*}
This map is surjective as a consequence of the hypothesis $\widehat{\h} = O\sym(\O)O^\top$.
Moreover, one has
\begin{align*}
\|\exp(A)x-O\exp(A)O^\top y\|
&\leq 
\|\exp(A)x-\exp(A) y\| + \|\exp(A) y - O\exp(A)O^\top y\|\\
&\leq \|x-y\| + \beta \|y\|\leq \|x-y\| + \beta \delta,
\end{align*}
where we used Lemma \ref{lem:bound_exp_so(n)} with $\beta$.
Finally, let $\pi^*$ denote the measure obtained by integrating the $\pi_{(x,y)}$'s over $\pi$.
More precisely, $\pi^*$ is defined, for any test function $\psi\colon \R^n\times \R^n\rightarrow \R$, as
$$
\int \psi(x,y)\d\pi^*(x,y) = \int \bigg( \int \psi(x',y')\d\pi_{(x,y)}(x',y') \bigg) \d\pi(x,y).
$$
It is a transport plan between $\mu_\O$ and $\mu_{\widehat{\O}}$.
We compute:
\begin{align*}
\W_2^2(\mu_\O,\mu_{\widehat{\O}}) &\leq \int \|x-y\|^2 \d\pi^*(x,y)\\
&= \int \bigg( \int \|x'-y'\|^2\d\pi_{(x,y)}(x',y') \bigg) \d\pi(x,y)\\
&\leq \int \bigg(\|x-y\| + \beta \delta\bigg)^2 \d\pi(x,y)
\leq \bigg(\W_2(\mu_\O,\mu_X)+\beta\delta \bigg)^2,
\end{align*}
the last inequality coming from the subadditivity of the $L^2$ norm.
The proposition follows.
\end{proof}

\begin{remark}\label{rem:projection_mindimspace}
As pointed out in Section \ref{subsec:PCA_preprocessing}, the dimension reduction implemented in \ref{item:step1} has a significant importance on the algorithm.
Indeed, by projecting $\O$ onto $\spn{\O}$, the subspace it spans, we ensure that elements $O \in \Sym(\O)\setminus\{I\}$ act non-trivially on $\O$.
Otherwise, and as evoked in Section \ref{subsec:symmetry_group}, there would exist symmetries $O$ satisfying $Ox = x$ for all $x\in \O$.
If such a matrix had been obtained via \ref{item:step3}, then the orbit $\widehat{\O}_x$ would be a singleton.
Although satisfying the statement of Proposition \ref{prop:stability_step4}, it has little interest from a data analysis perspective.
\end{remark}

\subsection{Consistency of the algorithm}\label{subsec:robustness_algorithm}

We now arrive at our main theorem by combining the results obtained so far.
It states that, as long as the point cloud $X$ is sampled sufficiently close to the underlying orbit $\O$, Algorithm~\ref{alg: 1} returns an accurate estimation of $\O$, either in the form of an orbit $\widehat{\O}_x$ close to $\O$ (or close to being contained in $\O$), or of a probability measure $\mu_{\widehat{\O}}$ close to its uniform measure.
The closeness between $X$ and $\O$ is quantified by both the Hausdorff and Wasserstein distances in the former case, and only by the Wasserstein distance in the latter.
Recall that the algorithm, in addition to $X$, takes as an input a compact Lie group $G$, and parameters $\epsilon$, $l$, $r$ and $\omega_\mathrm{max}$, corresponding respectively to the threshold for dimension reduction (\ref{item:step1}), the dimension and radius of local PCA (\ref{item:step2}), and a bound on the spectra of Lie algebras (\ref{item:step3} and \ref{item:step3}').
Among these parameters, our result requires that $G$ is the underlying Lie group, that $l$ equal the dimension of $\O$, and that $\omega_\mathrm{max}$ is sufficiently large to contain $\sym(\O)$.

Moreover, we remind the reader that our analysis involves constants of four different natures.
Namely, the orthogonality and homogeneity of $\O$ are quantified through $\Var\big[\|\mu_\O\|\big]/\sigma_\mathrm{min}^2$ and  $\sigma_\mathrm{max}^2/\sigma_\mathrm{min}^2$ (see Section \ref{subsubsec:analysis_pca_orbit}), the robustness of \texttt{LiePCA} via its bottom nonzero eigenvalue $\lambda$ (see Section \ref{subsubsec:lie-pca_consistency}), the geometry of the orbit with $\mathrm{vol}(\O)$ and $\reach(\O)$ (see Section \ref{subusbsec:stability_tangent_space_estimation}), and finally the rigidity of Lie subalgebras with $\Rig(G,n)$ and $\Rig(G,n,\omega_\mathrm{max})$ (see Section \ref{subsubsec:rigidity_lie_subalgebras}).
More accurately, the geometric quantities---volume and reach---are computed from the orthonormalized orbit $\widetilde{\O}$, obtained after \ref{item:step1}.
Although they could be expressed in terms of $\O$ only, we did not pursue that question further.

\begin{theorem}\label{th:robustness_algorithm}
Let $G$ be a compact Lie group of dimension $d$, $\mathcal{O}$ an orbit of an almost-faithful representation $\phi$ in $\R^n$, potentially non-orthogonal, and $l$ its dimension.
Let $\mu_\O$ be the uniform measure on $\O$, $\sigma_\mathrm{max}^2$ and $\sigma_\mathrm{min}^2$ the top and bottom nonzero eigenvalues of the covariance matrix $\Sigma[\mu_\O]$, and $\lambda$ the bottom nonzero eigenvalue of the ideal \texttt{LiePCA} operator $\Lambda_\O$.
Denote by $\widetilde{\O} = \sqrt{\Sigma[\mu_\O]^+} \O$ the orthonormalized orbit.
Besides, let $X\subset\R^n$ be a finite point cloud and $\mu_X$ its empirical measure.
Lastly, choose a positive integer $\omega_\mathrm{max}$ such that $\sym(\O)$ is spanned by matrices whose spectra come from primitive integral vectors of coordinates at most $\omega_\mathrm{max}$.
Define
\begin{alignat*}{2}
&\omega = \frac{\W_2(\mu_\O,\mu_X)}{\sigma_\mathrm{min}},
~~~~~~~~~
&&\upsilon = \bigg(\frac{\Var\big[\|\mu_\O\|\big]}{\sigma_\mathrm{min}^2}\bigg)^{1/2},
\\
&\widetilde{\omega} = 4(n+1)^{3/2}\bigg(\frac{\sigma_\mathrm{max}^3}{\sigma_\mathrm{min}^3}\bigg)\bigg(\omega(\upsilon + \omega)\bigg)^{1/2},
~~~~~~~~~
&&\rho = \bigg(16l(l+2)6^l\bigg)\frac{\max(\mathrm{vol}(\widetilde{\O}),\mathrm{vol}(\widetilde{\O})^{-1})}{\min(1,\reach(\widetilde{\O}))},
\end{alignat*}
\begin{equation*}
\gamma = 
\begin{cases}
\big(2^{11/2}d\rho\big)^{-1}\cdot\lambda^2\cdot\Rig(\SO(2),n,\omega_\mathrm{max}) ~~\text{if \ref{item:step3} in general case},\\
\big(2^{11/2}d\rho\big)^{-1}\cdot\lambda^2\cdot\Rig(G,n,\omega_\mathrm{max}) ~~~~~~~\text{if \ref{item:step3} in transitive case or \ref{item:step3}'}.
\end{cases}
~~~~~~
\end{equation*}
Suppose that $\omega$ is sufficiently small to satisfy
\begin{align*}
&\omega < \bigg(\bigg(\upsilon^2+\frac{1}{2}\bigg)^{1/2} -\upsilon\bigg)
\bigg/\bigg(3(n+1)\frac{\sigma_\mathrm{max}^2}{\sigma_\mathrm{min}^2}  \bigg)
,~~~~~~
\widetilde{\omega} \leq \min\bigg\{
\bigg(\frac{1}{6\rho}\bigg)^{3(l+1)},
\frac{\gamma^{l+3}}{16},
\bigg(\frac{\gamma}{(6\rho)^2}\bigg)^{l+1}
\bigg\}.
\end{align*}
Choose two parameters $\epsilon$ and $r$ in the following nonempty sets:
\begin{align*}
\epsilon &\in \bigg( (2\upsilon+\omega)\omega\sigma_\mathrm{min}^2, 
~\frac{1}{2}\sigma_\mathrm{min}^2\bigg],
~~~~~
r \in \bigg[\big(6\rho\big)^{2}\widetilde{\omega}^{1/(l+1)}, ~\big(6\rho\big)^{-1}\bigg]\cap\bigg[ \big(4/\gamma\big)^{2/(l+1)}\widetilde{\omega}^{1/(l+1)}, ~\gamma\bigg].
\end{align*}
Let $\widehat{\h}$, $\widehat{\phi}$, $\widehat{\O}_x$ and $\mu_{\widehat{\O}}$ be the output of Algorithm \ref{alg: 1} performed on $X$ with parameters $G$, $\epsilon$, $r$, $l$, $\omega_\mathrm{max}$ and an arbitrary $x\in X$.
We suppose the minimization problems are computed exactly.
\begin{itemize}
\item \textbf{General case:}
It holds that
\begin{equation}\label{eq:theorem_hausdorff}
\HDmid{\widehat{\O}_x}{\widetilde{\O}}
\leq
2^{1/2} \frac{\HDmid{X}{\O}}{\sigma_\mathrm{min}}
+
2^{9/4}\frac{(nd\rho)^{1/2}}{\lambda}
\bigg(r + 4 \bigg(\frac{\widetilde{\omega}}{r^{l+1}}\bigg)^{1/2}\bigg)^{1/2}.
\end{equation}
Moreover, the output Lie algebra $\widehat{\h}$ is conjugate to a subalgebra of $\sym(\O)$.
\item \textbf{Transitive case:}
Suppose that $\sym(\O)$ and $G$ have equal dimensions. Then 
\begin{align}
\HD{\widehat{\O}_x}{\widetilde{\O}}
&\leq
(1+2^{-1/2}) \frac{\HDmid{X}{\O}}{\sigma_\mathrm{min}}
+
2^{11/4}\frac{(nd\rho)^{1/2}}{\lambda}
\bigg(r + 4 \bigg(\frac{\widetilde{\omega}}{r^{l+1}}\bigg)^{1/2}\bigg)^{1/2},
\label{eq:theorem_hausdorff_transitive}\\
\W_2\big(\mu_{\widehat{\O}}, \mu_{\widetilde{\O}} \big)
&\leq 2^{-1/2} \frac{\W_2(\mu_X,\mu_\O)}{\sigma_\mathrm{min}} 
+
2^{11/4}\frac{(nd\rho)^{1/2}}{\lambda}
\bigg(r + 4 \bigg(\frac{\widetilde{\omega}}{r^{l+1}}\bigg)^{1/2}\bigg)^{1/2}.\label{eq:theorem_wasserstein}
\end{align}
Moreover, the output representation $\widehat{\phi}$ is orbit-equivalent to $\phi$.
\end{itemize}
\end{theorem}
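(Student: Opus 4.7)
The plan is to chain together the four stability results established for each step of the algorithm—Propositions \ref{prop:stability_step1}, \ref{prop:Lie-PCA}, Lemma \ref{lem:stability_minimization_liealgebra}, and Proposition \ref{prop:stability_step4}—and verify that the parameter choices and smallness assumptions of the theorem trigger each one. The constant $\widetilde{\omega}$ plays a central role, as it is precisely the bound coming out of \ref{item:step1}, and it feeds into the bound for \ref{item:step2}. The parameter $\gamma$ is chosen as the smallness threshold required by Lemma \ref{lem:stability_minimization_liealgebra}, while the two windows for $\epsilon$ and $r$ are built so that Propositions \ref{prop:stability_step1} and \ref{prop:Lie-PCA} apply simultaneously.

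First, I would verify the hypothesis on $\omega$ matches the first assumption of Proposition \ref{prop:stability_step1}, and that the chosen $\epsilon$ satisfies the second assumption (this is precisely the content of the interval prescribed for $\epsilon$). Applying that proposition to $\mu_\O$ and $\mu_X$ yields
\begin{equation*}
\W_2\!\left(\sqrt{\Sigma[\mu_\O]^+}\projbracket{\spn{\O}}\mu_\O,\ \sqrt{\Sigma[\mu_X]^+}\Pi_{\Sigma[\mu_X]}^{>\epsilon}\mu_X\right) \leq \widetilde{\omega},
\end{equation*}
where the left measure is the uniform measure on $\widetilde{\O}$. Write $\widetilde{\mu}_X$ for the right-hand pushforward. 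The remaining steps operate on $\widetilde{\O}$ and $\widetilde{\mu}_X$, and the subsequent distances naturally appear divided by $\sigma_\mathrm{min}$, explaining the $\sigma_\mathrm{min}$ in the denominators of Equations \eqref{eq:theorem_hausdorff}--\eqref{eq:theorem_wasserstein}.

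Next, I would apply Proposition \ref{prop:Lie-PCA} to $\widetilde{\O}$, $\widetilde{\mu}_X$, and the radius $r$. The hypothesis $r \in [(6\rho)^2 \widetilde{\omega}^{1/(l+1)}, (6\rho)^{-1}]$ is exactly what is required there, and the upper bound $\widetilde{\omega} \leq (1/6\rho)^{3(l+1)}$ ensures this interval is non-empty. This gives
\begin{equation*}
\|\Lambda - \Lambda_{\widetilde{\O}}\|_\mathrm{op} \leq \sqrt{2}\rho\bigg(r + 4\Big(\tfrac{\widetilde{\omega}}{r^{l+1}}\Big)^{1/2}\bigg).
\end{equation*}
Using the second window for $r$, namely $r \leq \gamma$ and $r \geq (4/\gamma)^{2/(l+1)}\widetilde{\omega}^{1/(l+1)}$, both summands are bounded by $\gamma$, so the right-hand side is at most $2\sqrt{2}\rho\gamma$. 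Substituting the definition of $\gamma$ then shows $\|\Lambda - \Lambda_{\widetilde{\O}}\|_\mathrm{op} \leq \lambda^2\Rig/(16d)$, which is precisely the first hypothesis of Lemma \ref{lem:stability_minimization_liealgebra}. The second hypothesis (upper bound on the minimum of the objective) is automatic: since a true minimizer $\widehat{\h}$ exists at $0$ for $\Lambda_{\widetilde{\O}}$, the inequality $\min f \leq 2d\|\Lambda-\Lambda_{\widetilde{\O}}\|_\mathrm{op}$ from the proof of that lemma suffices. The lemma then returns either the general or transitive conclusion with
\begin{equation*}
\beta \;=\; \tfrac{2\sqrt{d}}{\lambda}\|\Lambda-\Lambda_{\widetilde{\O}}\|_\mathrm{op}^{1/2} \;\leq\; \tfrac{2\sqrt{d}}{\lambda}\cdot 2^{1/4}\rho^{1/2}\bigg(r + 4\Big(\tfrac{\widetilde{\omega}}{r^{l+1}}\Big)^{1/2}\bigg)^{1/2}.
\end{equation*}

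Finally, I would feed this $\beta$ into Proposition \ref{prop:stability_step4}, applied to the orthonormalized orbit $\widetilde{\O}$, the orthonormalized cloud, and the returned algebra $\widehat{\h}$. Since $\widetilde{\O}$ lies on the unit sphere, one has $\delta = 1$, and the $\d(x,\widetilde{\O})$ term is controlled by $\HDmid{X}{\O}/\sigma_\mathrm{min}$ after a short triangle-inequality argument (in the transitive case, the symmetric distance requires the analogous argument on the reverse direction, contributing the extra $1\cdot \d(x,\widetilde{\O})$ term in \eqref{eq:theorem_hausdorff_transitive}). Collecting the multiplicative constants $2$, $2\sqrt{2}$, and the $\beta$-expression gives the stated bounds; the Wasserstein bound follows directly from the second inequality of Proposition \ref{prop:stability_step4} applied to the orthonormalized measures, with the $\beta\delta$ term rewriting into the second summand of \eqref{eq:theorem_wasserstein}.

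The main obstacle is bookkeeping: verifying that the somewhat intricate parameter windows for $\epsilon$ and $r$ are non-empty under the stated smallness condition on $\widetilde{\omega}$, and that the dependencies chain without circularity (Step 2's bound depends on $r$, which must be chosen small enough that Step 3's rigidity assumption is met, while $r$ must simultaneously be large enough relative to $\widetilde{\omega}$ for Lemma \ref{lem:wasserstein_stability_tangent_space} to apply). A careful check that the three bounds $\widetilde{\omega} \leq (1/6\rho)^{3(l+1)}$, $\gamma^{l+3}/16$, and $(\gamma/(6\rho)^2)^{l+1}$ are exactly what is needed to keep both intervals non-empty and to absorb the constants into the final bound will be the most delicate part. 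A secondary issue is that in the general (non-transitive) case, Lemma \ref{lem:stability_minimization_liealgebra} only provides a conjugation per element of $\widehat{\h}$ rather than a uniform conjugation, so the transitive bounds on the symmetric Hausdorff and on the Wasserstein distance cannot be obtained in that regime—this is the reason Equation \eqref{eq:theorem_hausdorff} only controls the one-sided Hausdorff distance.
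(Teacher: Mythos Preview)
Your overall chaining strategy matches the paper's proof exactly, and your identification of how the parameter windows for $\epsilon$ and $r$ tie into the hypotheses of Propositions \ref{prop:stability_step1}, \ref{prop:Lie-PCA} and Lemma \ref{lem:stability_minimization_liealgebra} is correct. However, there are two concrete problems in your Step~4 argument.

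First, you assert that the orthonormalized orbit $\widetilde{\O}$ lies on the unit sphere, so $\delta = 1$. This is wrong. After orthonormalization the covariance matrix $\Sigma[\mu_{\widetilde{\O}}]$ is the $n\times n$ identity, hence by Equation~\eqref{eq:ideal_cov_variance_trace} one has $\E[\|\mu_{\widetilde{\O}}\|^2] = \Tr(I) = n$; since the representation is orthogonal the norm is constant, giving $\delta = \sqrt{n}$. This $\sqrt{n}$ is exactly what produces the factor $(nd\rho)^{1/2}$ in Equations \eqref{eq:theorem_hausdorff}--\eqref{eq:theorem_wasserstein}; with $\delta=1$ you would obtain $(d\rho)^{1/2}$ instead and the stated constants would not come out.

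Second, the passage from $\d(x,\widetilde{\O})$ to $\HDmid{X}{\O}/\sigma_\mathrm{min}$ is not just a triangle inequality. The point cloud has been transformed by $\sqrt{\Sigma[\mu_X]^+}\,\Pi_{\Sigma[\mu_X]}^{>\epsilon}$ while the orbit has been transformed by $\sqrt{\Sigma[\mu_\O]^+}\,\projbracket{\spn{\O}}$, and these are \emph{different} linear maps. The paper controls their discrepancy by multiplying the operator-norm bounds obtained in Equations~\eqref{eq:majoration_frobenius_pca} and~\eqref{eq:proof_orthonormalization_3} (from the proofs of Propositions~\ref{prop:stability_PCA_orbit} and~\ref{prop:stability_orthonormalization}), and then uses the smallness hypothesis on $\omega$ to collapse this product to $1/(\sqrt{2}\,\sigma_\mathrm{min})$. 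That is the source of the $2^{1/2}$ (general) and $2^{-1/2}$ (Wasserstein) coefficients, and it is a genuine piece of work you would need to supply.
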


\begin{proof}
We will verify each step of the algorithm, first considering an execution with \ref{item:step3} without futher assumption (general case), and then with \ref{item:step3} or \ref{item:step3}' under the assumption $\dim (G) = \dim (\sym(\O))$ (transitive case).
Before entering into the proof, we verify that if $\omega$ satisfies the assumptions of the theorem, then the intervals given for $\epsilon$ and $r$ are non-empty.

\vspace{.2cm}\noindent
\underline{Study of the intervals.}
The first inequality on $\omega$ implies $\omega < (\upsilon^2+1/2)^{1/2} -\upsilon)$. 
As we have observed in Equation \eqref{eq:bound_W2_with_sigma}, this inequality is equivalent to $(2\upsilon+\omega)\omega < 1/2$.
In particular, the interval for $\epsilon$ is non-empty.
Next, we study the set for $r$. It reads
$$
\bigg[\big(6\rho\big)^{2}\cdot\widetilde{\omega}^{1/(l+1)}, ~\big(6\rho\big)^{-1}\bigg]
\cap
\bigg[ \big(4/\gamma\big)^{2/(l+1)}\cdot\widetilde{\omega}^{1/(l+1)}, ~\gamma\bigg].
$$
As a direct consequence of the assumptions $\widetilde{\omega} \leq \big(6\rho\big)^{-3(l+1)}$ and $\widetilde{\omega} \leq \gamma^{l+3}/4^2$, we see that both the intervals are non-empty.
Let us prove that they intersect.
It is enough to show the following relations on their opposite endpoints:
$$
\big(6\rho\big)^{-1} \nless \big(4/\gamma\big)^{2/(l+1)}\cdot\widetilde{\omega}^{1/(l+1)}
~~~~\mathrm{and}~~~~
\gamma \nless\big(6\rho\big)^{2}\cdot\widetilde{\omega}^{1/(l+1)},
$$
or, equivalently,
$$
\big(6\rho\big)^{-1} \cdot \big(4/\gamma\big)^{-2/(l+1)}\geq \widetilde{\omega}^{1/(l+1)}
~~~~\mathrm{and}~~~~
\gamma \cdot \big(6\rho\big)^{-2} \geq \widetilde{\omega}^{1/(l+1)},
$$
This second inequality is already an assumption of the theorem.
Concerning the first one, we remark that the two assumptions above on $\widetilde{\omega}$ can be rewritten as $\widetilde{\omega}^{1/(l+1)} \leq \big(6\rho\big)^{-3}$ and $\widetilde{\omega}^{1/(l+1)} \leq \gamma \big(\gamma/4\big)^{2/(l+1)}$.
We decompose: 
\begin{align*}
\widetilde{\omega}^{1/(l+1)} 
=\big(\widetilde{\omega}^{1/(l+1)}\big)^{1/3}\big(\widetilde{\omega}^{1/(l+1)}\big)^{2/3}
&\leq\big(\big(6\rho\big)^{-3}\big)^{1/3}\cdot\big(\gamma \big(\gamma/4\big)^{2/(l+1)}\big)^{2/3}\\
&=\big(6\rho\big)^{-1}\big(4/\gamma\big)^{-2/(l+1)}\big(\gamma^{l+2}4^2\big)^{1/3(l+1)}.
\end{align*}
Besides, we have $\gamma \leq \big( 4(2d+1)\sqrt{2}\big)^{-1} < 4^{-2}$.
Therefore, the term $\big(\gamma^{l+2}4^2\big)^{1/3(l+1)}$ is lower than $1$ and we deduce the wanted inequality:
\begin{align*}
\widetilde{\omega}^{1/(l+1)} 
\leq \big(6\rho\big)^{-1}\big(4/\gamma\big)^{-2/(l+1)}.
\end{align*}

\vspace{.2cm}\noindent
\underline{\ref{item:step1}.}
We now study the algorithm step by step.
To start, let us define the orthonormalized sets and the pushforward measures 
\begin{alignat*}{2}
\widetilde{\O} &= \sqrt{\Sigma[\mu_\O]^+}\projbracket{\spn{\mathcal{O}}}\O,
&&\widetilde{X} = \sqrt{\Sigma[\nu]^+}\Pi_{\Sigma[\nu]}^{>\epsilon} X,\\
\mu_{\widetilde{\O}}&=\sqrt{\Sigma[\mu_\O]^+}\projbracket{\spn{\mathcal{O}}}\mu_\O,
~~~~
&&\mu_{\widetilde{X}}=\sqrt{\Sigma[\nu]^+}\Pi_{\Sigma[\nu]}^{>\epsilon}\mu_X.
\end{alignat*}
We note that $\mu_{\widetilde{\O}}$ and $\mu_{\widetilde{X}}$ are respectively the uniform measure on $\widetilde{\O}$ and the empirical measure on $\widetilde{X}$.
Moreover, $\mu_{\widetilde{X}}$ is the output of \ref{item:step1}.
Besides, we have seen that the first assumption of the theorem regarding $\omega$ is equivalent to the first assumption of Proposition \ref{prop:stability_step1}.
Together with the hypothesis on $\epsilon$, which is equivalent to its second assumption, we deduce that this proposition can be used.
It reads as follows, with the quantity $\widetilde{\omega}$ appearing explicitly:
$$
\W_2(\mu_{\widetilde{\O}},\mu_{\widetilde{X}})
\leq
4(n+1)^{3/2}\bigg(\frac{\sigma_\mathrm{max}^3}{\sigma_\mathrm{min}^3}\bigg)\bigg(\omega(\upsilon + \omega)\bigg)^{1/2}.
$$

\vspace{.2cm}\noindent
\underline{\ref{item:step2}.}
Let us denote by $\Lambda_{\widetilde{\O}}$ the ideal \texttt{LiePCA} operator on $\widetilde{\O}$ and by $\Lambda$ that computed from $\widetilde{X}$.
The hypothesis on $r$, given by the first interval, yields
$$
\big(6\rho\big)^2\W_2(\mu_{\widetilde{\O}},\mu_{\widetilde{X}})^{1/(l+1)} \leq r \leq \big(6\rho\big)^{-1}.
$$
Hence, the assumptions of Proposition \ref{prop:Lie-PCA} are satisfied. 
We deduce that the bound
\begin{align*}
\| \Lambda_{\widetilde{\O}}-\Lambda \|_\mathrm{op}
&\leq 
\sqrt{2}\rho \bigg(r + 4\bigg(\frac{\W_2(\mu_{\widetilde{\O}},\mu_{\widetilde{X}})}{r^{l+1}}\bigg)^{1/2}\bigg)
\leq     
\sqrt{2}\rho \bigg(r + 4\bigg(\frac{\widetilde{\omega}}{r^{l+1}}\bigg)^{1/2}\bigg).
\end{align*}

\vspace{.2cm}\noindent
\underline{\ref{item:step3}.}
Let $(A_1^*,\dots,A_d^*)$ be a minimizer of Equation \eqref{eq:minimization_stiefel_2} and $\widehat{\h}\subset\so(n)$ be the space it spans.
From the hypothesis $r \geq \big(\widetilde{\omega}4^2/\gamma^2\big)^{1/(l+1)}$ we deduce that
$$
4\bigg(\frac{\widetilde{\omega}}{r^{l+1}}\bigg)^{1/2} \leq \gamma.
$$
Together with the other bound $r \leq \gamma$, we obtain
\begin{equation*}
\| \Lambda_{\widetilde{\O}}-\Lambda \|_\mathrm{op}
\leq     
\sqrt{2}\rho \bigg(r + 4\bigg(\frac{\widetilde{\omega}}{r^{l+1}}\bigg)^{1/2}\bigg)
\leq     
\sqrt{2}\rho(\gamma+\gamma)
= 2\sqrt{2}\rho\gamma.
\end{equation*}
Besides, by definition of $\gamma$, we have
$$
2\sqrt{2}\rho\gamma = \frac{\lambda^2}{16d}\Rig(\SO(2),n,\omega_\mathrm{max}).
$$
Combined, these inequalities yield
\begin{equation}\label{eq:theorem_proof_operatornorm_liepca}
\| \Lambda_{\widetilde{\O}}-\Lambda \|_\mathrm{op}
\leq     
\frac{\lambda^2}{16d}\Rig(\SO(2),n,\omega_\mathrm{max}).
\end{equation}
Thus, the first assumption of Lemma \ref{lem:stability_minimization_liealgebra} is satisfied.
Besides, the second assumption holds precisely when $\widehat{\h}$ is conjugate to a subalgebra of $\sym(\O)$, as a consequence of Equation~\eqref{eq:proof_upperbound_differenceobjectives}:
$$
2d\|\Lambda_{\widetilde{\O}}-\Lambda \|_\mathrm{op} \leq 2d \frac{\lambda^2}{16d}\Rig(\SO(2),n,\omega_\mathrm{max})
= \frac{\lambda^2}{8}\Rig(\SO(2),n,\omega_\mathrm{max}).
$$
In this case, the lemma states that for all $A\in\widehat{\h}$, there exists a $B \in \sym(\O)$ and an $O\in\Ort(n)$ such that $A = O B O^\top$ and $\|A-B\| \leq 2\beta$, where
\begin{align*}
\beta^2
&=\frac{4d\|\Lambda_{\widetilde{\O}}-\Lambda \|_\mathrm{op}}{\lambda^2}
\leq 4d\sqrt{2}\frac{\rho}{\lambda^2}\bigg(r + 4\bigg(\frac{\widetilde{\omega}}{r^{l+1}}\bigg)^{1/2}\bigg).
\end{align*}
Moreover, we deduce from this first equality that $2\beta \leq 1$.

\vspace{.2cm}\noindent
\underline{\ref{item:step4}.}
The existence of the constant $2\beta$ allows us to apply Proposition \ref{prop:stability_step4}.
Since the orbit $\widetilde{\O}$ is orthonormalized, the covariance matrix $\Sigma[\mu_{\widetilde{\O}}]$ is the identity and the norm $\|\mu_{\widetilde{\O}}\|$ is constant.
Besides, as we have seen in Equation \eqref{eq:ideal_cov_variance_trace}, $\E[\|\mu_{\widetilde{\O}}\|^2] = \Tr\big( \Sigma[\mu_{\widetilde{\O}}] \big)$.
We deduce the radius of $\widetilde{\O}$ to be $\sqrt{n}$.
This is the constant $\delta$ appearing in the proposition.
We obtain
\begin{align}
\HDmid{\widehat{\O}_x}{\widetilde{\O}}
&\leq 
(2\beta+1)\d(x,\widetilde{\O}) + 2\beta \delta \nonumber\\ 
&\leq 
2\d(x,\widetilde{\O}) + 2\beta\sqrt{n} \nonumber\\   
&\leq 
2\d(x,\widetilde{\O})
+
2\bigg(4d\sqrt{2}\frac{\rho}{\lambda^2}\bigg)^{1/2}
\bigg(r + 4 \bigg(\frac{\widetilde{\omega}}{r^{l+1}}\bigg)^{1/2}\bigg)^{1/2}\cdot\sqrt{n} \nonumber\\   
&= 
2\d(x,\widetilde{\O})
+
2^{9/4}\frac{(nd\rho)^{1/2}}{\lambda}
\bigg(r + 4 \bigg(\frac{\widetilde{\omega}}{r^{l+1}}\bigg)^{1/2}\bigg)^{1/2}.
\label{eq:theorem_hausdorff_proof}
\end{align}
Last, we wish to replace $\d(x,\widetilde{\O})$ with a term involving $\HDmid{X}{\O}$.
To do so, we use the obvious upper bound $\d(x,\widetilde{\O})\leq\HDmid{\widetilde{X}}{\widetilde{\O}}$, as well as the definitions of $\widetilde{\O}$ and $\widetilde{X}$:
\begin{align*}
\HDmid{\widetilde{X}}{\widetilde{\O}} &=\HDmid{\sqrt{\Sigma[\nu]^+}\Pi_{\Sigma[\nu]}^{>\epsilon} X~}{~ \sqrt{\Sigma[\mu_\O]^+}\projbracket{\spn{\mathcal{O}}}\O}  \\   
&\leq \big\|\sqrt{\Sigma[\nu]^+} - \sqrt{\Sigma[\mu_\O]^+}  \big\|_\mathrm{op} \cdot\HDmid{\Pi_{\Sigma[\nu]}^{>\epsilon} X ~}{~ \projbracket{\spn{\mathcal{O}}}\O} \\
&\leq \big\|\sqrt{\Sigma[\nu]^+} - \sqrt{\Sigma[\mu_\O]^+}  \big\|_\mathrm{op}
\cdot\big\|\Pi_{\Sigma[\nu]}^{>\epsilon} - \projbracket{\spn{\mathcal{O}}} \big\|_\mathrm{op}\cdot \HDmid{ X}{\O}.
\end{align*}
We recall that the operator norm is not greater than the Frobenius one.
Moreover, bounds for $\big\|\sqrt{\Sigma[\nu]^+} - \sqrt{\Sigma[\mu_\O]^+}  \big\|$ and $\big\|\Pi_{\Sigma[\nu]}^{>\epsilon} - \projbracket{\spn{\mathcal{O}}} \big\|_\mathrm{op}$ have been obtained respectively in Equations \eqref{eq:majoration_frobenius_pca} and \eqref{eq:proof_orthonormalization_3} of Propositions \ref{prop:stability_PCA_orbit} and \ref{prop:stability_orthonormalization}.
We obtain that the product is upper bounded by
\begin{align*}
&\big\|\sqrt{\Sigma[\nu]^+} - \sqrt{\Sigma[\mu_\O]^+}  \big\|_\mathrm{op}
\cdot\big\|\Pi_{\Sigma[\nu]}^{>\epsilon} - \projbracket{\spn{\mathcal{O}}} \big\|_\mathrm{op}\\
\leq
&\frac{2}{\sigma_\mathrm{min}^4}\bigg(2\Var\big[\|\mu_\O\|\big]^{1/2}  + \W_2(\mu_\O,\mu_X)\bigg)^{3/2}\W_2(\mu_\O,\mu_X)^{3/2}.
\end{align*}
Besides, we have seen that the first assumption on $\omega$ is equivalent to
$$
\bigg(2\Var\big[\|\mu_\O\|\big]^{1/2}+\W_2(\mu_\O,\nu)\bigg) \W_2(\mu_\O,\nu) \leq \frac{1}{2}\sigma_\mathrm{min}^2.
$$
We obtain the wanted inequality:
\begin{equation}\label{eq:theorem_proof_bound_productnorms}    
\HDmid{\widetilde{X} }{ \widetilde{\O}}
\leq 
\frac{2}{\sigma_\mathrm{min}^4} \bigg( \frac{1}{2}\sigma_\mathrm{min}^2\bigg)^{3/2}
\HDmid{X}{\O}
= \frac{1}{\sqrt{2}\sigma_\mathrm{min}} \HDmid{X}{\O}.
\end{equation}
Together with Equation \eqref{eq:theorem_hausdorff_proof}, we eventually deduce Equation \eqref{eq:theorem_hausdorff} of the theorem.

\vspace{.2cm}\noindent
\underline{Transitive case.}
We now consider an execution of the algorithm under the assumption $\dim (G) = \dim (\sym(\O))$.
The proof is similar.
Let $\widehat{\h}\subset\so(n)$ denote the Lie algebra spanned by a minimizer of Equations \eqref{eq:minimization_stiefel_2} or \eqref{eq:minimization_grassmann}.
Just as we have obtained in Equation \eqref{eq:theorem_proof_operatornorm_liepca}, we have
$$
\| \Lambda_{\widetilde{\O}}-\Lambda \|_\mathrm{op}
\leq     
\frac{\lambda^2}{16d}\Rig(G,n,\omega_\mathrm{max}).
$$
Therefore we can apply Lemma \ref{lem:stability_minimization_liealgebra}, which states that $\widehat{\h}$ is conjugate to $\sym(\O)$, hence that $\widehat{\phi}$ is orbit-equivalent to $\phi$.
Moreover, we can apply Proposition \ref{prop:stability_step4} with $2\sqrt{2}\beta$:
\begin{align*}
\HD{\widehat{\O}_x}{\widetilde{\O}} &\leq 
(2\sqrt{2}\beta+1)\d(x,\widetilde{\O}) + 2\sqrt{2}\beta \delta, \\
\W_2\big(\mu_{\widehat{\O}}, \mu_{\widetilde{\O}} \big)&\leq
\W_2(\mu_{\widetilde{\O}},\mu_{\widetilde{X}})+2\sqrt{2}\beta\delta.    
\end{align*}
As previously, and since the orbit $\widetilde{\O}$ is orthonormal, we have $\delta=\sqrt{n}$.
Following the same proof as in the general case, we obtain the bound on the Hausdorff distance stated in Equation~\eqref{eq:theorem_hausdorff_transitive} of the theorem.
Last, to simplify the bound on the Wasserstein distance, we express $\W_2(\mu_{\widetilde{\O}},\mu_{\widetilde{X}})$ in terms of $\W_2(\mu_\O,\mu_X)$ via the inequalities
\begin{align*}
\W_2\big( \mu_{\widetilde{X}}, \mu_{\widetilde{\O}}\big) &=\W_2\bigg(\sqrt{\Sigma[\nu]^+}\Pi_{\Sigma[\nu]}^{>\epsilon} \mu_X, \sqrt{\Sigma[\mu_\O]^+}\projbracket{\spn{\mathcal{O}}}\mu_{\O}\bigg)  \\   
&\leq \big\|\sqrt{\Sigma[\nu]^+} - \sqrt{\Sigma[\mu_\O]^+}  \big\|_\mathrm{op} 
\cdot\W_2\bigg(\Pi_{\Sigma[\nu]}^{>\epsilon} \mu_X, \projbracket{\spn{\mathcal{O}}}\mu_{\O}\bigg) \\
&\leq \big\|\sqrt{\Sigma[\nu]^+} - \sqrt{\Sigma[\mu_\O]^+}  \big\|_\mathrm{op}
\cdot\big\|\Pi_{\Sigma[\nu]}^{>\epsilon} - \projbracket{\spn{\mathcal{O}}} \big\|_\mathrm{op}\cdot \W_2\big( \mu_X, \mu_{\O}\big).
\end{align*}
We have seen in Equation \eqref{eq:theorem_proof_bound_productnorms} that this product of norms is upper bounded by $1/\big(\sqrt{2}\sigma_\mathrm{min}\big)$.
We eventually deduce Equation \eqref{eq:theorem_wasserstein} of the theorem.
\end{proof}

\begin{remark}\label{rem:reformulation_theorem}
Equations \eqref{eq:theorem_hausdorff}, \eqref{eq:theorem_hausdorff_transitive} and \eqref{eq:theorem_wasserstein} justify the correctness of Algorithm \ref{alg: 1}, in the sense that, if the parameters are correctly chosen and are small, then the output of the algorithm is close to the unknown underlying orbit $\O$.
Namely, one reads from these equations that the significant quantities to obtain a correct output are $\HDmid{X}{\O}$ or $\W_2\big(\mu_{X},\mu_\O\big)$, the initial distance between the data and the underlying object, but also $r$ and $\big(\widetilde{\omega}/r^{l+1}\big)^{1/2}$, the quantities reflecting the bias-variance trade-off when estimating tangent spaces, as discussed in Remark \ref{rem:estimation_tangentspace_bias_variance}.
Let us comment on this point further. 
We see that the trade-off is optimized when $r$ is equal to $4\big(\widetilde{\omega}/r^{l+1}\big)^{1/2}$, that is, when $r$ is of order $\widetilde{\omega}^{1/(l+3)}$.
Moreover, as we have observed in Remark \ref{rem:prop_orthonormalization_asymptotics}, in the case where the input orbit is not orthogonal---that is, when $\upsilon>0$---, then $\widetilde{\omega}$ has order $\W_2\big(\mu_{X},\mu_\O\big)^{1/2}$.
We conclude, as a rule of thumb, that an `optimal' parameter $r$ must be chosen to be of order $\W_2\big(\mu_{X},\mu_\O\big)^{1/2(l+3)}$.
For such a choice---albeit unknown to the user---, the asymptotic behavior of Equation \eqref{eq:theorem_wasserstein}, using the big-$\Theta$ notation, is
\begin{align}\label{eq:theorem_optimal_choice}
\W_2\big(\mu_{\widehat{\O}}, \mu_{\widetilde{\O}} \big)
= \Theta\bigg( \W_2\big(\mu_{X},\mu_\O\big)^{1/4(l+3)}\bigg).
\end{align}
Compared to the first term of Equation \eqref{eq:theorem_wasserstein}, of order $\Theta\big(\W_2\big(\mu_{X},\mu_\O\big)\big)$, this latter term, being raised to the power of $1/4(l+3)$, is crucially slower.
This shows that the `slowest' step of the algorithm is that of estimating normal spaces.
\end{remark}

\begin{remark}
We have chosen to state Theorem \ref{th:robustness_algorithm} in terms of $\W_2\big(\mu_{\O},\mu_X \big)$, the Wasserstein distance between the input and underlying measures. We consider it to be a general theoretical framework.
Alternatively, one can deduce a formulation in terms of the number of data points.
Namely, if $N$ is the cardinality of $X$, then is is known that the distance $\W_2\big(\mu_{\O}, \mu_X\big)$ between the uniform measure on a $l$-manifold and the empirical measure on a $N$-sample has rate $(1/N)^{(1/l)}$ \cite{singh2018minimax,divol2021short}.
Thus, the result of the theorem, with `optimal' parameter $r$ of Equation \eqref{eq:theorem_optimal_choice}, reads
\begin{align*}
\W_2\big(\mu_{\widehat{\O}}, \mu_{\widetilde{\O}} \big)
= \Theta\big( 1/N^{1/4l(l+3)}\big).
\end{align*}
\end{remark}

\section{Applications}\label{sec:applications}
We now apply Algorithm \ref{alg: 1} to concrete problems, briefly presented in the introduction.
More than an extensive list of tasks involving Lie groups, this section aims to identify and justify the conditions that guarantee the effectiveness of the methods developed throughout this work.

\subsection{Image analysis}\label{subsec:pixel_permutations}

Since an image (greyscale or RGB) is nothing more than a collection of real values on a pixel grid (or voxel grid, in 3D), we can treat it as a point in an Euclidean space, and a set of images as a point cloud. 
We aim to show how Lie group representations appear naturally in this context, and how this information can be exploited in conjunction with {\texttt{LieDetect}}.
The simplest case is that of pixel permutations: if an Abelian finite group acts on an image (e.g., through translations), then we show that the orbit of this image lies along a compact Lie group orbit. Non-Abelian permutations are considered in a second subsection, focused on 3D-image datasets. We then turn to regression problems, which we study through the lens of harmonic analysis.

\subsubsection{Pixel permutations}\label{subsec: ppt}
Let us model a one-channel $(m_x\times m_y)$-pixels image as a signal $f\colon[1\isep m_x]\times [1\isep m_y]\to \R$, and define its lift $f^\uparrow$ as the signal's natural embedding in $\R^n$, for $n=m_x\times m_y$. 
Consider a permutation $\sigma$ applied to the pixel grid.
It generates, from $f$, the new image $f \circ \sigma$. 
Visually, such a transformation $\sigma$ only rearranges the pixels, leaving quantities such as total brightness conserved (see the top of Figure \ref{fig: ppt}). If we denote the permutation matrix associated with $\sigma$ by the same variable, then the lifts are related by $(f \circ \sigma)^\uparrow = \sigma\cdot f^\uparrow$.

\begin{figure}[ht]
    \centering
    \includegraphics[width=0.8\textwidth]
    {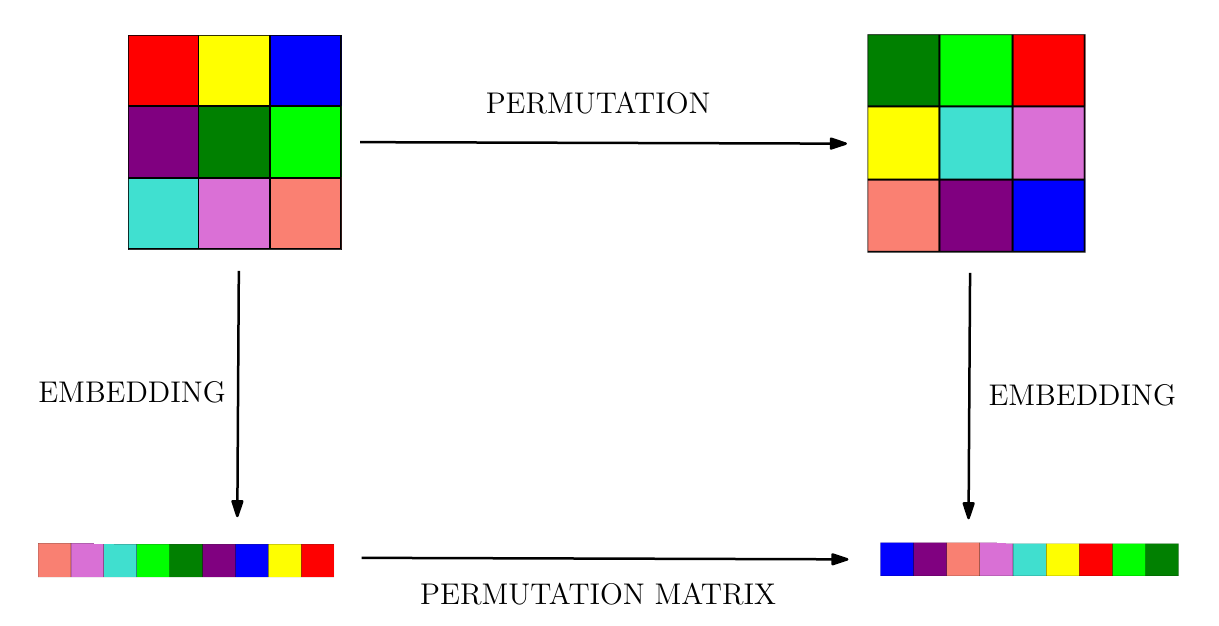}
    \caption{Diagram identifying two signals permutation-apart from each other on a $3\times 3$ grid, where each color represents the signals' value at the pixels. Notice that the diagram commutes.}\label{fig: ppt}
    \centering
\end{figure}

We shall recall here some important properties of permutation matrices. For example, by construction, it is easy to see that if $\sigma$ is a permutation matrix, then it is an orthogonal transformation on $\R^{n}$. Besides, if $\sigma$ and $\sigma'$ are two permutations on the pixel grid, then the composition $\sigma \circ \sigma'$ also is a permutation, whose associated permutation matrix is given by the product $\sigma \cdot \sigma'$. Consequently, if $\Sigma$ is a group of grid permutations, we may define an orthogonal representation by considering its permutation matrices. This shows that every group of permutations is a subgroup of $\Ort(n)$.

We are, hoewever, interested in determining whether these groups are also contained in smaller Lie groups. Explicitly, if $\Sigma$ is a permutation group and $f$ an initial signal, then its orbit $\{\sigma\cdot f^\uparrow \mid \sigma \in \Sigma\}$ is a subset of $\R^n$ and is included in the orbit of a representation of a compact Lie group.
An instructive case is that of a cyclic group, say of order $p$, generated by a permutation $\sigma$. 
Supposing the permutation is even, the corresponding matrix is special orthogonal, hence it can be written as $\exp(2\pi L/p)$ for a certain skew-symmetric matrix $L$.
The orbit thus reads
$$
\big\{ \exp(2\pi m L/p)\cdot f^\uparrow \mid m \in [0\isep p-1] \big\}.
$$
In particular, it is a subset of the orbit of $f^\uparrow$ under the representation of $\SO(2)$ in $\R^n$ given by $\theta\mapsto \exp(2\pi \theta L)$.
The lemma below shows that this observation holds for any Abelian group $\Sigma$ and that, in addition, the orbit can be projected in lower-dimensional subspaces, provided that they are chosen in accordance with the covariance matrix of the orbit.
To keep the focus of this section on applications, we defer its proof to Appendix \ref{sec:appendix_applications}.

\begin{lemma}\label{th: Tn-ppt}
Consider point cloud $X = \{x_i\}_{i=1}^N$ of $\R^n$ generated by the application of an Abelian permutation group $\Sigma$ of rank $d$ to an initial image that has been centered, i.e., $\sum_{i=1}^N x_i = 0$. 
Then $X$ lies on an orbit of a representation of the torus $T^d$.
More precisely, there exists a representation $\phi\colon T^d \rightarrow \SO(n)$, a point $x\in\R^n$, and $p_1,\dots,p_d\in \Z^d$ such that 
$$
X = \big\{ \phi\big(2\pi m_1 / p_1, \dots, 2\pi m_d/ p_d \big)\cdot x \mid m_1 \in [0\isep p_1-1], ~\dots, ~m_d \in [0\isep p_d-1] \big\}.
$$
Moreover, suppose the point cloud is projected into a subspace that is a sum of eigenspaces of the data's covariance matrix (all of which have even dimension). 
Then it lies on an orbit of $T^d$.
\end{lemma}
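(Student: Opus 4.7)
The plan is to proceed in two steps: (i) exhibit an orthogonal representation $\phi\colon T^d\to\SO(n)$, a base point $x\in\R^n$, and integers $p_1,\ldots,p_d$ realizing $X$ as the prescribed discrete $T^d$-orbit; and (ii) use the $\Sigma$-equivariance of the empirical covariance $\Sigma[X]$ to show its eigenspaces are $\phi$-invariant under the even-dimensional assumption, so that projecting $X$ preserves the orbit structure.

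For (i), the structure theorem for finite Abelian groups gives $\Sigma\simeq\Z/p_1\Z\times\cdots\times\Z/p_d\Z$ with generators $\sigma_1,\ldots,\sigma_d$. Their permutation matrices pairwise commute and are orthogonal, so they admit a simultaneous real normal form with $1\times 1$ blocks in $\{\pm 1\}$ and $2\times 2$ rotation blocks. The centering hypothesis $\sum_i x_i=0$ forces $x:=f^\uparrow$ to have zero component along the $\Sigma$-fixed subspace (the collection of $+1$ trivial blocks); each remaining non-trivial $1\times 1$ block can then be paired with a $1$-dimensional slot on which $x$ vanishes --- available because in a permutation representation the trivial isotypic has dimension equal to the number of $\Sigma$-orbits on the pixel grid --- to form a $2\times 2$ rotation block. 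Assigning such a pair the weight $(k_1,\ldots,k_d)=(p_1\epsilon_1/2,\ldots,p_d\epsilon_d/2)$, where $\epsilon_i\in\{0,1\}$ encodes the sign of $\sigma_i$ on the non-trivial slot, is consistent: integrality of each $k_i$ follows from the fact that $\sigma_i^{p_i}=I$ together with the presence of a $-1$ eigenvalue forces $p_i$ to be even. In this combined block form, pairwise commuting skew-symmetric matrices $\widetilde{L}_i$ with integer imaginary spectrum are defined block by block so that
\begin{equation*}
\phi(\theta_1,\ldots,\theta_d) \;:=\; \exp\bigl(\theta_1\widetilde{L}_1 + \cdots + \theta_d\widetilde{L}_d\bigr)\colon T^d \longrightarrow \SO(n)
\end{equation*}
evaluated at $(2\pi m_1/p_1,\ldots,2\pi m_d/p_d)$ sends $x$ to $\sigma_1^{m_1}\cdots\sigma_d^{m_d}x$ --- exactly the claimed parameterization of $X$ --- even though $\phi$ need not agree with this composition of permutations as an endomorphism of $\R^n$.

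For (ii), reindexing the defining sum after conjugation by $\sigma\in\Sigma$ yields $\sigma\,\Sigma[X]\,\sigma^\top=\Sigma[X]$, so the eigenspaces of $\Sigma[X]$ are $\Sigma$-invariant. A short Fourier-sum computation over $\Sigma$ (essentially Schur orthogonality) shows that on each $2$-dimensional weight subspace $V_\omega$ of $\phi$ on which $\Sigma$ does not act as $\pm I$, the $\Sigma$-action is irreducible and $\Sigma[X]\vert_{V_\omega}$ is a scalar multiple of the identity. The exceptional ``degenerate'' weights where $\Sigma$ acts as $\pm I$ are precisely those contributing odd-dimensional $\Sigma$-irreducible pieces, and the even-dimensional eigenspace hypothesis excludes exactly these pieces from any chosen sum of eigenspaces. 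Consequently the chosen sum $W$ is a union of $\phi$-invariant $2$D weight subspaces, the restricted representation $\phi\vert_W\colon T^d\to\SO(W)$ is well-defined, and the projection of $X$ onto $W$ is a discrete suborbit of an orbit of $\phi\vert_W$.

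The main obstacle I anticipate lies in the pairing step of (i): simultaneously, across all $\sigma_i$, ensuring that enough free $1$D slots on which $x$ vanishes exist to absorb every non-trivial $1$D block into a valid $\SO(2)$ rotation block. The permutation structure guarantees this through the orbit-count formula for trivial multiplicities; the weight-assignment arithmetic is then forced by the elementary parity identity $\chi(\sigma_i)=-1\Rightarrow p_i$ even.
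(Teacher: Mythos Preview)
There is a genuine gap in the pairing argument for part (i). You claim that each non-trivial one-dimensional block can be paired with a one-dimensional trivial slot (on which $x$ vanishes), justifying availability by the remark that ``the trivial isotypic has dimension equal to the number of $\Sigma$-orbits on the pixel grid.'' But the total number of non-trivial one-dimensional blocks can strictly exceed the number of trivial ones. Take $\Sigma=(\Z/2)^2$ acting regularly on four pixels, say via $\sigma_1=(12)(34)$ and $\sigma_2=(13)(24)$: the real permutation representation on $\R^4$ decomposes into four one-dimensional pieces, one trivial and three sign-type, so a single trivial slot would have to absorb three sign blocks. Your closing parenthetical anticipation of difficulty (``the permutation structure guarantees this through the orbit-count formula'') is exactly where the argument fails --- the orbit count controls the multiplicity of the trivial character, not the aggregate multiplicity of the non-trivial real characters.

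It is worth noting that the paper's own proof is equally terse at precisely this juncture. For $d>1$ it asserts that since the (possibly modified) $\sigma_i$ commute, ``we can choose the $L_i$'s to commute.'' In the same $(\Z/2)^2$ example both $\sigma_1,\sigma_2$ are already even permutations in $\SO(4)$, yet one checks directly that no orthogonal splitting $\R^4=Q_1\oplus Q_2$ makes both restrictions $\sigma_i|_{Q_j}$ land in $\SO(Q_j)$, whence no commuting skew-symmetric logarithms exist. So your proposal and the paper's argument share the same lacuna in the non-cyclic case; a fully rigorous proof of the ``more precisely'' clause --- if it holds as literally stated --- seems to require a genuinely different idea (for instance, allowing the $p_i$ to differ from the orders of the $\sigma_i$ and exploiting collisions in the resulting grid, or else weakening the conclusion to the first sentence of the lemma, which does follow by decomposing $\R^n$ into arbitrary $\Sigma$-invariant $2$-planes).
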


\begin{example}[Translated gorillas on circle]\label{ex:gorillas_circle}
We consider a dataset formed of 130 RGB gorilla images with $120\times 130$ pixels each. As illustrated in the left-side of Figure \ref{fig: gorilla circle}, these images are formed by subsequent applications of the translation group $\Sigma = \langle (0,1) \rangle \simeq \Z/130\Z$ to an initial image, meaning that the respective embeddings lie exactly on an orbit of this $\Sigma$ in $\R^{120\times 130\times 3}$. Alternatively, by Lemma \ref{th: Tn-ppt}, these embeddings must also be samples of an orbit of $T^1=\SO(2)$ on this space, whose specific representation type can be given by {\texttt{LieDetect}}. 
The middle image of Figure \ref{fig: gorilla circle} indicates the bottom eigenvalues of \ref{item:step2}'s \texttt{LiePCA} operator when applied to the embedded gorilla images, with dimension previously reduced to 8 through PCA. 
One particularly small eigenvalue is observed, as predicted by Lemma \ref{th: Tn-ppt}. The estimated representation type for this problem has weights $(1,2,3,4)$. Good convergence of {\texttt{LieDetect}} in this case can be confirmed by the right-most image of the figure, where the reconstructed orbit (magenta) is visually identified to be very close to the embedded images (black)---in fact, the estimated Hausdorff distance is of only $0.0086$.

Naturally, we might ask what influence projecting the images into $\R^8$ has had on the outputs. In Table~\ref{table:gorillas_translation_SO(2)}, we repeat the experiment for all even dimensions between 2 and 26. As can be seen, the estimated orbit always approaches the point cloud correctly, except for dimension 26, where the algorithm fails. We attribute this to the fact that the estimation of tangent spaces, required by \texttt{LiePCA}, is no longer good enough for this dimension.
In this regard, we point out that the estimation of tangent spaces was performed with only two neighbors---knowing that the data live on a circle, this is sufficient.
If we were to use, say, the ten nearest neighbors of each point, then the algorithm would fail as early as dimension 10.

\begin{figure}[H]
\centering
\includegraphics[width=0.28\textwidth]
{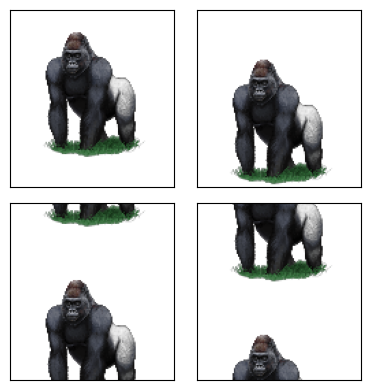}
~~
\includegraphics[width=0.31\textwidth]
{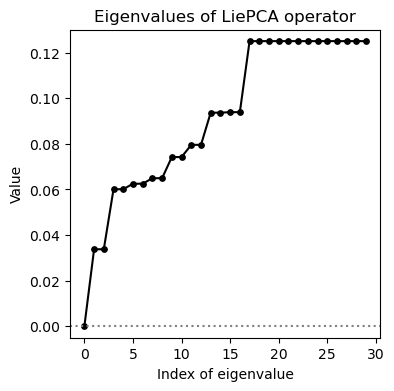}
~~
\includegraphics[width=0.31\textwidth]
{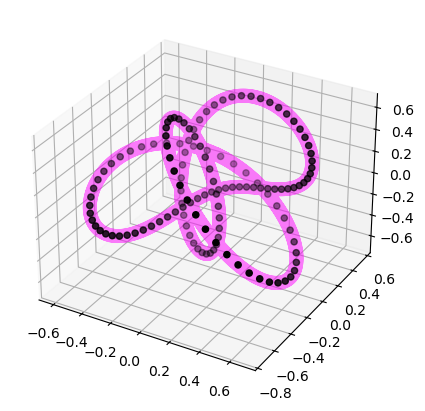}
\caption{Illustrations of Example \ref{ex:gorillas_circle}.
\textbf{Left:} sample of the dataset of translated gorilla images in a $\SO(2)$-like permutation group. \textbf{Middle:} eigenvalues of the \texttt{LiePCA} operator $\Lambda$, with one particularly small. \textbf{Right:} estimated orbit (magenta) for the point cloud (black) of the embedded images in dimension 8 (reduced to dimension 3 via PCA for visualization).}
\label{fig: gorilla circle}
\end{figure}

\begin{longtable}{||c|l|c||}\hline\begin{tabular}{@{}c@{}}Ambient\\dimension\end{tabular}&Representation found&\begin{tabular}{@{}c@{}}Haudorff\\distance\end{tabular}\\*\hline2&(1,)&0.0031\\4&(1, 2)&0.0050\\6&(1, 2, 3)&0.0068\\8&(1, 2, 3, 4)&0.0086\\10&(1, 2, 3, 4, 5)&0.0104\\12&(1, 2, 3, 4, 5, 6)&0.0122\\14&(1, 2, 3, 4, 5, 6, 8)&0.0148\\16&(1, 2, 3, 4, 5, 6, 7, 8)&0.0159\\18&(1, 2, 3, 4, 5, 6, 7, 8, 10)&0.0183\\20&(1, 2, 3, 4, 5, 6, 7, 8, 9, 10)&0.0206\\22&(1, 2, 3, 4, 5, 6, 7, 8, 9, 10, 11)&0.0581\\24&(1, 2, 3, 4, 5, 6, 7, 8, 9, 10, 11, 12)&0.0513\\26&(1, 2, 3, 4, 5, 6, 7, 8, 9, 10, 11, 12, 13)&0.5475\\*\hline\caption{Results of {\texttt{LieDetect}} on gorillas translated in one direction, as a function of the dimension reduction performed in \ref{item:step1}.}
\label{table:gorillas_translation_SO(2)}
\end{longtable}
\end{example}

\begin{example}[Translated gorillas on torus]\label{ex:gorillas_torus}
Let us generalize the prior example to higher dimensions. 
Translating the images both horizontally and vertically would, by Lemma \ref{th: Tn-ppt}, yield a representation of the torus $T^2$.
More precisely, we let the group $\Z/60\Z\times\Z/65\Z$ act on the images, where the element $(1,0)$ corresponds to a horizontal translation by 2 pixels, and $(0,1)$ a vertical translation by 2 pixels (see Figure \ref{fig: gorilla torus}).
After flattening, we are faced with a set of $60\times65$ points in $\R^{120\times 130\times 3}$, that we reduce to dimension 8 with PCA, and apply \ref{item:step1}'s orthornormalization.
The middle plot of Figure \ref{fig: gorilla torus} shows the result of \ref{item:step2}, where a solution space of dimension 2 can be clearly identified. Because the skew-symmetrized outputs of Step 2 commute up to $10^{-13}$ in matrix Frobenius norm, we apply the reformulation of the algorithm in Section \ref{sec: algorithm torus simp} to torus data, giving the representation type of $((0, 1, 1, 1), (-1, 0, 1, 2))$. The Hausdorff distance with the generated orbit was only $0.015$, which we consider small. 
Next, we repeat the whole experiment, changing the initial dimension reduction
(see Table \ref{table:gorillas_translation_T2}). From dimension 14 onwards, the estimated orbit becomes incorrect, and we should resort, as in the previous example, to a better estimation of tangent spaces.

\begin{figure}[H]
\centering
\includegraphics[width=0.28\textwidth]
{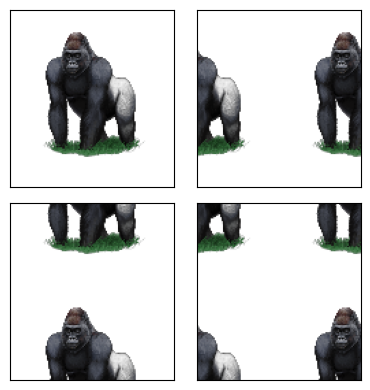}
~~
\includegraphics[width=0.31\textwidth]
{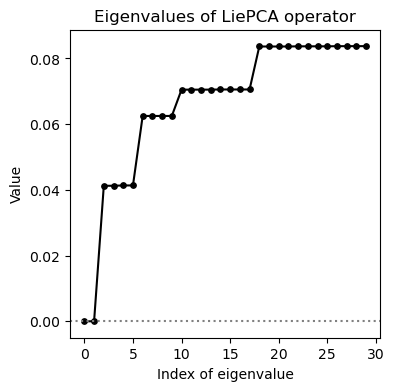}
~~
\includegraphics[width=0.31\textwidth]
{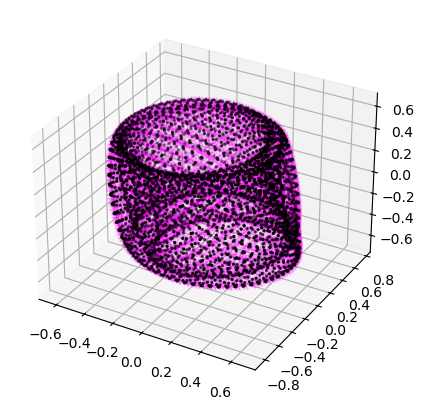}
\caption{Illustrations of Example \ref{ex:gorillas_torus}. \textbf{Left:} sample of the dataset of translated gorilla images in a $T^2$-like permutation group. \textbf{Middle:} eigenvalues of the matrix $\Lambda$, after dimension reduction into $\R^8$. Two small values stands out. \textbf{Right:} orbit output by our algorithm.}
\label{fig: gorilla torus}
\end{figure}
\vspace{-0.35em}

\begin{longtable}{||c|l|c||}\hline\begin{tabular}{@{}c@{}}Ambient\\dimension\end{tabular}&Representation found&\begin{tabular}{@{}c@{}}Haudorff\\distance\end{tabular}\\*\hline4&((0, 1), (1, 0))&0.0383\\6&((-2, 1, 1), (-1, 0, 1))&0.0506\\8&((0, 1, 1, 1), (-1, 0, 1, 2))&0.0624\\10&((-2, 0, 2, 1, 1), (-2, 1, 1, 0, 1))&0.0768\\12&((-2, 2, 1, 0, 1, -1), (-1, 2, 2, -1, 1, 0))&0.0353\\14&((-1, 1, 2, -2, 1, 2, 0), (0, 2, 1, -2, 1, 0, -1))&0.8207\\*\hline
\caption{Results of {\texttt{LieDetect}} on gorillas translated in both directions, as a function of the dimension reduction performed in \ref{item:step1}.}
\label{table:gorillas_translation_T2}
\end{longtable}
\end{example}

\begin{example}[Rotated gorillas on circle]\label{ex:gorillas_circle_rotations}
Because of interpolation, rotations of images are not exactly pixel permutations and do not fall within the assumptions of Lemma \ref{th: Tn-ppt}. Nevertheless, we still expect {\texttt{LieDetect}} to detect orbits of $\SO(2)$ with some noise.
For this experiment, we use \texttt{scipy}'s implementation of 2D rotation, with default spline interpolation of order 3.
We generate 360 images from the initial $120\times 130$ gorilla and project the point cloud in dimension 32 via PCA.
As shown in Figure \ref{fig: gorilla rotations}, we estimate a correct orbit, with (non-symmetric) Hausdorff distance to the initial point cloud of approximately 0.0334.
Conversely, the Hausdorff distance from the estimated orbit to the data is only 0.0866.
This is remarkably small: the maximal distance between two consecutive images (after dimension reduction and orthonormalization) is 0.1718, and the Hausdorff distance cannot be greater than half of it, that is, 0.0859.

\begin{figure}[H]
\centering
\includegraphics[width=0.28\textwidth]
{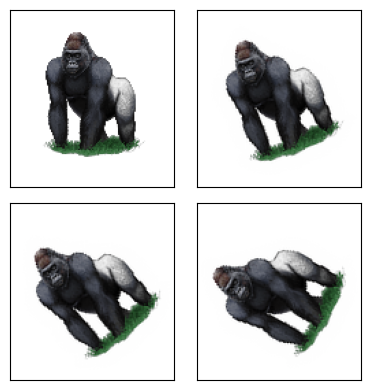}
~~
\includegraphics[width=0.31\textwidth]
{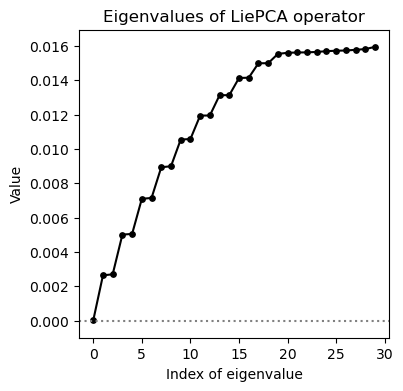}
~~
\includegraphics[width=0.31\textwidth]
{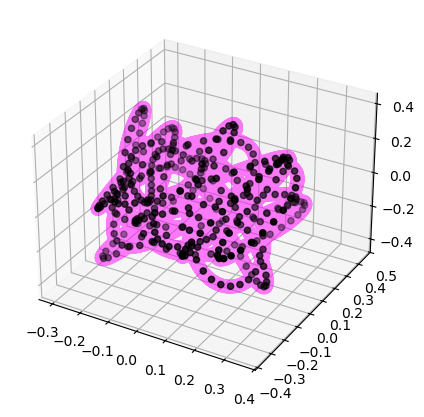}
\caption{Illustrations of Example \ref{ex:gorillas_circle_rotations}.
\textbf{Left:} sample of the dataset of rotated gorillas. \textbf{Middle:} bottom eigenvalues of the \texttt{LiePCA} operator $\Lambda$, where one value is clearly distinguishable. \textbf{Right:} estimated orbit (magenta) for the point cloud (black) of the embedded images in dimension 32.}
\label{fig: gorilla rotations}
\end{figure}
\end{example}

\subsubsection{Rotation of 3D bodies}\label{subsubsec:rot_3D_bodies}
As shown in Example \ref{ex:gorillas_circle_rotations} above, applying rotations to a 2D image produces a point cloud that can be modeled as a representation of $\SO(2)$. In the case of 3D images, rotated in all directions, we expect an $\SO(3)$-orbit. However, the results do not generalize so easily, for two reasons. Firstly, the orbit forms a manifold of dimension 3, which complicates the estimation of tangent spaces. In addition, 3D objects are often implemented as meshes, not images, for which several notions of rotation are used.

More precisely, we will consider three rotation processes, illustrated in Figure \ref{fig:rotation_3D_methods}.
Starting from an initial mesh of a 3D object (we use here the armadillo from the library \texttt{open3d}) and an initial grid of voxels (of size $20\times20\times20$ in our experiments), 
\begin{itemize}
\itemsep0.cm
\item[From mesh:] we rotate the initial mesh, and identify which voxels intersect it. This is done through \texttt{open3d}'s native function \texttt{create\_from\_triangle\_mesh}, which yields a binary 3D image.
\item[From image:] we build an initial 3D image by sampling many points on the mesh and counting how many points fall in each voxel, and we rotate directly this image, with \texttt{scipy}.
\item[From points:] we generate an initial sample of points on the mesh, rotate these points, build a kernel density estimator on them, and record the values of it on the voxel grid.
In practice, we use a Gaussian kernel with a bandwidth of $0.25$.
\end{itemize}
While the first method more accurately captures the rotation of the object than the second, they both yield noisy point clouds, since, by localizing points in voxels, we lose information.
The third one, in which voxels integrate information from all the points, is arguably more robust.

\begin{figure}[ht]\centering
\begin{minipage}{0.31\linewidth}\centering
\includegraphics[width=0.9\linewidth]
{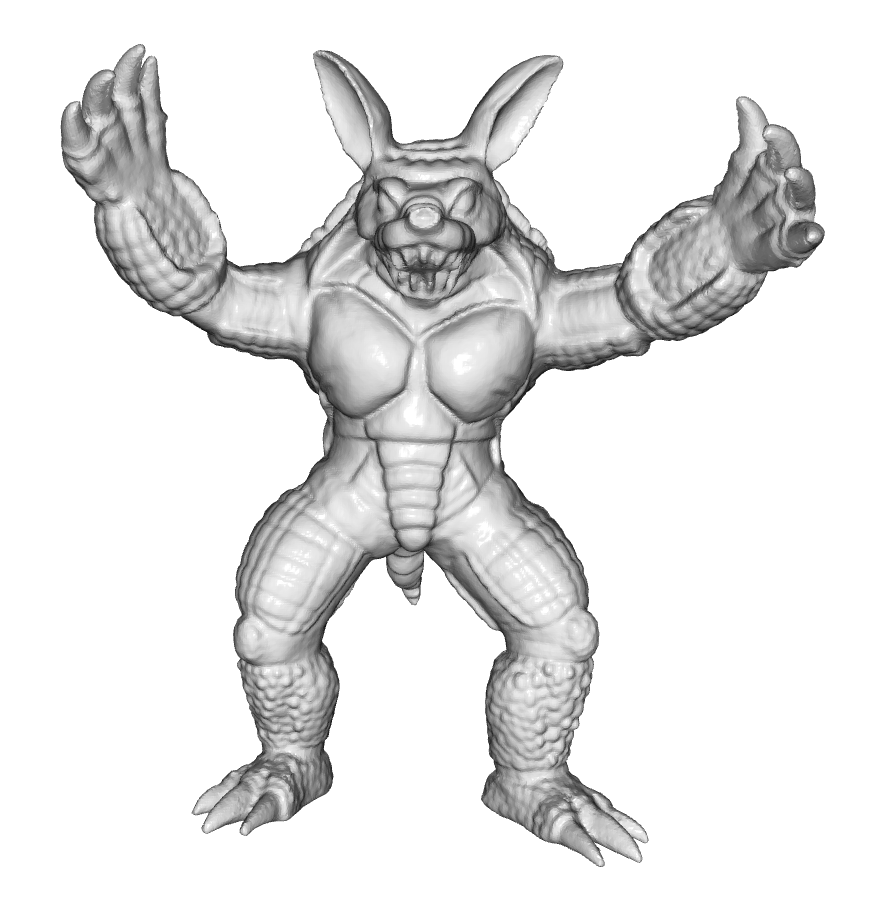}
\\\vspace{-.5cm}
\includegraphics[width=0.99\linewidth]
{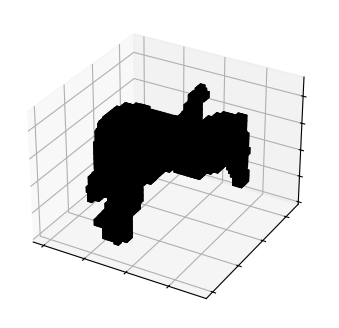}
\end{minipage}
~
\begin{minipage}{0.31\linewidth}\centering
\includegraphics[width=0.99\linewidth]
{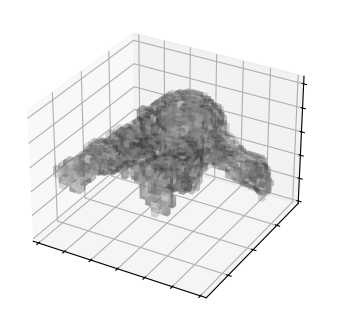}
\\\vspace{-.5cm}
\includegraphics[width=0.99\linewidth]
{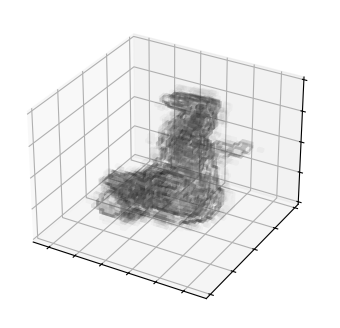}
\end{minipage}
~
\begin{minipage}{0.31\linewidth}\centering
\includegraphics[width=0.99\linewidth]
{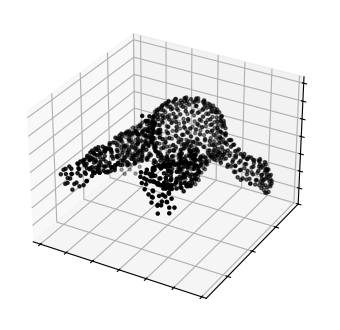}
\\\vspace{-.5cm}
\includegraphics[width=0.99\linewidth]
{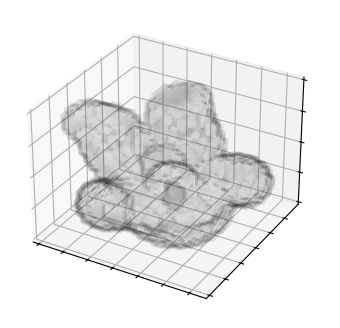}
\vspace{-.5cm}
\end{minipage}
\caption{To generate 3D images of a rotated armadillo, three methods are considered.
\textbf{Left:} a high-quality mesh (top) is rotated and then voxelized (bottom).
\textbf{Middle:} the mesh is first voxelized (top) and the image is rotated (bottom).
\textbf{Right:} a point cloud is sampled on the mesh (top), then rotated and used as the input of a kernel density estimation (bottom).}
\label{fig:rotation_3D_methods}
\end{figure}

For each method, we generated a set of 5000 images of shape $20\times20\times20$, seen as point clouds in $\R^{20\times 20\times 20}$.
The top eigenvalues of the covariance matrix of the first point cloud are
$$
0.141, 
~~0.103, 
~~0.100,
~~0.069, 
~~0.068, 
~~0.067, 
~~0.059, 
~~0.058, 
~~0.034, 
~~0.033, 
~~0.033, 
~~0.032.
$$
One identifies three significant gaps between consecutive eigenvalues: after the first, third, and eighth one. 
Let us choose dimension reduction into $\R^8$, since the two other values would yield uninteresting point clouds. 
For the point clouds obtained via the three methods, we apply {\texttt{LieDetect}} for the group $\SO(3)$, each time detecting the optimal representation $(3,5)$, with Hausdorff distances (computed in \ref{item:step4}) given in Table \ref{table:armadillos_rotated} and orbits represented in Figure \ref{fig:rotation_3D_methods_orbits}.

\begin{table}[ht]\centering
\begin{tabular}{||ll||} 
\hline
Rotation method &  Distance $\HDmid{X}{\widehat{\O}_x}$ \\ 
\hline
From mesh   & $1.1184$  \\ 
From image  & $0.3521$  \\ 
From points & $0.1909$ \\
\hline
\end{tabular}
\caption{Results of {\texttt{LieDetect}} on rotated armadillos.}
\label{table:armadillos_rotated}
\end{table}

\begin{figure}[ht]\centering
\begin{minipage}{0.32\linewidth}\centering
\includegraphics[width=0.99\linewidth]
{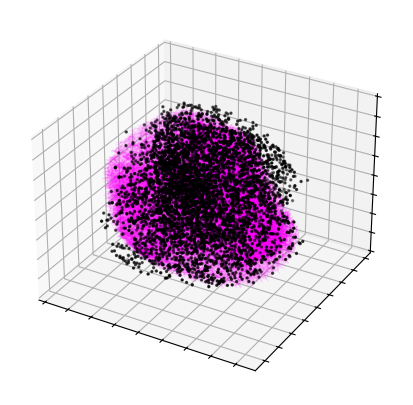}
\end{minipage}
\begin{minipage}{0.32\linewidth}\centering
\includegraphics[width=0.99\linewidth]
{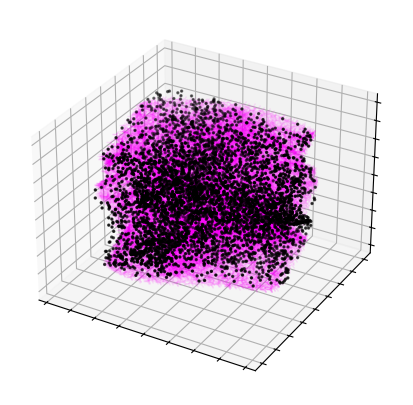}
\end{minipage}
\begin{minipage}{0.32\linewidth}\centering
\includegraphics[width=0.99\linewidth]
{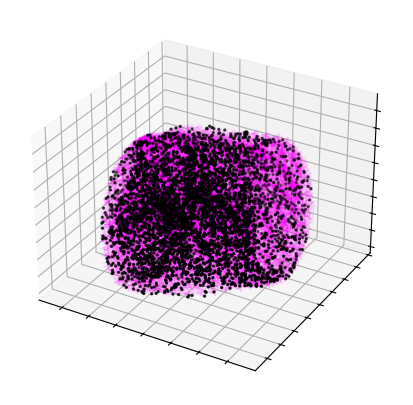}
\end{minipage}
\caption{Orbits estimated by {\texttt{LieDetect}} on the rotated armadillo images, generated from the mesh (\textbf{left}), the image (\textbf{middle}), or the points (\textbf{right}).}
\label{fig:rotation_3D_methods_orbits}
\end{figure}

It is worth noting that all our analysis relies on the choice of a specific embedding dimension, chosen as 8 in the example above.
More generally, given a point cloud $X$ in $\R^n$ obtained from the transformation of images, one may wonder in which dimension $X$ should be projected to observe a representation orbit.
In the case of Abelian pixel permutations, as stated in Lemma \ref{th: Tn-ppt}, one is free to choose any eigenspace.
For actions of $\SO(3)$, however, we did not study this problem further.
From an empirical point of view, we propose to tackle this question in two ways.
The first idea consists of identifying the significant gaps in consecutive eigenvalues of the covariance matrix $\Sigma[X]$ of $X$.
Indeed, as studied in Section \ref{subsubsec:analysis_pca_orbit}, when $X$ is the orbit of an orthogonal representation, the eigenvalues of $\Sigma[X]$ come as tuples of equal values, each tuple representing an invariant subspace.
For the second idea we consider, for each dimension, the projected point cloud, and compute the ratio between the third and fourth bottom eigenvalues of its \texttt{LiePCA} operator.
According to Proposition \ref{prop:consistency_LiePCA_idealcase}, if $X$ supports an action of $\SO(3)$, then only the first three eigenvalues must be close to zero, hence the ratio must be large.
We represent these quantities in Figure \ref{fig:rotation_3D_methods_eigengaps} for the three rotation methods and dimensions up to 19.
As expected, all the curves show a peak for dimension 8.
Interestingly, another peak is seen at dimension 15, for the third rotation method.
Although we have not taken this analysis further, we mention that, after $\R^3$ and $\R^5$, the next irreducible representation of $\SO(3)$ happens precisely in $\R^7$, leaving open the possibility that the data lies on the representation $(3,5,7)$ in $\R^{15}$.

\begin{figure}[ht]\centering
\begin{minipage}{0.32\linewidth}\centering
\includegraphics[width=0.99\linewidth]
{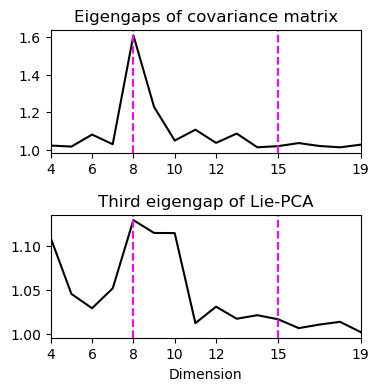}
\end{minipage}
\begin{minipage}{0.32\linewidth}\centering
\includegraphics[width=0.99\linewidth]
{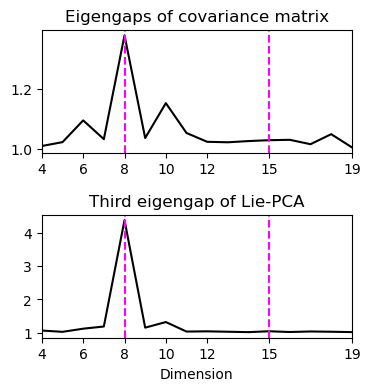}
\end{minipage}
\begin{minipage}{0.32\linewidth}\centering
\includegraphics[width=0.99\linewidth]
{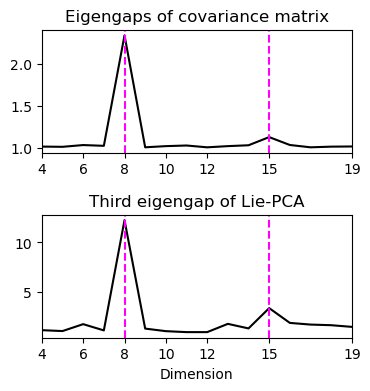}
\end{minipage}
\caption{Ratio of consecutive eigenvalues of the covariance matrix (\textbf{top}), and ratio between third and fourth eigenvalue of the \texttt{LiePCA} operator, as a function of the embedding dimension (\textbf{bottom}), for the three armadillo datasets. Images generated 
from the mesh (\textbf{left}), from the image (\textbf{middle}) or the points (\textbf{right}).
}
\label{fig:rotation_3D_methods_eigengaps}
\end{figure}

\subsubsection{Group-aware feature extraction}\label{subsec:harmonic_analysis}
Below, we suggest two applications of our algorithm in the context of regression where the data lie within orbits of representations.
In both cases, the datasets consist of collections of images that have been transformed by toric translations (Example \ref{ex:harmonic_analysis_T2}) and spatial rotations (Example \ref{ex:harmonic_analysis_SO3}), forming orbits of representations of the groups $T^2$ and $\SO(3)$. Using the representation estimated by \texttt{LieDetect}, we obtain a new, lower-dimensional set of coordinates for the data, in which the objective function $F$ assumes the form of the series described in Equation \eqref{eq: harmonic transform}. This suggests an application of \texttt{LieDetect} for group-aware feature extraction, which allows for more efficient machine learning models compared to estimating the objective function directly from the natural embedding of images in the Euclidean space.

\begin{example}[Regression via harmonic analysis on the torus]\label{ex:harmonic_analysis_T2}
Let us refer back to the gorillas of Example \ref{ex:gorillas_torus}, observed to form a point cloud $X=\{x_i\}_{i=1}^N$ that lies on an orbit of a representation of the torus (see Figure \ref{fig: gorilla torus}). Any complex function $F$ with domain the set of images may be then seen as the restriction $\widetilde{F}|_X$ of a map $\widetilde{F}: T^2\to \C$, which is well-behaved enough to be expressed as 
\begin{equation}\label{eq:linear reg ha}
\widetilde{F}(\theta_1, \theta_2) = \sum_{n_1, n_2 \in \Z} \widehat{f}(n_1, n_2) \cdot\exp(2\pi i\theta_1 n_1)\cdot \exp(2\pi i\theta_2 n_2)
\end{equation}
for the usual coordination $(\theta_1, \theta_2)$ of $T^2$ and for the Fourier coefficients $\widehat{f}$ defined as in Example \ref{ex: fourier}.
Notice that, since $\widehat{f}(n_x, n_y)$ are just coefficients, they may be learned through linear regression from a training set of values $F(\theta_1,\theta_2)$ for $(\theta_1,\theta_2)\in T^2$.
For instance, in \cite{miller1973complex}, the authors give a Maximum Likelihood Estimation of these complex coefficients as
\begin{equation*}
\widehat{{\beta}}(n_1,n_2) = (A^* A)^{-1}A^* \vec{F},
\end{equation*}
where $\vec{F}$ is the vector of all outputs of $F$, and $A$ is a regressor matrix.
In addition, this procedure relies on a hyperparameter $\nu_\mathrm{max}$, the maximal frequency, which, the bigger it becomes, the finer an approximation of $F$ we achieve. Of course, the best result happens in the infinite limit; nevertheless, in practice, taking values of $\nu_\mathrm{max}$ of order $10$ is often enough.

In our context, however, the input is given as a collection of images $x_i\in\R^n$, which are not explicitly mapped to a point $(\theta_1,\theta_2)$ of the torus.
This is where \texttt{LieDetect} comes into play, by estimating a representation of $T^2$ in $\R^n$ that describes the data.
More precisely, the algorithm gives a representation $\phi\colon T^2\rightarrow \SO(n)$ and an initial point $x_0$ such that the orbit 
$$\big\{\phi(\theta_1,\theta_2)\cdot x_0 \mid (\theta_1,\theta_2)\in T^2 \big\}$$ 
lies close to $X$. 
Through the derived representation $\d \phi$, we can describe this orbit as
$$\big\{\exp(\d\phi(t_1,t_2))\cdot x_0 \mid (t_1,t_2)\in \mathfrak{t}^2 \big\}.$$
It is even enough to consider a subset of the Lie algebra $\mathfrak{t}^2$, namely, a fundamental domain for the exponential map, as described in Section~\ref{sec:step4_hasdorff}.
Now, any image $x_i\in X$ can be pulled back to $\mathfrak{t}^2$, by identifying which pair minimizes the cost
\begin{equation}\label{eq:toroidal_coordinates}
\min_{(t_1,t_2)\in \t^2} \big\|x_i - \exp(\d\phi(t_1,t_2))\cdot x_0 \big\|.    
\end{equation}
Through this procedure, one obtains a map $X\rightarrow \t^2$, that we shall call \textit{toroidal coordinates}. Therefore, these smaller dimensional coordinates ultimately work as a feature extraction step, which simplifies the original Euclidean embedded data before the application of machine learning models. In particular, using these coordinates, we know the objective function to be approximately given by Equation \eqref{eq:linear reg ha}, which, again, might be computed by the very simple Maximum Likelihood Estimation described above.

Figure \ref{fig: filter gorilla} defines a function that illustrates a regression problem: we consider the Gaussian $130\times 120$ pixels filter shown on the left.
Then, we take the convolution of the filter with the gorilla pictures, that is, multiply, pixel-wise, the filter and the image, and sum the values of the resulting pixels. We see that, by fixing the filter and applying this convolution process to each of the gorillas' images, we define a function $F\colon X\to \R$, whose final value is a measurement of the total brightness of the image \emph{after} multiplication by the filter.
On the right of Figure \ref{fig: filter gorilla}, we represent the function $F$ as a color plot on the $65\times 60$ grid, where the color of each point corresponds to the value of an image $x_i\in X$.
We remind the reader that the set of images has been obtained by translating an initial gorilla 65 times in the vertical and 60 times in the horizontal direction, hence they can be arranged in a grid of shape $65\times60$.
Notice that, for the points closer to the center---that is, for the least translated images---$F$ achieves smaller values since less of the white part of the gorilla's image is exposed to the white part of the filter. 

\begin{figure}[ht]
\centering
\includegraphics[width=\textwidth]
{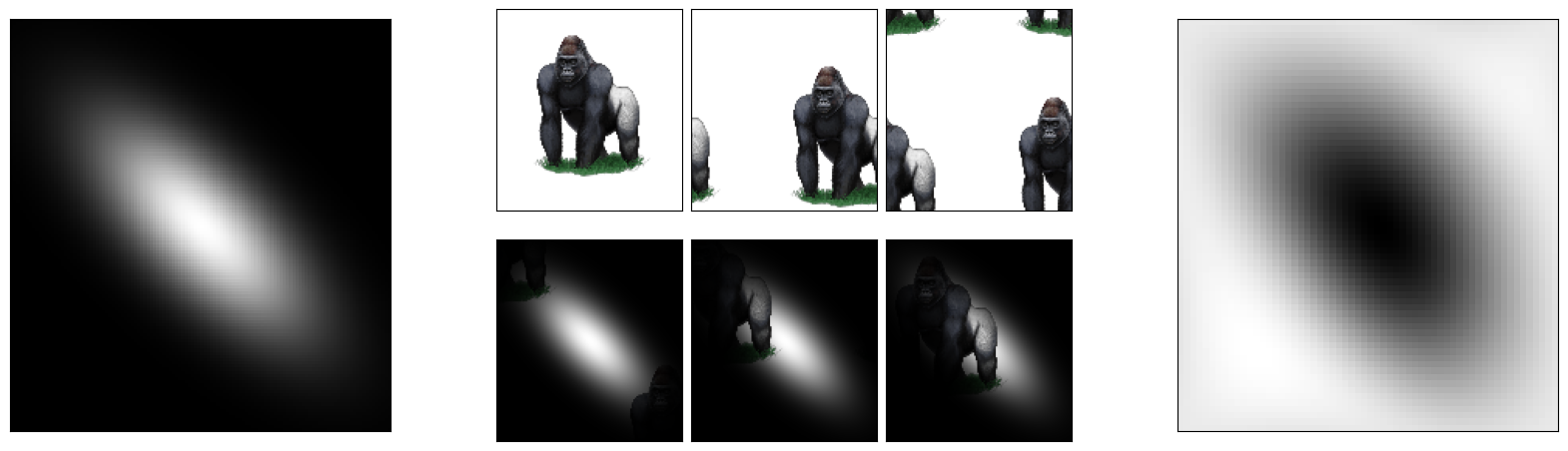}
\caption{Illustrations of Example \ref{ex:harmonic_analysis_T2}.
\textbf{Left:} our Gaussian filter is a $130\times 120$ image.
\textbf{Middle:} each gorilla image is multiplied coordinate-wise with the filter, and the sum of the pixels is then computed. This yields a real value $F(x_i)$ for each image $x_i$.
\textbf{Right:} representation of the values $\{F(x_i)\}_{i=1}^N$, where the inputs are arranged in a grid of shape $65\times60$. 
}
\label{fig: filter gorilla}
\end{figure}

To transform this into an actual regression problem, we randomly split the data, here seen as the vector embedding of the images $X=\{x_i\}_{i=1}^N$ (inputs) and their respective values $\{F(x_i)\}_{i=1}^N$ (outputs), into a training set of $90\%$ the original size, and a test set with the remaining $10\%$. 
To speed up computations, the models are not fed directly with the $130\times120$ pixels images, but with the projection of the point cloud in $\R^{50}$.
Table \ref{tb: models} shows the Mean Squared Error (MSE) achieved on test data by classic regression methods.
The random forest used \texttt{sklearn}'s implementation default parameters. For the SVM, we used RBF kernel and grid search on the sets $\{0.1, 1, 10, 100, 1000\}$ and $\{0.1, 0.2, 0.5, 1\}$ for the parameters $C$ and $\epsilon$, respectively. Finally, different feedforward neural network architectures with further $10\%$ data dedicated to validation were tested, and the reported result corresponds to the prediction of the one that achieved the smallest MSE on test data.

Lastly, the toroidal coordinates model has been trained following the procedure described above.
Using only the training data, a PCA projection into $\R^4$ was performed, and \texttt{LieDetect} was used to estimate a representation of $T^2$ that fits the data, just as we have done in Example \ref{ex:gorillas_torus}.
Toroidal coordinates $c\colon X\rightarrow \t^2$ have been found by sampling many points on $\t^2$ and identifying those minimizing Equation \eqref{eq:toroidal_coordinates}.
Finally, Maximum Likelihood Estimation using \cite{miller1973complex} on Equation \eqref{eq:linear reg ha} has been performed, taking for input data the coordinates $c(x_i)$ and their respective value $F(x_i)$.
This model can then be used to predict the value $F(x)$ of an image $x$ in the test set by simply projecting it into $\R^4$, following the PCA projection performed on the test data, identifying toroidal coordinates $c(x)$, and computing the value of the regression function on it.
This procedure is shown in Figure \ref{fig:toroidal_coordinates}, where each input $x$ is represented via its coordinates $x(c)\in[0,1]^2$, and colored with the corresponding value $F(x)$.
The initial task has been reduced to a simple regression problem on the square.

Although Table \ref{tb: models} indicates that the harmonic analysis performs better on this task, we stress that this list of models is far from exhaustive and it is very likely that fine-tuning extra hyperparameters on the SVM and neural network implementations would allow for models that improve over the Lie group-oriented technique. Moreover, the embedded gorilla images have a close-to-zero deviation from the underlying torus manifold, which is unlikely to happen in other applications. The point, however, is that through the geometric knowledge of data, competitive solutions to machine learning tasks are possible, indicating that the methods described in this article represent not only gains in terms of explainability but also in accuracy.

\begin{table}[ht]
\centering
\begin{tabular}{||c|c||}\hline
Model&MSE on test data\\\hline
Random forest&5.9953$\times 10^{-5}$\\Support Vector Machine&8.6959$\times 10^{-5}$\\Neural Network&6.0687$\times 10^{-5}$\\
Toroidal coordinates&\textbf{4.0333}$\times 10^{-5}$\\\hline
\end{tabular}
\vspace{.25cm}
\caption{MSE of the models for regression with abstract harmonic analysis in Example \ref{ex:harmonic_analysis_T2}. 
The models were trained on $90\%$ of the gorilla image dataset. The best score is shown in bold.}
\label{tb: models}
\end{table}

\begin{figure}[ht]
\centering
\includegraphics[width=0.7\textwidth]{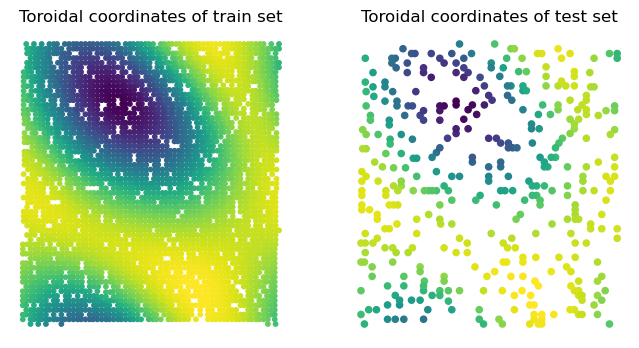}
\caption{\textbf{Left:} \texttt{LieDetect} allows to associate each image $x$ of the train set to toroidal coordinates $c(x)\in[0,1]^2$. The points are colored by the value of the objective function $F(x)\in\R$. \textbf{Right:} harmonic analysis allows for a regression of $F$, subsequently computed on test images.}
\label{fig:toroidal_coordinates}
\end{figure}
\end{example}

\begin{example}[Prediction of rotation matrices]\label{ex:harmonic_analysis_SO3}
Another task that has received attention in Machine Learning is that of detecting, based on an initial 3D image and a rotated version of it, the orthogonal matrix underlying the transformation \cite{saxena2009learning}.
We aim to show in this example how \texttt{LieDetect} can be used for such a task.
More precisely, we shall consider the set $\{x_i\}_{i=1}^N$ of $N=5000$ images of armadillos defined in Section \ref{subsubsec:rot_3D_bodies}, obtained by rotating an initial 3D image (input).
Our regression goal, $\{F(x_i)\}_{i=1}^N$, is the unit vector of $\R^3$ that points to the head of the armadillo (output).
Just as before, we build a training set of $90\%$ of the original size and test with the remaining $10\%$.
Figure \ref{fig:rotation_3D_regression} shows the values of $F(x_i)$.

We remember from Section \ref{subsubsec:rot_3D_bodies} that the images support an action of $\SO(3)$ when projected in $\R^8$.
We thus consider this particular dimension and estimate the underlying orbit with \texttt{LieDetect}.
The output is an orthogonal representation $\phi\colon \SO(3)\rightarrow \SO(8)$, whose orbit $\{\phi(g)\cdot x_0\mid g\in \SO(3) \}$, with $x_0\in\R^8$ an arbitrary initial point, lies close to the point cloud (as in Table \ref{table:armadillos_rotated}).
In a second step, following the idea of harmonic analysis, we use the representation $\phi$ to pull the points $x_i$ back to the algebra $\so(3)$ via the derived representation $\d\phi$. 
This is done by sampling many points on $\so(3)$, and identifying, for each $x_i$, a $c_i\in\so(3)$ which minimizes 
$$
\min_{c\in \so(3)} \big\|x_i - \exp(\d\phi(c))\cdot x_0 \big\|.
$$
This procedure yields a new representation of the data $\{x_i\}_{i=1}^N$, as a set of points $\{c_i\}_{i=1}^N$ in $[0,1]^3$. As already observed in Figure \ref{fig:rotation_3D_regression}, the objective function $F$, now seen with domain $\so(3)$, looks simpler. We shall call $\{c_i\}_{i=1}^N$ the \textit{orthogonal coordinates} for the point cloud $X$.

Although we could proceed with a harmonic decomposition of the objective function $F$ (using spherical harmonics) and solve the regression problem again with a Maximum Likelihood Estimation---similarly to what we did in the previous example---we want here to stress the feature extraction property of using orthogonal coordinates, which improves the ability of estimation independently of the exact model we apply after transforming $F$ into a well-behaved infinite sum. We train an SVM classifier with input $\{c_i\}_{i=1}^N$ and output $\{F(x_i)\}_{i=1}^N$. More precisely, three SVM regressions are computed (one per coordinate of the output), using a standard radial basis function kernel and optimizing the parameters on a large grid.
As seen in Table \ref{tab:rotation_3D_regression_scores}, a very good score is attained: the mean squared error is only $0.0066$.
In comparison, we train a collection of SVM classifiers, now on the full input $\{x_i\}_{i=1}^N$, after applying dimension reduction, with dimensions ranging from 3 to 10. 
It is striking that the classifier trained on orthogonal coordinates, which are only 3-dimensional, returns a better score than those trained on the initial point cloud, even when reduced to dimension 10 (the error is $0.0122$).
As suggested by Figure \ref{fig:rotation_3D_regression}, taking the points back into the Lie algebra $\so(3)$ acts as a kind of signal decorrelation, extracting features relevant to the action of $\SO(3)$.

\begin{figure}[ht]\centering
\includegraphics[width=0.99\linewidth]
{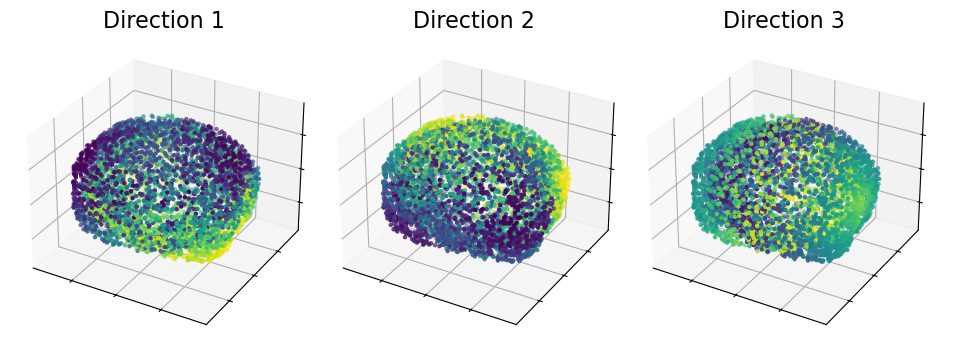}\\
\includegraphics[width=0.99\linewidth]
{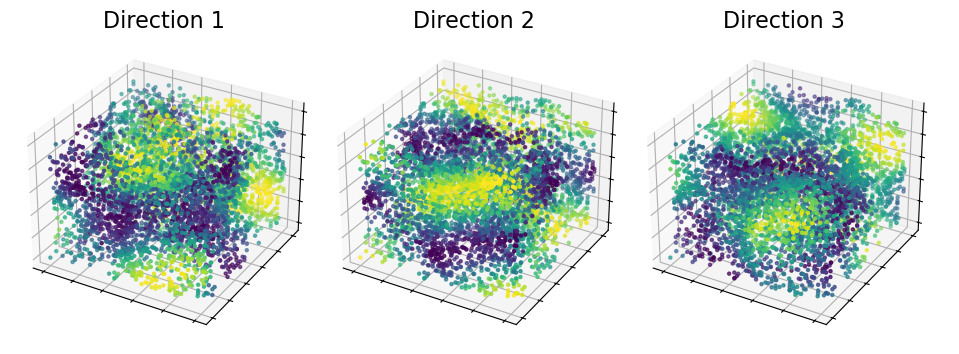}
\caption{Illustrations of Example \ref{ex:harmonic_analysis_SO3}.
\textbf{Top:} The regression task associates to a 3D image $x_i\in\R^n$ the unit vector $F(x_i)\in\R^3$ pointing to the head of the armadillo. This data is represented as scatter plots, for each coordinate of $F(x_i)$ (Direction 1, 2, and 3), with the color indicating the magnitude, and with the base point being the point cloud $\{x_i\}_{i=1}^N$ projected in $\R^3$ via PCA for visualization purposes.
\textbf{Bottom:} The magnitudes of $F(x_i)$ are also represented, now based on the orthogonal coordinates $\{c_i\}_{i=1}^N$, obtained by pulling back the 3D images $x_i$ to the Lie algebra $\so(3)$. This results in a point cloud in $[0,1]^3$.
}
\label{fig:rotation_3D_regression}
\end{figure}

\begin{table}[H]
\centering
\begin{tabular}{||c|c||}\hline
\hline Model&MSE on test data\\*\hline
SVM in dimension 3&0.4003\\SVM in dimension 4&0.2496\\SVM in dimension 5&0.1295\\SVM in dimension 6&0.0380\\SVM in dimension 7&0.0148\\SVM in dimension 8&0.0119\\SVM in dimension 9&0.0114\\SVM in dimension 10&0.0122\\SVM on orthogonal coordinates&\textbf{0.0066}\\*\hline
\end{tabular}
\vspace{.2cm}
\caption{Models trained on $90\%$ of the armadillo 3D image dataset (\textbf{left column}) and their test Mean Squared Error (\textbf{right column}). 
The set of 5000 images is projected in dimension $n$ ranging from $3$ to $10$ via PCA, yielding a point cloud $x_i\in\R^n$, and an SVM classifier is fitted to estimate the unit vector $F(x_i)$ of $\R^3$ that points towards the head of the armadillo.
In addition, the same SVM is trained on the orthogonal coordinates $\{c_i\}_{i=1}^N\subset \R^3$ instead of $\{x_i\}_{i=1}^N\subset\R^n$. It shows a better score (shown in bold), even though this encoding of the data is only of dimension 3.}
\label{tab:rotation_3D_regression_scores}
\end{table}
\end{example}

\subsection{Machine learning}

In this section, we extend the use of \texttt{LieDetect} to two additional data science tasks: sampling on orbits of representations and equivariant neural networks.

\subsubsection{Density estimation}\label{ex:sampling}
\texttt{LiePCA} (\ref{item:step2}) was originally proposed in \cite{DBLP:journals/corr/abs-2008-04278} for density estimation and sampling problems. In their setting, $X=\{x_i\}_{i=1}^N$ is a finite sample of an unknown distribution $\mu$ supported on a manifold $\man$ embedded in a real vector space $\R^n$, and with symmetry group $G=\text{Sym}(\man)$ acting transitively on it. 
The problem consists of sampling, from the knowledge of $X$ only, new points from the distribution.
The authors proceed to draw the new points $Y=\{y_j\}_{j=1}^{N'}$ using the output $\{A_k\}_{k=1}^{\dim G}$ of \ref{item:step2} through 
\begin{equation}\label{eq:samples}
   y_j = \exp\bigg[\sum_{k=1}^{\dim G} t_k A_k\bigg]\cdot x_i
\end{equation}
where for each $j$, a point $x_i$, with $1\leq i \leq n$, called $y_j$'s \textit{source}, is randomly selected from the original dataset, and $t_k$ are drawn from a multivariate Gaussian distribution in $\R^{\dim G}$. 
In particular, the Gaussian distribution is taken to have zero mean and diagonal covariance, set according to the Silverman's rule of thumb\footnote{The Silverman's factor of a subset of $n$ points of a $l$-dimensional submanifold is $\big(\frac{n(l+2)}{4}\big)^{-\frac{1}{l+4}}$.} \cite{silverman2018density}. This technique, originally from kernel density estimation, scales the variances according to what is the most natural to the implicit dimension of $\man$, which can be estimated through manifold learning techniques, but we will assume given in our experiments.

Some care is, nonetheless, important here: because $\{A_k\}_{k=1}^{\dim G}$ are estimations of (a basis for) the \textit{vector space} $\mathfrak{sym}(\mathcal{\man})$, with no regards to its group structure, directly sampling according to Equation \eqref{eq:samples} may cause deviations from $\man$, especially for high values of $t_k$. This is illustrated on the left side of Figure \ref{fig:samples}, where the result of \texttt{LiePCA}, $A$, applied to samples $\{x_i\}_{i=1}^N$ of an orbit of a $\SO(2)$-representation in $\R^4$ is used to generate new samples through the law
\begin{equation}\label{eq:sample}
    y_j = \exp(t_j A)\cdot x_1
\end{equation}
where we now artificially fix a single source $x_1$ and $t_j$ is drawn uniformly on a fixed real interval $[0,T]$, with $T$ chosen so that a full period of the representation is performed. Because $A$ is not guaranteed to be an element of $\sym(\man)$, its exponential is not guaranteed to be periodic, explaining why the generated orbit (in blue) does not `close'. The authors of \cite{DBLP:journals/corr/abs-2008-04278} remedy this situation by resampling $y_j$ if its distance to $X$ is larger than a fixed threshold $\tau$. When this happens, one draws again $y_j$ for the same choice of source, but new values of $t_k$. Their choice for $\tau$ is not made explicit in the article, but it can be neither too small (the sampled distribution would not have connected support) nor too big (there might be a lot of noise due to the lack of periodicity).
In what follows, we will take it to be the maximum distance between two closest neighbors in $X$. With this correction, the authors empirically show that using \texttt{LiePCA} for sampling outperforms other techniques such as local PCA or kernel density estimation.

\begin{figure}[ht]
\centering
\begin{minipage}{0.49\linewidth}\center
\includegraphics[width=0.8\textwidth]{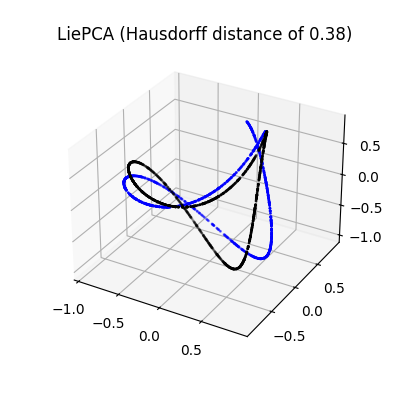}
\end{minipage}
\begin{minipage}{0.49\linewidth}\center
\includegraphics[width=0.8\textwidth]{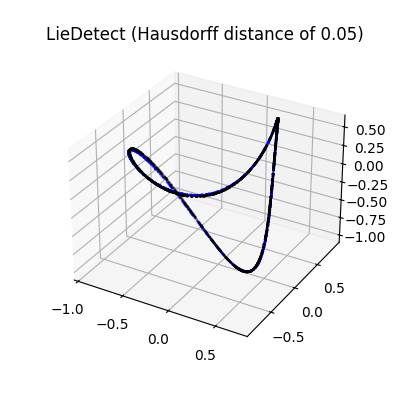}
\end{minipage}
\caption{New samples (blue) generated from a single source as in the law of Equation~\eqref{eq:sample}, where $A$ is estimated using only \texttt{LiePCA} (\textbf{left}) or using the full \texttt{LieDetect} algorithm (\textbf{right}) applied to a $\SO(2)$ orbit in $\R^4$. Black samples correspond to a ground-truth distribution (uniform distribution) on the orbit. 
Corresponding non-symmetric Hausdorff distances from the generated points and points from the ground-truth distribution are shown above the figures.}
\label{fig:samples}
\end{figure}

It is natural to wonder whether the sampling technique of the authors of \texttt{LiePCA} is affected if we substitute $\{A_k\}_{k=1}^{\dim G}$ by the outputs of \texttt{LieDetect}'s \ref{item:step3}, which we denote by $\{\widehat{A}_k\}_{k=1}^{\dim G}$. 
Because $A_k$ are estimated through tangent spaces only, while $\widehat{A}_k$ takes into account the Lie algebra structure, sampling using the latter is likely more robust to noise or small values of $n$ than the former. We compare these two strategies with a third, perhaps more natural, in which we rely on the periodicity of $\widehat{A}_k$ to arbitrarily fix a \textit{single source} $x_1$ and sample according to 
\begin{equation*}
    y_j=  \exp\bigg[\sum_{k=1}^{\dim G}t_k \widehat{A}_k \bigg]\cdot x_1
\end{equation*}
where the $t_k$'s now form a regularly-spaced finite subset of a fundamental domain of the Lie algebra, as explained in Section \ref{sec:step4_hasdorff} (see the right-hand side of Figure \ref{fig:samples}). 
In summary, we consider the following three sampling paradigms:
\begin{itemize}
    \item Multi-source \texttt{LiePCA}: sample according to 
    \begin{equation*}
        y_j=  \exp\bigg[\sum_{k=1}^{\dim G}t_k A_k \bigg]\cdot x_i
    \end{equation*}
    with $\{A_k\}_{k=1}^{\dim G}$ being the output of \texttt{LiePCA} (\ref{item:step2}), $\{t_k\}_{k=1}^{\dim G}$ drawn from Gaussian distributions using Silverman's rule of thumb, $x_i$ drawn uniformly in $X$ (for each $y_j$), and resampling whenever the distance from $y_j$ to $X$ is greater than $\tau$.
    
    \item Multi-source \texttt{LieDetect}: sample according to 
    \begin{equation*}
        y_j=  \exp\bigg[\sum_{k=1}^{\dim G}t_k \widehat{A}_k \bigg]\cdot x_i
    \end{equation*}
    with $\{\widehat{A}_k\}_{k=1}^{\dim G}$ being the output of \ref{item:step3}, $\{t_k\}_{k=1}^{\dim G}$ drawn from Gaussian distributions using Silverman's rule of thumb, $x_i$ drawn uniformly in $X$ (for each $y_j$), and resampling whenever the distance from $y_j$ to $X$ is greater than $\tau$.
    
    \item Single-source \texttt{LieDetect}: sample according to
    \begin{equation*}
        y_j=  \exp\bigg[\sum_{k=1}^{\dim G}t_k\widehat{A}_k \bigg]\cdot x_1
    \end{equation*}
    with $\{\widehat{A}_k\}_{k=1}^{\dim G}$ being the output of \ref{item:step3}, $\{t_k\}_{k=1}^{\dim G}$ chosen on a regularly-spaced finite subset of the Lie algebra $\langle\widehat{A}_k\rangle_{k=1}^{\dim G}$, and $x_1$ a fixed random point in $X$. 
\end{itemize}
We note that while multi-source sampling should approximately reproduce any distribution $\mu$ supported in $\man$, single-source \texttt{LieDetect} is, in principle, only useful when $\mu$ is uniform. 
Moreover, we point out that there is a lot of room to improve on the choice of $x_1$, although we have not further explored this direction here.

We compare, in Table \ref{table:experiment_density_estimation}, the three sampling paradigms for different configurations, assuming the distribution $\mu$ uniform on orbits of different Lie groups $G$, where we also vary the embedding space $\R^n$, the number of sampled points $N$ and the level of noise added to the data. 
To isolate the performance of each technique, we manually chose representation types that do not require an application of PCA in \ref{item:step1}. 
Following \texttt{LiePCA}'s original article, we use the \textit{symmetric} Hausdorff distance as a metric of quality of sampling, which we computed between the estimated samples (respectively 500, 5000 and 8000 for groups of dimension 1, 2, and 3) and the same number of ground-truth samples (chosen as evenly spaced on the orbit).
Some clear patterns can be observed, depending on the initial sample size and the presence of noise.

First, single-source \texttt{LieDetect} consistently outperforms the other methods in the regime of many points and no noise (second row of each representation), except in one case.
This is no surprise: in this setting, \texttt{LieDetect} is expected to estimate the representation type correctly, and the orbit is reconstructible from any point.
The only example where \texttt{LiePCA} obtains a better score is the representation $(3,4)$ of $\SU(2)$ in $\R^7$. 
As a matter of fact, this representation is particularly more challenging to estimate than the others. 
Indeed, we saw in Example~\ref{ex:rep_SU2_R7} that the symmetry group of the corresponding orbit has dimension 4 and not 3 (see Figure \ref{fig:repsR7_LiePCA}), which makes optimization of \ref{item:step3} more difficult, sometimes failing to identify it. 
In this case, the orbit generated by single-source \texttt{LieDetect} is not close to the underlying theoretical orbit.
Nevertheless, it is interesting to observe that, whereas multi-source \texttt{LiePCA} obtains an average distance of 0.25 with a very low variance, single-source \texttt{LieDetect} obtains a distance either around 0.2 (60\% of the time), better than \texttt{LiePCA}, or around 1, when the optimization fails.

With fewer input points, however, \texttt{LieDetect} more often fails to identify the representation, leading to a single-source-generated orbit far away from the underlying orbit (first row of each representation). Nonetheless, even in this case, \texttt{LieDetect}'s correction has some benefit over \texttt{LiePCA}, yielding better scores in the multi-source case.
It should be noted, however, that the difference between both is less significant, with a representation ($T^2$ in $\R^6$) for which multi-source \texttt{LiePCA} shows a lower Hausdorff distance.
This phenomenon is also particularly apparent in the case of many sampled points (and still no noise): multi-source \texttt{LiePCA} and multi-source \texttt{LieDetect} have similar scores, sometimes even equal.

In the presence of noise, similar patterns can be observed, depending on the Lie group.
When the group is $\SO(2)$, multi-source \texttt{LieDetect} is superior to the other methods, both with few and many initial points (third and fourth row of each representation).
This observation, however, is undermined by groups of dimension 2 and 3: in the many points regime, single-source \texttt{LieDetect} performs better; while with few points, it is observed that any of the three methods can yield a better score.
Such a phenomenon can be attributed to the `complexity' of the representations: while certain groups only admit one almost-faithful representation in $\R^n$ (such as $\SU(2)$ in $\R^5$), for which \texttt{LieDetect} is assured to output the correct representation type, others present many such representation (such as $T^2$ in $\R^6$, admitting 56 almost-faithful representations with frequencies at most $2$).
In the latter case, \texttt{LieDetect} may not estimate the correct representation, hence single-source \texttt{LieDetect} is bound to fail.

In conclusion, adding our \ref{item:step3} to the pipeline of \texttt{LiePCA}'s density estimation allows us to consistently improve the method, or at least perform as well.
An even more striking improvement is provided by single-source \texttt{LieDetect} (the method employed in our other applications), although it only succeeds if enough initial points are provided and $\mu$ is known to be uniform.

\begin{table}[ht!]\center
\begin{tabular}{|ccc|rr|ccc||}
\hline
\multicolumn{3}{||c|}{Orbit} & \multicolumn{2}{c|}{Parameters} & \multicolumn{3}{c||}{Method} \\ \hline
\multicolumn{1}{||c|}{\begin{tabular}{@{}c@{}}Lie \\ group\end{tabular}}   & \multicolumn{1}{c|}{\begin{tabular}{@{}c@{}}$\R^n$ \\  dim.\end{tabular}} & \multicolumn{1}{c|}{\begin{tabular}{@{}c@{}}Rep. \\  type\end{tabular}} & \multicolumn{1}{c|}{\begin{tabular}{@{}c@{}}Number \\ samples\end{tabular}} & \begin{tabular}{@{}c@{}}Noise\\ STD\end{tabular} & \multicolumn{1}{c|}{\begin{tabular}{@{}c@{}}Multi-source \\ \texttt{LiePCA}\end{tabular}} & \multicolumn{1}{c|}{\begin{tabular}{@{}c@{}}Multi-source \\ \texttt{LieDetect}\end{tabular}} & \multicolumn{1}{c||}{\begin{tabular}{@{}c@{}}Single-source \\ \texttt{LieDetect}\end{tabular}} \\ \hline\hline

% SO(2) ----------------------------------------------------------------------%

\multicolumn{1}{||c|}{\multirow{4}{*}{$\SO(2)$}} &         \multicolumn{1}{c|}{\multirow{4}{*}{4}} &         \multirow{4}{*}{(1, 2)} & \multicolumn{1}{r|}{30} &         \multirow{2}{*}{0} & \multicolumn{1}{c|}{0.64$\pm$0.12} &         \multicolumn{1}{c|}{\textbf{0.50$\pm$0.16}} & 0.97$\pm$0.21 \\ \cline{4-4} \cline{6-8}         \multicolumn{1}{||c|}{} & \multicolumn{1}{c|}{} & &         \multicolumn{1}{r|}{100} & & \multicolumn{1}{c|}{0.10$\pm$0.04} &         \multicolumn{1}{c|}{0.08$\pm$0.02} & \textbf{0.05$\pm$0.03} \\ \cline{4-8}         \multicolumn{1}{||c|}{} & \multicolumn{1}{c|}{} & &         \multicolumn{1}{r|}{30} & \multirow{2}{*}{0.1} &         \multicolumn{1}{c|}{0.77$\pm$0.12} & \multicolumn{1}{c|}{\textbf{0.68$\pm$0.17}} &         0.99$\pm$0.24 \\ \cline{4-4} \cline{6-8}         \multicolumn{1}{||c|}{} & \multicolumn{1}{l|}{} & &         \multicolumn{1}{r|}{100} & & \multicolumn{1}{c|}{0.41$\pm$0.05} &         \multicolumn{1}{c|}{\textbf{0.38$\pm$0.04}} & 0.44$\pm$0.17 \\ \hline\hline 

\multicolumn{1}{||c|}{\multirow{4}{*}{$\SO(2)$}} &         \multicolumn{1}{c|}{\multirow{4}{*}{6}} &         \multirow{4}{*}{(1, 2, 3)} & \multicolumn{1}{r|}{30} &         \multirow{2}{*}{0} & \multicolumn{1}{c|}{0.88$\pm$0.13} &         \multicolumn{1}{c|}{\textbf{0.84$\pm$0.10}} & 1.13$\pm$0.08 \\ \cline{4-4} \cline{6-8}         \multicolumn{1}{||c|}{} & \multicolumn{1}{c|}{} & &         \multicolumn{1}{r|}{200} & & \multicolumn{1}{c|}{0.11$\pm$0.03} &         \multicolumn{1}{c|}{0.11$\pm$0.03} & \textbf{0.03$\pm$0.01} \\ \cline{4-8}         \multicolumn{1}{||c|}{} & \multicolumn{1}{c|}{} & &         \multicolumn{1}{r|}{30} & \multirow{2}{*}{0.1} &         \multicolumn{1}{c|}{1.00$\pm$0.13} & \multicolumn{1}{c|}{\textbf{0.88$\pm$0.09}} &         1.15$\pm$0.11 \\ \cline{4-4} \cline{6-8}         \multicolumn{1}{||c|}{} & \multicolumn{1}{l|}{} & &         \multicolumn{1}{r|}{200} & & \multicolumn{1}{c|}{0.53$\pm$0.05} &         \multicolumn{1}{c|}{\textbf{0.49$\pm$0.06}} & 0.90$\pm$0.18 \\ \hline\hline 

\multicolumn{1}{||c|}{\multirow{4}{*}{$\SO(2)$}} &         \multicolumn{1}{c|}{\multirow{4}{*}{6}} &         \multirow{4}{*}{(1, 3, 10)} & \multicolumn{1}{r|}{50} &         \multirow{2}{*}{0} & \multicolumn{1}{c|}{0.98$\pm$0.02} &         \multicolumn{1}{c|}{\textbf{0.89$\pm$0.06}} & 1.06$\pm$0.11 \\ \cline{4-4} \cline{6-8}         \multicolumn{1}{||c|}{} & \multicolumn{1}{c|}{} & &         \multicolumn{1}{r|}{500} & & \multicolumn{1}{c|}{0.31$\pm$0.07} &         \multicolumn{1}{c|}{0.31$\pm$0.07} & \textbf{0.06$\pm$0.11} \\ \cline{4-8}         \multicolumn{1}{||c|}{} & \multicolumn{1}{c|}{} & &         \multicolumn{1}{r|}{50} & \multirow{2}{*}{0.1} &         \multicolumn{1}{c|}{1.07$\pm$0.09} & \multicolumn{1}{c|}{\textbf{0.89$\pm$0.05}} &         1.07$\pm$0.13 \\ \cline{4-4} \cline{6-8}         \multicolumn{1}{||c|}{} & \multicolumn{1}{l|}{} & &         \multicolumn{1}{r|}{500} & & \multicolumn{1}{c|}{0.49$\pm$0.04} &         \multicolumn{1}{c|}{\textbf{0.46$\pm$0.04}} & 0.65$\pm$0.18 \\ \hline\hline 

% T^2 ------------------------------------------------------------------------%

\multicolumn{1}{||c|}{\multirow{4}{*}{$T^2$}} &         \multicolumn{1}{c|}{\multirow{4}{*}{6}} &         \multirow{4}{*}{$\begin{pmatrix}1&2&1\\2&1&1\end{pmatrix}$} & \multicolumn{1}{r|}{100} &         \multirow{2}{*}{0} & \multicolumn{1}{c|}{\textbf{0.84$\pm$0.06}} &         \multicolumn{1}{c|}{0.86$\pm$0.03} & 1.07$\pm$0.13 \\ \cline{4-4} \cline{6-8}         \multicolumn{1}{||c|}{} & \multicolumn{1}{c|}{} & &         \multicolumn{1}{r|}{1000} & & \multicolumn{1}{c|}{0.21$\pm$0.03} &         \multicolumn{1}{c|}{0.17$\pm$0.01} & \textbf{0.15$\pm$0.00} \\ \cline{4-8}         \multicolumn{1}{||c|}{} & \multicolumn{1}{c|}{} & &         \multicolumn{1}{r|}{100} & \multirow{2}{*}{0.1} &         \multicolumn{1}{c|}{1.04$\pm$0.08} & \multicolumn{1}{c|}{\textbf{0.93$\pm$0.03}} &         1.09$\pm$0.13 \\ \cline{4-4} \cline{6-8}         \multicolumn{1}{||c|}{} & \multicolumn{1}{l|}{} & &         \multicolumn{1}{r|}{1000} & & \multicolumn{1}{c|}{0.50$\pm$0.04} &         \multicolumn{1}{c|}{0.49$\pm$0.04} & \textbf{0.32$\pm$0.07} \\ \hline\hline 

% T^3 ------------------------------------------------------------------------%

\multicolumn{1}{||c|}{\multirow{4}{*}{$T^3$}} &         \multicolumn{1}{c|}{\multirow{4}{*}{8}} &         \multirow{4}{*}{\begin{tabular}{@{}c@{}} ((0, 1, -1, 0),\\(1, 0, -1, -1),\\(1, 0, -1, -1))\end{tabular}} & \multicolumn{1}{r|}{500} &         \multirow{2}{*}{0} & \multicolumn{1}{c|}{0.46$\pm$0.07} &         \multicolumn{1}{c|}{\textbf{0.31$\pm$0.03}} & 0.34$\pm$0.02 \\ \cline{4-4} \cline{6-8}         \multicolumn{1}{||c|}{} & \multicolumn{1}{c|}{} & &         \multicolumn{1}{r|}{2000} & & \multicolumn{1}{c|}{0.43$\pm$0.11} &         \multicolumn{1}{c|}{0.33$\pm$0.06} & \textbf{0.32$\pm$0.03} \\ \cline{4-8}         \multicolumn{1}{||c|}{} & \multicolumn{1}{c|}{} & &         \multicolumn{1}{r|}{500} & \multirow{2}{*}{0.1} &         \multicolumn{1}{c|}{\textbf{0.87$\pm$0.04}} & \multicolumn{1}{c|}{0.93$\pm$0.13} &         0.91$\pm$0.18 \\ \cline{4-4} \cline{6-8}         \multicolumn{1}{||c|}{} & \multicolumn{1}{l|}{} & &         \multicolumn{1}{r|}{2000} & & \multicolumn{1}{c|}{0.53$\pm$0.03} &         \multicolumn{1}{c|}{0.51$\pm$0.03} & \textbf{0.42$\pm$0.06} \\ \hline\hline 

% SU(2) ----------------------------------------------------------------------%

\multicolumn{1}{||c|}{\multirow{4}{*}{$\SU(2)$}} &         \multicolumn{1}{c|}{\multirow{4}{*}{5}} &         \multirow{4}{*}{(5)} & \multicolumn{1}{r|}{500} &         \multirow{2}{*}{0} & \multicolumn{1}{c|}{0.29$\pm$0.04} &         \multicolumn{1}{c|}{0.26$\pm$0.01} & \textbf{0.22$\pm$0.02} \\ \cline{4-4} \cline{6-8}         \multicolumn{1}{||c|}{} & \multicolumn{1}{c|}{} & &         \multicolumn{1}{r|}{2000} & & \multicolumn{1}{c|}{0.23$\pm$0.01} &         \multicolumn{1}{c|}{0.23$\pm$0.01} & \textbf{0.21$\pm$0.02} \\ \cline{4-8}         \multicolumn{1}{||c|}{} & \multicolumn{1}{c|}{} & &         \multicolumn{1}{r|}{500} & \multirow{2}{*}{0.1} &         \multicolumn{1}{c|}{0.52$\pm$0.05} & \multicolumn{1}{c|}{0.41$\pm$0.02} &         \textbf{0.27$\pm$0.05} \\ \cline{4-4} \cline{6-8}         \multicolumn{1}{||c|}{} & \multicolumn{1}{l|}{} & &         \multicolumn{1}{r|}{2000} & & \multicolumn{1}{c|}{0.48$\pm$0.04} &         \multicolumn{1}{c|}{0.42$\pm$0.02} & \textbf{0.25$\pm$0.03} \\ \hline\hline 

\multicolumn{1}{||c|}{\multirow{4}{*}{$\SU(2)$}} &         \multicolumn{1}{c|}{\multirow{4}{*}{7}} &         \multirow{4}{*}{(7)} & \multicolumn{1}{r|}{500} &         \multirow{2}{*}{0} & \multicolumn{1}{c|}{0.74$\pm$0.05} &         \multicolumn{1}{c|}{\textbf{0.54$\pm$0.01}} & 0.56$\pm$0.09 \\ \cline{4-4} \cline{6-8}         \multicolumn{1}{||c|}{} & \multicolumn{1}{c|}{} & &         \multicolumn{1}{r|}{2000} & & \multicolumn{1}{c|}{0.49$\pm$0.01} &         \multicolumn{1}{c|}{0.49$\pm$0.01} & \textbf{0.40$\pm$0.02} \\ \cline{4-8}         \multicolumn{1}{||c|}{} & \multicolumn{1}{c|}{} & &         \multicolumn{1}{r|}{500} & \multirow{2}{*}{0.1} &         \multicolumn{1}{c|}{1.01$\pm$0.04} & \multicolumn{1}{c|}{\textbf{0.59$\pm$0.02}} &         0.65$\pm$0.09 \\ \cline{4-4} \cline{6-8}         \multicolumn{1}{||c|}{} & \multicolumn{1}{l|}{} & &         \multicolumn{1}{r|}{2000} & & \multicolumn{1}{c|}{0.81$\pm$0.06} &         \multicolumn{1}{c|}{0.52$\pm$0.02} & \textbf{0.45$\pm$0.04} \\ \hline\hline 

\multicolumn{1}{||c|}{\multirow{4}{*}{$\SU(2)$}} &         \multicolumn{1}{c|}{\multirow{4}{*}{7}} &         \multirow{4}{*}{(3, 4)} & \multicolumn{1}{r|}{500} &         \multirow{2}{*}{0} & \multicolumn{1}{c|}{\textbf{0.31$\pm$0.03}} &         \multicolumn{1}{c|}{0.53$\pm$0.23} & 0.69$\pm$0.41 \\ \cline{4-4} \cline{6-8}         \multicolumn{1}{||c|}{} & \multicolumn{1}{c|}{} & &         \multicolumn{1}{r|}{2000} & & \multicolumn{1}{c|}{\textbf{0.25$\pm$0.01}} &         \multicolumn{1}{c|}{0.38$\pm$0.18} & 0.51$\pm$0.39 \\ \cline{4-8}         \multicolumn{1}{||c|}{} & \multicolumn{1}{c|}{} & &         \multicolumn{1}{r|}{500} & \multirow{2}{*}{0.1} &         \multicolumn{1}{c|}{0.55$\pm$0.03} & \multicolumn{1}{c|}{0.56$\pm$0.16} &         \textbf{0.52$\pm$0.31} \\ \cline{4-4} \cline{6-8}         \multicolumn{1}{||c|}{} & \multicolumn{1}{l|}{} & &         \multicolumn{1}{r|}{2000} & & \multicolumn{1}{c|}{0.51$\pm$0.03} &         \multicolumn{1}{c|}{0.55$\pm$0.12} & \textbf{0.45$\pm$0.25} \\ \hline\hline 

\multicolumn{1}{||c|}{\multirow{4}{*}{$\SU(2)$}} &         \multicolumn{1}{c|}{\multirow{4}{*}{11}} &         \multirow{4}{*}{(4, 7)} & \multicolumn{1}{r|}{500} &         \multirow{2}{*}{0} & \multicolumn{1}{c|}{0.91$\pm$0.02} &         \multicolumn{1}{c|}{0.76$\pm$0.02} & \textbf{0.64$\pm$0.07} \\ \cline{4-4} \cline{6-8}         \multicolumn{1}{||c|}{} & \multicolumn{1}{c|}{} & &         \multicolumn{1}{r|}{5000} & & \multicolumn{1}{c|}{0.57$\pm$0.02} &         \multicolumn{1}{c|}{0.57$\pm$0.02} & \textbf{0.44$\pm$0.02} \\ \cline{4-8}         \multicolumn{1}{||c|}{} & \multicolumn{1}{c|}{} & &         \multicolumn{1}{r|}{500} & \multirow{2}{*}{0.1} &         \multicolumn{1}{c|}{1.27$\pm$0.10} & \multicolumn{1}{c|}{\textbf{0.79$\pm$0.03}} &         0.89$\pm$0.08 \\ \cline{4-4} \cline{6-8}         \multicolumn{1}{||c|}{} & \multicolumn{1}{l|}{} & &         \multicolumn{1}{r|}{5000} & & \multicolumn{1}{c|}{0.95$\pm$0.14} &         \multicolumn{1}{c|}{0.66$\pm$0.03} & \textbf{0.58$\pm$0.12} \\ \hline
\end{tabular}
\vspace{-.25cm}
\caption{Results of the experiment described in Section \ref{ex:sampling}.
The table displays the symmetric Hausdorff distance between samples generated through Equation~\eqref{eq:samples} with multi-source \texttt{LiePCA} (sixth column), multi-source \texttt{LieDetect} (seventh column), and single-source \texttt{LieDetect} (eighth column) for different Lie groups (first column) embedded in $\R^n$ (second column).
For each set of parameters and each method, several runs were considered (100 for groups of dimension 1 and 2, and 10 for dimension 3), and we report the mean distance and its standard deviation.
In each row, the best score is indicated in bold font.
The input of the methods consists of a certain number of input points (fourth column) sampled uniformly on a representation (third column), with potentially the addition of Gaussian noise (standard deviation in the fifth column).
}
\label{table:experiment_density_estimation}
\end{table}

\subsubsection{Equivariant neural networks}\label{sec: equivariant neural networks}
Equivariant networks were born from the generalization of the $\Z^2$-translational equivariance of convolutional neural networks (CNNs), proved by \cite{lecun1995convolutional}. These extensions---known as group equivariant neural networks---introduce architectures that preserve actions of a group $G$: if $X_{\text{in}}$, the vector space where the input data lives, receives a representation $\rho_{\text{in}}$ of $G$ and $X_{\text{out}}$, the network's output space, is related to $X_{\text{in}}$ through $\mathcal{F}:X_{\text{in}}\to X_{\text{out}}$, then there is also a representation of $G$ in $X_{\text{out}}$ such that
\begin{equation*}
    \mathcal{F}(\rho_{\text{in}}(g)(x))=\rho_{\text{out}}(g)[\mathcal{F}(x)],
\end{equation*}
for all $x\in X_{\text{in}}$ and $g\in G$. Whenever this equation holds, we call both the representations, $\rho_{\text{in}}$ and $\rho_{\text{out}}$, and the spaces, $X_{\text{in}}$ and $X_{\text{out}}$, equivariant. Equivariance becomes important when data possess some symmetries and the network $\mathcal{F}$ is expected to take them into account. The now-long literature on the topic \citep{cohen2016group, jenner2021steerable, cohen2016steerable, deng2021vector, weiler2018learning} presents several tasks, both artificial and on natural data, that benefit from equivariance, as well as theoretical arguments that justify their use \citep{kondor2018generalization, jenner2021steerable}.

Actual implementations of equivariant networks are divided into two groups. The first, and perhaps most natural choice, the G-CNNs, directly expands the CNN architecture to general symmetry groups and has been extended to work even with non-compact Lie groups through Monte Carlo techniques \cite{finzi2020generalizing}. Although these implementations are easier to generalize for arbitrary groups, they sometimes perform slightly worse than the other set of implementations, known as steerable CNNs. In this second approach \citep{jenner2021steerable, cohen2016steerable, weiler2018learning}, inspired by quantum field
theory, features are modeled as vector fields, that is, (smooth) fiber bundles $f_i: \R^n\to \R^{c_i}$, where $\R^n$ is the ambient space---often assumed to be $\R^2$ or $\R^3$---and $\R^c$ are the fibers, where $c_i$ are the channels' hyperparameters of the network. At each layer $i$ of the network, the steerable architectures ensure that there exists a representation $\rho_i$ of the group $G$ on $\R^{c_i}$, which defines an action $\rho^{\text{reg}}_i$ on $f_i$ by
\begin{equation*}
    [\rho^{\text{reg}}_i(g) f_i](x)=\rho_i(g)\cdot f_i(g^{-1}\cdot x)
\end{equation*}
for all $x\in \R^n$, all $g\in G$, and where $g^{-1}\cdot x$ is some fixed $G$-representation on $\R^n$. Notice that two representations are showing up in the equation: $\rho_i$, which acts on the finite-dimensional vector space $\R^{c_i}$, and the representation $\rho^{\text{reg}}_i$, known as the corresponding regular representation, which acts on the infinite-dimensional vector space of all $\R^{c_i}$-fields over $\R^n$, $C^\infty(\R^n, \R^{c_i})$. In practice, because the ambient space $\R^n$ is often discretized---e.g., with pixels when $n=2$ or voxels when $n=3$---$\rho^{\text{reg}}_i$ becomes a representation on the finite vector space $\R^{c_i\times d}$, where $d$ is the number of discrete components. Finally, the equivariance of the whole network is guaranteed through the equivariance of any two consecutive representations, $\rho_i^{\text{reg}}$ and $\rho_{i+1}^{\text{reg}}$. In applications, some non-linear functions are also included in-between layers, but these are shown not to spoil the general equivariance property of the full network \citep{jenner2021steerable}.

Several computations have been proposed to check the equivariance of steerable networks (see, for example, \cite{e2cnn} and \cite{e3nn_paper, e3nn}) and here, we propose using \texttt{LieDetect} to check $\SO(2)$ equivariance of architectures that perform well in the rotated MNIST problem. Rotated MNIST corresponds to an augmentation of the famous MNIST
dataset, in which the handwritten digits are also rotated by their centers. As in vanilla MNIST, the task consists of predicting the digit from its image, but, because of the extra rotations, the problem is significantly harder. In \cite{e2cnn}, the authors propose using a steerable architecture for this task, which, even though implements equivariance with respect to the cardinality 8 finite rotational group (i.e., to rotations only up to $\pi/4$), achieves competitive performance for this task---about 98 \% accuracy in only 30 epochs of training. In principle, however, the choice of a finite group of rotations might imply that the network is not equivariant under the \textit{full} symmetry group $\SO(2)$, a property that would be impossible to verify using just the tools available by the \texttt{e2cnn} package \citep{e2cnn}. Moreover, because the implementation implies an application of all rotations to some square grids (known as filters), some numerical error due to interpolation might arise whenever rotations by angles that are not multiple of $\pi/2$ are applied, which means that the network may not even be exactly equivariant for the finite group $R_8$. To assess the question of whether the finite-group equivariant architectures are also equivariant under $\SO(2)$ translations, we retrain the same architecture for different cardinalities of finite groups of rotation, $R_n$, and compare each layer's representation $\rho^{\text{reg}}_i$ to representations of $\SO(2)$. All the implementations consisted of 6-steerable block networks, with some non-linearities between layers, with each block corresponding to direct sums of regular representations. Notice that even the inputs can be understood as $G$-steerable layers when seen as features $f:\R^2\to\R$, with respect to the trivial representation of $G$. Therefore, at each layer $i$, if the network has no errors, we can assume that $\rho_i^{\text{reg}}$ is a representation of $G$ on $\R^{c_i\times d_i}$, where $d_i$ is the number of grids of the output of $i$, and $i=0,\dots,6$.

With the networks trained, we evaluate them on a dataset consisting of the same image (the digit 6), rotated about its center by 500 equally spaced angles from $0$ to $2\pi$. If the networks were exactly $\SO(2)$-equivariant, the output of each of their layers evaluated at the dataset would be samples from the orbits of $\rho_i^{\text{reg}}$. We then apply \texttt{LieDetect} to each of these samples to reconstruct $\SO(2)$ representations. Figure \ref{fig: nn} shows the non-symmetric Hausdorff distance from the reconstructed orbits and the samples for the 6 different layers and each equivariant model. In general, we notice that the bigger the group's cardinality, the more likely the representations are to be representations of $\SO(2)$---which is expected, as larger $n$ should imply a better approximation of the full group. Implementations of $R_4$ equivariance, however, seem to better preserve the $\SO(2)$ actions---at least in the first layers---than implementations of $R_8$ equivariance. This behavior is most likely caused by the fact that, in the former's implementation, no interpolation of the filters is necessary, as only rotations by multiples of $\pi/2$ are considered. Nevertheless, for all cases other than $R_2$, a good approximation of $\SO(2)$-equivariance is observed, which helps to justify the success of finite cardinality rotation groups' architectures in the rotated MNIST task. Finally, we notice that the deviation from $\SO(2)$ representations tends to increase at later layers, which might be caused by the accumulation of numerical error due to interpolation. Moreover, we could not satisfactorily explain the reason for the peak in error at the second layer, a trend that was common among all cardinalities.

\begin{figure}[ht]
\centering
\includegraphics[width=0.8\textwidth]
{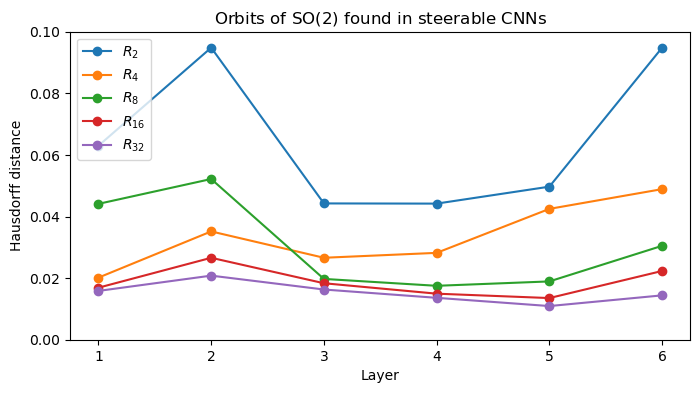}
\caption{Non-symmetric Hausdorff distance between the 500 samples used for estimation and the orbits of $\SO(2)$ representations estimated with \texttt{LieDetect} at each layer of the common $R_n$ -steerable architecture for different cardinalities of the finite rotation group $R_n$.}\label{fig: nn}
\centering
\end{figure}

\subsection{Physics}
In this section, we will exemplify how \texttt{LieDetect} may be employed in two different settings in Physics: the study of conformational spaces of molecules and the search for analytic solutions of orbits of the three-body problem.
We point out that, although intuition guided us to consider these datasets, we had no clear guarantee of the algorithm's success. The approximate linear orbits found in the cyclooctane dataset and certain three-body orbits were somewhat surprising.

\subsubsection{Conformational space of cyclooctane}\label{subsec:conformational_space}
In the study of molecular structures, the term \textit{conformation} refers to the different spatial arrangements of atoms within a molecule that can be achieved through rotation around single bonds. 
These conformations can vary due to the flexibility of these rotations, leading to different shapes and spatial orientations of the molecule.
The collection of all possible conformations a molecule can adopt constitutes its \textit{conformational space}. 
An interesting example is that of the cyclooctane molecule ($\mathrm{C}_8\mathrm{H}_{16}$): it was found by Martin \emph{et al.}~\cite{brown2008algorithmic,martin2010topology} that it has conformational space formed by the Klein bottle and a 2-sphere, intersecting in two circles.
This result has inspired the development of new Topological Data Analysis methods to analyze the conformational space of other molecules \cite{membrillo2019topology,steinberg2019topological} and to aid visualization and dimension reduction \cite{stolz2020geometric,scoccola2022fibered,lim2023hades}.

To go further in the exploration of the conformational space of cyclooctane, we investigate, in this section, whether it admits a Lie group action.
To this end, we will apply \texttt{LieDetect} on a collection of conformers generated with the popular library \texttt{RDkit} \cite{landrum2013rdkit}, used in some of the articles already cited \cite{membrillo2019topology,scoccola2022fibered}.
It is important to note that the study of conformation space depends on the metric we give it. 
If we see cyclooctane conformations as points in $\R^{72}$ (24 atoms times 3 coordinates) or $\R^{24}$ (8 atoms of carbon times 3 coordinates), then we can compare them with the Euclidean distance. 
However, another distance is commonly used: the root-mean-square deviation (RMSD), obtained by first aligning the molecules through translations, rotations, and permutations of
the atoms \cite{sadeghi2013metrics}.
In our case, we will use the first construction, since our algorithm searches for representation orbits immersed in some ambient Euclidean space.
Therefore, aligning or not the conformers at the start has an important influence on the results, as it will become clear in the following examples.

\paragraph{Conformers not aligned}
For our first experiment, we generate 10,000 conformers of cyclooctane, without aligning them.
More precisely, we generate 50,000 of them with \texttt{RDkit}, from which we remove the hydrogen atoms, to consider only the 8 carbon atoms (as is commonly done), which we then represent in $\R^{24}$ (8 atoms times 3 coordinates).
Subsequently, we select a subset of 10,000 conformers from this point cloud, starting with a random conformer, and iteratively adding the furthest point.
The resulting point cloud, visualized in Figure \ref{fig:cyclooctane_1}, seems to be made up of two parts: one tubular and one that goes all the way around. 
We expect to find a structure analogous to the union of a Klein bottle and a sphere.

Next, to apply dimension reduction, we inspect the first eigenvalues of the covariance matrix:
\begin{center}
0.474, ~0.454, ~0.012, ~0.012, ~0.008, ~0.007, ~0.006, ~0.006, ~0.006, ~0.002.
\end{center}
We choose to project the point cloud in $\R^4$ via PCA.
We then orthonormalize it, via \ref{item:step1} of our algorithm.
However, contrary to what is expected, the points, after orthonormalization, do not all have norm 1. This indicates that the whole dataset does not lie within a single representation orbit of a compact Lie group, which corroborates the expectation that it is composed of two orbits. Thus, in addition, we project them onto the unit sphere.
The resulting point cloud is visualized in Figure \ref{fig:cyclooctane_2}.
We point out that this projection is not necessary for the algorithm to work properly. Apart from slightly improving the final score, it mainly results in a more readable figure, reminiscent of that found in \cite{martin2010topology} (albeit, here, in the case of non-aligned conformers).

As seen on Figure \ref{fig:cyclooctane_2}, the point cloud exhibits even more clearly the tubular and circular components.
We extract them through the following simple procedure: the 15\% most anomalous points are discarded (using the Local Outlier Factor algorithm implemented in \texttt{sklearn}), and the remaining set is clustered into two via single-linkage clustering.
We focus on the `toroidal component' (red on the figure), made of 7520 points.

\texttt{LiePCA}, computed in \ref{item:step2}, shows two small eigenvalues, suggesting an action of the 2-torus (see Figure \ref{fig:cyclooctane_2}).
We eventually apply \ref{item:step3} and \ref{item:step4} with the torus group $T^2$.
The output is an orbit $\widehat{\O}_x$ of $T^2$, generated by an initial point $x$ of the point cloud.
The Hausdorff distances are relatively small: $\HDmid{X}{\widehat{\O}_x} \approx 0.2047$ and $\HDmid{\widehat{\O}_x}{X} \approx 0.1047$.
We conclude that the data is well approximated by a representation of $T^2$.
We point out that, unlike all the other experiments in this article, and because the data studied here are somewhat noisy, the initial point $x$ has not been arbitrarily chosen as the first point of $X$. Instead, all points of $X$ were tested, and the one giving the best result was chosen.

\begin{figure}[ht]
\begin{subfigure}[t]{.49\textwidth}\center
\includegraphics[width=.85\linewidth]{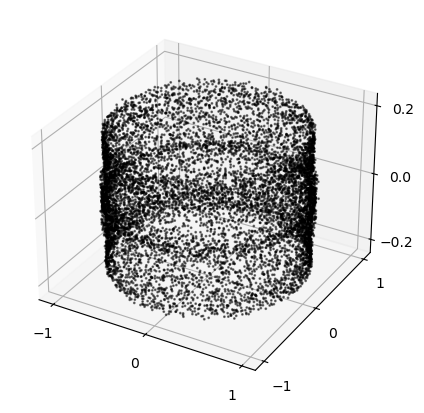}
\caption{A set of 10,000 cyclooctane conformers is generated and embedded in $\R^{24}$. 
Projected in dimension 3, one sees a cylinder surrounded by a circle.}
\label{fig:cyclooctane_1}
\end{subfigure}
\hfill
\begin{subfigure}[t]{.49\textwidth}\center
\includegraphics[width=.85\linewidth]{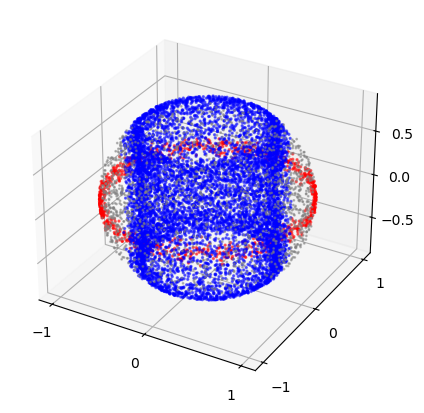}
\caption{The point cloud is projected in $\R^4$ and orthonormalized. After discarding 15\% outliers (in grey), two clusters appear clearly. We keep the red one.}
\label{fig:cyclooctane_2}
\end{subfigure}
% \end{figure}
% \begin{figure}[ht]\ContinuedFloat 
\begin{subfigure}[t]{.49\textwidth}\center
\includegraphics[width=.7\linewidth]{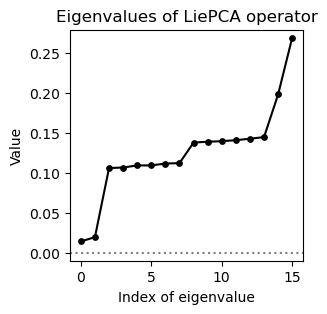}
\caption{\texttt{LiePCA} exhibits two significantly small eigenvalues, suggesting a symmetry group of dimension 2, and an action of the torus $T^2$.}
\label{fig:cyclooctane_3}
\end{subfigure}
\hfill
\begin{subfigure}[t]{.49\textwidth}\center
\includegraphics[width=.85\linewidth]{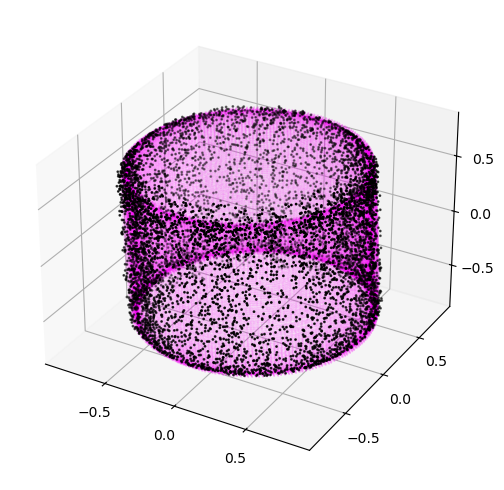}
\caption{\texttt{LieDetect} finds an orbit of a representation of $T^2$ in $\R^4$ that fits the data, at Hausdorff distance $\HDmid{X}{\widehat{\O}_x} \approx 0.2047$ and $\HDmid{\widehat{\O}_x}{X} \approx 0.1047$.}
\label{fig:cyclooctane_4}
\end{subfigure}
\caption{A component of the conformational space of cyclooctane is close to a linear orbit of the torus. In this experiment, we do not align the conformers.}
\end{figure}

\paragraph{Conformers aligned}
In the previous experiment, the conformers were not aligned with a reference conformer, potentially leading to an inaccurate conformation space.
As a matter of fact, the torus we observed does not coincide with the results of the literature, which report a Klein bottle.
To represent the molecule more accurately, we repeat the experiment, now aligning the conformers after they are generated (through \texttt{AlignMolConformers} of \texttt{RDkit}).
The resulting point cloud $X\subset\R^{24}$ is visualized on Figure \ref{fig:cyclooctane_1_align}.
In contrast to the previous dataset, the point cloud already exhibits a well-separated structure: one strip and two accumulation points.

Just as before, we extract the `strip component' by discarding the 10\% most anomalous points and classifying the resulting subset into three classes with single linkage clustering.
We keep the largest cluster, shown in red in Figure \ref{fig:cyclooctane_2_align}, that we project into $\R^4$ and orthonormalize.

The \texttt{LiePCA} operator now exhibits only one small eigenvalue (see Figure \ref{fig:cyclooctane_3_align}). 
This prompts us to apply our algorithm with the group $\SO(2)$.
\texttt{LieDetect} finds an orbit $\widehat{\O}_x$, but it does not accurately describe the data: the non-symmetric Hausdorff distance is $\HDmid{X}{\widehat{\O}_x} \approx 1.0034$. 
On the other hand, the distance from the estimated orbit to the data is small: $\HDmid{\widehat{\O}_x}{X} \approx 0.2065$.
This suggests that we have found a non-transitive action of $\SO(2)$ on $X$ (see Figure \ref{fig:cyclooctane_4_align}).

Indeed, the estimated representation of $\SO(2)$ yields a family of 1-dimensional orbits $\widehat{\O}_x$, $x\in X$, that fit the point cloud well.
Namely, the distance $\HDmid{\widehat{\O}_x}{X}$ has mean value $0.1225$, standard deviation $0.0419$, and attains the maximal value $0.3277$.
The fact that the point cloud supports a non-transitive action of $\SO(2)$ is consistent with the topology of the Klein bottle.
We point out that a similar case has been encountered in Example \ref{ex: Mobius}, with the Möbius strip.

There are two conclusions to be drawn from our analysis. Firstly, by a rather direct application of \texttt{LieDetect}, we obtain a structure compatible with the Klein bottle, as reported in the literature.
We also observe, in the unaligned data, the structure of a torus, a space that gives, by quotient, the Klein bottle.
Secondly, and perhaps more remarkably, this geometry is obtained from the initial data in $\R^{24}$ only using linear transformations. This contrasts with all the articles cited above, where non-linear dimension reduction algorithms are employed. 

\begin{figure}[H]
\begin{subfigure}[t]{.49\textwidth}\center
\includegraphics[width=.85\linewidth]{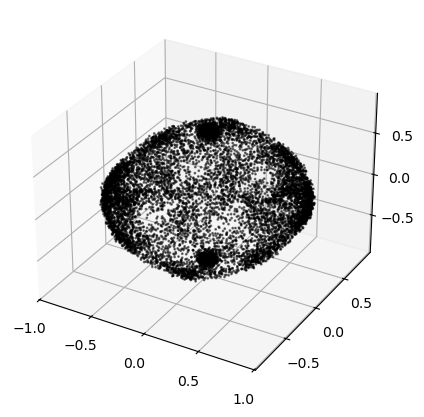}
\caption{A set of 10,000 cyclooctane conformers is generated, aligned, and embedded in $\R^{24}$. One identifies three components: a surface and two clusters.}
\label{fig:cyclooctane_1_align}
\end{subfigure}
\hfill
\begin{subfigure}[t]{.49\textwidth}\center
\includegraphics[width=.85\linewidth]{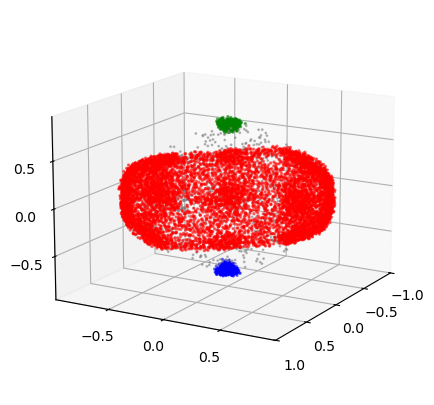}
\caption{10\% of the data is discarded (grey), and the remaining points are clustered in three classes via single linkage clustering. We keep the red cluster.}
\label{fig:cyclooctane_2_align}
\end{subfigure}
% \end{figure}
% \begin{figure}[H]\ContinuedFloat 
\begin{subfigure}[t]{.49\textwidth}\center
\includegraphics[width=.7\linewidth]{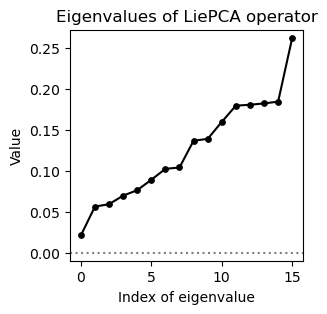}
\caption{The point cloud is projected into $\R^4$ and orthonormalized. \texttt{LiePCA} exhibits one significantly small eigenvalue, suggesting an action of $\SO(2)$.}
\label{fig:cyclooctane_3_align}
\end{subfigure}
\hfill
\begin{subfigure}[t]{.49\textwidth}\center
\includegraphics[width=.85\linewidth]{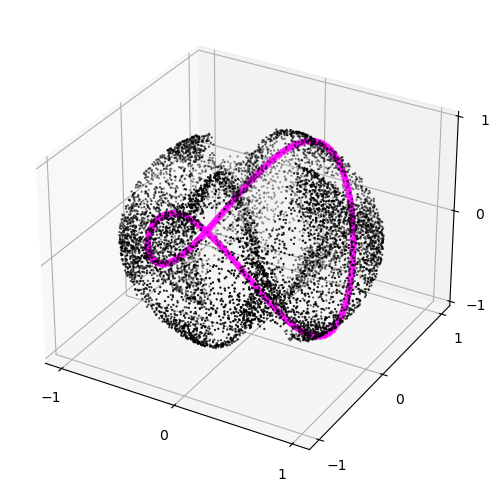}
\caption{\texttt{LieDetect} finds an action of $\SO(2)$ that stabilizes the point cloud. The mean value of $\HDmid{\widehat{\O}_x}{X}$, where $x$ ranges over the point cloud, is $0.1225$}
\label{fig:cyclooctane_4_align}
\end{subfigure}
\caption{A component of the conformational space of cyclooctane admits a linear action of $\SO(2)$. In this experiment, we align the conformers.}
\end{figure}

\subsubsection{Approximate closed-form solutions to the three-body problem}\label{subsec:3body}
The problem of describing the orbits of three massive objects, subjected only to the influence of their mutual gravitational force, has interested physicists for both its practical and theoretical particularities \cite{musielak2014three}. Different from its two-body counterpart, in which orbits were fully described from the beginning of Newtonian mechanics \citep{Newton_2016}, closed-form (analytic) solutions to the three-body problem were only discovered by Euler in 1740 and Lagrange in 1772 with some very constrained parameter choices \citep{liao2022three}. The difficulty in finding a general closed-form solution was transformed into an impossibility thanks to the seminal work of Henri Poincaré \citep{poincare1889problem}, who proved that the associated dynamical system is non-integrable, with most solutions not periodic. Even though, Poincaré's result does not rule out the possibility of \textit{specific} closed-form solutions under some configurations, but it makes the task of finding them especially complicated. Although several periodic solutions were found in the last decades, mostly through numerical techniques \citep{broucke1975periodic,broucke1975relative,broucke1979isosceles,moore1993braids,vsuvakov2013three,liao2022three}, no analytic orbits were identified.

Back to Lie groups, we notice that if $z(t)$ is the vector in $\R^{6}$ which registers the position of each of the bodies in the plane, and if the solution is periodic, then $z(t)$ is within an orbit of a $\SO(2)$ \textit{action} in $\R^{6}$, even if it has no analytic solution. If, however, $z(t)$, for some reason, lies within an orbit of a \textit{representation} of $\SO(2)$, then it admits an analytic form, which can be easily derived from the knowledge of the representation type. 
More precisely, it is a linear combination of cosines of at most three distinct frequencies.
For this, we propose using \texttt{LieDetect} to identify whether $z(t)$ lies within representation orbits of $\SO(2)$. Of course, because of potential uncertainties in the algorithm, this should not be seen as a way of proving the existence of alternative closed-form solutions, but only an indication that some of the well-known periodic dynamics are \textit{close} to analytic solutions, which might be useful in demonstrating some theoretical results.

We will be focusing on the periodic orbits discovered by Broucke in 1975 \citep{broucke1975periodic} for three bodies of equal mass in the plane, and whose (approximate) initial conditions are found on the website\footnote{The \text{Three-Body Gallery} gathers the initial conditions of several periodic orbits \url{http://three-body.ipb.ac.rs/}. In particular, Broucke's orbits are found at \url{http://three-body.ipb.ac.rs/broucke.php}.} of the authors of \citep{vsuvakov2013three}, within ten decimal places (\url{http://three-body.ipb.ac.rs/}).
Using \texttt{scipy}'s integration of ordinary differential equations---namely, \texttt{scipy.integrate.solve\_ivp} with integration method \texttt{Radau} (a Runge-Kutta method)---, we generated the orbits of nine systems, represented on Table \ref{tab:threebody} (second column).
The bodies' positions are concatenated to form a subset $X$ of $10,000$ points in $\R^6$ (third column).
In each case, we observed, through PCA, that $X$ is (approximately) contained in a subspace of dimension $4$, on which we projected it.   
Next, we applied \texttt{LieDetect}, performing first the orthonormalization step, and then estimating the representation type through \ref{item:step3}' for $\SO(2)$ (described in Section \ref{sec: algorithm so(2) simp}).
The optimal type is given in the fourth column of Table \ref{tab:threebody}.
In the verification step, we not only calculate the Hausdorff distance between $X$ and the estimated orbit (in magenta, in the last column), but also go back to the plane (reversing the orthonormalization step), and compute, for each body, the Hausdorff distance between its initial and its estimated orbit (in green, red and blue).
In contrast to the rest of the article, we here calculate the \textit{symmetrical} Hausdorff distance, to more precisely observe the quality of our estimation.
We note that the typical distance between two consecutive points of $X$ (after orthonormalization) is of the order of $0.01$ (value obtained numerically), thus a Hausdorff distance of this magnitude indicates an excellent estimate.

In Table \ref{tab:threebody}, two systems stand out: A2 and A3, exhibiting very small Hausdorff distances: $0.01176$ and $0.01323$.
One also observes, visually, a convincing fit.
This is an argument in favor of the existence of approximate analytic formulae for these orbits, as linear combinations of $\cos(t)$ and $\cos(6t)$ for A2, and $\cos(t)$ and $\cos(5t)$ for A3, as suggested by the estimated representation type $(1,6)$ and $(1,5)$.
Knowing that these orbits are notorious for not admitting a simple formula, this result is rather surprising.
Regarding the other systems, one observes that, while some can be ruled out easily (A1, A7, and R1), the others (A11, R2, R9, and R12) appear, visually, to be well approximated by the estimated orbit.
To extend our study and find out whether the discrepancy is only due to numerical errors or not, it would be necessary to consider more efficient integration methods and to increase the number of points generated.

\clearpage
\begin{longtable}{||ccccl||}
\hline
Name & \begin{tabular}{@{}c@{}}Orbits of the\\three bodies in $\R^2$\end{tabular} & \begin{tabular}{@{}c@{}}Orbit of the\\embedding in $\R^6$\end{tabular} & \begin{tabular}{@{}c@{}}\texttt{LieDetect}\\output\end{tabular} & \begin{tabular}{@{}c@{}}Hausdorff\\distances\end{tabular}\\*\hline
A1 & \begin{minipage}{.3\linewidth}\vspace{1mm}\includegraphics[width=1\linewidth]{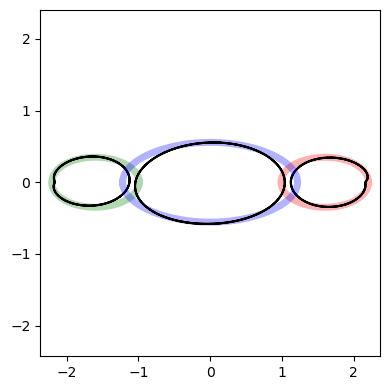}\vspace{1mm}\end{minipage} & \begin{minipage}{.3\linewidth}\vspace{1mm}\includegraphics[width=1\linewidth]{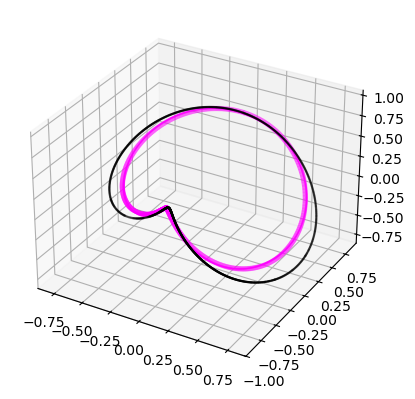}\vspace{1mm}\end{minipage} & $(1, 2)$ & \begin{tabular}{@{}c@{}}\textcolor{dartmouthgreen}{0.13668} \\ \textcolor{red}{0.13681}\\ \textcolor{blue}{0.17907} \\ \textcolor{deepmagenta}{0.22507}\end{tabular}\\\hline
A2 & \begin{minipage}{.3\linewidth}\vspace{1mm}\includegraphics[width=1\linewidth]{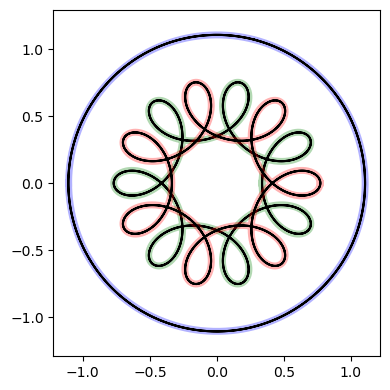}\vspace{1mm}\end{minipage} & \begin{minipage}{.3\linewidth}\vspace{1mm}\includegraphics[width=1\linewidth]{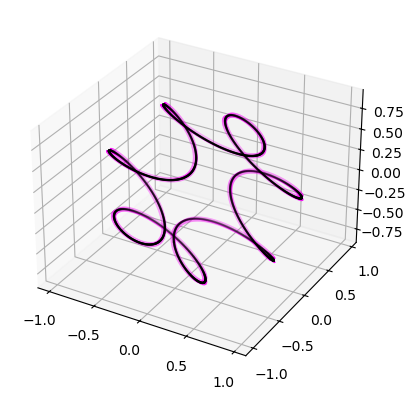}\vspace{1mm}\end{minipage} & $(1, 6)$ & \begin{tabular}{@{}c@{}}\textcolor{dartmouthgreen}{0.00405} \\ \textcolor{red}{0.00404}\\ \textcolor{blue}{0.00327} \\ \textcolor{deepmagenta}{0.01176}\end{tabular}\\\hline
A3 & \begin{minipage}{.3\linewidth}\vspace{1mm}\includegraphics[width=1\linewidth]{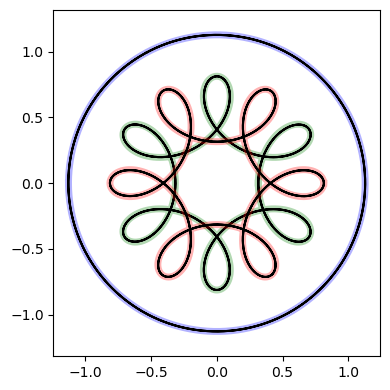}\vspace{1mm}\end{minipage} & \begin{minipage}{.3\linewidth}\vspace{1mm}\includegraphics[width=1\linewidth]{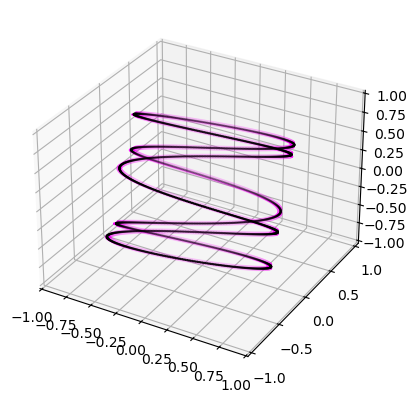}\vspace{1mm}\end{minipage} & $(1, 5)$ & \begin{tabular}{@{}c@{}}\textcolor{dartmouthgreen}{0.00515} \\ \textcolor{red}{0.00518}\\ \textcolor{blue}{0.00273} \\ \textcolor{deepmagenta}{0.01323}\end{tabular}\\\hline
A7 & \begin{minipage}{.3\linewidth}\vspace{1mm}\includegraphics[width=1\linewidth]{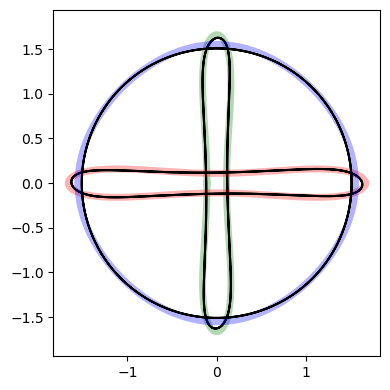}\vspace{1mm}\end{minipage} & \begin{minipage}{.3\linewidth}\vspace{1mm}\includegraphics[width=1\linewidth]{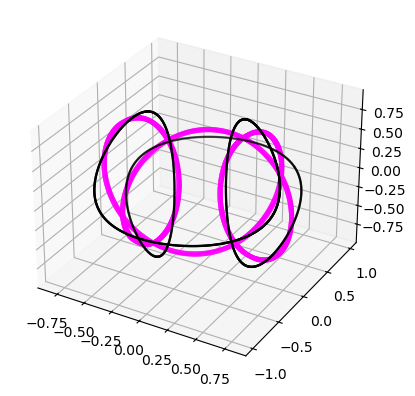}\vspace{1mm}\end{minipage} & $(1, 3)$ & \begin{tabular}{@{}c@{}}\textcolor{dartmouthgreen}{0.03642} \\ \textcolor{red}{0.03532}\\ \textcolor{blue}{0.04316} \\ \textcolor{deepmagenta}{0.1767}\end{tabular}\\\hline
A11 & \begin{minipage}{.3\linewidth}\vspace{1mm}\includegraphics[width=1\linewidth]{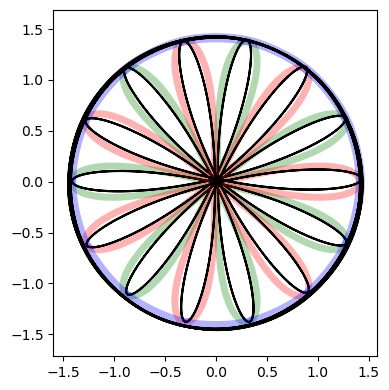}\vspace{1mm}\end{minipage} & \begin{minipage}{.3\linewidth}\vspace{1mm}\includegraphics[width=1\linewidth]{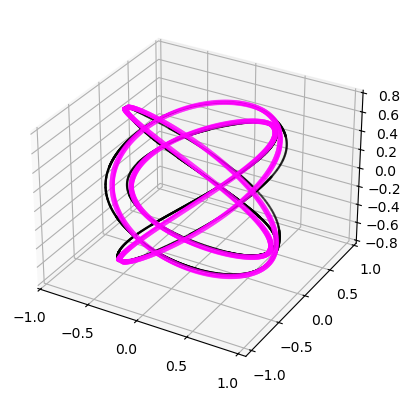}\vspace{1mm}\end{minipage} & $(3, 4)$ & \begin{tabular}{@{}c@{}}\textcolor{dartmouthgreen}{0.06831} \\ \textcolor{red}{0.07886}\\ \textcolor{blue}{0.04532} \\ \textcolor{deepmagenta}{0.06893}\end{tabular}\\\hline
R1 & \begin{minipage}{.3\linewidth}\vspace{1mm}\includegraphics[width=1\linewidth]{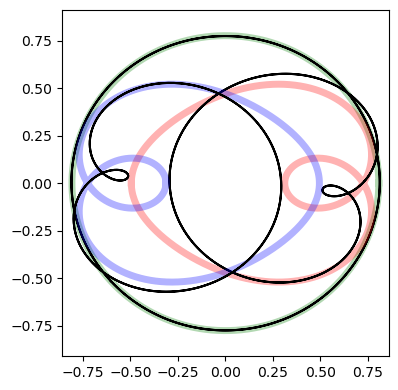}\vspace{1mm}\end{minipage} & \begin{minipage}{.3\linewidth}\vspace{1mm}\includegraphics[width=1\linewidth]{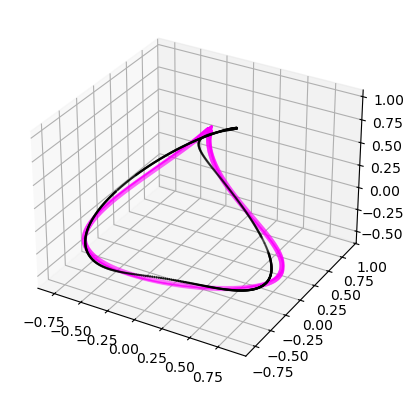}\vspace{1mm}\end{minipage} & $(1, 2)$ & \begin{tabular}{@{}c@{}}\textcolor{dartmouthgreen}{0.00696} \\ \textcolor{red}{0.20459}\\ \textcolor{blue}{0.20541} \\ \textcolor{deepmagenta}{0.45210}\end{tabular}\\\hline
R2 & \begin{minipage}{.3\linewidth}\vspace{1mm}\includegraphics[width=1\linewidth]{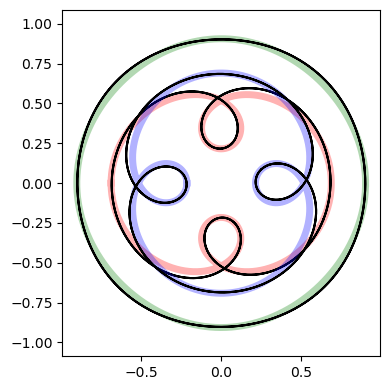}\vspace{1mm}\end{minipage} & \begin{minipage}{.3\linewidth}\vspace{1mm}\includegraphics[width=1\linewidth]{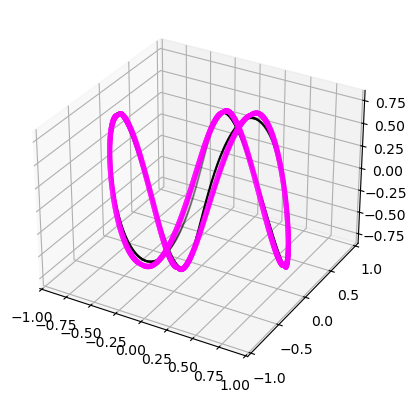}\vspace{1mm}\end{minipage} & $(1, 3)$ & \begin{tabular}{@{}c@{}}\textcolor{dartmouthgreen}{0.02645} \\ \textcolor{red}{0.04213}\\ \textcolor{blue}{0.04209} \\ \textcolor{deepmagenta}{0.05618}\end{tabular}\\\hline
R9 & \begin{minipage}{.3\linewidth}\vspace{1mm}\includegraphics[width=1\linewidth]{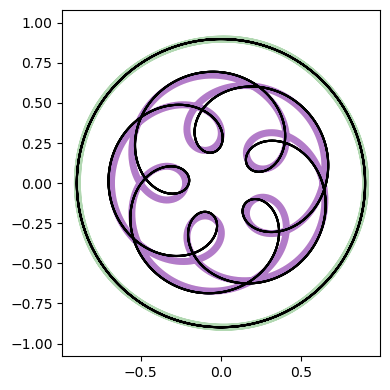}\vspace{1mm}\end{minipage} & \begin{minipage}{.3\linewidth}\vspace{1mm}\includegraphics[width=1\linewidth]{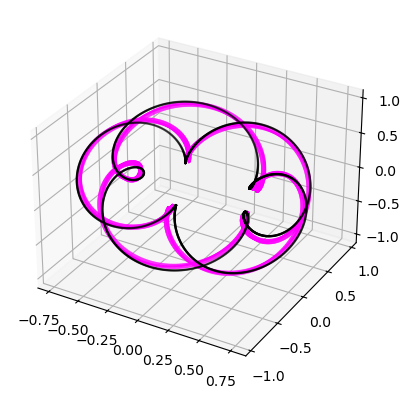}\vspace{1mm}\end{minipage} & $(2, 7)$ & \begin{tabular}{@{}c@{}}\textcolor{dartmouthgreen}{0.00489} \\ \textcolor{red}{0.05039}\\ \textcolor{blue}{0.05038} \\ \textcolor{deepmagenta}{0.07829}\end{tabular}\\\hline
R12 & \begin{minipage}{.3\linewidth}\vspace{1mm}\includegraphics[width=1\linewidth]{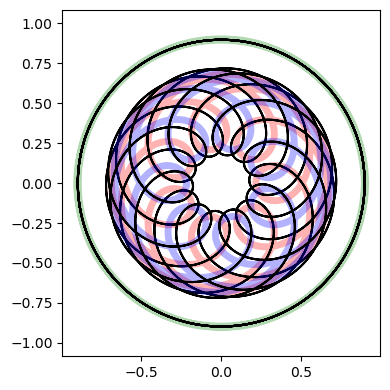}\vspace{1mm}\end{minipage} & \begin{minipage}{.3\linewidth}\vspace{1mm}\includegraphics[width=1\linewidth]{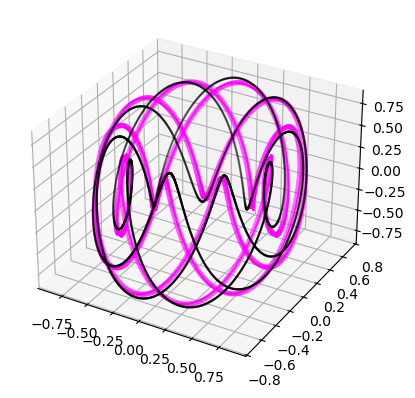}\vspace{1mm}\end{minipage} & $(3, 10)$ & \begin{tabular}{@{}c@{}}\textcolor{dartmouthgreen}{0.00639} \\ \textcolor{red}{0.05133}\\ \textcolor{blue}{0.05137} \\ \textcolor{deepmagenta}{0.10782}\end{tabular}\\\hline
\caption{
Nine periodic orbits of the three-body problem are considered.
\textbf{First column:} name of the orbit following \citep{vsuvakov2013three}.
\textbf{Second column:} positions of the three bodies (green, red, and blue) during a whole period. In addition, the black curves represent the orbits estimated by our method.
\textbf{Third column:} embedding of the three bodies' positions in $\R^6$ (in magenta), projected in dimension 3 through PCA for visualization purposes, and orbit estimated by our method (black).
\textbf{Fourth column:} estimated representation type.
\textbf{Fifth column:} symmetric Hausdorff distance between the first body and the first two coordinates of the estimated orbit (green), between the second body and the second two coordinates of the estimated orbit (red), between the third body and the third two coordinates of the estimated orbit (blue), and between the embedding in $\R^6$ and the whole estimated orbit.
}
\label{tab:threebody}
\end{longtable}

\section{Conclusion}
We presented in Algorithm \ref{alg: 1} what we believe to be the first method in the literature to identify, in an unsupervised fashion, 
Lie group representation orbits in point clouds. 
Moreover, because we estimate the equivalence class of the representation in the process, the algorithm can be used to reconstruct the orbit, eventually allowing us to determine which compact Lie group generate it. 
There are several constraints assumed in the application of the algorithm that we have made explicit in Theorem \ref{th:robustness_algorithm} and which we aim to sequentially loosen in future works.

The first line of expansion consists of lifting the \textit{representation}  constraint. 
This does not only mean expanding Algorithm \ref{alg: 1} to non-linear actions on point clouds, although it definitely is a direction to further explore, but consists in considering more inventive maps from Lie groups. 
For example, there is in Physics literature a genuine interest in projective representations, which are maps $\phi: G\to \GL(m, \C)/\C^*I$, where $\C^*I$ is the set of nonzero multiples of the identity. These non-conventional representations are the most well-known objects used in the traditional theory of classification of $\Spin(n)$ complex representations, giving rise to the definition of the geometric objects known as `spinors'. Similar derivations of spinors can be achieved by means of Clifford algebras, as briefly introduced in \cite{belinfante1989survey}. Besides, infinite-dimensional representations, because of their natural translation to vector field theory and their important role in quantum mechanics, are also worth exploring.

Expanding the methods to more general groups also deserves to be investigated. Starting with finite groups, these have already received a fair share of literary interest in machine learning.
Many, if not all, of the properties used in deriving Algorithm \ref{alg: 1}, such as orthogonality relations for matrix coefficients and classification of irreps, also work for this class of groups. For more general Lie groups, though, some of the properties here assumed fail to hold. In fact, although some of the non-compact cases such as Poincaré and Lorentz's groups have their representations well-studied for decades, this does not mean that they are easy to work with. For both of these cases, for example, no non-trivial finite-dimensional representation is unitary \cite{woit2017quantum}, which deeply impacts our computations. On the other hand, given a general manifold $\man$ embedded in some $\R^n$, the description of its symmetry group, $\Sym(\man)$, may easily skip the compact case, justifying the interest in these less-well-behaved scenarios.
In this scenario, an important tool would be Weyl's unitarian trick, which allows us to study representations of certain non-compact Lie groups from compact ones.

In addition, the methods explored here seem to present a fruitful field of applications to machine learning. As reviewed in Section \ref{sec:introduction}, most existing architectures, including equivariant neural networks, do not require precise knowledge of representation types nor, sometimes, of the acting Lie group to properly work.
We believe, on the other hand, that this important knowledge, which can be attained through Algorithm \ref{alg: 1}, may be used to suggest different inference architectures, which have the potential to overcome many of the challenges faced by current Lie group-aware techniques. Notice that these challenges do not necessarily restrict themselves to improvements in prediction capacity: in the many distinct areas discussed here where Lie groups play a significant role---i.e., image processing, harmonic analysis, and classical mechanics---the existing techniques have already shown significant success, and the extra knowledge of the representation will probably signify better accuracy. 
Yet, there are many fields, such as particle physics, where the knowledge of the precise representation type becomes crucial, and machine learning algorithms that incorporate the precise representation equivalence class in their architecture may prove to be the correct form of addressing some inference tasks.

Finally, the data presented in this article offer a fruitful avenue for statistical analysis, which we reserve for future work. Although the key geometric quantities in Theorem \ref{th:robustness_algorithm}---for example, the reach of the orbit or its covariance---are identified explicitly, they remain unknown to the user. A deeper investigation of these quantities would make it possible to develop rigorous statistical tests of the linear orbit hypothesis, and Lie-action-aware dimension reduction.

\paragraph{Acknowledgements.}
The original work behind this article was developed for HE's master's thesis, supervised by RT. We are mostly in debt to César Camacho, who was HE's co-advisor, as well as the members of the thesis jury, Clément Maria, Eduardo Mendes, and Jameson Cahill, not only for agreeing to evaluate the original work but also for many valuable inputs. 
Finally, we are indebted to the anonymous reviewers for their important feedback and suggestions. 

\appendix
\section{Additional comments}
\subsection{Notations}\label{sec:notations}
Notation conventions are summarized in the table below.

{\renewcommand{\arraystretch}{1.2}
\begin{longtable}{||r l ||} 
\hline Symbol &  Meaning \\*\hline\hline
$\spn{v_1, \dots, v_N}$ & Vector subspace spanned by the vectors $v_1, \dots, v_N \in \R^n$ \\\hline

$[i \isep j]$ & The set of integers between $i$ and $j$ included \\\hline

$X$ & A finite subset of $\R^n$ \\\hline

$S^{n-1}, B^n$ & Unit sphere and unit ball of $\R^n$ \\\hline

$\diag(A_1,\dots,A_n)$  & Block-diagonal matrix whose blocks are the matrices $A_1,\dots,A_n$ \\\hline

$\langle A,B\rangle$, $\|A\|$& Euclidean inner product and Euclidean norm of vectors $A,B\in\R^n$
\\
& or Frobenius inner product and Frobenius norm of matrices $A,B\in\M_n(\R)$\\\hline

$\|A\|_\mathrm{op}$ & Operator norm of a matrix $A\in\M_n(\R)$ \\\hline

$\projbracket{V}$ or $\proj_V$ & Orthogonal projection matrix on a subspace $V\subset \R^n$ or $V\subset \so(n)$ \\\hline

$\mathrm{Isom}(\man)$, & Isometry (resp. symmetry) group of a Riemannian manifold $\man$
\\$\Sym(\man)$& isometrically embedded in $\R^n$ (see Section \ref{subsec:symmetry_group})\\\hline

$\T_x\man$, $\N_x\man$  & The tangent (resp. normal) space of a submanifold $\man\subset\R^n$ at $x\in\man$ \\\hline

$G$, $\g$, $\phi$, $\O$ & A Lie group, a Lie algebra, a representation, an orbit of representation\\\hline

$\widehat{\h}$, $\widehat{\phi}$, $\widehat{\O}_x$, $\mu_{\widehat{\O}}$ 
&  Outputs of Algorithm \ref{alg: 1} (defined in Section \ref{subsec:step4}) \\\hline

$\text{Rep}(G, \R^n)$,  & The equivalence classes (resp. orbit-equivalence classes) of almost- \\$\text{OrbRep}(G, \R^n)$&faithful representations of the Lie group $G$ in $\R^n$ 
(see Section \ref{subsec:structure_orbits})\\\hline

$\text{Irr}(G)$ & A set consisting of a choice of an orthogonal representative \\
& for each equivalence class of irreps of the Lie group $G$\\\hline

$\mathfrak{orb}(G,n)$ & Pushforward algebras $\big\{ \big(B^{\phi_1},\dots,B^{\phi_p}\big) \mid (\phi_1,\dots,\phi_p) \in \mathrm{OrbRep}(G,n) \big\}$
\\& for an arbitrary choice of orthogonal representatives (see Section \ref{subsec:grassmann_lie})\\\hline

$\G^\mathrm{Lie}(d, \mathfrak{so}(n))$, & The Grassmannian (resp. Stiefel) variety of $d$-dimensional\\$\mathcal{V}^\mathrm{Lie}(d, \mathfrak{so}(n))$& Lie subalgebras of $\so(n)$ (see Section \ref{subsec:grassmann_lie_notgroup})\\\hline

$\G(G, \mathfrak{so}(n))$, & The Grassmannian (resp. Stiefel) variety of $d$-dimensional\\$\mathcal{V}(G, \mathfrak{so}(n))$& Lie subalgebras of $\so(n)$ pushforward from $G$ (see Section \ref{subsec:grassmann_lie})\\\hline

$\Lambda$~~~&  \texttt{LiePCA} operator (defined in Equation \eqref{eq:operator_sigma})\\\hline

$\Lambda_\O$  &  Ideal \texttt{LiePCA} operator (defined in Equation \eqref{eq:ideal_Lie_PCA_operator})\\\hline

$\HDmid{X}{Y}$  &  Non-symmetric Hausdorff  distance (resp. Hausdorff distance) \\$\HD{X}{Y}$& between two compact subsets $X,Y\subset \R^n$ (see Section \ref{sec:step4_hasdorff}) \\\hline

$\W_p(\mu,\nu)$  &  Wasserstein distance between $p$-integrable measures (see Section \ref{subsubsec:step4_wasserstein}.)\\\hline

$\mu_X$, $\mu_{G}$, $\mu_{\mathcal{O}}$  & Empirical measure on a point cloud $X$, Haar measure on \\& a compact Lie group $G$, uniform measure on an orbit $\O$\\\hline

$\Sigma[\mu]$ &Covariance matrix of the measure $\mu$ (defined in Equation \eqref{eq:covariance_matrix})\\\hline

$\Rig(G,n)$ & Rigidity of pushforward algebras of $G$ in $\R^n$ (resp. with \\$\Rig(G,n, \omega_{\max})$& weights at most $\omega_{\max}$) (defined in Equation \eqref{eq:gamma_def}) \\\hline

\end{longtable}
}

\subsection{Supplementary results}\label{sec:supplementary_results}
\subsubsection{Matrix exponential}\label{sec:supplementary_preliminaries_symmetrygroup}
\begin{proof}[Proof of Equation \eqref{eq:sym_algebra_formulation}]
As it is the case for any Lie subgroup of $\GL_n(\R)$, we have the equality 
\begin{equation}\label{eq:sym_algebra_formulation2}
\sym(\man) = \big\{A \in \M_n(\R)\mid \forall t \in \R, ~\exp(tA) \in \Sym(\man)\big\}.
\end{equation}
It can be found for instance in \cite[Th.~4.1]{sepanski2007compact}.
Let us first show that the set defined in Equation \eqref{eq:sym_algebra_formulation2} is equal to that of Equation \eqref{eq:sym_algebra_formulation}.
If $A$ is a matrix such that $\exp(tA) \in \Sym(\man)$ for all $t\in \R$, then for any $x\in \man$ the curve $t\mapsto \exp(tA)x$ lies in $\man$, hence its derivative at $0$, equal to $Ax$, lies in $\T_x \man$.
Conversely, suppose that $A$ satisfies $A x \in \T_x \man$ for all $x\in \man$.
Then for any $x_0 \in \man$, the curve $\gamma\colon t\mapsto \exp(tA)x_0$ has derivative $\gamma'(t) = A \gamma(t)$, hence $\gamma'(t) \in \T_{\gamma(t)}\man$, provided that $\gamma(t) \in \man$.
Consequently, the curve $\gamma$ satisfies an ordinary partial differential equation on the submanifold $\man$, showing that $\gamma(t)$ lies in $\man$. 
\end{proof}

\begin{proof}[Proof of Lemma \ref{lem:bound_exp_so(n)}]
Without loss of generality, we can suppose that $A$ is block diagonal.
There exists an integer $m\leq n/2$ and $m$ integers $\omega_1,\dots,\omega_m$ such that we can write
\begin{align*}
A &= \diag\big(L(\omega_1),\dots,L(\omega_m)\big)\\
\mathrm{and}~~~~~
\exp(tA) &= \diag\big(R(t\omega_1),\dots,R(t\omega_m)\big),
\end{align*}
with potentially zeros at the end, and where $L(\cdot)$ and $R(\cdot)$ are the $2\times2$ matrices defined in Example \ref{ex:normal_form_SO(2)}.
We have the identity $R(t\omega_1)=\cos(t\omega_i)I + (\sin(t\omega_i)/\omega_i)L(\omega_i)$.
That is, we can write 
$$
\exp(tA) = \diag\big(\cos(t\omega_1)I,\dots,\cos(t\omega_m)I\big) +\diag\big((\sin(t\omega_1)/\omega_1)L(\omega_1),\dots,(\sin(t\omega_m)/\omega_m)L(\omega_m)\big).
$$
Now, let us write $B = OAO^\top$ with $O\in \Ort(n)$.
Let also $e_1,\dots,e_n$ denote the canonical basis of $\R^n$.
For any integer $i\in[0,n]$ and $j$ the corresponding index in $[0,m]$, we deduce that
$$
\|\exp(tA) e_{i} - O\exp(tA)O^\top e_{i} \| = (\sin(t\omega_j)/\omega_j) \|Ae_{i}-Be_{i}\| \leq \|Ae_{i}-Be_{i}\|.  
$$
Summing this expression over all the $e_i$'s yields $\|\exp(tA) - \exp(t O A O^\top) \|\leq \|A-O A O^\top\|$.
\end{proof}

\subsubsection{Irreps of \texorpdfstring{$\SU(2)$}{SU(2)} and \texorpdfstring{$\SO(3)$}{SO(3)}}\label{app: irreducible reps su(2)}
As mentioned in Example \ref{ex:irreps_su(2)}, using the double covering $\SU(2)\rightarrow\SO(3)$, any representation $\SO(3)$ yields a representation of $\SU(2)$, although the converse is not true. 
This may be expressed by parametrizing the complex irreducible representations of $\SU(2)$ through $m\in \mathbb{Z}^+$ (i.e., $\widehat{\SU}(2) = \mathbb{Z}^+$) or, following Physics literature, by $j= m/2\in \frac{1}{2}\mathbb{N}^+$. 
It is well known that, while the representations with integer $j$ may be derived from (complex) representations of $\SO(3)$, the ones for half-integer $j$ may only be derived from the double cover $\SU(2)$ \cite{woit2017quantum}. Moreover, the ones for integer $j$ are said to be of the real type, whereas the others are of quaternionic type \cite{iwahori1959real}. Although all representations $\phi_j$ of real type may be transformed in real representations of $\su(2)$ by a simple change of coordinates, defining representations in $\R^{2j+1}$, for representations of quaternionic type, this is only achievable if $2j \equiv 1 \: (\text{mod } 4)$, in which case the representation dimension is of $4j+2$ \cite{itzkowitz1991note}. Therefore, if $\R^n$ receives a real irreducible representation of $\SU(2)$, then either $n$ is odd, in which case the representation is also a representation of $\SO(3)$, or $n$ is a multiple of 4. Moreover, for each choice of $n$, at most one representation of these groups can exist up to equivalence.

In \cite{campoamor2015elementary}, Campoamor-Stursberg derives the matrix elements of the real irreducible representations of $\SO(3)$ and $\SU(2)$. For the usual basis of $\so(3)$, the commutators are given by
\begin{equation*}
    [L_i,L_j] = \epsilon_{ijk}L_k
\end{equation*}
for the Levi-Civita symbol $\epsilon_{ijk}$.
Denoting the weights as $j\in \frac{1}{2}\mathbb{N}$, when $j$ is an integer, the corresponding representation of $\su(2)$ is of dimension $2j+1$ and have standard basis
\begin{equation*}
\begin{split} 
[\d\phi(L_1)]_{k,l}
   &=\bigg(\frac{1+(-1)^k}{2}\bigg)\bigg(\delta^l_{k+1}a_{\big\lfloor\frac{k}{2}\big\rfloor}+\delta^{l+3}_k a_{\big\lfloor\frac{k-2}{2}\big \rfloor}\bigg)\\
   &\;\;\; -\bigg(a_j+\sqrt{\frac{j^2+j}{2}}\bigg)(\delta^l_{2j+1}\delta_k^{2j}-\delta^{l}_{2j}\delta^{2j+1}_k)\\
   &\;\;\;-\bigg(\frac{1+(-1)^{k-1}}{2}\bigg)\bigg(\delta^l_{k+3}a_{\big\lfloor\frac{k+1}{2}\big\rfloor}+\delta^{l+1}_ka_{\big\lfloor \frac{k-1}{2}\big\rfloor}\bigg)\\
   [\d\phi(L_2)]_{k,l} &= 
   \bigg(a_j+\sqrt{\frac{j^2+j}{2}}\bigg)(\delta^l_{2j+1}\delta^{2j-1}_k-\delta^l_{2j-1}\delta_k^{2j+1})\\
   &\;\;\;-\delta^l_{k+2}a_{\big\lfloor\frac{k+1}{2}\big\rfloor}-\delta_k^{l+2}a_{\big\lfloor\frac{k-1}{2}\big\rfloor}\\
   [\d\phi(L_3)]_{k,l} &= \frac{1}{4}\times[(1+(-1)^k)\delta_{l+1}^{k}(2j+2-k)+((-1)^k-1)\delta_{k+1}^{l}(2j+1-k)]
\\\mathrm{where }~~~~~~~~~~~~~~~~~ a_l&=\sqrt{\frac{2jl-l(l-1)}{4}},1\leq l\leq j,
~~~~~~~~~~~~~~~~~~~~~~~~~~~~~~~
\end{split}
\end{equation*}

and $\lfloor \cdot \rfloor$ is the usual floor function. For half-integer $j$ such that $2j \equiv 1 \pmod{4}$, the representation has dimension $4j+2$ and has standard basis
\begin{equation*}
\begin{split}
   [\d\phi_j(L_1)]_{k,l}&= \bigg(\frac{1+(-1)^{k-1}}{2}\bigg)\bigg(\delta^l_{k+3}a_{\big\lfloor\frac{k+1}{2}\big\rfloor}+\delta^{l+1}_ka_{\big\lfloor \frac{k-1}{2}\big\rfloor}\bigg)\\
   &\;\;\;-\bigg(\frac{1+(-1)^{k}}{2}\bigg)\bigg(\delta^l_{k+1}a_{\big\lfloor \frac{k}{2}\big\rfloor}+\delta^{l+3}_ka_{\big\lfloor \frac{k-2}{2}\big\rfloor}\bigg)\\
   [\d\phi(L_2)]_{k,l} &= \delta^l_{k+2}a_{\big\lfloor\frac{k+1}{2}\big\rfloor}-\delta_k^{l+2}a_{\big\lfloor\frac{k-1}{2}\big\rfloor}\\
   [\d\phi(L_3)]_{k,l} &=\frac{1}{4}\times [1+(-1)^k]\delta^{l+1}_k(2j+2-k)+[(-1)^k-1]\delta^{k+1}_l(2j+1-k).
\end{split}
\end{equation*}
For other values of $j$, no real irreducible representations of $\su(2)$ are possible. 

\subsubsection{Computations for Remark \ref{rem:universal_constant_Lie-PCA}}\label{subsubsec:computations_universal_constant}
Consider the representation $\phi\colon \SO(2) \rightarrow \GL_4(\R)$ with distinct positive weights $(\omega_1,\omega_2)$, a real number $\epsilon>0$, $x_0=(1,0,\epsilon,0) \in \R^4$, and its orbit
$$
\O = \big\{ \big(\cos(\omega_1 \theta),\sin(\omega_1 \theta),\epsilon\cos(\omega_2 \theta),\epsilon\sin(\omega_2 \theta)\big) \mid \theta \in \SO(2) \big\}.
$$
The tangent vector at $x_0$ is equal to $(0,\omega_1,0,\epsilon\omega_2)$.
We then deduce that $(S_{x_0}\O)^\bot$ admits $(B_1,B_2,B_3)$ as an orthonormal basis, where
$$
B_1 = \begin{pmatrix}1&0&\epsilon&0\\0&0&0&0\\0&0&0&0\\0&0&0&0\end{pmatrix},
~~~~~B_2 = \begin{pmatrix}0&0&0&0\\0&0&0&0\\1&0&\epsilon&0\\0&0&0&0\end{pmatrix},
~~~~~B_3 =\begin{pmatrix}0&0&0&0\\\epsilon\omega_2&0&\epsilon^2\omega_2&0\\0&0&0&0\\-\omega_1&0&-\epsilon\omega_1&0\end{pmatrix}.
$$
Besides, the representation $\phi$ on $\R^4$ induces a representation on $\M_4(\R)$ by conjugation. 
Its restriction to $\so(4)$ decomposes into irreps as $\so(4) = \langle A_1 \rangle
\oplus \langle A_2 \rangle
\oplus \langle A_3,A_4 \rangle
\oplus \langle A_5,A_6 \rangle$, where
\begin{align*}
&A_1=\begin{pmatrix}0&-w_1&0&0\\w_1&0&0&0\\0&0&0&-w_2\\0&0&w_2&0\end{pmatrix},
~A_2=\begin{pmatrix}0&-w_2&0&0\\w_2&0&0&0\\0&0&0&w_1\\0&0&-w_1&0\end{pmatrix},
~A_3=\begin{pmatrix}0&0&0&-1\\0&0&1&0\\0&-1&0&0\\1&0&0&0\end{pmatrix},\\
&A_4=\begin{pmatrix}0&0&1&0\\0&0&0&1\\-1&0&0&0\\0&-1&0&0\end{pmatrix},
~~~~~~~~~A_5=\begin{pmatrix}0&0&0&-1\\0&0&-1&0\\0&1&0&0\\1&0&0&0\end{pmatrix},
~~~~~~~~~A_6=\begin{pmatrix}0&0&1&0\\0&0&0&-1\\-1&0&0&0\\0&1&0&0\end{pmatrix}.
\end{align*}
As we have seen in the proof of Proposition \ref{prop:consistency_LiePCA_idealcase}, these subspaces are stabilized by the action of $\SO(2)$ by conjugation.
Moreover, the first one, $\langle A_1 \rangle$, is the kernel of $\Lambda_\O$.
We can compute the eigenvalues of the other subspaces with the formula in Equation \eqref{eq:eigenvalues_formula_beta}. 
They are, respectively:
\begin{align*}
\sum_{i=1}^3 \big\langle A_2/\|A_2\|, B_i/\|B_i\| \big\rangle^2 
&= \frac{\epsilon^2}{2(1+\epsilon^2)}\frac{w_1^2+w_2^2}{w_1^2+(\epsilon w_2)^2},\\
\frac{1}{2}\bigg(\sum_{i=1}^3 \big\langle A_3/\|A_3\|, B_i/\|B_i\| \big\rangle^2+\sum_{i=1}^3 \big\langle A_4/\|A_4\|, B_i/\|B_i\| \big\rangle^2\bigg) 
&= \frac{1}{8}\bigg(1+\frac{1}{1+\epsilon^2}\frac{(w_1-\epsilon^2w_2)^2}{w_1^2+(\epsilon w_2)^2}\bigg),\\
\frac{1}{2}\bigg(\sum_{i=1}^3 \big\langle A_5/\|A_5\|, B_i/\|B_i\| \big\rangle^2+\sum_{i=1}^3 \big\langle A_6/\|A_6\|, B_i/\|B_i\| \big\rangle^2\bigg) 
&= \frac{1}{8}\bigg(1+\frac{1}{1+\epsilon^2}\frac{(w_1+\epsilon^2w_2)^2}{w_1^2+(\epsilon w_2)^2}\bigg).
\end{align*}

\subsubsection{Computations for Section \ref{subsec:pixel_permutations}}\label{sec:appendix_applications}

\begin{proof}[Proof of Lemma \ref{th: Tn-ppt}]
We first suppose that $\Sigma$ has rank one, that is to say, it is generated by a permutation matrix $\sigma$, and denote its order by $p$. If $x$ is any point of $X$, we can write
$$
X = \big\{ \sigma^m x \mid m\in[0\isep p-1]   \big\}.
$$
We distinguish two cases.
If $\sigma$ is an even permutation, then it has matrix determinant equal to $1$. 
That is, it belongs to $\SO(n)$, hence there exists a skew-symmetric matrix $L$ such that $\sigma = \exp(2\pi L/p)$.
The powers of $\sigma$ takes the form $\exp(2\pi m L/p)$ for $m \in [0\isep p-1]$ and we deduce that $X$ is included in the orbit of $x$ under the representation $\theta \mapsto \exp(2\pi \theta L/p)$ of $\SO(2)$.

On the other hand, if $\sigma$ is an odd permutation, then it does not admit a real logarithm.
In this case, we remember that $\sigma$ has eigenvalues $1$ and $-1$. 
The first comes from the fact that the vector $w=(1,\dots,1)\in\R^n$ is stabilized by $\sigma$, and the second is obtained from the decomposition of $\sigma$ into disjoint cycles.
Let $w'$ denote an eigenvector associated to $-1$.
We have an orthogonal decomposition $\R^n = V \oplus \spn{w}\oplus \spn{w'}$.
The matrix $\sigma$ stabilizes $V$. 
Consequently, in this decomposition of $\R^n$, it can be written as
$$
\sigma = \diag \big( O', 1, -1 \big)
$$
for a certain $O'\in \Ort(n-1)$.
The submatrix $\diag(1, -1)$ has determinant $-1$, hence that of $O'$ is $1$, meaning that it admits a real logarithm: $O' = \exp(2\pi L/p)$.
Besides, we note that the action of $\sigma$ on $X$ is the same as that of 
$$
\sigma' = \diag \big( O', -1, -1 \big).
$$
Indeed, the data has been centered, hence the point $x$ is orthogonal to the penultimate axis $\spn{w}$. Moreover, $\sigma'$ is now a matrix of determinant $1$, reducing to the first case.

More generally, let us suppose that $\Sigma$ has rank $d$, generated by a set of permutations $\{\sigma_1,\dots, \sigma_d\}$ of order $p_1,\dots,p_d$.
As we have seen above, each $\sigma_i$ can be written as $\exp(2\pi L_i/p_i)$ for a certain skew-symmetric matrix $L_i$.
Moreover, since the matrices $\sigma_i$'s commute by assumption, we can choose the $L_i$'s to commute.
We conclude that $X$ is included in the orbit of $x$ under the well-defined representation of $T^d$:
$$
(\theta_1,\dots,\theta_d) \longmapsto \prod_{j=1}^d \exp(2\pi \theta_j L_j/p_j).
$$
 
Next, we prove the second part of the lemma, starting with the case $d=1$ and supposing, without loss of generality, that $n$ is even.
The point cloud $X$ can be written as $\{ \phi(2\pi m/ p)\cdot x \mid m\in [0\isep p-1]\}$ for some representation $\phi$ of $\SO(2)$.
By decomposition into irreps, there exists $\omega \in \Z^{n/2}$ and a basis of $\R^n$ where $\phi$ is equal to 
$$
\theta \longmapsto \diag\big( R(\omega_1),\dots,R(\omega_{n/2}) \big).
$$
and where $R$ is the $2\times2$ rotation matrix defined in Example \ref{ex:normal_form_SO(2)}.
Moreover, using the explicit expression given in Lemma \ref{lm: covariance} below, the covariance matrix of $X$ takes the form
\begin{equation*}%\label{eq: normal cov}
\frac{1}{2} \begin{pmatrix}
x_1^2+x_2^2&&&&&\\
&x_1^2+x_2^2&&&&\\
&&\ddots&&&\\
&&&&x_{n-1}^2+x_{n}^2&\\
&&&&&x_{n-1}^2+x_{n}^2
\end{pmatrix}.
\end{equation*}
In particular, if $\lambda$ is any eigenvalue, then the eigenspace associated with $\lambda$ is a sum of stable planes of the representation. We deduce the statement of the lemma.
More generally, the result still holds if $\Sigma$ has rank $d$, because the covariance matrix can still be written as a diagonal matrix, with repeated eigenvalues on the stable planes.
\end{proof}

\begin{lemma}\label{lm: covariance}
Let $n$ be even, $\omega \in \Z^{n/2}$, and consider the corresponding representation of $\SO(2)$:
$$
\phi\colon \theta \longmapsto \diag\big( R(2\pi\omega_1\theta),\dots,R(2\pi\omega_{n/2}\theta) \big).
$$
Let $p$ be an integer that is a multiple of all the $\omega_i$, $x\in\R^n$ a nonzero point, and define the set $X = \{\phi(2\pi m/p)x \mid m\in[0\isep p-1]\}$.
Then the covariance matrix of $X$ is
\begin{equation*}
\Sigma[X] =
\frac{1}{2} \begin{pmatrix}
x_1^2+x_2^2&&&&&\\
&x_1^2+x_2^2&&&&\\
&&\ddots&&&\\
&&&&x_{n-1}^2+x_{n}^2&\\
&&&&&x_{n-1}^2+x_{n}^2
\end{pmatrix}.
\end{equation*}
\end{lemma}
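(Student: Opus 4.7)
Expanding the definition, $\Sigma[X] = \frac{1}{p}\sum_{m=0}^{p-1} y_m y_m^\top$ where $y_m = \phi(2\pi m/p)\, x$. Since $\phi$ is block-diagonal, $y_m y_m^\top$ splits into $2\times 2$ pair-blocks indexed by $(i,j)\in[1\isep n/2]^2$; writing $u_i = (x_{2i-1},x_{2i})^\top$, the $(i,j)$-pair-block of $\Sigma[X]$ is
\[
\frac{1}{p}\sum_{m=0}^{p-1} R\!\big(2\pi\omega_i m/p\big)\, u_i u_j^\top\, R\!\big(2\pi\omega_j m/p\big)^{\!\top}.
\]
The plan is to evaluate these blocks by reducing them to elementary character sums over $\mathbb{Z}/p\mathbb{Z}$.

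First, I would expand each of the four scalar entries of the $(i,j)$ pair-block using the product-to-sum identities for $\cos\theta_i\cos\theta_j$, $\sin\theta_i\sin\theta_j$ and $\sin\theta_i\cos\theta_j$ with $\theta_i = 2\pi\omega_i m/p$. Every entry then becomes a real linear combination (in the coefficients $x_{2i-1}x_{2j-1}, x_{2i-1}x_{2j}, x_{2i}x_{2j-1}, x_{2i}x_{2j}$) of the two averages
\[
\frac{1}{p}\sum_{m=0}^{p-1}\cos\!\big(2\pi k\, m/p\big), \qquad \frac{1}{p}\sum_{m=0}^{p-1}\sin\!\big(2\pi k\, m/p\big), \qquad k \in \{\omega_i-\omega_j,\ \omega_i+\omega_j\}.
\]
By orthogonality of the characters of $\mathbb{Z}/p\mathbb{Z}$, every sine-sum vanishes and each cosine-sum equals $1$ if $k\equiv 0 \pmod p$ and $0$ otherwise.

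Second, I would split into cases. For $i=j$ the surviving frequency is $\omega_i-\omega_j=0$ (the other, $2\omega_i$, is non-zero mod $p$ in the relevant regime where $p>2|\omega_i|$, which is the implicit setting of the lemma), and the four resulting entries collapse to the matrix $\tfrac{1}{2}(x_{2i-1}^{2}+x_{2i}^{2})\, I_2$. For $i\neq j$ both $\omega_i\pm\omega_j$ are typically nonzero mod $p$, so all four entries vanish. Combining the two cases yields the claimed block-diagonal form.

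The only delicate point is the numerical condition that guarantees $\omega_i\pm\omega_j\not\equiv 0\pmod p$ for $i\neq j$ and $2\omega_i\not\equiv 0\pmod p$ for each $i$; this is the main obstacle to stating the result cleanly. It holds in the setting of Lemma~\ref{th: Tn-ppt}, where the $\omega_i$ arise as (the moduli of) the distinct eigen-angles of a permutation of order $p$. As a sanity check for the coefficient $1/2$, one can verify that $\mathrm{Tr}(\Sigma[X]) = \frac{1}{p}\sum_m \|y_m\|^2 = \|x\|^2$ using orthogonality of $\phi$, which matches the sum of the diagonal entries of the claimed matrix. An alternative, equally short route avoiding the explicit character sums is to observe that $\Sigma[X]$ commutes with $\phi(2\pi/p)$ (by a reindexing $m\mapsto m+1$ of the defining sum) and then apply Schur's lemma to the real irreducible decomposition of this cyclic action, fixing each scalar by the trace computation above.
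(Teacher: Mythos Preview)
Your main approach---expand the covariance into $2\times2$ pair-blocks and reduce each entry to the orthogonality relations $\frac{1}{p}\sum_m e^{2\pi i k m/p}=\mathbb{1}_{p\mid k}$---is essentially the paper's proof, which does the same computation entry-by-entry and packages the needed identities as a separate trigonometric lemma. Your block-wise organization is marginally tidier but not materially different. You are right to flag the side conditions $\omega_i\pm\omega_j\not\equiv 0\pmod p$ (for $i\neq j$) and $2\omega_i\not\equiv 0\pmod p$: the paper relies on them too, as hypotheses of its trigonometric lemma, so the statement as written is not literally correct without them; your caveat is more explicit than the paper's on this point. Your closing remark---that $\Sigma[X]$ commutes with $\phi(2\pi/p)$ and is therefore scalar on each real isotypic component by Schur, with the scalar fixed by the trace identity $\mathrm{Tr}\,\Sigma[X]=\|x\|^2$---is a genuinely different and cleaner route that the paper does not take, and it makes the role of the distinct-weights hypothesis transparent (it is exactly what forces the isotypic components to be the individual planes).
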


\begin{proof}
Let us denote $d=n/2$. 
Explicitly, $X$ is the set of the points
\begin{equation}\label{eq: normal vec}
x_m = 
\begin{pmatrix}
\cos(2\pi m\omega_1/p)x_1-\sin(2\pi m \omega_1/p)x_2\\
\sin(2\pi m\omega_1/p)x_1+\cos(2\pi m \omega_1/p)x_2\\
\vdots\\
\cos(2\pi m\omega_1/p)x_{n-1}-\sin(2\pi m \omega_1/p)x_n\\
\sin(2\pi m\omega_1/p)x_{n-1}+\cos(2\pi m \omega_1/p)x_n
\end{pmatrix},
\end{equation}
for $m\in[0\isep p-1]$.
Denote the $ij$-th entry of $\Sigma[X]$ by $a_{ij}$. 
We wish to show that
\begin{enumerate}
\item\label{enum:proof_PCA_PPT:1} $\frac{1}{p}\sum_i a_{2i+1}^2=\frac{1}{p}\sum_i a_{2i+2}^2=\frac{1}{2}(x_{2i+1}^2+x_{2i+2}^2)$ for $0\leq i < d-1$,
\item\label{enum:proof_PCA_PPT:2} $\frac{1}{p}\sum_i a_{2i+1}a_{2i+2}=0$ for $0\leq i < d-1$,
\item\label{enum:proof_PCA_PPT:3} $\frac{1}{p}\sum_i a_{2i+1}a_{2j+1}=\frac{1}{|\sigma|}\sum_i a_{2i+1}a_{2j+2}=0$ for $0\leq i\neq j<d-1$.
\end{enumerate}
In order to prove Item \ref{enum:proof_PCA_PPT:1}, we expand the product $a_{2i+1}^2$ and apply Equations (\ref{eq:trigonometry:1}) and (\ref{eq:trigonometry:3}) stated in the elementary Lemma \ref{lm: trigonometric equations} below, whose proof is omitted.
Similarly, Item \ref{enum:proof_PCA_PPT:2} is proven by expanding $a_{2i+1}a_{2i+2}$ and using Equations (\ref{eq:trigonometry:1}) and (\ref{eq:trigonometry:3}).
In the same vein, we obtain Item \ref{enum:proof_PCA_PPT:3} from $a_{2i+1}a_{2j+1}$ with Equation (\ref{eq:trigonometry:2}).
\end{proof}

\begin{lemma}\label{lm: trigonometric equations}
For every integers $0<\omega,\nu<n$, one has
\begin{align}\label{eq:trigonometry:1}
\begin{split}
\sum_{k=0}^{n-1} \cos(2\pi\omega k/n)\sin(2\pi\nu k/n) = 0.
\end{split}\end{align}
Moreover, when $0<\omega,\nu<n$ are distinct, it holds
\begin{align}\label{eq:trigonometry:2}\begin{split}  
\sum_{k=0}^{n-1} \cos(2\pi\omega k/n)\cos(2\pi\nu k/n) = \sum_{k=0}^{n-1} \sin(2\pi\omega k/n)\sin(2\pi\nu k/n) = 0.
\end{split}\end{align}
Last, for every integer $0<\omega<n/2$,
\begin{align}\label{eq:trigonometry:3}\begin{split}
\sum_{k=0}^{n-1} \cos(2\pi\omega k/n)^2 = \sum_{k=0}^{n-1} \sin(2\pi\omega k/n)^2 = \frac{n}{2}
\end{split}\end{align}
\end{lemma}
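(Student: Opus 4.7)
The plan is to prove all three identities by converting each trigonometric product into a linear combination of single sines and cosines, and then invoking the single fundamental identity
\[
\sum_{k=0}^{n-1} e^{2\pi \iu m k/n} = \begin{cases} n & \text{if } m \equiv 0 \pmod n,\\ 0 & \text{otherwise,} \end{cases}
\]
which separates, via Euler's formula, into $\sum_{k=0}^{n-1}\cos(2\pi m k/n) = n\cdot\mathbf{1}_{m\equiv 0}$ and $\sum_{k=0}^{n-1}\sin(2\pi m k/n) = 0$ for every integer $m$.

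First I would rewrite each product using standard product-to-sum formulas: $\cos A \sin B = \tfrac12(\sin(A+B)-\sin(A-B))$ for the first identity; $\cos A \cos B = \tfrac12(\cos(A-B)+\cos(A+B))$ and $\sin A \sin B = \tfrac12(\cos(A-B)-\cos(A+B))$ for the second; and $\cos^2 A = \tfrac12(1+\cos 2A)$, $\sin^2 A = \tfrac12(1-\cos 2A)$ for the third. Each of the three sums then reduces to a linear combination of sine/cosine sums indexed by $m \in \{\omega\pm\nu,\ 2\omega\}$.

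The first identity is immediate: both pieces are sine sums, which vanish for any integer frequency. For the third, $0<\omega<n/2$ forces $0<2\omega<n$, so $2\omega \not\equiv 0 \pmod n$; the cosine sum vanishes and only the constant $\tfrac12$ survives, yielding $n/2$. The delicate case is the second identity, where one needs both $\omega-\nu \not\equiv 0 \pmod n$ and $\omega+\nu \not\equiv 0 \pmod n$; the former follows from distinctness together with $|\omega-\nu|<n$, while the latter requires $\omega+\nu \neq n$ (the case $\omega+\nu=2n$ is impossible since $\omega,\nu<n$).

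The main obstacle is precisely this last edge case $\omega+\nu = n$, in which the $\cos(A+B)$ summand contributes $n$ rather than $0$, and the identity would fail as literally stated. In the application (Lemma \ref{lm: covariance}), this configuration is avoided because only the standard orthogonality relations among the weights appearing in the representation are used, with the implicit understanding that frequencies are chosen so that $\omega\pm\nu \not\equiv 0 \pmod n$; so either the lemma should be read with this tacit hypothesis added to the second identity, or one restricts to $0<\omega,\nu<n/2$ (as is implicitly the case in its use via Schur decomposition into $2\times 2$ rotation blocks). Under this standing hypothesis the proof is a one-line application of the orthogonality of characters of $\Z/n\Z$.
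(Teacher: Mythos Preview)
The paper omits the proof of this lemma entirely (it is introduced as ``the elementary Lemma \ref{lm: trigonometric equations} below, whose proof is omitted''), so there is nothing to compare against; your argument via product-to-sum formulas and the orthogonality relation $\sum_{k=0}^{n-1} e^{2\pi\iu mk/n}=n\cdot\mathbf{1}_{m\equiv 0}$ is the standard one and is correct.

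Your observation about the edge case $\omega+\nu=n$ in the second identity is accurate: as literally stated, Equation~\eqref{eq:trigonometry:2} fails there (e.g.\ $n=4$, $\omega=1$, $\nu=3$ gives $n/2$, not $0$). This is a genuine gap in the \emph{statement}, not in your proof, and you are right that the natural fix is either to add the hypothesis $\omega+\nu\neq n$ or to restrict to $0<\omega,\nu<n/2$. Your remark that the application in Lemma~\ref{lm: covariance} sidesteps this is plausible but not fully argued there either, since the weights $\omega_i$ in that lemma are arbitrary integers and $p$ is only assumed to be a common multiple; a careful version would need to track which congruences among the $\omega_i\pm\omega_j$ modulo $p$ are actually excluded.
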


\bibliographystyle{plain}%alpha
\bibliography{main.bib}
\end{document}